\begin{document}
\title{Group Gradings on Classical Lie Superalgebras}

\author{Caio De Naday Hornhardt}
\address{Department of Mathematics and Statistics, Memorial University of Newfoundland, St. John's, NL, A1C5S7, Canada}
\email{cdnh22@mun.ca}
\thanks{This paper is based on the first author's Ph.D. thesis, written under the supervision of the second author. The first author acknowledges the support of Memorial University and of the Natural Sciences and Engineering Research Council (NSERC) of Canada.}

\author{Mikhail Kochetov}
\address{Department of Mathematics and Statistics, Memorial University of Newfoundland, St. John's, NL, A1C5S7, Canada}
\email{mikhail@mun.ca}
\thanks{The second author is supported by Discovery Grant 2018-04883 of NSERC}

\begin{abstract}
    We classify, up to isomorphism, the group gradings on the non-exceptional classical simple Lie superalgebras, except for type~$A(1,1)$, over an algebraically closed field of characteristic zero.
    To this end, we study graded-simple and graded-superinvolution-simple associative superalgebras satisfying the descending chain condition on graded left superideals, which allows us to classify abelian group gradings on finite-dimensional simple and superinvolution-simple associative superalgebras over an algebraically closed field of characteristic different from $2$.
\end{abstract}

\keywords{Group grading, Lie superalgebra, classical Lie superalgebra, associative superalgebra, superinvolution}

\subjclass[2020]{Primary 17B70; Secondary 16W55, 16W50, 16W10}

\maketitle

\tableofcontents

\section{Introduction}

The use of group gradings in Lie theory traces back to 1888 (see \cite{MR1510529}), when W.~Killing introduced the root space decomposition of complex semisimple Lie algebras.
Later, $\ZZ_2$-gradings appeared in E.~Cartan's work on real semisimple Lie algebras (see \cite{Cartan-1914}). 
Interest in gradings intensified in the 1960s through works of J.~Tits, I.L.~Kantor, and M.~Koecher (see \cite{Tit62,Kan64,Koe67}). 
Systematic classification of group gradings on Lie algebras began with \cite{PZ} and remains an active research area in Lie theory and representation theory.
Building on the work of several authors, including \cite{HPP,BSZ01,BZ02,BZ03,BSZ05,BZ06,BZ07}, all group gradings for classical simple Lie algebras (series $A$, $B$, $C$, $D$) over an algebraically closed field of characteristic different from $2$ were classified up to isomorphism in \cite{BK10}, while fine gradings were classified up to equivalence in \cite{Eld10} (see also \cite{livromicha} and references therein).
These classifications are also known over the field of real numbers \cite{paper-adrian,ELDUQUE202261}.

The first appearances of Lie superalgebras were related to cohomology \cite{FN56,Gen63,Gen64,MM65}, but they were independently introduced in physics in relation to supersymmetries \cite{GN64,Miy68,Mic69}. 
V.~Kac classified the finite-dimensional simple Lie superalgebras over an algebraically closed field of characteristic $0$ \cite{artigokac}, dividing them into two types: classical and Cartan type (see also \cite{livrosuperalgebra}).
The classical Lie superalgebras are divided into the series $A(m,n)$, $B(m,n)$, $C(n)$, $D(m,n)$, $P(n)$, and $Q(n)$, as well as three exceptional cases: $F(4)$, $G(3)$, and the family $D(2,1,\alpha)$ with a scalar parameter $\alpha \neq 0, -1$.  
The Lie superalgebras of Cartan type are divided into the series $W(n)$, $S(n)$, $\tilde S(n)$ and $H(n)$.

The $\mathbb{Z}$-gradings on classical Lie superalgebras where classified in \cite{kacZ}. 
In \cite{serganova}, gradings by finite cyclic groups were considered and the corresponding twisted loop superalgebras were classified. 
Fine gradings on the exceptional classical simple Lie superalgebras were classified up to equivalence in \cite{artigoelduque}.

Classifications of all group gradings up to isomorphism on Lie superalgebras were first obtained in \cite{paper-Qn} for the series $Q$, in \cite{paper-MAP} for the series $P$ and partially $A$ (what we call Type I gradings), and in \cite{Helens_thesis} for the series $B$.

In the present work, our main goal is to classify, up to isomorphism, the group gradings on non-exceptional classical Lie superalgebras.
To this end, we study graded-simple and graded-superinvolution-simple associative superalgebras that are (left) graded-Artinian (\ie, satisfy the descending chain condition on graded left superideals).
Our method is a combination of \cite{livromicha}, where graded-simple Artinian associative algebras and their anti-automorphisms were studied, and of \cite{racine}, which classified primitive associative superalgebras with superinvolution that have a minimal one-sided superideal.
In particular, this method allows us to classify abelian group gradings on finite-dimensional simple and superinvolution-simple associative superalgebras over an algebraically closed field of characteristic different from $2$.
Taking advantage of the fact that the nonexceptional classical Lie superalgebras in characteristic $0$ of type different from $A(1,1)$ can be realized in terms of skew elements in superinvolution-simple associative superalgebras in such a way that the automorphism groups match, we can then transfer the classification of gradings from the associative to Lie superalgebras.
This gives a uniform classification of gradings for all types other than $A(1,1)$.
This latter has an exceptionally large automorphism group, so our transfer approach does not apply.
Group gradings on exceptional classical Lie superalgebras as well as $A(1,1)$ were classified up to isomorphism in \cite{gradingsExceptional}.

The paper is structured as follows.
In \cref{sec:preliminaries}, we establish basic definitions and notation, as well as some preliminary results. 
\Cref{sec:grdd-simple-ass} develops the theory of graded-simple graded-Artinian associative superalgebras over a field $\FF$, concluding with the classification of abelian group gradings on finite-dimensional simple associative superalgebras for algebraically closed $\FF$, $\Char \FF \neq 2$. 
For superalgebras with superinvolution, \cref{sec:grdd-sinv-simple} presents the general theory of graded-superinvolution-simple graded-Artinian superalgebras, which is applied in \Cref{sec:gradings-on-vphi-simple} to finite-dimensional superinvolution-simple superalgebras for algebraically closed $\FF$, $\Char \FF \neq 2$. 
Finally, \Cref{sec:Lie} transfers these classifications to non-exceptional classical Lie superalgebras for algebraically closed $\FF$, $\Char \FF = 0$.

\section{Preliminaries}\label{sec:preliminaries}

\subsection{Gradings and Superalgebras}\label{subsec:grds}

To discuss gradings on (super)algebras, we start with gradings on vector spaces:

\begin{defi}\label{defi:grading}
    Let $G$ be a group. 
	A $G$-\emph{grading on a vector space} $V$ is a direct sum decomposition
	\[\label{eq:grading}
	    \Gamma\from V= \bigoplus_{g \in G} V_g,
	\]
	indexed by the elements of $G$.
    If $\Gamma$ is fixed, we say that $V$ is \emph{$G$-graded} or simply \emph{graded} if $G$ is understood from the context. 
    The support, denoted either by $\supp \Gamma$ or $\supp V$, is the subset $\{g\in G \mid V_g \neq 0\} \subseteq G$.
    The subspaces $V_g$ are called \emph{homogeneous components} and their elements are called \emph{homogeneous elements}.
    If $0 \neq v \in V_g$, we say that the \emph{degree} of $v$ is $g$ and write $\deg v = g$.
\end{defi}

Given two $G$-graded vector spaces $V = \bigoplus_{g \in G} V_g$ and $W = \bigoplus_{g \in G} W_g$, a linear map $f\from V \to W$ is said to be a \emph{homomorphism of $G$-graded vector spaces} if it is degree-preserving, \ie, if $f(V_g) \subseteq W_g$ for all $g\in G$.
A subspace $U \subseteq V$ is said to be a \emph{graded subspace} if
\[
    U = \bigoplus_{g\in G} U_g, \text{ where } U_g \coloneqq U \cap V_g\,.
\]
Then the decomposition above is a $G$-grading on $U$ such that the inclusion map $U \hookrightarrow V$ is a homomorphism of $G$-graded spaces and, also, $V/U$ becomes a $G$-graded vector space in such way that the natural map $V \to V/U$ is a homomorphism of $G$-graded vector spaces.
Furthermore, we consider $V \times W$ and $V \tensor W$ as $G$-graded vector spaces by defining $(V\times W)_g \coloneqq V_g \times W_g$ and by setting, for all  non-zero homogeneous elements $v\in V$ and $w\in W$,
\[
    \deg (v \tensor w) \coloneqq (\deg v) (\deg w)\,.
\]

\begin{defi}\label{defi:grdd-algebra}
    A $G$-\emph{grading on an algebra} $A$ is a grading $\Gamma\from A= \bigoplus_{g \in G} A_g$ on its underlying vector space with the extra condition that
    \[
        \forall g,h\in G, \quad A_gA_h \subseteq A_{gh}\,,
    \]
    \ie, the multiplication map $A\tensor A \to A$ is a homomorphism of $G$-graded vector spaces.
\end{defi}

A {homomorphism of $G$-graded algebras} is a degree-preserving homomorphism of algebras.
A \emph{graded subalgebra} (respectively, \emph{graded ideal}) is a subalgebra (respectively, ideal) that is a graded subspace.
A $G$-graded algebra $A$ is said to be \emph{graded-simple} if $AA \neq 0$ and the only graded ideals are $0$ and $A$.

In the study of superalgebras, there is a canonical grading by $\ZZ_2$, which we will distinguish from other gradings by using superscripts instead of subscripts; there is also special terminology associated with this grading.
Thus, a \emph{vector superspace}, or simply \emph{superspace}, is a $\ZZ_2$-graded vector space, $U = U\even \oplus U\odd$, where the elements of $U\even$ and $U\odd$ are said to be, respectively, \emph{even} and \emph{odd}, so the degree for this grading is called \emph{parity}.
We will write $|u| = i$ if $0 \neq u\in U^i$.

\begin{defi}\label{defi:superspace}
    A \emph{$G$-grading on a superspace} $U$ is a grading $\Gamma\from U = \bigoplus_{g\in G} U_g$ on its underlying vector space that is compatible with the canonical $\ZZ_2$-grading in the sense that every homogeneous component is a $\ZZ_2$-graded subspace, \ie,
    \[
        \forall g\in G,\quad U_g = (U_g\cap U\even)\oplus(U_g\cap U\odd)\,,
    \]
    or, equivalently, $U\even$ and $U\odd$ are $G$-graded subspaces.
    A \emph{superalgebra} is a $\ZZ_2$-graded algebra $A = A\even \oplus A\odd$ and a \emph{$G$-grading on a superalgebra} $A$ is one that makes $A$ both a $G$-graded algebra and a $G$-graded superspace.
\end{defi}

\begin{remark}\label{rmk:G-sharp}
    Letting $G^\# \coloneqq G\times \ZZ_2$, we note that a $G$-graded superspace (superalgebra) is the same as $G^\#$-graded vector space (algebra), with $U_{(g,i)} \coloneqq U_g\cap U^i$ for all $g\in G$ and $i \in \ZZ_2$.
\end{remark}

We define \emph{homomorphisms of ($G$-graded) superalgebras} as homomorphisms of $(G\times)\ZZ_2$-graded algebras.
Similarly, we define \emph{($G$-graded) subsuperalgebras} (respectively, \emph{superideals}) as $(G\times)\ZZ_2$-graded subalgebras (respectively, ideals) and \emph{simple ($G$-graded) superalgebras} as simple $(G\times)\ZZ_2$-graded algebras.

The above definitions do not address a crucial aspect of ``super'' structures, known as the \emph{sign rule}, which is relevant to the identities imposed on multiplication to define classes of superalgebras such as Lie superalgebras.
Many definitions in algebra (\eg, the definition of anti-automorphism), involve interchanging the position of elements ($\vphi(ab) = \vphi(b)\vphi(a)$).
In the ``super'' context, a minus sign is introduced if odd elements are interchanged with each other.
This procedure can be formalized through the concept of \emph{symmetric monoidal category} (see, \eg, \cite[Chapter 3]{MR2069561}), but in this work it will be sufficient to apply the sign rule on a case-by-case basis, as in \cref{defi:super-anti-auto,defi:Lie-SA} below.

\begin{defi}\label{defi:super-anti-auto}
    Let $R$ and $S$ be ($G$-graded) superalgebras.
    A \emph{super-anti-isomorphism} is an isomorphism of ($G$-graded) superspaces $\vphi\from R \to S$ such that
    \[
        \forall a,b \in R\even \cup R\odd,\quad \vphi(ab) = (-1)^{|a||b|}\vphi(b)\vphi(a)\,.
    \]
    If $R = S$, we say that $\vphi$ is a \emph{super-anti-automorphism}.
    If, further, $\vphi^2 = \id$, we say that $\vphi$ is a \emph{superinvolution}.
\end{defi}

We note that, if $\Char \FF = 2$, then \cref{defi:super-anti-auto} reduces to the definition of anti-isomorphism, anti-automorphism, and involution.
Thus, we are mainly interested in the case $\Char \FF \neq 2$.

A \emph{$G$-grading on a superalgebra with super-anti-automorphism $(R, \vphi)$} is a $G$-grading on $R$ for which $\vphi$ is degree-preserving.
A \emph{homomorphism between ($G$-graded) superalgebras with super-anti-automorphism} is a homomorphism preserving parities, (degrees) and the super-anti-automorphisms.
A ($G$-graded) superalgebra with superinvolution $(R, \vphi)$ is said to be \emph{(graded-)superinvolution-simple} if $RR \neq 0$ and the only $\vphi$-invariant (graded) superideals are $0$ and $R$.

\begin{defi}\label{defi:Lie-SA}
	A superalgebra $L=L\even\oplus L\odd$ with product $[\cdot, \cdot]\from L\times L \to L$ is said to be a \emph{Lie superalgebra} if, for all nonzero homogeneous elements $a, b, c \in L$, we have\footnote{Additional conditions are often required if $\Char \FF = 2$ or $3$ (\eg, \cite[\S 1.2]{MR1192546}).}:
	\begin{enumerate}
		\item $[ a, b ] = - \sign{a}{b} [b, a]$ (\emph{super-anti-commutativity});
		\item $[a,[b,c]] = [[a,b],c] + \sign{a}{b} [b, [a,c]]$ (\emph{super Jacoby identity}).
	\end{enumerate}
\end{defi}

Given an associative superalgebra $R$, we define the Lie superalgebra $R^{(-)}$ to be the same superspace as $R$ but with product given by the \emph{supercommutator}:
\[
    \forall a,b\in R\even\cup R\odd, \quad [a,b] \coloneqq ab - \sign{a}{b} ba\,.
\]
If $\vphi$ is a super-anti-automorphism on $R$, then we also define the Lie subsuperalgebra $\Skew(R, \vphi) \coloneqq \{ r\in R \mid \vphi(r) = - r \} \subseteq R^{(-)}$.

For the main results in this work, we assume that the grading group $G$ is abelian.  
In our context, this is without loss of generality because the group generated by the support is abelian for graded-simple Lie algebras (see \cite[Lemma 2.1]{BZ06}, \cite[Proposition 1]{MR2257580}, or [Propositions 1.12]\cite{livromicha}) and for graded-involution-simple associative algebras (see \cite[Theorem~1]{BSZ05} or [Proposition 2.49]\cite{livromicha}), and the same proofs work in the superalgebra setting.  

To close this section, we define isomorphism of gradings:

\begin{defi}\label{defi:isomorphic-grds}
    Let $V$ be an (ungraded) object in the category of vector spaces, superspaces, algebras, superalgebras or superalgebras with super-anti-isomorphisms, and let $\Gamma$ and $\Delta$ be $G$-gradings on $V$.
    We say that that $\Gamma$ is \emph{isomorphic} to $\Delta$, denoted by $\Gamma \iso \Delta$ if $(V, \Gamma) \iso (V, \Delta)$ as $G$-graded objects.
\end{defi}

\subsection{Homogeneous maps, elementary gradings and graded modules}\label{sec:grd-modules}

Homomorphisms are not the only maps that interest us.
Some maps that do not preserve degrees are of fundamental importance when dealing with gradings:

\begin{defi}\label{defi:homogeneous-map}
    Let $V$ and $W$ be $G$-graded vector spaces and let $f\from V \to W$ be a linear map.
    We say that $f$ is \emph{homogeneous of degree $g \in G$}, and write $f\in \Hom(V, W)_g$, if 
    \[
    \forall h\in G, \quad f(V_h) \subseteq W_{gh}\,.
    \]
    The subspace $\bigoplus_{g\in G} \Hom(V, W)_g$ of $\Hom(V, W)$ is $G$-graded and will be denoted by $\Hom^{\text{gr}}(V, W)$.
    As before, we will use the word \emph{parity} instead of \emph{degree} in the case of canonical $\ZZ_2$-gradings of superspaces.
\end{defi}

Note that, for non-abelian $G$, \cref{defi:homogeneous-map} depends on the convention of writing functions on the left of their arguments.
The evaluation map $\Hom^{\text{gr}}(V, W) \tensor V \to W$ becomes a (degree-preserving) homomorphism of $G$-graded vector spaces.
If $V$ is finite-dimensional, it is easy to check that $\Hom^{\text{gr}}(V, W) = \Hom(V, W)$.

\phantomsection\label{para:elementary-grading}

In the case $V=W$, the associative algebra $\End^{\text{gr}}(V)$ becomes a $G$-graded algebra. 
When $V$ is finite-dimensional, the $G$-gradings on $\End(V)$ that arise from $G$-gradings on $V$ in this way are called \emph{elementary}.

A basis of $V$ composed of homogeneous elements (with potentially distinct degrees) is called a \emph{graded basis}.  
Given a graded basis $\mathcal{B} = \{v_1, \ldots, v_n\}$ and defining $g_i \coloneqq \deg v_i$ for all $1 \leq i \leq n$, we may identify $\End(V)$ with the matrix algebra $M_n(\FF)$.  
The elementary grading on $M_n(\FF)$ is determined by assigning degree $g_i g_j^{-1}$ to the matrix unit $E_{ij}$ (\ie, the matrix with $1$ in the position $(i,j)$ and $0$ elsewhere) for all $1 \leq i, j \leq n$.  
We say that this is the \emph{elementary grading associated to the tuple $(g_1, \ldots, g_n) \in G^n$}.  
Clearly, for any tuple in $G^n$, we could define a grading on $M_n(\FF)$ via this construction.

\begin{defi}\label{defi:left-grdd-module}
    Let $R = \bigoplus_{g\in G} R_g$ be a $G$-graded associative (super)algebra and let $V = \bigoplus_{g\in G} V_g$ be a $G$-graded vector (super)space.
    We say that $V$ is a \emph{$G$-graded left module $V$} if it is a left $R$-module and
    \[
    \forall g,h\in G,\quad R_g \cdot M_h \subseteq M_{gh}\,.
    \]
    Similarly, we say that $V$ is a \emph{$G$-graded right module $V$} if it is a right $R$-module and
    \[
    \forall g,h\in G,\quad M_h \cdot R_g \subseteq M_{hg}\,.
    \]
    A \emph{homomorphism of $G$-graded left (right) $R$-modules} is a degree-preserving $R$-linear map.
    If $S$ is a $G$-graded (super)algebra and $V$ is a $(R,S)$-bimodule, we say that $V$ is a \emph{$G$-graded bimodule} if it is $G$-graded as both a left $R$-module and a right $S$-module.
\end{defi}

\phantomsection\label{para:Hom-R-gr}

Note that, if $V$ and $W$ are $G$-graded right $R$-modules, then $\Hom_R^{\text{gr}} (V,W) \coloneqq \Hom_R (V,W) \cap \Hom^{\text{gr}} (V,W)$ is a graded subspace of $\Hom^{\text{gr}}(V, W)$.
To make this work for left modules, one should write homomorphisms on the right and define their degree accordingly.

\begin{defi}\label{defi:shift}
    Let $\Gamma\from V = \bigoplus_{h\in G} V_h$ be a grading on a vector space $V$. 
    Given an element $g\in G$, the \emph{right shift of $\Gamma$ by $g$}, denoted $\Gamma^{[g]}$, is obtained by declaring elements of degree $h \in G$ to have degree $hg$, \ie, $\Gamma^{[g]}\from V = \bigoplus_{h\in G} V_h'$ where $V_{h}' \coloneqq V_{h g\inv}$, for all $h\in G$. 
    If $\Gamma$ is fixed, we denote by $V^{[g]}$ the $G$-graded vector space $V$ endowed with $\Gamma^{[g]}$.
    The \emph{left shift of $\Gamma$ by $g$} is defined similarly. 
\end{defi}

Note that, for every $g\in G$, $\End^{\text{gr}}(V) = \End^{\text{gr}}(V^{[g]})$ as $G$-graded algebras.

\phantomsection\label{phsec:definitions-of-M-and-Q-associative}

\begin{defi}\label{defi:M(m,n)}
    Let $m,n\in \ZZ_{\geq 0}$.
    We denote by $\FF^{m|n}$ the superspace whose even and odd components are $\FF^m$ and $\FF^n$, respectively.
    Note that the canonical basis of $\FF^{m|n} \iso \FF^{m+n}$ is graded. 
    We denote by $\M(m,n)$ or $\M_{m|n} (\FF)$ the matrix superalgebra, which is $\M_{m+n}(\FF)$ with the corresponding elementary grading.
    In block notation:
    \begin{align}
    \M(m,n)\even &\coloneqq \left\{ \left(\begin{array}{c|c}
        A & 0\\
        \hline
        0 & D
    \end{array}\right)
    \mid A\in M_{m}(\FF),\, D\in M_n(\FF) \right\},\\
    \M(m,n)\odd &\coloneqq \left\{
    \left(\begin{array}{c|c}
        0 & B\\
        \hline
        C & 0
    \end{array}\right)
    \mid B \in M_{m\times n}(\FF), \, D\in M_{n\times m}(\FF) \right\}.
\end{align}
\end{defi}

\phantomsection\label{elementary-grading-SA}

Given $\gamma_\bz \in G^m$ and $\gamma_\bo \in G^n$, we can endow $M(m,n)$ with an elementary $G$-grading compatible with the structural $\ZZ_2$-grading by taking $\gamma \in G^{m+n}$ to be the concatenation of $\gamma_\bz$ and $\gamma_\bo$ and consider on $M_{m+n}(\FF)$ the elementary grading associated to $\gamma$.
We will refer to this grading as the \emph{elementary grading on $M(m,n)$ associated to $(\gamma_\bz, \gamma_\bo)$}.

We use the opportunity to define an important subsuperalgebra (\ie, a subalgebra that is $\ZZ_2$-graded subspace) of $M(n,n)$:

\begin{defi}\label{defi:associative-Q(n)}
    The \emph{queer associative superalgebra} $Q(n)$ is defined by:
\[
    Q(n) \coloneqq \left\{ \left(\begin{array}{c|c}
        A & B\\
        \hline
        B & A
    \end{array}\right)
    \mid A,B\in M_{n}(\FF)
    \right\}  \subseteq M(n,n).
\]
\end{defi}

\phantomsection\label{phsec:change-side-of-map}

We will follow the convention of writing $R$-linear functions between left (right) $R$-modules on the right (left).
This is especially important in the case $R$ is a superalgebra, because of the rule of signs: if we change the positions of a function and its argument, we may have to change the sign.
We emphasize that this is not an issue for homomorphisms between $R$-modules, since those are parity-preserving by definition. 

\begin{defi}\label{def:change-map-to-the-left}
	Let $R$ be a ($G$-graded) superalgebra, let $\U$ and $\V$ be ($G$-graded) left $R$-modules and let $\psi\from \U \to \V$ be a non-zero homogeneous $R$-linear map (hence written on the right). 
	We define $\psi^\circ\from \U \to  \V$ to be the following map, written on the left:
	\[
		\forall v\in \V\even \cup \V\odd, \quad \psi^\circ(v) = \sign{\psi}{v} (v)\psi\,.
	\]
\end{defi}

It is easy to see that $\psi^\circ$ is not necessarily $R$-linear, but instead
\[\label{lemma:change-of-side-properties}
    \forall r\in R\even \cup R\odd, v\in \V,\quad \psi^\circ (rv) = \sign{\psi}{r} r \psi^\circ (v)\,.
\]
Further, given another homogeneous $R$-linear map $\tau\from \V \to \mc W$, we have 
\[\label{eq:change-of-side-composition}
    (\psi\tau)^\circ = \sign{\psi}{\tau} \tau^\circ\psi^\circ\,.
\]

\subsection{Super-anti-automorphisms and supersymmetric bilinear forms}

We have defined super-anti-automorphisms in \cref{subsec:grds}, here is an example:

\begin{defi}\label{def:supertranspose}
    The \emph{supertranspose} of a matrix in $M(m,n)$ is given by
    \[
        \left(\begin{array}{c|c}
            A & B\\
            \hline
            C & D
        \end{array}\right)\stransp \coloneqq
        \left(\begin{array}{c|c}
            A\transp & -C\transp\\
            \hline
            \rule{0pt}{2.5ex}
            B\transp & \phantom{-}D\transp
        \end{array}\right)\,.
    \]
\end{defi}

Note that it is a super-anti-automorphism of order $4$ if $mn \neq 0$.

\begin{defi}
    Let $U$ be a superspace and let $\langle \cdot, \cdot \rangle \from U\times U \to \FF$ be a bilinear form which is homogeneous if seen as a map $U\tensor U \to \FF$.
    We say that $\langle \cdot, \cdot \rangle$ is \emph{supersymmetric} if
    \[
        \forall u,v \in U\even \cup U\odd, \quad \langle u, v \rangle 
        = \sign{u}{v} \langle v, u \rangle\,,
    \]
    and \emph{super-skew-symmetric} if
    \[
        \forall u,v \in U\even \cup U\odd, \quad \langle u, v \rangle 
        = -\sign{u}{v} \langle v, u \rangle\,.
    \]
    If $U$ is finite-dimensional, then any nondegenerate $\langle \cdot, \cdot \rangle$ defines a unique linear map $\vphi\from \End(U) \to \End(U)$ that sends any $T\in \End(U)\even \cup \End(U)\odd$ to $\vphi(T)$ defined by
    \[
        \forall u,v \in U\even \cup U\odd,\,
        \langle T(u), v \rangle = \sign{T}{u} \langle u, \vphi(T)(v) \rangle\,. 
    \]
    We will call this map $\vphi$ the \emph{superadjunction} with respect to $\langle \cdot, \cdot \rangle$.
\end{defi}

\phantomsection\label{phsec:parity-reversed}

The superadjunction is always a super-anti-automorphism, and it is a superinvolution \IFF $\langle \cdot, \cdot \rangle$ is supersymmetric or super-skew-symmetric. 
Since a super-skew-symmetric bilinear form on $U$ is the same as a supersymmetric bilinear form on the parity-reversed superspace $U^{[\barr 1]}$ (\cref{defi:shift}), we can restrict our attention to the supersymmetric ones.

\begin{defi}\label{def:superopposite}
    Let $R$ be a superalgebra. 
    We define the \emph{superopposite superalgebra $R\sop$} to be $R$ as a superspace but with a different product. 
    When an element $r\in R$ is regarded as an element of $R\sop$, we will denote it by $\bar r$. 
    The product on $R\sop$ is defined by:
    \[
    \forall r, s \in R\even \cup R\odd,\quad \bar r \, \bar {s} = \sign{r}{s} \, \overline{sr}\,. 
    \]
\end{defi}

Note that if $G$ is abelian and $R$ is $G$-graded, if we consider $R\sop$ as the same $G$-graded superspace, then $R\sop$ is also a $G$-graded superalgebra.
In particular, the construction below is a source of $G$-graded superalgebras with superinvolution:

\begin{defi}\label{defi:exchange-sinv}
    Let $S$ be a superalgebra and set $R \coloneqq S \times S\sop$.
    We define the \emph{exchange superinvolution} on $R$ to be the map $\vphi\from R \to R$ given by
    \[
        \forall s_1,s_2\in S, \quad \varphi (s_1, \bar s_2) = (s_2, \bar s_1)\,.
    \]
    Unless stated otherwise, we will always consider $R = S \times S\sop$ to be endowed with exchange superinvolution.
\end{defi}

It is straightforward to verify that $S \times S\sop$ is graded-superinvolution-simple \IFF $S$ is graded-simple. 

\begin{ex}\label{ex:FxF-iso-FZ2}
	The simplest possible example is to take $S = \FF$. 
	If $\Char \FF \neq 2$, then $S\times S\sop = \FF [\zeta]$ where $\zeta = (1, -1)$ and the exchange superinvolution is given by $\vphi(1) = 1$ and $\vphi(\zeta) = -\zeta$.
	Note that $\FF [\zeta] \iso \FF\ZZ_2$ with the trivial superalgebra structure. 
\end{ex}

\begin{ex}\label{ex:FZ2xFZ2sop-iso-FZ4}
	Consider $S = Q(1)$, so $S\even = \FF 1$ and $S\odd = \FF u$ where $u^2 =1$.
	Note that $S$ is isomorphic to $\FF\ZZ_2$, but this time with the superalgebra structure given by its natural $\ZZ_2$-grading. 
	If $\Char \FF \neq 2$, we claim that $R \coloneqq S\times S\sop$ is isomorphic to $\FF\ZZ_4$.
	Indeed, the element $\omega \coloneqq (u, \bar u) \in S\times S\sop$ has order $4$ and generates $S\times S\sop$: $\omega^2 = (1, - 1)$, $\omega^3 = (u, - \bar u)$ and $\omega^4 = (1, 1)$.
	Hence $R\even = \FF1 \oplus \FF \omega^2$ and $R\odd = \FF \omega \oplus \FF \omega^3$.
	Also, the exchange superinvolution on $R$ is given by $\vphi(1) = 1$, $\vphi(\omega) = \omega$, $\vphi(\omega^2) = -\omega^2$ and $\vphi(\omega^3) = -\omega^3$.
\end{ex}

The following is the graded version of a result in \cite{racine}:

\begin{prop}\label{prop:only-SxSsop-is-simple}
	Let $(R, \vphi)$ be a graded superalgebra with superinvolution. 
	Then $(R, \vphi)$ is graded-superinvolution-simple \IFF either $R$ is a graded-simple or $(R, \vphi)$ is isomorphic to $S\times S\sop$ with the exchange superinvolution, for some graded-simple superalgebra $S$.
\end{prop}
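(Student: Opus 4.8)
The statement is an equivalence whose backward direction is immediate, so the plan is to dispose of it first and then concentrate on the forward implication. If $R$ is graded-simple, then every $\vphi$-invariant graded superideal is in particular a graded superideal, hence $0$ or $R$, so $(R,\vphi)$ is graded-superinvolution-simple; and if $(R,\vphi)\iso S\times S\sop$ with the exchange superinvolution for graded-simple $S$, the conclusion is exactly the remark recorded just before the proposition. For the forward direction I would assume $(R,\vphi)$ is graded-superinvolution-simple and that $R$ is \emph{not} graded-simple. Since $RR\neq 0$ is part of the hypothesis, ``not graded-simple'' yields a proper nonzero graded superideal $I\subsetneq R$, which is the object I would play against its image under $\vphi$.

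The next step is the classical dichotomy argument, adapted to the super setting. Because $\vphi$ preserves the $G$-grading and the parity and reverses products, $\vphi(I)$ is again a graded superideal; and since $\vphi^2=\id$, both $I\cap\vphi(I)$ and $I+\vphi(I)$ are $\vphi$-invariant graded superideals, hence each equals $0$ or $R$. From $I\neq 0$ I get $I+\vphi(I)=R$, and from $I\subsetneq R$ I get $I\cap\vphi(I)=0$ (otherwise it would be $R$, forcing $I=R$). Thus $R=I\oplus\vphi(I)$ as a graded superspace, with both summands graded superideals, so $I\,\vphi(I)\subseteq I\cap\vphi(I)=0$ and likewise $\vphi(I)\,I=0$; hence $R$ is the direct product of the subsuperalgebras $I$ and $\vphi(I)$.

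Setting $S\coloneqq I$, I would then exhibit the isomorphism with $S\times S\sop$ explicitly. The restriction $\vphi|_S\from S\to\vphi(S)$ is a super-anti-isomorphism, and I would define $\Phi\from S\times S\sop\to R$ by $\Phi(s_1,\bar s_2)=s_1+\vphi(s_2)$, which is a grading- and parity-preserving bijection because $R=S\oplus\vphi(S)$ and $\vphi$ is bijective. One checks simultaneously that $\Phi$ is an algebra homomorphism and that it intertwines the exchange superinvolution with $\vphi$; the latter, $\Phi(s_2,\bar s_1)=\vphi(\Phi(s_1,\bar s_2))$, is immediate from $\vphi^2=\id$. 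This verification is where the only genuine ``super'' content lies and is the step I expect to be the main obstacle: one must confirm that the sign in the product $\bar s_2\,\bar s_2'=\sign{s_2}{s_2'}\,\overline{s_2's_2}$ of $S\sop$ is cancelled exactly by the sign produced when $\vphi$ is applied to a product, $\vphi(s_2's_2)=\sign{s_2}{s_2'}\vphi(s_2)\vphi(s_2')$, so that the relevant term of $\Phi$ of the product becomes $\vphi(s_2)\vphi(s_2')$, while the vanishing cross-products $S\,\vphi(S)=\vphi(S)\,S=0$ dispose of the remaining terms. I would also record that $G$ must be abelian for $S\sop$ to be $G$-graded, which is the standing assumption of the paper.

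Finally, I would verify that $S$ is graded-simple. Any proper nonzero graded superideal $J\subsetneq S$ is automatically an ideal of $R$, since $S\,\vphi(S)=\vphi(S)\,S=0$; then $J\oplus\vphi(J)$ is a $\vphi$-invariant graded superideal that is nonzero and proper, contradicting graded-superinvolution-simplicity. Moreover $SS\neq 0$, because $SS=0$ would give $\vphi(S)\vphi(S)=\vphi(SS)=0$ and hence $RR=0$, against the hypothesis. Therefore $S$ is graded-simple and $(R,\vphi)\iso S\times S\sop$, completing the forward direction.
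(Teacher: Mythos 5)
Your proof is correct and follows essentially the same route as the paper: the same dichotomy argument showing $I\cap\vphi(I)=0$ and $I+\vphi(I)=R$ for a proper nonzero graded superideal $I$, yielding $R=I\oplus\vphi(I)$ and hence $(R,\vphi)\iso I\times I\sop$ with the exchange superinvolution. The only difference is that you spell out what the paper compresses into ``clearly'' --- the explicit map $\Phi(s_1,\bar s_2)=s_1+\vphi(s_2)$, the cancellation of signs between the superopposite product and the super-anti-automorphism identity, and the verification that $S$ is graded-simple --- all of which you carry out correctly.
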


\begin{proof}
	Suppose $(R, \vphi)$ is 
	graded-superinvolution-simple but $R$ is not graded-simple. 
	Let $0 \neq I \subsetneq R$ be a graded superideal.
	Note that $\vphi(I)$ is also a graded superideal, hence $I \cap \vphi(I)$ and $I + \vphi (I)$ are $\vphi$-invariant graded superideals. 
	Since $I \cap \vphi(I) \subseteq I \neq R$, we have $I \cap \vphi(I) = 0$, so we can write $I + \vphi (I) = I \oplus \vphi (I)$. 
	Since $0 \neq I \subseteq I \oplus \vphi (I)$, we conclude that $R = I \oplus \vphi (I)$.
	Clearly, this implies that $(R, \vphi)$ is isomorphic to $I \times I\sop$ with exchange superinvolution.
\end{proof}

\subsection{Center and Supercenter}

In the context of superalgebras, we have not only the usual concept of center but also the concept of supercenter:

\begin{defi}\label{defi:center}
	Let $R$ be an associative superalgebra.
	The \emph{center} of $R$ is the set
	\[
		Z(R) = \{c\in R \mid cr = rc \text{ for all } r\in R \},
	\]
	\ie, the center of $R$ seen as an algebra, and the \emph{supercenter} of $R$ is the set $sZ(R) \coloneqq sZ(R)\even \oplus sZ(R)\odd$, where
	\begin{align*}
		sZ(R)^i = \{c\in R^i \mid cr = (-1)^{i |r|} rc \text{ for all } r\in R\even\cup R\odd \},\quad i \in \ZZ_2.
	\end{align*}
\end{defi}

It is well known that $Z(M_n (\FF)) = \FF 1$, and one can check that $sZ(M_n (\FF)) = \FF 1$.
For the associative superalgebra $Q(n) = M_n (\FF) \oplus u \, M_n(\FF)$, we have that $Z(Q(n))$ is the subspace spanned by $1$ and $u$ (so $Z(Q(n)) \iso Q(1)$), while $sZ(Q(n)) = \FF 1$.

\begin{lemma}\label{lemma:center-is-graded}
	Let $G$ be an abelian group and let $R$ be an associative $G$-graded superalgebra.
	Then the center $Z(R)$ and the supercenter $sZ(R)$ are $G$-graded subsuperalgebras of $R$.
\end{lemma}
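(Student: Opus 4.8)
The claim is that for an abelian grading group $G$, both the center $Z(R)$ and supercenter $sZ(R)$ of a $G$-graded associative superalgebra $R$ are $G$-graded subsuperalgebras. The essential point is that each is a graded subspace: if a homogeneous decomposition of a (super)central element is taken, each homogeneous piece must itself be (super)central. The fact that these sets are subsuperalgebras (closed under multiplication and $\ZZ_2$-graded) is routine once gradedness is established, so the heart of the matter is gradedness with respect to $G$.

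The plan is to treat the center and supercenter in parallel, writing an arbitrary element $c \in Z(R)$ (resp.\ $c \in sZ(R)^i$) in its $G$-homogeneous decomposition $c = \sum_{g \in G} c_g$ with $c_g \in R_g$, and to show each $c_g$ is itself central (resp.\ supercentral). First I would reduce to testing against homogeneous elements $r \in R_h$, since these span $R$ and the defining conditions are linear in $r$. For the center, the condition $cr = rc$ becomes $\sum_g (c_g r - r c_g) = 0$; because $r \in R_h$, the term $c_g r - r c_g$ lies in the homogeneous component $R_{gh} = R_{hg}$ — and here abelianity of $G$ is exactly what guarantees these two products land in the \emph{same} component. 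The directness of the sum $R = \bigoplus_g R_g$ then forces $c_g r - r c_g = 0$ for every $g$ and every homogeneous $r$, whence each $c_g \in Z(R)$, so $Z(R) = \bigoplus_g (Z(R) \cap R_g)$ is graded.

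For the supercenter the argument is identical in structure but one must also track parity. Fix $i \in \ZZ_2$ and take $c \in sZ(R)^i$, so $c$ is $\ZZ_2$-homogeneous of parity $i$; writing $c = \sum_g c_g$, each $c_g$ inherits parity $i$ because the $G$-grading is compatible with the $\ZZ_2$-grading (Definition~\ref{defi:superspace}, so $R_g$ is a $\ZZ_2$-graded subspace and the parity-$i$ part of $c$ picks out parity-$i$ pieces $c_g$). Testing against a homogeneous $r \in R_h$ of parity $|r|$, the supercentrality condition $cr = (-1)^{i|r|} rc$ rearranges to $\sum_g \bigl(c_g r - (-1)^{i|r|} r c_g\bigr) = 0$, and again each summand lies in $R_{gh} = R_{hg}$ by abelianity, so each vanishes separately; thus $c_g \in sZ(R)^i$. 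This shows $sZ(R)^i = \bigoplus_g (sZ(R)^i \cap R_g)$, and taking the sum over $i \in \ZZ_2$ gives that $sZ(R)$ is $G$-graded and $\ZZ_2$-graded, i.e.\ a graded subsuperspace.

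I do not anticipate a serious obstacle: the argument is the standard ``a central element in a $G$-graded algebra over an abelian group is a sum of central homogeneous elements'' observation, transported to the super setting. The only point requiring a moment of care is the role of abelianity — without it $c_g r$ and $r c_g$ would lie in $R_{gh}$ and $R_{hg}$ respectively, different components in general, and the separation-by-components argument would collapse. After gradedness, closure under multiplication for $Z(R)$ is immediate, and for $sZ(R)$ one checks that a product of supercentral elements is supercentral by a short sign computation; both $Z(R)$ and $sZ(R)$ are visibly $\ZZ_2$-graded by construction, completing the verification that they are graded subsuperalgebras.
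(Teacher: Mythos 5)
Your argument is correct and is essentially the paper's: both proofs decompose a (super)central element into homogeneous components and use abelianity of the grading group so that $c_g r$ and $r c_g$ land in the same component $R_{gh} = R_{hg}$, forcing each component to be (super)central separately. The one spot where you are slightly glib is the closing claim that $Z(R)$ is ``visibly $\ZZ_2$-graded by construction'': unlike $sZ(R)$, which is defined as $sZ(R)\even \oplus sZ(R)\odd$, the center is defined with no reference to parity, so the assertion that $Z(R)$ is a subsuperspace needs the same separation argument applied to the parity decomposition $c = c^{\bar 0} + c^{\bar 1}$ (i.e., your argument with $\ZZ_2$ in place of $G$ — legitimate, since the canonical $\ZZ_2$-grading is an algebra grading by an abelian group). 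The paper sidesteps this entirely by decomposing over $G^\# = G \times \ZZ_2$ from the start, which yields the $G$-gradedness and the $\ZZ_2$-gradedness of $Z(R)$ in one stroke; your argument runs verbatim over $G^\#$ as well (abelian since $G$ is), and doing so is the cleanest way to close this small gap.
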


\begin{proof}
	We consider $R$ as a $G^\#$-graded algebra.
	Let $c \in Z(R)$ and write $c = \sum_{g \in G^\#} c_g$, where $c_g \in R_g$ for all $g \in G^\#$.
	For every homogeneous $r \in R$, we have
	\begin{align*}
		\big(\sum_{g\in G^\#} c_g\big)r = r \big(\sum_{g\in G^\#} c_g\big).
	\end{align*}
	Comparing the components of degree $gh = hg$, where $h = \deg r$, we conclude that $rc_g = c_g r$ for all $g \in G^\#$.
	By linearity, $r c_g = c_g r$ for all $r\in R$, hence $c_g \in Z(R)$.

	The same argument works to show that $sZ(R)\even$ is graded and, with straightforward modifications, to show that $sZ(R)\odd$ is graded.
\end{proof}

\begin{lemma}
	Let $(R, \vphi)$ be a superalgebra with super-anti-automorphism.
	Then $Z(R)$ and $sZ(R)$ are $\vphi$-invariant.
\end{lemma}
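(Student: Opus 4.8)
The plan is to verify both assertions directly from the defining (super)commutation relations, using that a super-anti-automorphism $\vphi$ is in particular a parity-preserving bijection of $R$. First I would reduce to homogeneous arguments. Both $Z(R)$ and $sZ(R)$ are $\ZZ_2$-graded subspaces: for the center this is \cref{lemma:center-is-graded} applied with $G$ trivial (so that $G^\# = \ZZ_2$), and for the supercenter it is built into the componentwise definition in \cref{defi:center}. Hence it suffices to prove $\vphi(c)$ lies in the respective set for homogeneous $c$, and then extend by linearity over the two parity components. Moreover, since $\vphi$ is bijective and preserves parity, every homogeneous $s \in R$ can be written as $s = \vphi(r)$ for a homogeneous $r$ with $|r| = |s|$; testing (super)centrality of $\vphi(c)$ against elements of this convenient form $\vphi(r)$ is what makes the super-anti-automorphism rule applicable.

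For the center, I would fix homogeneous $c \in Z(R)$ and homogeneous $r$, and apply the identity $\vphi(ab) = \sign{a}{b}\vphi(b)\vphi(a)$ twice to obtain $\vphi(c)\vphi(r) = \sign{r}{c}\vphi(rc)$ and $\vphi(r)\vphi(c) = \sign{r}{c}\vphi(cr)$. Because $c$ is central, $rc = cr$, so the two right-hand sides coincide and $\vphi(c)$ commutes with $\vphi(r)$. As $r$ ranges over all homogeneous elements, $\vphi(r)$ ranges over a spanning set of $R$, so by linearity $\vphi(c)$ commutes with all of $R$, \ie, $\vphi(c) \in Z(R)$.

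The supercenter case follows the same scheme but requires the sign bookkeeping to be tracked carefully, which I expect to be the only delicate point. For homogeneous $c \in sZ(R)$ the defining relation is $cr = \sign{c}{r}rc$, so $\vphi(cr) = \sign{c}{r}\vphi(rc)$. Substituting this into the two rewritings from the previous paragraph, the sign $\sign{c}{r}$ coming from supercommutation should combine with the signs coming from the super-anti-automorphism rule to give exactly $\vphi(c)\vphi(r) = \sign{c}{r}\vphi(r)\vphi(c)$; since $|r| = |\vphi(r)|$, this is precisely the condition that $\vphi(c)$ supercommutes with $\vphi(r)$. The thing to check is that the exponents of $-1$ add up correctly (here one uses $\sign{c}{r}^2 = 1$ and the symmetry $\sign{r}{c} = \sign{c}{r}$); once they do, I conclude $\vphi(c) \in sZ(R)$, and linearity finishes the proof.
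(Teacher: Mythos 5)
Your proposal is correct and takes essentially the same approach as the paper: reduce to homogeneous $c$ via the $\ZZ_2$-gradedness of $Z(R)$ and $sZ(R)$, then exploit the parity-preserving bijectivity of $\vphi$ — testing $\vphi(c)$ against elements of the form $\vphi(r)$ is exactly the paper's step of applying $\vphi$ to the relation $c\,\vphi\inv(r) = \vphi\inv(r)\,c$, and your sign bookkeeping (the two factors $\sign{c}{r}$ cancelling) checks out in both the center and supercenter cases.
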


\begin{proof}
	We will prove only that $Z(R)$ is $\vphi$-invariant, since the proof for $sZ(R)$ is similar.
    By \cref{lemma:center-is-graded}, we have $Z(R) = Z(R)\even \oplus Z(R)\odd$, so it is sufficient to show that if $c \in Z(R)\even \cup Z(R)\odd$, then $\vphi(c) \in Z(R)$. 
	Let $r \in R\even \cup R\odd$.
	Since $c\vphi\inv (r) = \vphi\inv (r)c$, we can apply $\vphi$ on both sides and get $\sign{c}{r} r \vphi(c) = \sign{c}{r} \vphi(c) r$ and, hence, $r \vphi(c) = \vphi(c) r$.
\end{proof}

The following result is well known (see, \eg, \cite[Theorem 8.1]{Sh98} and \cite[Theorem 28]{MR2407903}):

\begin{cor}\label{cor:Q-no-sinv-center}
	If $\Char \FF \neq 2$, the associative superalgebra $Q(n)$ does not admit a superinvolution.
\end{cor}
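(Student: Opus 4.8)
The plan is to argue by contradiction and reduce everything to the center. Suppose $\vphi$ is a superinvolution on $Q(n)$. By the lemma just proved, $Z(Q(n))$ is $\vphi$-invariant, and since $\vphi^2 = \id$ holds on all of $Q(n)$, the restriction $\vphi|_{Z(Q(n))}$ is itself a superinvolution of $Z(Q(n))$. Thus it suffices to show that this center admits no superinvolution. Recall that $Z(Q(n))$ is spanned by $1$ and $u$, where $u$ is odd and $u^2 = 1$; in other words $Z(Q(n)) \iso Q(1)$, so the whole problem collapses to understanding the action of a putative superinvolution on this two-dimensional supercommutative superalgebra.

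Next I would pin down $\vphi$ on the center. A super-anti-automorphism fixes the identity (from $\vphi(r) = \vphi(1\cdot r) = \vphi(1)\vphi(r)$ together with surjectivity), so $\vphi(1) = 1$. Since $\vphi$ preserves parity and the odd part of $Z(Q(n))$ is the line $\FF u$, we must have $\vphi(u) = \lambda u$ for some scalar $\lambda \in \FF$. Applying $\vphi$ to the relation $u^2 = 1$ and using the sign rule gives $1 = \vphi(1) = \vphi(u^2) = (-1)^{|u||u|}\vphi(u)^2 = -\lambda^2 u^2 = -\lambda^2$, hence $\lambda^2 = -1$.

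Finally I would invoke the involution condition: $u = \vphi^2(u) = \vphi(\lambda u) = \lambda^2 u$, which forces $\lambda^2 = 1$. Combining the two computations yields $1 = -1$, i.e.\ $2 = 0$ in $\FF$, contradicting $\Char \FF \neq 2$. There is no serious obstacle here once one localizes to the center: the tension is simply that the odd generator $u$ squares to $+1$, which via the sign rule forces $\lambda^2 = -1$, while the involution property independently forces $\lambda^2 = +1$. The only point needing a little care is the justification that $\vphi$ restricts to the center and respects its parity decomposition, which is exactly what the two preceding lemmas provide.
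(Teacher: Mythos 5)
Your proof is correct and follows essentially the same route as the paper: restrict to the center $Z(Q(n)) \iso Q(1)$, write $\vphi(u) = \lambda u$ for the odd central element $u$, deduce $\lambda^2 = -1$ from $u^2 = 1$ via the sign rule, and contradict involutivity. The only cosmetic difference is that the paper phrases the contradiction as $\vphi^2(u) = \lambda^2 u = -u \neq u$ (using $\Char \FF \neq 2$ implicitly there), whereas you make the use of the characteristic assumption explicit via $1 = -1$.
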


\begin{proof}
	The center of $Q(n)$ is isomorphic to $\FF1 \oplus \FF u$, where $u$ is an odd element with $u^2 = 1$.
	Let $\vphi$ be a super-anti-automorphism on $Q(n)$.
	Since $u$ is odd and central, $\vphi(u)$ is odd and central.
	Hence there is $\lambda \in \FF$ such that $\vphi(u) = \lambda u$.
	Using that $u^2 = 1$, we have $1 = \vphi(1) = \vphi(u^2) = - \vphi(u)^2 = - \lambda^2$.
	But then $\vphi^2 (u) = \lambda^2 u = -u \neq u$, hence $\vphi^2 \neq \id$.
\end{proof}

\subsection{Simple superalgebras}\label{subsec:simple-superalgebras}

We now present the classification of finite-dimensional simple superalgebras in the associative, associative-with-superinvolution, and Lie cases.  
Throughout this subsection, we assume that $\FF$ is algebraically closed and that $\Char \FF \neq 2$.  
For the Lie case, we will further assume $\Char \FF = 0$ and limit ourselves to the non-exceptional classical Lie superalgebras.

\subsubsection{Simple associative superalgebras}

The classification up to isomorphism of simple finite-dimensional associative superalgebras is well known (and also follows from the results in \cref{subsec:artin-wedderburn} by considering trivial $G$-gradings).  
Given $m,n \in \ZZ_{\geq 0}$, the associative superalgebras $M(m,n)$, with $m$ and $n$ not both zero, and $Q(n)$, with $n \neq 0$, are simple.  
Conversely, any finite-dimensional simple associative superalgebra $R$ is isomorphic to either $M(m,n)$ or $Q(n)$.  
These cases are mutually exclusive, and we say $R$ is of \emph{type $M$} or \emph{type $Q$}, respectively.  
Moreover:
\begin{itemize}
    \item $M(m,n) \iso M(m',n')$ \IFF $(m,n) = (m',n')$ or $(m,n) = (n',m')$;
    \item $Q(n) \iso Q(n')$ \IFF $n = n'$.
\end{itemize}

\subsubsection{Superinvolution-simple associative superalgebras}

The classification of finite-dimensional superinvolution-simple associative superalgebras was obtained in \cite{racine} (and also follows from \cref{thm:vphi-iff-vphi0-and-B,thm:vphi-involution-iff-delta-pm-1,thm:iso-(R-vphi)-with-parameters} with trivial $G$-grading).

If $(R, \vphi)$ is superinvolution-simple but $R$ is not simple as a superalgebra, then it must be isomorphic to $M(m,n) \times M(m,n)\sop$, or $Q(n)\times Q(n)\sop$, which are mutually exclusive cases.
If $R$ is simple as a superalgebra, then $R$ must be of type $M$ and $(R, \vphi) \iso (\End(\FF^{m|n}), \vphi')$, where $\vphi'$ is the superadjunction with respect to a nondegenerate homogeneous supersymmetric bilinear form $\langle \cdot, \cdot \rangle$ on $\FF^{m|n}$.
Moreover, the isomorphism class of $(R, \vphi)$ depends only on $m$, $n$, and the parity of $\langle \cdot, \cdot \rangle$.
This motivates the following:

\begin{defi}\label{defi:M(m-n-p_0)}
    Let $m,n \in \ZZ_{\geq 0}$ (not both zero) and let $\langle \cdot, \cdot \rangle$ be a nondegenerate homogeneous supersymmetric bilinear form on $\FF^{m|n}$ with parity $p_0 \in \ZZ_2$. 
    We define $M^*(m,n,p_0)$ as the superalgebra $M(m,n) \iso \End(\FF^{m|n})$ endowed with the superinvolution given by adjunction with respect to $\langle \cdot, \cdot \rangle$.
    In matrix terms, the superinvolution $\vphi$ is given by:
    \[
        \forall X \in M_{m|n}(\FF), \quad \vphi(X) \coloneqq \Phi^{-1} X^{\stransp} \Phi,
    \]
    where $\Phi$ is the matrix representation of $\langle \cdot, \cdot \rangle$ in the standard basis.
\end{defi}

We note that an even nondegenerate homogeneous supersymmetric bilinear form on $\FF^{m|n}$ exists \IFF $n$ is an even number.
In this case, $\Phi$ can be chosen to be:
\[
\Phi = \left(
    \begin{array}{c|c}
        I_m & 0 \\
        \hline
        0 & \begin{array}{cc}
            0 & I_{\frac{n}{2}} \\
            - I_{\frac{n}{2}} & 0
        \end{array}
    \end{array}
    \right)\,.
\]
An odd nondegenerate homogeneous supersymmetric bilinear form on $\FF^{m|n}$ exists \IFF $m=n$.
In this case, $\Phi$ can be chosen to be:
\[\label{eq:matrix-for-P}
\Phi = \left(
    \begin{array}{c|c}
        0 & I_n \\
        \hline
        I_n & 0
    \end{array}
    \right)\,.
\]

\begin{defi}\label{defi:types-sinv-simple}
    Let $(R, \vphi)$ be a finite-dimensional superinvolution-simple superalgebra. 
    \begin{enumerate}
        \item If $R$ is of type $M$, we say that $(R, \vphi)$ is of type $M$;
        \item If $(R, \vphi) \iso M(m,n) \times M(m,n)\sop$, for some $m,n \geq 0$, we say that $(R, \vphi)$ is of type $M\times M\sop$;
        \item If $R \iso Q(n) \times Q(n)\sop$, for some $n \geq 0$, we say that $(R, \vphi)$ is of type $Q \times Q\sop$.
    \end{enumerate}
\end{defi}

We can distinguish the types using the center:

\begin{lemma}\label{prop:types-of-SA-via-center}
	Let $(R, \vphi)$ be a superalgebra with superinvolution.
	\begin{enumerate}
		\item If $(R, \vphi)$ is of type $M$, then $(Z(R), \vphi) \iso (\FF, \id)$;\label{item:F-id}
		\item If $(R, \vphi)$ is of type $M\times M\sop$, then $(Z(R), \vphi)$ is isomorphic to the superalgebra with superinvolution in \cref{ex:FxF-iso-FZ2};\label{item:FZ2-exchg}
		\item If $(R, \vphi)$ is of type $Q\times Q\sop$, then $(Z(R), \vphi)$ is isomorphic to the superalgebra with superinvolution in \cref{ex:FZ2xFZ2sop-iso-FZ4}.\label{item:FZ4-exchg}
	\end{enumerate}
\end{lemma}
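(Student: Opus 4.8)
This is a case-by-case verification for the three types, and in each case the strategy is the same: compute $Z(R)$ explicitly, then check how $\vphi$ acts on it. The key preliminary observation is that the center is $\vphi$-invariant (proved in the lemma just above), so $(Z(R),\vphi)$ is a well-defined superalgebra with super-anti-automorphism, and by \cref{lemma:center-is-graded} it splits as $Z(R)\even \oplus Z(R)\odd$. I would also invoke the centers computed in the text right after \cref{defi:center}: $Z(M_n(\FF)) = \FF 1$ (equivalently $Z(M(m,n)) = \FF 1$ since it is simple), and $Z(Q(n)) = \FF 1 \oplus \FF u$ with $u$ odd and $u^2 = 1$.

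\emph{Type $M$.} Here $R \iso \End(\FF^{m|n}) = M(m,n)$ is simple as a superalgebra, so $Z(R) = \FF 1$, which is concentrated in even degree and trivial parity. Since $\vphi(1) = 1$ for any super-anti-automorphism, we get $(Z(R),\vphi) \iso (\FF,\id)$, giving \ref{item:F-id} immediately.

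\emph{Type $M \times M\sop$.} For $R = M(m,n) \times M(m,n)\sop$ with exchange superinvolution, the center is the product of the centers, $Z(R) = Z(M(m,n)) \times Z(M(m,n)\sop) = \FF(1,0) \oplus \FF(0,\bar 1)$, again entirely even. Writing $1 = (1,\bar 1)$ for the identity and $\zeta = (1,-\bar 1)$, these two elements form a basis, $\zeta^2 = 1$, and the exchange superinvolution swaps the two factors, so $\vphi(1) = 1$ and $\vphi(\zeta) = -\zeta$. This is precisely the structure described in \cref{ex:FxF-iso-FZ2}, proving \ref{item:FZ2-exchg}.

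\emph{Type $Q \times Q\sop$.} Now $R = Q(n) \times Q(n)\sop$ with exchange superinvolution, and $Z(R) = Z(Q(n)) \times Z(Q(n)\sop)$. Since $Z(Q(n)) = \FF 1 \oplus \FF u$ with $u$ odd, the center $Z(R)$ is $4$-dimensional with an even part spanned by $(1,\bar 1)$ and $(u, -\bar u)$ (the second being even because the superopposite construction reverses the sign on the odd part, matching the computation in \cref{ex:FZ2xFZ2sop-iso-FZ4}) and an odd part spanned by $(u,\bar u)$ and $(u, -\bar u)$ — more precisely I would set $\omega \coloneqq (u, \bar u)$ and verify, exactly as in \cref{ex:FZ2xFZ2sop-iso-FZ4}, that $\omega$ has order $4$, that $\{1,\omega,\omega^2,\omega^3\}$ is a basis of $Z(R)$, and that the exchange superinvolution gives $\vphi(1)=1$, $\vphi(\omega)=\omega$, $\vphi(\omega^2)=-\omega^2$, $\vphi(\omega^3)=-\omega^3$. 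This is the structure of \cref{ex:FZ2xFZ2sop-iso-FZ4}, establishing \ref{item:FZ4-exchg}.

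\emph{Main obstacle.} The only point requiring genuine care is the type $Q \times Q\sop$ case, specifically tracking the signs introduced by the superopposite construction (\cref{def:superopposite}): because $\overline{u}\,\overline{u} = (-1)^{|u||u|}\overline{u^2} = -\overline{1}$, the odd central generator $u$ of $Q(n)$ has its square sign flipped in $Q(n)\sop$, and it is this interplay that forces $\omega = (u,\bar u)$ to have order $4$ rather than order $2$. Everything else is routine identification of the centers with the model superalgebras of \cref{ex:FxF-iso-FZ2,ex:FZ2xFZ2sop-iso-FZ4}, which were set up precisely for this purpose.
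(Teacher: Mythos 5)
Your proof is correct and follows essentially the same route as the paper's: reduce to $Z(S\times S\sop) = Z(S)\times Z(S)\sop$ with $Z(M(m,n)) = \FF 1$ and $Z(Q(n)) \iso Q(1)$, then identify the resulting superalgebra with superinvolution with the models of \cref{ex:FxF-iso-FZ2,ex:FZ2xFZ2sop-iso-FZ4}. One slip to fix in the type $Q\times Q\sop$ case: the even part of $Z(R)$ is spanned by $(1,\bar 1)$ and $(1,-\bar 1) = \omega^2$, not by $(u,-\bar u) = \omega^3$, which is odd --- as your own subsequent verification with the basis $\{1,\omega,\omega^2,\omega^3\}$ correctly shows, so the final argument stands.
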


\begin{proof}
    \Cref{item:F-id} follows from the fact that $Z(M_{m+n}(\FF)) \iso \FF$.  
    To verify \cref{item:FZ2-exchg,item:FZ4-exchg}, note that $Z(S \times S\sop) = Z(S) \times Z(S)\sop$ for any superalgebra $S$.  
    Since $Z(M_{m+n}(\FF)) \iso \FF$ and $Z(Q(n)) \iso Q(1)$, the results follow directly.
\end{proof}

Finally, the isomorphism conditions are the following:
\begin{enumerate}
    \item $M^*(m,n, p_0) \iso M^*(m',n', p_0')$ \IFF{} $(m,n) = (m',n')$ and $p_0 = p_0'$;  
    \item $M(m,n) \times M(m,n)\sop \iso M(m',n') \times M(m',n')\sop$ \IFF{} $(m,n) = (m',n')$ or $(m,n) = (n',m')$;  
    \item $Q(n)\times Q(n)\sop \iso Q(n')\times Q(n')\sop$ \IFF{} $n = n'$.  
\end{enumerate}

\subsubsection{Simple Lie superalgebras}\label{subsec:def-Lie-superalgebras}

We now assume that $\Char \FF = 0$. 
In this case, the classification of finite-dimensional simple Lie algebras was established in \cite{artigokac}.  
A simple Lie superalgebra $L = L\even \oplus L\odd$ is said to be \emph{classical} if $L\odd$ is a semisimple $L\even$-module and of \emph{Cartan type} otherwise.  
The classical Lie superalgebras are divided into the series $A(m,n)$, $B(m,n)$, $C(n)$, $D(m,n)$, $P(n)$, and $Q(n)$, as well as three exceptional cases: $F(4)$, $G(3)$, and the family $D(2,1,\alpha)$ with $\alpha \in \FF \setminus \{0, -1\}$. 
In this work, we will define only the non-exceptional classical Lie superalgebras, as these are the ones to which our method of classifying gradings applies.
For the definitions of exceptional classical Lie superalgebras and Cartan-type series, we refer to \cite{artigokac,livrosuperalgebra}.

\phantomsection\label{phsec:series-A}

Let $m,n \in \ZZ_{\geq 0}$, not both zero. 
The \emph{general linear Lie superalgebra} $\gl(m|n)$ is $M(m,n)^{(-)}$.
The \emph{supertrace} of a matrix in $M(m,n)$ is defined by
\[
    \operatorname{str} 
    \left(\begin{array}{c|c}
        A & B\\
        \hline
        C & D
    \end{array}\right)
    \coloneqq \tr A - \tr D.
\] 
The \emph{special linear Lie superalgebra} $\Sl (m|n)$ is the derived superalgebra of $\gl(m|n)$ or, equivalently,
\[
    \Sl(m|n) \coloneqq \left\{
	T \in \gl(m|n)
	\mid \str T = 0
	\right\}.
\]
If $m\neq n$ then $\Sl(m|n)$ is a simple Lie superalgebra. 
However, if $m=n$, then
\[
    Z(\Sl(m|n)) = \FF 1 = \left\{
    \left(\begin{array}{c|c}
        \lambda I & 0\\
        \hline
        0 & \lambda I
    \end{array}\right)
	\mid \lambda \in \FF
	\right\}
\]
is a nontrivial superideal. 
The quotient $\Sl(n|n)/ \FF 1$ is a simple superalgebra \IFF $n > 1$.

In the definition below, we impose $m\geq n$ to avoid repetition:

\begin{defi}\label{defi:A(m-n)}
    For $m \geq n$, the Lie superalgebra $A(m,n)$ is defined to be $\Sl(m+1 \,|\, n+1)$ if $m\neq n$, and $\mathfrak{psl}(n+1 \,|\, n+1) \coloneqq \Sl(n+1 \,|\, n+1)/ \FF 1$ if $m=n$. 
\end{defi} 

\phantomsection\label{phsec:series-B}

The \emph{orthosymplectic Lie superalgebra} $\osp(m|n)$ is $\Skew(M^*(m,n, \bz))$. 
If $m,n > 0$, then $\osp(m|n)$ is a simple Lie superalgebra. 

\begin{defi}\label{defi:B-C-D}
    The orthosymplectic Lie superalgebras are divided into three series:
    \begin{itemize}
    	\item $B(m,n) \coloneqq \osp(2m+1 \,|\, 2n)$, for $m \geq 0$ and $n \geq 1$;
    	\item $C(n) \coloneqq \osp(2 \,|\,  2n - 2)$, for $n\geq 2$;
    	\item $D(m,n) \coloneqq \osp(2m \,|\, 2n)$, for $m\geq 2$ and $n\geq 1$.
    \end{itemize}
\end{defi}

Since $C(2)\iso A(1,0)$, it is sometimes imposed $n\geq 3$ in the $C(n)$ case to avoid repetition.

\phantomsection\label{phsec:series-P}

The \emph{periplectic Lie superalgebra} $\mathfrak{p}(n)$ is defined to be $\Skew(M^*(n,n, \bo))$.
Unlike the orthosymplectic case, $\mathfrak{p}(n)$ is not simple.

\begin{defi}\label{defi:P(n)}
    The superalgebra $P(n)$ is the derived superalgebra of $\mathfrak{p}(n+1)$. 
\end{defi}

$P(n)$ is simple \IFF $n\geq 2$. 

\phantomsection\label{phsec:series-Q-Lie}

Finally, let $R$ denote the associative superalgebra $Q(n+1)$ and consider the Lie superalgebra $R^{(-)}$.
Its derived superalgebra is
\[
    R^{(1)} = \left\{
    \left(\begin{array}{c|c}
        A & B\\
        \hline
        B & A
    \end{array}\right) \in R^{(-)} \mid \tr A = \tr B = 0\right\}\,,
\]
which is not simple since $Z(R^{(1)}) = \FF1$.

\begin{defi}
    The \emph{queer Lie superalgebra}, also denoted by $Q(n)$, is $R^{(1)}/\FF1$.
\end{defi}

The Lie superalgebra $Q(n)$ is simple \IFF $n\geq 2$.
\section{Graded-Simple Associative Superalgebras}\label{sec:grdd-simple-ass}

In this section, we develop the theory of graded-simple associative superalgebras satisfying the descending chain condition on graded left superideals.
In \cref{subsec:artin-wedderburn}, we show, following \cite{livromicha}, that any such superalgebra is of the form $\End_\D(\U)$, where $\D$ is a graded-division superalgebra and $\U$ is a graded right $\D$-module of finite rank.
Then, in \cref{subsection:graded-modules-over-D}, we describe the structure of $\U$ and $\End_\D(\U)$.
The last two subsections focus on finite-dimensional graded-simple superalgebras over an algebraically closed field $\FF$ with $\Char \FF \neq 2$: \cref{subsec:D-alg-closed} describes graded-division superalgebras, while \cref{subsec:grdd-simple-ass-algebraically-closed} classifies graded-simple superalgebras and, in particular, $G$-gradings on simple finite-dimensional associative superalgebras.

\subsection{Graded Wedderburn-Artin Theory}\label{subsec:artin-wedderburn}

A cornerstone in the theory of gradings is the analog of the Wedderburn–Artin Theorem for graded‐simple algebras. Special cases of this result have appeared in several works (\eg, \cite{BSZ01,MR2046303,BZ02}).
Here we state the theorem in the context of superalgebras, which follows directly as a corollary of \cite[Theorem 2.6]{livromicha} (see \cite[page 31]{livromicha} for the converse) by regarding a $G$-graded superalgebra as a $G^\#$-graded algebra.

\begin{defi}\label{def:graded-division-algebra}
    A \emph{$G$-graded-division superalgebra} is a unital $G$-graded superalgebra $\D$ in which every nonzero $G^\#$-homogeneous element (\ie, an element homogeneous with respect to both the $G$-grading and the canonical $\ZZ_2$-grading) is invertible.
    In this context, we refer to the $G$-grading on $\D$ as a \emph{division grading}.
\end{defi}

Division superalgebras are the special case corresponding to trivial $G$ (see \cite{racine,MR1746565}), whereas graded-division algebras are the special case corresponding to trivial $\ZZ_2$-grading, \ie, $\D = \D\even$.

\begin{defi}\label{defi:even-odd-D}
    If $\D = \D\even$, we say that $\D$ is an \emph{even} graded-division superalgebra, and if $\D \neq \D\even$, we say that $\D$ is an \emph{odd} graded-division superalgebra. 
\end{defi}

\begin{remark}\label{lemma:odd-M-m=n}
    Note that if $\D$ is odd, then for any nonzero $d_1 \in \D\odd$, we have $\D\odd = d_1 \D\even$, which implies that $\dim_\FF \D\even = \dim_\FF \D\odd$.
    In particular, if $\D \iso M(m,n)$ as a superalgebra, then $\D$ can be odd only when $m = n$ (since $\dim \D\even = m^2 + n^2$ and $\dim \D\odd = 2mn$).
\end{remark}

A standard argument with Zorn's Lemma shows that any graded $\D$-module $\U$ has a basis consisting of $G^\#$-homogeneous elements.
Moreover, if the basis is finite, then $\End_\D^\text{gr} (\U) = \End_\D (\U)$ (see \cref{para:Hom-R-gr}).

\begin{thm}\label{thm:End-over-D}
    Let $G$ be a group and let $R$ be a $G$-graded associative superalgebra satisfying the descending chain condition on graded left superideals. 
	Then $R$ is graded-simple \IFF there exists a $G$-graded division superalgebra $\D$ and a nonzero graded right $\D$-module $\mc{U}$ of finite rank such that $R \iso \End_{\D} (\mc{U})$ as graded superalgebras. \qed
\end{thm}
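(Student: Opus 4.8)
The plan is to reduce the entire statement to the already-established graded Wedderburn–Artin theorem for algebras, namely \cite[Theorem 2.6]{livromicha}, by passing to the group $G^\# = G \times \ZZ_2$. By \cref{rmk:G-sharp}, a $G$-graded superalgebra is literally the same datum as a $G^\#$-graded algebra, via $R_{(g,i)} = R_g \cap R^i$. I would open the proof by fixing this identification and then checking that each hypothesis and each conclusion of the superalgebra statement is the verbatim $G^\#$-translation of the corresponding notion for ordinary algebras, so that \cite[Theorem 2.6]{livromicha} can be applied directly over $G^\#$.

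The core of the work is therefore a dictionary. First, a graded superideal of $R$ (a $\ZZ_2$-graded, $G$-graded left ideal) is exactly a $G^\#$-graded left ideal of $R$ viewed as a $G^\#$-graded algebra; consequently the descending chain condition on graded left superideals is precisely the statement that $R$ is graded-Artinian as a $G^\#$-graded algebra, and graded-simplicity as a superalgebra (no proper nonzero graded superideals together with $RR \neq 0$) is precisely $G^\#$-graded-simplicity. Next, the notion of a $G$-graded-division superalgebra in \cref{def:graded-division-algebra} --- invertibility of every nonzero element homogeneous for both gradings --- is by construction the statement that $\D$ is a $G^\#$-graded-division algebra. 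Finally, a graded right $\D$-module $\U$ that is also a superspace is the same thing as a $G^\#$-graded right $\D$-module, and ``finite rank'' coincides in both pictures.

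With the dictionary in place, \cite[Theorem 2.6]{livromicha} applied over $G^\#$ yields a $G^\#$-graded-division algebra $\D$ and a $G^\#$-graded right $\D$-module $\U$ of finite rank with $R \iso \End_\D^{\text{gr}}(\U)$ as $G^\#$-graded algebras; translating back gives exactly the desired $G$-graded division superalgebra and $G$-graded $\D$-module. The one point I would take care over --- the only genuine subtlety, and bookkeeping rather than a real obstacle --- is the identification of the endomorphism superalgebra: a $G^\#$-homogeneous $\D$-linear endomorphism of degree $(g,i)$ is $G$-homogeneous of degree $g$ and shifts parity by $i$, so the $\ZZ_2$-part of the $G^\#$-grading on $\End_\D^{\text{gr}}(\U)$ is exactly the even/odd (parity-preserving versus parity-reversing) decomposition that makes $\End_\D(\U)$ a superalgebra. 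Since $\U$ has finite rank, $\End_\D^{\text{gr}}(\U) = \End_\D(\U)$ (see \cref{para:Hom-R-gr}), so no homogeneity is lost in this passage. The converse implication --- that any such $\End_\D(\U)$ is graded-simple and graded-Artinian --- is cited from \cite[page 31]{livromicha} and again requires only the same re-reading over $G^\#$.
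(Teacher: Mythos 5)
Your proposal is correct and takes essentially the same route as the paper, which states the theorem with no further proof precisely because it ``follows directly as a corollary of \cite[Theorem 2.6]{livromicha} (see \cite[page 31]{livromicha} for the converse) by regarding a $G$-graded superalgebra as a $G^\#$-graded algebra'' via \cref{rmk:G-sharp}. Your extra bookkeeping --- matching superideals with $G^\#$-graded ideals, identifying the parity decomposition of $\End_\D(\U)$, and noting $\End_\D^{\text{gr}}(\U)=\End_\D(\U)$ in finite rank --- simply makes explicit the dictionary the paper leaves implicit.
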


\begin{remark}\label{lemma:converse-density-thm}
    The right $\D$-module $\U$ is also a left $R$-module and a $(R, \D)$-bimodule.
    As a left $R$-module, $\U$ is graded-simple, and the representation $\rho\from \D \to \End_R(\U)$ is an isomorphism  (see \cite[Exercise 3 on page 60]{livromicha}). 
\end{remark}

An important observation is that the centers of $\D$ and $R$ can be identified.
This identification will later be used to determine the type of $\D$ as a superalgebra (with or without superinvolution).
The ungraded version of the following result is well known, and the same proof applies to the graded case:

\begin{lemma}\label{prop:R-and-D-have-the-same-center}
    Suppose $G$ is abelian. 
    Then the map $\iota\from Z(\D) \to Z(R)$ given by $\iota (d)(u) \coloneqq ud$, for all $d\in Z(\D)$ and $u\in \U$, is an isomorphism of graded algebras. \qed
\end{lemma}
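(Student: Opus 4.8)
The plan is to reduce everything to ordinary (non-super) graded algebra by regarding $R$ and $\D$ as $G^\#$-graded algebras, as in \cref{rmk:G-sharp} and in the statement of \cref{thm:End-over-D}; over the abelian group $G^\# = G\times\ZZ_2$ all the relevant objects are plain associative algebras, the sign rule disappears, and $Z(R)$, $Z(\D)$ are ordinary centers. In this picture $\U$ is a $G^\#$-graded right $\D$-module that is free of finite rank (a $G^\#$-homogeneous basis exists by the Zorn's Lemma argument recalled before \cref{thm:End-over-D}), $R = \End_\D(\U)$ acts on the left, and $\U$ is an $(R,\D)$-bimodule. I would observe at the outset that $\iota$ is just the restriction to $Z(\D)$ of the right-multiplication representation $\rho\from \D \to \End_R(\U)$ from \cref{lemma:converse-density-thm}, since $\iota(d)(u) = ud = \rho(d)(u)$.

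First I would check that $\iota$ is well defined with image in $Z(R)$. For $d\in Z(\D)$ and $u\in\U$, $d'\in\D$, centrality of $d$ gives $\iota(d)(ud') = ud'd = udd' = \iota(d)(u)d'$, so $\iota(d)$ is $\D$-linear, \ie, lies in $R$. To see it is central, evaluate on any $u$: for $T\in R$, using that $T$ is $\D$-linear, both $(T\iota(d))(u) = T(ud) = T(u)d$ and $(\iota(d)T)(u) = \iota(d)(T(u)) = T(u)d$ agree, so $\iota(d)T = T\iota(d)$. Next I would verify that $\iota$ is a unital algebra homomorphism: $\iota(1) = \id_\U = 1_R$, and for $d_1,d_2 \in Z(\D)$ a direct evaluation gives $\iota(d_1)\iota(d_2) = \iota(d_2 d_1) = \iota(d_1 d_2)$, the last equality using that $Z(\D)$ is commutative (this is exactly why the a priori anti-multiplicative $\rho$ becomes multiplicative on the center). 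Finally, $\iota$ is degree-preserving: if $d$ is homogeneous of degree $g\in G^\#$, then $\iota(d)(\U_h) = \U_h d \subseteq \U_{hg} = \U_{gh}$, so $\iota(d)$ has degree $g$; here the commutativity of $G^\#$—\ie, the standing assumption that $G$ is abelian—is what makes $\iota$ homogeneous.

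For injectivity I would use freeness: if $\iota(d) = 0$ then $ud = 0$ for all $u$, and evaluating on a basis vector forces $d = 0$. The crux is surjectivity. Given $z\in Z(R)$, I may assume $z$ is $G^\#$-homogeneous because $Z(R)$ is a graded subspace by \cref{lemma:center-is-graded}. Centrality $zT = Tz$ for all $T\in R$ means precisely that $z$ is a homomorphism of left $R$-modules: $z(Tu) = (zT)(u) = (Tz)(u) = T(z(u))$. By the converse density statement in \cref{lemma:converse-density-thm}, $\rho$ maps $\D$ bijectively onto $\End_R(\U)$, so $z = \rho(d)$ for a unique $d\in\D$, \ie, $z(u) = ud$. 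Because $z$ already lies in $R = \End_\D(\U)$, it is $\D$-linear, and comparing $z(ud') = ud'd$ with $z(u)d' = udd'$ forces $d'd = dd'$ for all $d'$, \ie, $d\in Z(\D)$; thus $z = \iota(d)$.

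The main obstacle is the surjectivity step, which rests entirely on identifying $\End_R(\U)$ with $\rho(\D)$ via the converse density theorem (\cref{lemma:converse-density-thm}); once that identification is in hand, the remaining work is the routine bookkeeping above, the only genuine hypothesis used elsewhere being the abelianness of $G$ to guarantee that $\iota$ preserves degrees.
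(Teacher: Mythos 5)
Your proof is correct and follows exactly the standard argument the paper alludes to when it says the well-known ungraded proof carries over: identify $\End_R(\U)$ with $\D$ via the density isomorphism $\rho$ of \cref{lemma:converse-density-thm}, observe that central elements of $R$ are precisely the $R$-module endomorphisms lying in $R$, and track $G^\#$-degrees, with abelianness of $G$ ensuring $\iota$ is degree-preserving. You also correctly handle the one genuine subtlety (the anti-multiplicativity of $\rho$ when right multiplications are written on the left, which disappears on the commutative center), so there is nothing to add.
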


    

We finish this subsection by presenting a necessary and sufficient condition for $G$-graded superalgebras $\End_\D(\U)$ and $\End_{\D'}(\U')$ to be isomorphic.
To this end, we introduce the following definitions:

\begin{defi}\label{def:twist}
    Let $\psi_0\from \D \to \D'$ be a homomorphism of $G$-graded superalgebras, and let $\U$ be is graded right $\D'$-module.
    We can equip $\U$ with the structure of a graded right $\D$-module by defining $u\cdot d \coloneqq u\,\psi_0 (d)$, for all $u\in \U$ and $d\in \D$.
    We denote the resulting graded superspace by $\U^{\psi_0}$.
    In the case $\psi_0\from \D \to \D$ is an automorphism, $\U^{\psi_0}$ is called the \emph{twist of $\U$ by $\psi_0$}.
\end{defi}

\begin{defi}\label{def:inner-automorphism}
	Let $d\in \D$ be a nonzero homogeneous element.
	We define:
    \begin{enumerate}
        \item The \emph{inner automorphism} $\operatorname{Int}_d\from \D \to \D$ by $\operatorname{Int}_d (c) \coloneqq dcd\inv$, for all $c\in \D$;
        \item The \emph{superinner automorphism} $\operatorname{sInt}_d\from \D \to \D$ to be the linear map determined by $\operatorname{sInt}_d (c) \coloneqq \sign{d}{c}dcd\inv$, for all $c\in \D\even\cup\D\odd$.
    \end{enumerate}
\end{defi}

The next result is \cite[Theorem 2.10]{livromicha} with a slightly different notation:

\begin{thm}\label{thm:iso-abstract}
	Let $R \coloneqq \End_\D(\U)$ and $R' \coloneqq \End_{\D'}(\U')$, where $\D$ and $\D'$ are graded-division superalgebras, and $\U$ and $\U'$ are nonzero right graded modules of finite rank over $\D$ and $\D'$, respectively.
	Given an isomorphism $\psi\from R \to R'$, there is a triple $(g, \psi_0, \psi_1)$, where $g \in G^\#$, $\psi_0\from {}^{[g\inv]}\D^{[g]} \to \D'$ is an isomorphism of graded superalgebras, $\psi_1\from \U^{[g]} \to (\U')^{\psi_0}$ is an isomorphism of graded right $\D$-modules, such that
	\begin{equation}\label{eq:def-iso-algebras}
		\forall r\in R, \quad \psi(r) = \psi_1 \circ r \circ \psi_1\inv.
	\end{equation}
	Conversely, given a triple $(g, \psi_0, \psi_1)$ as above, Equation \eqref{eq:def-iso-algebras} defines an isomorphism of graded superalgebras $\psi\from R \to R'$.
	Another triple $(g', \psi_0', \psi_1')$ defines the same isomorphism $\psi$ \IFF there are $t\in \supp \D' \subseteq G^\#$ and $0 \neq d\in \D'_t$ such that $g'= gt$, $\psi_0' = \mathrm{Int}_{d\inv} \circ \psi_0$ and $\psi_1' (u) = \psi_1 (u) d$ for all $u \in \U$. \qed
\end{thm}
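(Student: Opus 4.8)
The plan is to exploit the module-theoretic description of $R$ supplied by \cref{thm:End-over-D} and \cref{lemma:converse-density-thm}: $\U$ is a graded-simple left $R$-module whose graded endomorphism superalgebra $\End_R(\U)$ is (isomorphic to) $\D$ acting on the right, and symmetrically for $\U'$, $R'$, $\D'$. Viewing everything as $G^\#$-graded, the entire statement is then a graded-super incarnation of the standard double-centralizer bookkeeping; the three data $(g,\psi_0,\psi_1)$ record, respectively, the shift needed to match graded-simple modules, the induced identification of the endomorphism superalgebras, and the module isomorphism implementing $\psi$.

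First, given an isomorphism $\psi\from R \to R'$, I would pull the left $R'$-module $\U'$ back to a left $R$-module $W$ by setting $r\cdot u' \coloneqq \psi(r)(u')$; as a $G^\#$-graded $R$-module, $W$ is again graded-simple. Since $R$ is graded-simple and graded-Artinian, all graded-simple left $R$-modules are isomorphic up to a shift in $G^\# = G\times\ZZ_2$ (this uniqueness is the module-theoretic content underlying \cref{thm:End-over-D}). Hence there is some $g\in G^\#$ and an isomorphism of graded left $R$-modules $\psi_1\from \U^{[g]} \to W$. Because shifting leaves the $R$-action untouched (recall $\End^{\text{gr}}(\U) = \End^{\text{gr}}(\U^{[g]})$), unwinding the definition of $W$ shows that $R$-linearity of $\psi_1$ is exactly $\psi_1(ru) = \psi(r)(\psi_1(u))$, i.e.\ $\psi(r) = \psi_1\circ r\circ \psi_1\inv$, which is \eqref{eq:def-iso-algebras}.

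Next I would produce $\psi_0$ by transporting endomorphisms. For $d\in\D$, the map $u'\mapsto \psi_1(\psi_1\inv(u')\cdot d)$ commutes with the $R'$-action, since right $\D$-multiplication commutes with the left $R$-action and $\psi_1$ intertwines the $R$-actions; it therefore lies in $\End_{R'}(\U')\iso\D'$, and I define $\psi_0(d)$ to be this element. By construction $\psi_1(u\cdot d) = \psi_1(u)\cdot\psi_0(d)$, so $\psi_1$ becomes an isomorphism of graded right $\D$-modules $\U^{[g]}\to(\U')^{\psi_0}$, and $\psi_0$ is a superalgebra homomorphism, bijective via the symmetric construction applied to $\psi\inv$. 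The genuinely delicate point is the \emph{grading and sign bookkeeping}: the shift by $g$ forces $\psi_0$ to be degree-preserving not from $\D$ but from the two-sided shift ${}^{[g\inv]}\D^{[g]}$, and one must track the super signs from \cref{def:change-map-to-the-left} and \eqref{eq:change-of-side-composition} that arise whenever one passes between left-written and right-written $\D$-linear maps, so that $\psi_0$ indeed preserves parity.

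For the converse I would verify directly that, given a triple, \eqref{eq:def-iso-algebras} lands in $R'$, is multiplicative, degree-preserving, and invertible---routine once the three compatibility properties are in hand. For the ambiguity: if $(g,\psi_0,\psi_1)$ and $(g',\psi_0',\psi_1')$ induce the same $\psi$, then $\psi_1\inv\circ\psi_1'$ commutes with every $r\in R$ and hence is right multiplication by a homogeneous $d\in\D'\iso\End_{R'}(\U')$; reading off its $G^\#$-degree $t\in\supp\D'$ gives $g'=gt$, and substituting back yields $\psi_1'(u)=\psi_1(u)d$ together with $\psi_0'=\mathrm{Int}_{d\inv}\circ\psi_0$, the converse again being a direct check. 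I expect the main obstacle to be precisely the step defining $\psi_0$: keeping straight the left-versus-right convention for $\D$-linear maps, the resulting sign factors in the super setting, and confirming that ${}^{[g\inv]}\D^{[g]}$ is exactly the grading making $\psi_0$ homogeneous.
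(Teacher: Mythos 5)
Your proposal is correct and takes essentially the same route as the paper, which gives no proof of its own but cites \cite[Theorem 2.10]{livromicha}, whose argument is exactly yours: pull $\U'$ back along $\psi$ to a graded-simple left $R$-module, invoke uniqueness of the graded-simple module up to a shift in $G^\#$ to produce $(g, \psi_1)$, transport the right $\D$-action through $\psi_1$ to define $\psi_0$ (with the two-sided shift ${}^{[g\inv]}\D^{[g]}$ accounting for the degree bookkeeping), and resolve the ambiguity by a Schur-type argument. One cosmetic remark: to obtain $d \in \D'_t$ as in the statement, the intertwiner to consider is $\psi_1' \circ \psi_1\inv \in \End_{R'}(\U') \iso \D'$ rather than your $\psi_1\inv \circ \psi_1' \in \End_R(\U) \iso \D$ (the two are matched by $\psi_0$, so nothing breaks), and since right multiplications are written as genuine right operators, no super-signs actually arise in defining $\psi_0$ --- which is why the theorem features $\mathrm{Int}_{d\inv}$ rather than $\operatorname{sInt}_{d\inv}$.
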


\subsection{Graded-division superalgebras and their graded modules}\label{subsection:graded-modules-over-D}

We start with examples of division gradings on the superalgebras $Q(1)$ and $M(1,1)$.

\begin{ex}\label{ex:Q(1)-as-grd-div-SA}
    Let $G = \langle h \rangle$ be a cyclic group of order $1$ or $2$.
    We can grade $Q(1) = \FF \oplus \FF u$ by declaring the $G$-degree of $u$ to be $h$ and this gives us a division grading. 
    In this case $G^\# = \langle h \rangle \times \ZZ_2$ and the support of the grading is the subgroup $\langle (h, \bar 1) \rangle \iso \ZZ_2$. 
\end{ex}

\begin{ex}\label{ex:Pauli-2x2-super}
    Assume that $\Char \FF \neq 2$ and let $G = \ZZ_2$ (so $G^\# = \ZZ_2 \times \ZZ_2$).
    Then we have a division-grading on $M(1,1)$, sometimes called \emph{Pauli grading}, with $G^\#$-degrees given by:
    \begin{align*}
	\deg \begin{pmatrix}
		\phantom{.}1 & \phantom{-}0\phantom{.} \\
		\phantom{.}0 & \phantom{-}1\phantom{.}
	\end{pmatrix} = (\bar 0, \bar 0),\quad & \deg \begin{pmatrix}
		\phantom{.}0 & \phantom{-}1\phantom{.} \\
		\phantom{.}1 & \phantom{-}0\phantom{.}
	\end{pmatrix} = (\bar 0, \bar 1), \\
	\deg \begin{pmatrix}
		\phantom{.}1 & \phantom{-}0\phantom{.} \\
		\phantom{.}0 & -1\phantom{.}
	\end{pmatrix} = (\bar 1, \bar 0),\quad &
	\deg \begin{pmatrix}
		\phantom{.}0 & -1\phantom{.}           \\
		\phantom{.}1 & \phantom{-}0\phantom{.}
	\end{pmatrix} = (\bar 1, \bar 1).
\end{align*}
\end{ex}

For any graded-division superalgebra $\D$, one readily verifies that $T \coloneqq \supp \D$ is a subgroup of $G^\#$, so $\D$ can be reguarded as a $T$-graded algebra. 
We will use subscripts to refer to this $T$-grading, \ie, $\D_t$ refers to the homogeneous component $\D_g^i$ where $t = (g, i) \in T \subseteq G^\#$. 

The parity is encoded by the group homomorphism $p\from T \subseteq G^\# \to \ZZ_2$ defined by $p (g, i) = i$ for all $(g,i)\in G^\#$.
We will denote the kernel of $p$ by $T^+$, \ie, $T^+ \coloneqq T\cap (G\times \{ \bar 0 \}) = \supp \D\even$. 
Similarly, we define $T^- \coloneqq T\cap (G\times \{ \bar 1 \}) =  \supp \D\odd$. 

\phantomsection\label{para:D-modules}

Now, let $\U = \bigoplus_{g\in G^\#} \U_g$ be an arbitrary graded right $\D$-module.
Observe that the $\D$-submodule generated by a homogeneous component $\U_g$ is $\U_{gT} \coloneqq \bigoplus_{t\in T} \U_{gt}$, so it is convenient to introduce the following:

\begin{defi}
    Given a left coset $x\in G/T$, the corresponding \emph{isotypic component} of $\U$ is defined by
    $
        \U_{x} \coloneqq \bigoplus_{g\in x} \U_{g}.
    $
\end{defi}

\begin{remark}
    The use of the term ``isotypic component'' here is consistent with its common use. 
    It is easy to see that all simple $\D$-modules are of the form ${}^{[g]}\D$ for some $g\in G^\#$, and ${}^{[g]}\D \iso {}^{[g']}\D$ \IFF $g\inv g' \in T$. 
\end{remark}

Clearly, two graded right $\D$-modules $\U$ and $\V$ are isomorphic \IFF the isotypic component $\U_x$ is isomorphic to $\V_x$ for all $x \in G/T$.

\begin{defi}
    A $\D$-basis $\mc B$ of $\U$ is called a \emph{graded basis of $\U$} if all the elements in $\mc B$ are $G^\#$-homogeneous (of various degrees).
\end{defi}

Since $\D$ is a graded-division superalgebra, $\D_e$ is a division algebra.
For all $g\in G^\#$, the graded subspace $\U_g$ is a $\D_e$-submodule, and it is straightforward to check that a $\D_e$-basis of $\U_g$ is a graded $\D$-basis of the isotypic component $\U_{gT}$.
As a consequence, we have:

\begin{lemma}\label{cor:iso-isotypic-components}
\begin{enumerate}
    \item Every graded right $\D$-module $\U$ has a graded basis, and all graded bases of $\U$ have the same cardinality (so $\dim_\D \U$ is well defined).
    \item Let $\U$ and $\V$ be graded right $\D$-modules.
    Then, for any $g\in G^\#$, the graded $\D$ modules $\U_{gT}$ and $\V_{gT}$ are isomorphic \IFF $\dim_{\D_e} \U_g = \dim_{\D_e} \V_g$. \qed
\end{enumerate}
\end{lemma}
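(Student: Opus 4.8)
The plan is to derive both parts from the fact established in the preceding paragraph — that a $\D_e$-basis of the component $\U_g$ is a graded $\D$-basis of the isotypic component $\U_{gT}$ — together with one structural observation about $\D$: since every nonzero $G^\#$-homogeneous element of $\D$ is invertible, for each $t \in T$ the component $\D_t$ is a one-dimensional right $\D_e$-vector space. Indeed, fixing $0 \neq d \in \D_t$, any $d' \in \D_t$ satisfies $d^{-1}d' \in \D_e$, so $\D_t = d\,\D_e$; and because $d$ is invertible, conjugation $c \mapsto d^{-1}cd$ is an automorphism of $\D_e$, so right multiplication by $d$ is a $\D_e$-semilinear bijection $\U_g \to \U_{gt}$. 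In particular $\dim_{\D_e}\U_g$ depends only on the coset $gT$.

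For part (1), existence is immediate: choosing a representative $g_x$ for each coset $x \in G^\#/T$, a $\D_e$-basis of $\U_{g_x}$ is a graded $\D$-basis of the corresponding isotypic component, and the union over all cosets is a graded $\D$-basis of $\U = \bigoplus_x \U_x$. For the invariance of cardinality I would count $\D_e$-dimensions component by component. Given any graded $\D$-basis $\mc B$, the freeness of $\mc B$ makes $d \mapsto bd$ an injection $\D \hookrightarrow \U$ for each $b \in \mc B$, so intersecting $\U = \bigoplus_{b \in \mc B} b\D$ with the degree-$g'$ component yields $\U_{g'} = \bigoplus_{b \in \mc B,\ \deg b \in g'T} b\,\D_{(\deg b)^{-1}g'}$, a direct sum of one-dimensional right $\D_e$-spaces. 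Hence $\dim_{\D_e}\U_{g'}$ equals the number of elements of $\mc B$ whose degree lies in the coset $g'T$; summing over cosets gives $|\mc B| = \sum_{x \in G^\#/T}\dim_{\D_e}\U_{g_x}$, a quantity visibly independent of $\mc B$.

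Part (2) is more direct. For the implication from equal dimensions, I would fix $\D_e$-bases of $\U_g$ and $\V_g$ of the same cardinality; by the preceding fact these are graded $\D$-bases of $\U_{gT}$ and $\V_{gT}$ consisting of homogeneous elements all of degree $g$, so any bijection between them extends $\D$-linearly to a degree-preserving module isomorphism. Conversely, any isomorphism $\U_{gT} \to \V_{gT}$ of graded $\D$-modules is degree-preserving and $\D$-linear, hence restricts to a $\D_e$-linear isomorphism $\U_g \to \V_g$, forcing $\dim_{\D_e}\U_g = \dim_{\D_e}\V_g$.

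The only genuinely delicate point is the cardinality invariance in part (1): the argument must accommodate possibly infinite $\D_e$-dimensions, so I would phrase the count in terms of cardinals and verify that the decomposition of $\U_{g'}$ is a genuine internal direct sum of one-dimensional $\D_e$-subspaces. This is precisely where both ingredients are needed — the invertibility of homogeneous elements of $\D$, which forces $\dim_{\D_e}\D_t = 1$, and the freeness of a graded basis, which keeps each $b\,\D_s$ nonzero and one-dimensional.
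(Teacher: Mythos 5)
Your proof is correct and follows essentially the same route as the paper, which states the lemma as an immediate consequence of the preceding observation that a $\D_e$-basis of $\U_g$ is a graded $\D$-basis of the isotypic component $\U_{gT}$. Your write-up simply supplies the details the paper leaves implicit (the one-dimensionality of each $\D_t$ over $\D_e$, the coset-wise counting for the invariance of $|\mc B|$, and the degree-preserving extension in part (2)), all of which are accurate.
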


It follows that the isomorphism class of any graded $\D$-module $\U$ with finite dimension over $\D$ is determined by the map $\kappa \from G^\#/T \to \ZZ_{\geq 0}$ defined by $\kappa(x) \coloneqq \dim_\D (\U_x)$, which has finite support.
In other words, $\kappa$ is a finite \emph{multiset} in $G^\#/T$, where $\kappa(x)$ is viewed as the multiplicity of the point $x$. 
As usually done for multisets, we define $|\kappa| \coloneqq \sum_{x \in \supp \kappa} \kappa(x)$. 
By definition, $|\kappa| = \dim_\D (\U)$.

\begin{defi}\label{defi:gamma-realizes-kappa}
    Given a map $\kappa\from G^\#/T \to \ZZ_{\geq 0}$ with finite support, we say that a $k$-tuple $\gamma = (g_1, \ldots, g_k) \in (G^\#)^k$, where $k \coloneqq |\kappa|$, \emph{realizes} $\kappa$ if the number of entries $g_i$ with $g_i\in x$ is equal to $\kappa (x)$ for every $x\in G/T$.
\end{defi}

It is clear that, choosing any $k$-tuple $\gamma = (g_1, \ldots, g_k)$ that realizes $\kappa$, we can construct the graded $\D$-module $\U \coloneqq {}^{[g_1]}\D \oplus \cdots \oplus {}^{[g_k]}\D$, which is a representative of the isomorphism class of graded $\D$-modules determined by $\kappa$.

\phantomsection\label{para:End_D(U)-is-M-tensor-D}

Fixing a graded basis $\B$ of a graded right $\D$-module $\U$ with $k \coloneqq \dim_\D \U < \infty$, we can write $\End_\D(\U)$ as $M_k(\D)$.
Given a nonzero homogeneous element $d \in \D$ and $1\leq i,j \leq k$, let $E_{ij}(d)$ be the matrix with $d$ in position $(i,j)$ and $0$ everywhere else.
Writing $g_1, \ldots, g_k$ for the degrees of the elements in $\B$, we have
\[
    \deg E_{ij}(d) = g_i (\deg d) g_j\inv\,.
\]
Assuming $G$ abelian, it follows that $M_k(\D) \iso M_k(\FF) \tensor \D$ as $G$-graded superalgebras, where $M_k(\FF)$ is equipped with the elementary $G^\#$-grading associated to $(g_1, \ldots, g_k)$ (see \cref{para:elementary-grading}).

\phantomsection\label{phsec:simple-R-D-super}

A standard result states that the ideals of $M_k(\D)$ are exactly the sets $M_k(I)$, where $I$ is an ideal of $\D$.  
It is straightforward to check the same is true for superideals.

\begin{prop}\label{prop:simple-R-D-super}
    The superideals of $M_k(\D)$ are precisely the sets $M_k(I)$, where $I$ is a superideal of $\D$.  
    In particular, $\End_\D(\U)$ is simple as a superalgebra if and only if $\D$ is simple as a superalgebra. \qed
\end{prop}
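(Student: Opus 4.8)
The plan is to reduce the ``super'' statement to the classical ideal correspondence quoted immediately before the proposition, which identifies the two-sided ideals of $M_k(\D)$ (for $\D$ any unital ring) with the sets $M_k(I)$ for $I$ a two-sided ideal of $\D$, and then to verify that this bijection is compatible with the canonical $\ZZ_2$-gradings. Everything after the parity bookkeeping is formal.

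First I would record the parities of the matrix units. Using the identification of \cref{para:End_D(U)-is-M-tensor-D}, fix a graded basis of $\U$ whose elements have degrees $g_1, \ldots, g_k \in G^\#$, and set $\epsilon_i \coloneqq p(g_i)$. For a $\ZZ_2$-homogeneous $0 \neq d \in \D$, the element $E_{ij}(d)$ is $G^\#$-homogeneous of degree $g_i (\deg d) g_j\inv$, hence $\ZZ_2$-homogeneous of parity $\epsilon_i + |d| + \epsilon_j$. Consequently, for arbitrary $d = d\even + d\odd \in \D$, the two summands $E_{ij}(d\even)$ and $E_{ij}(d\odd)$ of $E_{ij}(d)$ are $\ZZ_2$-homogeneous of opposite parities, so they are precisely the homogeneous components of $E_{ij}(d)$ in $M_k(\D)$. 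This is the only computation that requires care, and it is exactly what makes the classical correspondence restrict to superideals.

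Then I would run the correspondence in both directions. Given a superideal $J$ of $M_k(\D)$, it is in particular a two-sided ideal, so by the standard result $J = M_k(I)$ with $I = \{d \in \D \mid E_{11}(d) \in J\}$ a two-sided ideal of $\D$. Since $J$ is $\ZZ_2$-graded, for any $d \in I$ both homogeneous components of $E_{11}(d)$, namely $E_{11}(d\even)$ and $E_{11}(d\odd)$, lie in $J$ by the parity observation above; hence $d\even, d\odd \in I$, so $I$ is a superideal of $\D$. Conversely, if $I$ is a superideal of $\D$, then $M_k(I)$ is a two-sided ideal by the standard result, and it is $\ZZ_2$-graded: writing any $X \in M_k(I)$ as $\sum_{i,j} E_{ij}(x_{ij})$ with $x_{ij} \in I$ and splitting each $x_{ij} = (x_{ij})\even + (x_{ij})\odd \in I$, the even and odd parts of $X$ are sums of terms $E_{ij}(c)$ with $c \in I$, hence again in $M_k(I)$. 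This yields the claimed bijection between superideals of $M_k(\D)$ and superideals of $\D$.

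The ``in particular'' statement is then immediate: under this bijection the superideals $0$ and $M_k(\D)$ correspond to $0$ and $\D$, and $M_k(\D)$ is nonzero with $M_k(\D)^2 = M_k(\D) \neq 0$ since $\D$ is unital. Thus $\End_\D(\U) \iso M_k(\D)$ has no proper nonzero superideal if and only if $\D$ does, i.e.\ $\End_\D(\U)$ is simple as a superalgebra if and only if $\D$ is. I do not anticipate a genuine obstacle; the sole point meriting attention is verifying that the assignment $d \mapsto E_{ij}(d)$ intertwines the $\ZZ_2$-decomposition of $\D$ with that of $M_k(\D)$, which is precisely the parity computation above.
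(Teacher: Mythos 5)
Your proposal is correct and follows exactly the route the paper intends: the paper merely cites the classical correspondence between ideals of $M_k(\D)$ and ideals of $\D$ and asserts the superideal version is "straightforward to check," which is precisely the parity bookkeeping you carry out (the observation that $E_{ij}(d\even)$ and $E_{ij}(d\odd)$ are the $\ZZ_2$-homogeneous components of $E_{ij}(d)$, so the bijection $I \mapsto M_k(I)$ restricts to superideals in both directions). Your handling of the ``in particular'' clause, including the $M_k(\D)M_k(\D) \neq 0$ condition required by the definition of simplicity, is also complete.
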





\phantomsection\label{para:module-over-even-or-odd-D}

The distinction between even and odd graded-division superalgebras influences the parameters we can use to describe a graded $\D$-module $\U$ and, consequently, the graded superalgebra $\End_\D (\U)$.

If $\D$ is an even graded-division superalgebra, then both $\U\even$ and $\U\odd$ are graded $\D$-submodules, and we can write 
\begin{align}
\End_\D(\U)\even &=
\begin{pmatrix}
    \End_\D(\U\even) & 0\\
    0 & \End_\D(\U\odd)
\end{pmatrix} \text{ and}\\
\End_\D(\U)\odd &=
\begin{pmatrix}
    0                       & \Hom_\D(\U\odd,\U\even)\\
    \Hom_\D(\U\even,\U\odd) & 0
\end{pmatrix}\,.
\end{align}

\phantomsection\label{phsec:kappas-even-and-odd}

The isomorphism classes of $\U\even$ and $\U\odd$ are determined, respectively, by maps $\kappa_\bz, \kappa_\bo\from G/T \to \ZZ_{\geq 0}$ given by $\kappa_i (x) = \dim \U^i_x$ for all $i\in \ZZ_2$ and $x\in G/T$. 
Note that, in this case, $G^\#/T = G^\#/T^+$ is the disjoint union of $G/T$ and $(e, \bar 1) \cdot G/T$ and, clearly, $\kappa ((g,i)T) = \kappa_i (gT)$, for all $i\in \ZZ_2$ and $g\in G$. 
Selecting $\D$-bases for $\U\even$ and $\U\odd$, we can identify $\End_\D (\U)$ with matrix superalgebra $M_{k_\bz | k_\bo}(\D)$ where $k_i \coloneqq |\kappa_i|$.

If $\D$ is an odd graded-division superalgebra and $\U \neq 0$, then neither $\U\even$ nor $\U\odd$ is a $\D$-submodule.
However, one can flip the parity of elements in any graded basis of $\U$ by multiplying them with a homogeneous element from $\D\odd$.
Consequently, it is always possible to select an \emph{even $\D$-basis} for $\U$, \ie, a graded $\D$-basis consisting solely of even elements.
Using such a basis to write $\End_\D(\U)$ as $M_k(\D) \iso M_k(\FF)\tensor \D$, we see that all the information about the canonical $\ZZ_2$-grading is encoded in $\D$:
\[
    M_k(\D)\even =  M_k(\FF)\tensor \D\even \AND M_k(\D)\odd =  M_k(\FF)\tensor \D\odd\,.
\]

\begin{convention}\label{conv:pick-even-basis}
    If $\D$ is an odd graded-division superalgebra, we always choose the graded basis $\B$ to be an even basis.
\end{convention}

From the point of view of the map $\kappa$, this convention corresponds to choosing $\gamma \in G^{|\kappa|}$ to realize $\kappa$.  
Since the map $\iota\from G/T^+ \to G^\#/T$ given by $\iota (gT^+) = (g, \bar 0) T$ is a bijection, we can work with $\kappa \circ \iota\from G/T^+ \to \ZZ_{\geq 0}$, which we will, by abuse of notation, also denote by $\kappa$. 

\subsection{Graded-division superalgebras over an algebraically closed field}\label{subsec:D-alg-closed}

In this subsection, we will assume that the grading group $G$ is abelian and that $\FF$ is an algebraically closed field with $\Char \FF \neq 2$.
We will parametrize and classify the finite-dimensional graded-division superalgebras and develop a theory regarding this parametrization.
Unless stated otherwise, $\D$ will be assumed to be a fixed finite-dimensional graded-division superalgebra.

\subsubsection{Parametrization of finite-dimensional graded-division superalgebras}\label{subsubsec:param-D}

Picking nonzero elements $X_t \in \D_t$, we have that $\D_t = \D_e X_t$.
Since $G$ is abelian, the elements $X_tX_s$ and $X_sX_t$ are both in $\D_{ts}$.
Since $\FF$ is algebraically closed, we have $\D_e = \FF$ for any finite-dimensional $\D$ and, hence, $X_tX_s$ is a nonzero scalar multiple of $X_sX_t$.
We can then define the map $\beta\from T\times T \to \FF^\times$ by
\[\label{eq:def-beta}
    \forall t,s \in T, \forall X_t\in \D_t, \forall X_s\in \D_s, \quad X_tX_s = \beta(t,s) X_sX_t\,.
\]
Thus, $\beta$ is a measure of how $\D$ fails to be commutative.
In the superalgebra setting, it is convenient to measure how $\D$ fails to be supercommutative, so we define $\tilde\beta\from T \times T \to \FF^\times$ by
\[\label{eq:def-tilde-beta}
    \forall t,s \in T, \forall X_t\in \D_t, \forall X_s\in \D_s, \quad X_tX_s = (-1)^{p(t)p(s)}\tilde\beta(t,s) X_sX_t\,.
\]

\begin{defi}\label{def:bicharacter}
    A map $b\from T\times T \to \FF^\times$ is a \emph{bicharacter} if, for every $t\in T$, both maps $b(t, \cdot)\from T \to \FF^\times$ and $b(\cdot, t)\from T \to \FF^\times$ are characters. 
    We say that a bicharacter $b$ is 
    \begin{enumerate}
        \item \emph{alternating} if $b(t,t) = 1$ for all $t\in T$;
        \item \emph{skew-symmetric} if $b(t,s) = b(s,t)\inv$ for all $t,s\in T$.
    \end{enumerate}
    Clearly, every alternating bicharacter is skew-symmetric. 
    The \emph{radical} of a skew-sym\-metric bicharacter is the subgroup of $T$ defined by
    \[
        \rad b \coloneqq \{ t \in T \mid b(t, T) =1 \}.
    \]
    If $\rad b = \{e\}$, we say that $b$ is \emph{nondegenerate}.
\end{defi}

Straightforward verification shows that both $\beta$ and $\tilde\beta$ are bicharacters, are independent of the choice of the elements $X_t\in \D_t$, and are related by the equation $\tilde\beta(t,s) = (-1)^{p(t)p(s)}\beta(t,s)$.
It follows from \cref{eq:def-beta,eq:def-tilde-beta} that the support of $Z(\D)$ is $\rad \beta$, and the support of $sZ(\D)$ is $\rad \tilde\beta$. 

Note that while $\beta$ is alternating, $\tilde\beta$ is skew-symmetric, but not alternating if $T^- \neq \emptyset$.
It is well known (\cite{MR529734}) and easy to check that for any abelian group $T$ and any skew-symmetric bicharacter $\tilde\beta$ on $T$, there is an alternating bicharacter $\beta$ on $T$ and a group homomorphism $p\from T \to \ZZ_2$ such that $\tilde\beta(t,s) = (-1)^{p(t)p(s)}\beta(t,s)$ for all $t,s\in T$.
Hence, a pair $(T, \tilde\beta)$, where $\tilde\beta$ is a skew-symmetric bicharacter on $T$, carries the same information as a triple $(T, \beta, p)$, where $\beta$ is an alternating bicharacter and $p\from T\to \ZZ_2$ is a group homomorphism.
In \cite{caios_thesis} we decided to use one notation but here we are using the other.

\begin{prop}\label{prop:parametrization-T-beta}
    The pair $(T, \tilde\beta)$ determines the isomorphism class of a finite-dimensional graded-division superalgebra $\D$.
    Moreover, for every finite abelian group $T$ and every skew-symmetric bicharacter $\tilde\beta\from T \times T \to \FF^\times$, there is a graded-division superalgebra associated with $(T,\tilde\beta)$. 
\end{prop}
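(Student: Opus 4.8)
The plan is to identify $\D$ with a twisted group algebra and then reduce the superalgebra statement to the known parametrization of finite-dimensional graded-division \emph{algebras} by pairs consisting of a support group and a commutation factor. First I would record the structural facts already available: since $\FF$ is algebraically closed and $\D$ is finite-dimensional, $\D_e = \FF$ and each homogeneous component $\D_t$ with $t \in T \coloneqq \supp \D$ is one-dimensional, say $\D_t = \FF X_t$. Regarding $\D$ as a $T$-graded algebra, the products satisfy $X_t X_s = \sigma(t,s) X_{ts}$ for a function $\sigma \from T \times T \to \FF^\times$, and associativity is equivalent to $\sigma$ being a $2$-cocycle; thus $\D$ is the twisted group algebra $\FF^\sigma T$, its commutation factor is exactly the $\beta$ of \cref{eq:def-beta}, and every nonzero homogeneous element is invertible.

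Next I observe that the superalgebra datum is bookkeeping on top of the algebra datum. An isomorphism of $G$-graded superalgebras $\D \to \D'$ sends $\D_t$ to $\D'_t$ for each $t \in G^\#$, so it forces the supports to coincide and is the same thing as an isomorphism of $T$-graded algebras; conversely, any such algebra isomorphism automatically preserves parity, because the parity of $\D_t$ is the $\ZZ_2$-component $p(t)$ of $t \in G^\# = G \times \ZZ_2$, which is intrinsic to $t$. Hence, for a fixed support $T$, isomorphism of graded-division superalgebras coincides with isomorphism of the underlying $T$-graded algebras, and the parity map $p$ is encoded in the way $T$ sits inside $G^\#$ rather than being extra rigidity. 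Since $\tilde\beta(t,t) = (-1)^{p(t)}$ recovers $p$, and $\beta(t,s) = (-1)^{p(t)p(s)}\tilde\beta(t,s)$ recovers $\beta$, the pair $(T, \tilde\beta)$ carries precisely the triple $(T, \beta, p)$.

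With this reduction the determination statement follows from the corresponding result for graded-division algebras: over an algebraically closed field the commutation factor $\beta$ determines the cohomology class of $\sigma$ in $H^2(T, \FF^\times)$, and two twisted group algebras $\FF^\sigma T$, $\FF^{\sigma'} T$ with $\sigma, \sigma'$ cohomologous are isomorphic as $T$-graded algebras via a rescaling $X_t \mapsto \lambda_t X_t$ with $\lambda_t \in \FF^\times$ (see \cite{livromicha}). Since $(T,\tilde\beta)$ fixes both $\beta$ and $p$, it fixes the isomorphism class of $\D$. For existence, given $(T,\tilde\beta)$ I extract $p$ and $\beta$ as above, choose a $2$-cocycle $\sigma$ whose commutation factor is $\beta$, and form $\D \coloneqq \FF^\sigma T$; embedding $T$ into $G^\# = T \times \ZZ_2$ by $t \mapsto (t, p(t))$ equips $\D$ with a $\ZZ_2$-grading that makes it a $T$-graded superalgebra with skew-symmetric bicharacter the prescribed $\tilde\beta$, and the invertibility of each $X_t$ makes it graded-division.

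I expect the main obstacle to be the cohomological core, namely that over an algebraically closed field the commutation factor classifies cocycles up to coboundary and realizes every alternating bicharacter, equivalently that $H^2(T, \FF^\times) \cong \operatorname{Alt}(T)$ for finite abelian $T$. This is precisely where algebraic closedness is used (divisibility of $\FF^\times$ and availability of roots of unity); everything else is formal. If this is not quoted directly from \cite{livromicha}, one proves it by decomposing $T$ into cyclic factors and writing down explicit cocycles, but I would prefer to cite the algebra version and treat the parity map $p$ as the only genuinely new ingredient, which is entirely routine.
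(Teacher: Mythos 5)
Your proposal is correct and follows essentially the same route as the paper: the paper's proof likewise reduces the pair $(T, \tilde\beta)$ to the triple $(T, \beta, p)$, with the parity map encoded in how $T$ sits inside $G^\#$, and then invokes the known classification of graded-division algebras by pairs $(T, \beta)$. Your additional unpacking of that citation via twisted group algebras and $H^2(T, \FF^\times) \cong \operatorname{Alt}(T)$ is precisely the content of the referenced result, not a different argument.
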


\begin{proof}
    Since $\tilde\beta$ determines $\beta$ and $p$, this result follows from the fact that graded-division algebras are classified by the pairs $(T,\beta)$ where $T$ is a finite abelian group and $\beta$ is an alternating bicharacter (see \cite[Section 2.2]{EK_d4} or \cite[Subsection 2.2.3]{caios_thesis} for an alternative proof). 
\end{proof}

It is known (see, \eg, \cite[Section 2.2]{livromicha} or \cite[Corollary 2.37]{caios_thesis}) that $\D$ is simple as an algebra \IFF $\beta$ is nondegenerate.
In \cref{prop:simple-tilde-beta}, we will show that $\D$ is simple as a superalgebra \IFF $\tilde\beta$ is nondegenerate.

\subsubsection{Automorphisms of \texorpdfstring{$\D$}{D}}

We will need some information about automorphisms of $\D$ as a graded superalgebra.

\begin{lemma}\label{lemma:Aut(D)-widehat-T}
    An $\FF$-linear map $\psi_0\from \D \to \D$ is an automorphism of $\D$ \IFF there is $\chi \in \widehat T$ such that $\psi_0( X_t ) = \chi(t) X_t$ for all $t\in T$. 
\end{lemma}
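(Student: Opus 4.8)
The plan is to verify both implications directly, exploiting that each homogeneous component $\D_t$ is one-dimensional. Recall from \cref{subsubsec:param-D} that, since $\FF$ is algebraically closed and $\D$ is finite-dimensional, $\D_e = \FF$, so $\D_t = \FF X_t$ for every $t\in T$; moreover, since $\D$ is a graded-division superalgebra, each product $X_t X_s$ is a nonzero homogeneous element of $\D_{ts} = \FF X_{ts}$, say $X_t X_s = c_{t,s}\, X_{ts}$ with $c_{t,s}\in\FF^\times$. Both directions will be read off from this single relation.

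For the converse, suppose $\psi_0(X_t) = \chi(t) X_t$ for some $\chi\in\widehat T$. Since every $\chi(t)\neq 0$ and $\psi_0$ scales each one-dimensional component $\D_t$ by $\chi(t)$, the map $\psi_0$ is a degree-preserving linear bijection. To see that it is multiplicative, I would compute on the spanning set $\{X_t\}_{t\in T}$: on one hand $\psi_0(X_t X_s) = c_{t,s}\,\psi_0(X_{ts}) = c_{t,s}\,\chi(ts)\, X_{ts}$, and on the other $\psi_0(X_t)\psi_0(X_s) = \chi(t)\chi(s)\, X_t X_s = \chi(t)\chi(s)\, c_{t,s}\, X_{ts}$; these agree precisely because $\chi(ts) = \chi(t)\chi(s)$. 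By bilinearity this extends to all of $\D$, and a multiplicative linear bijection of the unital algebra $\D$ automatically fixes the identity, so $\psi_0$ is a graded superalgebra automorphism.

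For the forward implication, suppose $\psi_0$ is an automorphism of the $G$-graded superalgebra $\D$. Being degree-preserving, it maps $\D_t$ into $\D_t$; as $\D_t = \FF X_t$ is one-dimensional, there is a scalar $\chi(t)$ with $\psi_0(X_t) = \chi(t)\, X_t$, and $\chi(t)\in\FF^\times$ because $\psi_0$ restricts to a bijection on $\D_t$. It then remains to check that $\chi$ is multiplicative. Applying the multiplicative map $\psi_0$ to $X_t X_s = c_{t,s}\, X_{ts}$ and comparing the coefficients of $X_{ts}$ yields $\chi(t)\chi(s)\, c_{t,s} = c_{t,s}\,\chi(ts)$; cancelling $c_{t,s}\neq 0$ gives $\chi(ts) = \chi(t)\chi(s)$, so $\chi\in\widehat T$.

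I do not expect a serious obstacle: the entire argument rests on the one-dimensionality of the homogeneous components together with the relation $X_t X_s = c_{t,s}\, X_{ts}$. The only points that genuinely require care are the standing reductions $\D_e = \FF$ and $\dim_\FF \D_t = 1$, which follow from $\FF$ being algebraically closed and $\D$ finite-dimensional, and the fact that $c_{t,s}\neq 0$, which is exactly the graded-division hypothesis (so that $X_t X_s$ cannot vanish). Notably, this proof never uses $\Char\FF\neq 2$ or the superalgebra structure beyond the $G^\#$-grading, so it is really a statement about the underlying graded-division algebra.
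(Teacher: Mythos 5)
Your proof is correct and takes essentially the same approach as the paper: the paper likewise observes that any invertible degree-preserving linear map must scale each one-dimensional component $\D_t$ by some $\chi(t)\in\FF^\times$ and that multiplicativity of $\psi_0$ is equivalent to $\chi$ being a group homomorphism, with your structure constants $c_{t,s}$ merely spelling out the step the paper dismisses as ``easy to see.'' Your closing remark that the argument uses neither $\Char\FF\neq 2$ nor the super structure, only the $G^\#$-graded-division property with one-dimensional components, is also accurate.
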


\begin{proof}
    Any invertible degree-preserving linear map $\psi\from \D \to \D$ is determined by a map $\chi\from T \to \FF^\times$ such that $\psi(X_t) = \chi(t) X_t$, for all $t\in T$. 
    It is easy to see that $\psi$ is an automorphism \IFF $\chi$ is a group homomorphism, \ie, $\chi \in \widehat T$. 
\end{proof}

\begin{lemma}\label{prop:all-central-automorphisms-of-D-are-superinner}  
    Let $\psi_0\from \D \to \D$ be an automorphism that restricts to the identity on $sZ(\D)$. 
    Then $\psi_0 = \operatorname{sInt}_{d}$ for some nonzero $G^\#$-homogeneous element $d\in \D$. 
\end{lemma}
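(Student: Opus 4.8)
The plan is to convert the entire statement into the language of characters of $T \coloneqq \supp \D$, using the parametrization developed in \cref{subsubsec:param-D}. By \cref{lemma:Aut(D)-widehat-T}, the automorphism $\psi_0$ is encoded by a character $\chi \in \widehat T$, namely $\psi_0(X_t) = \chi(t) X_t$ for all $t \in T$. First I would unwind the hypothesis that $\psi_0$ fixes $sZ(\D)$ pointwise. Since $sZ(\D)$ is a graded subsuperalgebra (by the lemma that the supercenter is graded) and its support is exactly $\rad \tilde\beta$, each nonzero homogeneous component of $sZ(\D)$ is one of the one‑dimensional spaces $\D_t = \FF X_t$ with $t \in \rad\tilde\beta$. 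Hence the hypothesis is equivalent to the purely group-theoretic condition that $\chi$ is trivial on $\rad\tilde\beta$.

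Next I would recognize the superinner automorphisms as the characters coming from $\tilde\beta$. Concretely, for $s \in T$ I would compute $\operatorname{sInt}_{X_s}(X_t)$: using $X_s X_t = (-1)^{p(s)p(t)}\tilde\beta(s,t)\,X_t X_s$ from \eqref{eq:def-tilde-beta}, the two factors of $(-1)^{p(s)p(t)}$ cancel (this cancellation is precisely why the \emph{super}inner automorphism, rather than $\operatorname{Int}$, is the right object), yielding
\[
    \operatorname{sInt}_{X_s}(X_t) = \tilde\beta(s,t)\, X_t .
\]
Thus $\operatorname{sInt}_{X_s}$ is exactly the automorphism attached to the character $\tilde\beta(s,\cdot) \in \widehat T$. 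So the problem reduces to showing that the character $\chi$ of our automorphism lies in the image of the homomorphism $\Phi\from T \to \widehat T$, $s \mapsto \tilde\beta(s,\cdot)$; taking $d = X_s$ for a preimage $s$ then finishes the proof, since $X_s$ is a nonzero $G^\#$-homogeneous element.

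The main obstacle is therefore the surjectivity of $\Phi$ onto the subgroup of characters trivial on $\rad\tilde\beta$, which I would settle by a finiteness/duality count. Because $\tilde\beta$ is a bicharacter, $\Phi$ is a group homomorphism, and by skew-symmetry its kernel is precisely $\{s \mid \tilde\beta(s,T)=1\} = \rad\tilde\beta$; moreover every character $\tilde\beta(s,\cdot)$ is itself trivial on $\rad\tilde\beta$ (again by skew-symmetry), so $\operatorname{Im}\Phi$ is contained in $\widehat{T/\rad\tilde\beta}$, viewed as the characters of $T$ killing $\rad\tilde\beta$. Now $|\operatorname{Im}\Phi| = |T/\rad\tilde\beta| = |\widehat{T/\rad\tilde\beta}|$, the last equality being the standard fact that a finite abelian group and its dual have the same order. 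Hence the inclusion $\operatorname{Im}\Phi \subseteq \widehat{T/\rad\tilde\beta}$ is an equality, and since our $\chi$ is trivial on $\rad\tilde\beta$ by the first paragraph, it is of the form $\tilde\beta(s,\cdot)$ for some $s \in T$. This gives $\psi_0 = \operatorname{sInt}_{X_s}$, as desired.
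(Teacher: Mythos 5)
Your proposal is correct and takes essentially the same route as the paper's proof: encode $\psi_0$ as a character $\chi \in \widehat T$ via \cref{lemma:Aut(D)-widehat-T}, note that fixing $sZ(\D)$ pointwise forces $\chi$ to be trivial on $\rad\tilde\beta$ (since $\supp sZ(\D) = \rad\tilde\beta$), and then realize $\chi$ as $\tilde\beta(s,\cdot)$ using nondegeneracy modulo the radical. The only cosmetic difference is that where the paper passes to $\barr T = T/\rad\tilde\beta$ and invokes the isomorphism $\barr T \to \widehat{\barr T}$ induced by the nondegenerate bicharacter, you inline the same finite-duality fact as an order count for the homomorphism $s \mapsto \tilde\beta(s,\cdot)$ on $T$ itself, and you spell out the computation $\operatorname{sInt}_{X_s}(X_t) = \tilde\beta(s,t)X_t$ that the paper leaves as ``straightforward.''
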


\begin{proof}
    By \cref{lemma:Aut(D)-widehat-T}, there is a character $\chi\in \widehat{T}$ such that $\psi_0(X_t) = \chi(t) X_t$, for all $t\in T$. 
    Since $\psi_0$ acts trivially on $sZ(\D)$, we have $\chi(t) = 1$ for all $t \in \rad \tilde\beta$. 
    
    Set $\barr T \coloneqq T/\rad \tilde\beta$ and let $\pi \from T \to \barr T$ be the quotient map. 
    The induced bicharacter $b\from \overline{T} \times \overline{T} \to \FF^\times$, defined by $b(\pi(s), \pi(t)) \coloneqq \tilde\beta(s,t)$, is nondegenerate and skew-symmetric. 
    Therefore, the map $\barr T \to \widehat{\barr T}$ given by $t \mapsto b(t, \cdot)$ is a group isomorphism.
    
    Since $\chi$ is trivial on $\rad \tilde\beta$, there exists a $\barr \chi \in \widehat{\barr T}$ such that $\chi = \barr \chi \circ \pi$.
    By nondegeneracy of $b$, there is an element $t\in T$ such that $b(\pi(t), \cdot ) = \barr \chi$. 
    A straightforward computation shows that $\psi_0 = \operatorname{sInt}_{d}$ where $d\in \D$ is any nonzero homogeneous element of degree $t$.
\end{proof}

\subsubsection{Radicals and parity elements}\label{para:radicals-betas}

As we have seen, the center and supercenter of $\D$ correspond, respectively, to the radicals of $\beta$ and $\tilde\beta$. 
From now on, we will also consider another bicharacter: $\beta^+$, which is the restriction of $\beta$ to $T^+ \times T^+$.
The radical of $\beta^+$ corresponds to the center of $\D\even$.
The interplay between these three radicals will be important in our theory.

\begin{lemma}\label{lemma:rad-tilde-beta}
	$\rad \tilde\beta = (\rad \beta)\cap T^+$.
\end{lemma}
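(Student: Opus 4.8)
The plan is to prove the two inclusions separately, using throughout the defining relation $\tilde\beta(t,s) = (-1)^{p(t)p(s)}\beta(t,s)$ together with the facts that $\beta$ is alternating (so $\beta(t,t)=1$) and that $T^+ = \ker p$.

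For the inclusion $(\rad\beta)\cap T^+ \subseteq \rad\tilde\beta$, I would take $t \in (\rad\beta)\cap T^+$. Since $t\in T^+$ we have $p(t)=\bar 0$, so the sign $(-1)^{p(t)p(s)}$ equals $1$ for every $s\in T$; and since $t\in\rad\beta$ we have $\beta(t,s)=1$ for all $s$. Hence $\tilde\beta(t,s)=\beta(t,s)=1$ for all $s$, i.e., $t\in\rad\tilde\beta$.

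For the reverse inclusion $\rad\tilde\beta \subseteq (\rad\beta)\cap T^+$, the key step --- and essentially the only one requiring any thought --- is a diagonal computation that first pins down the parity. Given $t\in\rad\tilde\beta$, evaluating at $s=t$ gives $1=\tilde\beta(t,t)=(-1)^{p(t)p(t)}\beta(t,t)=(-1)^{p(t)}$, where I use $\beta(t,t)=1$ and the identity $p(t)^2=p(t)$ in $\ZZ_2$. This forces $p(t)=\bar 0$, i.e., $t\in T^+$. Once the parity is known to be trivial, all the signs $(-1)^{p(t)p(s)}$ collapse to $1$, so $\tilde\beta(t,s)=\beta(t,s)$ for every $s$; the hypothesis $\tilde\beta(t,s)=1$ then yields $\beta(t,s)=1$ for all $s$, i.e., $t\in\rad\beta$. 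Combining, $t\in(\rad\beta)\cap T^+$.

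The main (and essentially only) subtlety is recognizing that membership in $\rad\tilde\beta$ automatically forces $t\in T^+$: this is exactly what the diagonal evaluation $\tilde\beta(t,t)=(-1)^{p(t)}$ delivers, and it is what makes the merely skew-symmetric bicharacter $\tilde\beta$ behave like an alternating one on its radical. Everything else reduces to substituting $p(t)=\bar 0$ into the relation between $\tilde\beta$ and $\beta$.
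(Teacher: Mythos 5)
Your proof is correct and follows essentially the same route as the paper: the diagonal evaluation $\tilde\beta(t,t)=(-1)^{p(t)}\beta(t,t)=(-1)^{p(t)}$ (using that $\beta$ is alternating) is exactly the paper's observation that no element of $T^-$ lies in $\rad\tilde\beta$, and the collapse of signs on $T^+$, giving $\tilde\beta(t,\cdot)=\beta(t,\cdot)$, is the paper's second step. The only cosmetic difference is that you organize the argument as two inclusions while the paper phrases it as $(\rad\tilde\beta)\cap T^+=(\rad\beta)\cap T^+$ plus the emptiness of $(\rad\tilde\beta)\cap T^-$.
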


\begin{proof}
    First note that if $t_1 \in T^-$, then $t_1 \not\in \rad \tilde\beta$ since $\tilde \beta (t_1,t_1) = (-1)^{|t_1|} = -1$.
    Now, if $t_0\in T^+$, then $\tilde\beta (t_0, \cdot) = \beta (t_0, \cdot)$. Hence
	$\tilde\beta (t_0, T) = 1$ \IFF $\beta (t_0, T) = 1$. Thus $(\rad \tilde\beta)\cap T^+ = (\rad \beta)\cap T^+$, concluding the proof.
\end{proof}

\begin{cor}\label{cor:Z-sZ-division}
    $sZ(\D) = Z(\D)\cap \D\even$. \qed
\end{cor}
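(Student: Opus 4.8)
The plan is to reduce the claimed identity of subspaces to an identity of subgroups of $T$, exploiting the fact that each homogeneous component of $\D$ is one-dimensional. First I would recall that, since $\FF$ is algebraically closed and $\D$ is finite-dimensional, we have $\D_e = \FF$, so every homogeneous component $\D_t = \FF X_t$ has dimension one. Consequently, any $T$-graded subspace of $\D$ is the direct sum of those components $\D_t$ whose index lies in its support, and is therefore completely determined by its support as a subset of $T$.

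Next I would identify the relevant supports. By \cref{lemma:center-is-graded}, both $Z(\D)$ and $sZ(\D)$ are $G$-graded, hence $T$-graded, subspaces. As observed in \cref{subsubsec:param-D}, the support of $Z(\D)$ is $\rad \beta$ and the support of $sZ(\D)$ is $\rad \tilde\beta$, while by definition the support of $\D\even$ is $T^+$. Since $\D\even$ is likewise a $T$-graded subspace, the intersection $Z(\D) \cap \D\even$ is $T$-graded with support $(\rad \beta) \cap T^+$.

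Finally, I would invoke \cref{lemma:rad-tilde-beta}, which gives $\rad \tilde\beta = (\rad \beta) \cap T^+$. Thus $sZ(\D)$ and $Z(\D) \cap \D\even$ are $T$-graded subspaces with the same support, and by the one-dimensionality of the homogeneous components they coincide, as desired. There is no genuine obstacle here: the only point requiring care is the bookkeeping that a graded subspace is recovered from its support, which rests entirely on the equality $\D_e = \FF$ and hence on the standing assumptions that $\FF$ is algebraically closed and $\D$ is finite-dimensional.
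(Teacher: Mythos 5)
Your proposal is correct and is essentially the paper's own argument: the corollary is stated with an immediate \qed{} precisely because, once one knows that $\supp Z(\D) = \rad\beta$, $\supp sZ(\D) = \rad\tilde\beta$, and $\supp \D\even = T^+$, the equality follows from \cref{lemma:rad-tilde-beta} together with the fact that graded subspaces of $\D$ are determined by their supports (each $\D_t = \FF X_t$ being one-dimensional since $\D_e = \FF$). Your explicit bookkeeping of the support-to-subspace correspondence is exactly the step the paper leaves implicit.
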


\begin{defi}\label{def:parity-element}
    We call an element $t_p \in T$ a \emph{parity element} if $\tilde\beta(t_p, t) = (-1)^{p(t)}$ for all $t\in T$, and denote the set of all parity elements by $T_p$.
\end{defi}

Note that for a nonzero homogeneous element $d\in \D$, its $G^\#$-degree is a parity element $t_p \in T$ \IFF the super inner automorphism $\operatorname{sInt}_d$ is the parity automorphism.
As a consequence of \cref{prop:all-central-automorphisms-of-D-are-superinner,cor:Z-sZ-division}, we have:

\begin{cor}\label{cor:t_p-exists}
    The set $T_p$ is non-empty. \qed
\end{cor}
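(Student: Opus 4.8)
The plan is to exploit the equivalence recorded just before the statement: a nonzero homogeneous element $d \in \D$ has its $G^\#$-degree in $T_p$ precisely when $\operatorname{sInt}_d$ equals the parity automorphism. Thus it suffices to produce a single homogeneous $d$ with $\operatorname{sInt}_d$ equal to the parity automorphism, and this I would obtain by showing that the parity automorphism is itself superinner.

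First I would let $\pi\from \D \to \D$ denote the parity automorphism, \ie, the linear map sending each homogeneous element $d$ to $(-1)^{|d|} d$. A one-line check shows $\pi$ is a degree- and parity-preserving automorphism of the graded superalgebra $\D$, since $(-1)^{|ab|} = (-1)^{|a|}(-1)^{|b|}$. The crucial point is that $\pi$ restricts to the identity on $sZ(\D)$: by \cref{cor:Z-sZ-division} we have $sZ(\D) = Z(\D) \cap \D\even$, so the supercenter is concentrated in the even part, where $\pi$ acts as the identity. With this hypothesis verified, \cref{prop:all-central-automorphisms-of-D-are-superinner} applies directly and yields a nonzero $G^\#$-homogeneous element $d \in \D$ with $\pi = \operatorname{sInt}_d$. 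By the remark preceding the corollary, $\deg d \in T_p$, and hence $T_p \neq \emptyset$. (If one prefers not to invoke the remark as a black box, the identity $\operatorname{sInt}_d = \pi$ unwinds, using $X_{t_p}X_t = (-1)^{p(t_p)p(t)}\tilde\beta(t_p,t)X_tX_{t_p}$, to the relation $\tilde\beta(\deg d, t) = (-1)^{p(t)}$ for all $t \in T$, which is exactly the defining property of a parity element.)

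There is very little genuine difficulty here once the framework is in place; the content of the argument has been front-loaded into \cref{prop:all-central-automorphisms-of-D-are-superinner,cor:Z-sZ-division}. The step I would flag as the conceptual crux is recognizing that \cref{cor:Z-sZ-division} is exactly what licenses applying the superinner-automorphism proposition to $\pi$: a priori $\pi$ acts by $-1$ on odd elements of the supercenter, so if $sZ(\D)$ had a nonzero odd part the hypothesis of \cref{prop:all-central-automorphisms-of-D-are-superinner} would fail outright. It is the purely-even nature of $sZ(\D)$ that makes the parity automorphism central in the precise sense the proposition requires, and after that the existence of a parity element is immediate.
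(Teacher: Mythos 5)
Your proof is correct and is exactly the paper's argument: the corollary is stated as a direct consequence of \cref{prop:all-central-automorphisms-of-D-are-superinner,cor:Z-sZ-division}, applied precisely as you do --- the parity automorphism fixes $sZ(\D)$ because $sZ(\D) = Z(\D)\cap \D\even$ is purely even, hence it is superinner, and the degree of the implementing element is a parity element by the remark preceding the statement. Your unwinding of $\operatorname{sInt}_d = \pi$ into $\tilde\beta(\deg d, t) = (-1)^{p(t)}$ is also the correct content of that remark.
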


Given a parity element $t_p$, we observe that $T_p = t_p (\rad \tilde\beta)$.
However, there is another way to describe $T_p$ relating the radicals of $\beta$, $\beta^+$ and $\tilde\beta$.

\begin{lemma}\label{lemma:set-parity-elements}
    If $T^- =\emptyset$, then $T_p = \rad \tilde\beta = \rad \beta = \rad \beta^+$.
    If $T^- \neq \emptyset$, then $T_p \cap T^+ = (\rad \beta^+) \smallsetminus (\rad\beta)$ and $T_p \cap T^- = (\rad \beta) \cap T^-$.
\end{lemma}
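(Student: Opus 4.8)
The plan is to reduce the defining property of a parity element to a statement purely about $\beta$, and then to split according to the parity of the candidate element. Starting from $t_p\in T_p$, i.e. $\tilde\beta(t_p,t)=(-1)^{p(t)}$ for all $t\in T$, and using the relation $\tilde\beta(t_p,t)=(-1)^{p(t_p)p(t)}\beta(t_p,t)$ from \cref{subsubsec:param-D}, this condition becomes
\[
    \beta(t_p,t)=(-1)^{p(t)\left(1+p(t_p)\right)}\quad\text{for all }t\in T.
\]
When $p(t_p)=1$ the exponent is even, so the condition reads $\beta(t_p,T)=1$, i.e. $t_p\in\rad\beta$; when $p(t_p)=0$ it reads $\beta(t_p,t)=(-1)^{p(t)}$, i.e. $\beta(t_p,T^+)=1$ together with $\beta(t_p,t)=-1$ for all $t\in T^-$. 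These two readings are what I will match against the claimed sets.

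If $T^-=\emptyset$, then $p\equiv 0$, so $\tilde\beta=\beta$, and since $T^+=T$ we also have $\beta^+=\beta$; the displayed condition collapses to $\beta(t_p,T)=1$, giving $T_p=\rad\beta=\rad\beta^+=\rad\tilde\beta$ in one stroke (the equality $\rad\tilde\beta=\rad\beta$ being \cref{lemma:rad-tilde-beta} in this case). So assume $T^-\neq\emptyset$, whence $T=T^+\sqcup T^-$ and I may describe $T_p\cap T^-$ and $T_p\cap T^+$ separately. The odd part is immediate from the case analysis above: an element $t_p\in T^-$ lies in $T_p$ \IFF $\beta(t_p,T)=1$, that is, $T_p\cap T^-=(\rad\beta)\cap T^-$.

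The even part carries the only real content. For $t_p\in T^+$ the condition is $t_p\in\rad\beta^+$ (from $\beta(t_p,T^+)=1$) together with $\beta(t_p,t)=-1$ for all $t\in T^-$, and the claim amounts to showing that, given $t_p\in\rad\beta^+$, this second requirement holds \IFF $t_p\notin\rad\beta$. The key observation is that $t_p\in\rad\beta^+$ makes the character $\beta(t_p,\cdot)$ trivial on $T^+$, so it factors through $T/T^+$; since $p$ is onto $\ZZ_2$ with kernel $T^+$, this quotient is cyclic of order $2$, and because $\FF$ is algebraically closed with $\Char\FF\neq2$ its characters take values in $\{1,-1\}$. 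Hence $\beta(t_p,\cdot)$ is constant on the single nontrivial coset $T^-$, equal to $1$ exactly when $t_p\in\rad\beta$ and to $-1$ otherwise. This yields $t_p\in T_p\cap T^+$ \IFF $t_p\in\rad\beta^+$ and $t_p\notin\rad\beta$, i.e. $T_p\cap T^+=(\rad\beta^+)\smallsetminus(\rad\beta)$. The main obstacle is precisely this factoring-through-$\ZZ_2$ step: one must use that $T^-$ is a single $T^+$-coset and that $-1$ is the only nontrivial square root of unity in $\FF$; the rest is bookkeeping with the exponent identity.
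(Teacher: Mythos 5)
Your proof is correct and takes essentially the same route as the paper's: both translate the parity-element condition into the $\beta$-condition $\beta(t_p,t)=(-1)^{p(t)(1+p(t_p))}$ (the paper does this case by case), handle odd $t_p$ immediately, and for even $t_p\in\rad\beta^+$ hinge on showing $\beta(t_p,\cdot)$ is constant with value $\pm1$ on $T^-$. Your factoring of the character through $T/T^+\iso\ZZ_2$ is just a repackaging of the paper's direct computation via $t_1^2\in T^+$ and $T^-=t_1T^+$ (and, incidentally, it needs only $\Char\FF\neq 2$, not algebraic closedness).
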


\begin{proof}
    The first assertion is trivial, so we assume $T^- \neq \emptyset$.
    
    Let $t_0 \in T^+$ be any element. 
    If $t_0 \in T_p$, then clearly $t_0 \in \rad\beta^+$ but, for any $t_1 \in T^-$, $\beta(t_0,t_1) = \tilde\beta(t_0,t_1) = -1$, so $t_0 \not\in \rad \beta$. 
    Conversely, if $t_0 \in \rad \beta^+ \setminus \rad\beta$, we have that $\beta(t_0,t_1) \neq 1$ for some $t_1 \in T^-$. Since $t_1^2 \in T^+$, $\beta(t_0, t_1)^2 = \beta(t_0, t_1^2) = 1$ and, hence, $\beta(t_0, t_1) = - 1$. Then $\beta(t_0, T^-) = \beta(t_0, t_1 T^+) = \beta(t_0, t_1) = -1$, proving that $t_0$ is a parity element.

    Now, let $t_1 \in T^-$ be any element. Then $t_1$ is a parity element \IFF $\tilde\beta(t_1, t) = (-1)^{p(t)} = (-1)^{p(t_1) p(t)}$ for all $t\in T$ which, by the definition of $\tilde\beta$, is equivalent to $\beta(t_1, t) = 1$ for all $t\in T$. 
    The result follows. 
\end{proof}

\begin{cor}\label{cor:radical-with-parity}
    We have that either $T_p \subseteq T^+$ or $T_p \subseteq T^-$. 
    If $T_p \subseteq T^+$, then $\rad \beta = \rad \tilde\beta$ and $\rad \beta^+ = (\rad \tilde\beta) \cup T_p$. 
    If $T_p \subseteq T^-$, then $\rad \beta = (\rad \tilde\beta) \cup T_p$ and $\rad \beta^+ = \rad \tilde\beta$. 
\end{cor}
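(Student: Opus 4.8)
The plan is to first establish the dichotomy---either $T_p \subseteq T^+$ or $T_p \subseteq T^-$---and then compute the three radicals in each case by reading off \cref{lemma:set-parity-elements,lemma:rad-tilde-beta}. The degenerate case $T^- = \emptyset$ is immediate: then $T = T^+$, so $T_p \subseteq T^+$ holds trivially, and \cref{lemma:set-parity-elements} already gives $\rad\beta = \rad\tilde\beta = \rad\beta^+ = T_p$, whence both displayed identities hold. So I would assume $T^- \neq \emptyset$ henceforth.

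For the dichotomy I would use the coset description $T_p = t_p(\rad\tilde\beta)$ recorded just before the statement, together with the inclusion $\rad\tilde\beta \subseteq T^+$ coming from \cref{lemma:rad-tilde-beta}. Since $\rad\tilde\beta \subseteq T^+ = \ker p$, every element of the coset $t_p(\rad\tilde\beta)$ has the same image under $p$ as $t_p$; hence all parity elements have the same parity. Therefore $T_p \subseteq T^+$ when $t_p \in T^+$ and $T_p \subseteq T^-$ when $t_p \in T^-$, which proves the first assertion.

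Suppose next that $T_p \subseteq T^+$. Then $T_p \cap T^- = \emptyset$, so the identity $T_p \cap T^- = (\rad\beta)\cap T^-$ of \cref{lemma:set-parity-elements} forces $\rad\beta \subseteq T^+$; combining this with $\rad\tilde\beta = (\rad\beta)\cap T^+$ gives $\rad\beta = \rad\tilde\beta$. For the remaining identity I would use the other part of \cref{lemma:set-parity-elements}, namely $T_p = T_p \cap T^+ = (\rad\beta^+)\setminus(\rad\beta)$, together with the elementary inclusion $\rad\beta \subseteq \rad\beta^+$ (if $\beta(t,T)=1$ then \emph{a fortiori} $\beta(t,T^+)=1$). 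Decomposing $\rad\beta^+$ as the disjoint union of $\rad\beta^+ \cap \rad\beta$ and $(\rad\beta^+)\setminus(\rad\beta)$ and substituting yields $\rad\beta^+ = \rad\beta \cup T_p = \rad\tilde\beta \cup T_p$, as required.

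The case $T_p \subseteq T^-$ is handled symmetrically. Here $T_p \cap T^+ = \emptyset$ makes $(\rad\beta^+)\setminus(\rad\beta) = \emptyset$, \ie $\rad\beta^+ \subseteq \rad\beta$; since also $\rad\beta^+ \subseteq T^+$ by definition of $\beta^+$ and $(\rad\beta)\cap T^+ \subseteq \rad\beta^+$, we obtain $\rad\beta^+ = (\rad\beta)\cap T^+ = \rad\tilde\beta$. Finally, partitioning $\rad\beta$ along $T = T^+ \sqcup T^-$ and using $(\rad\beta)\cap T^+ = \rad\tilde\beta$ and $(\rad\beta)\cap T^- = T_p \cap T^- = T_p$ gives $\rad\beta = \rad\tilde\beta \cup T_p$. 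I do not anticipate a genuine obstacle: once the dichotomy is in place, the statement is a bookkeeping consequence of the two cited lemmas. The only point demanding care is tracking the inclusions among the three radicals---chiefly $\rad\beta \subseteq \rad\beta^+$ and $\rad\beta^+ \subseteq T^+$---and manipulating the set differences via disjoint-union decompositions rather than by hand.
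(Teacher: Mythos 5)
Your proof is correct and follows essentially the same route as the paper: the dichotomy comes from $T_p$ being a coset of $\rad\tilde\beta$ together with $\rad\tilde\beta \subseteq T^+$ (Lemma~\ref{lemma:rad-tilde-beta}), and the radical identities are then read off from Lemma~\ref{lemma:set-parity-elements}. You merely make explicit the set-theoretic bookkeeping (including the degenerate case $T^-=\emptyset$ and the inclusion $\rad\beta \subseteq \rad\beta^+$, which is legitimate where you use it since you have just shown $\rad\beta \subseteq T^+$ there) that the paper's two-sentence proof leaves to the reader.
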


\begin{proof}
    Since $T_p$ is a coset of $\rad \tilde \beta$ and $\rad \tilde \beta \subseteq T^+$ by \cref{lemma:rad-tilde-beta}, the first assertion follows directly.
    The rest follows from \cref{lemma:set-parity-elements}.
\end{proof}

Restricting this discussion about radicals and parity elements to the case where $\tilde\beta$ is nondegenerate, we have:

\begin{prop}\label{lemma:beta-deg-beta-tilde-nondeg}
    The bicharacter $\tilde\beta$ is nondegenerate \IFF $|T_p| = 1$. In this case, the unique parity element $t_p$ has order $2$. Furthermore:
    \begin{enumerate}
        \item \label{item:beta-nondegenerate-case} If $\beta$ is nondegenerate, then $\tilde\beta$ is nondegenerate, $t_p \in T^+$ and $\rad\beta^+ = \langle t_p \rangle$;
        \item \label{item:beta-degenerate-case} If $\beta$ is degenerate, then $\tilde\beta$ is nondegenerate \IFF $\beta^+$ is nondegenerate. In this case, $t_p \in T^-$ and $\rad \beta = \langle t_p \rangle$. \qed
    \end{enumerate}
\end{prop}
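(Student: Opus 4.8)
The plan is to deduce everything from the three radicals $\rad\tilde\beta$, $\rad\beta$, $\rad\beta^+$ whose interplay was already worked out in \cref{lemma:rad-tilde-beta,lemma:set-parity-elements,cor:radical-with-parity}, together with the fact, recorded just before \cref{lemma:set-parity-elements}, that $T_p = t_p(\rad\tilde\beta)$ is a single coset of $\rad\tilde\beta$. First I would dispatch the opening equivalence: since $|T_p| = |\rad\tilde\beta|$, we have $|T_p| = 1$ exactly when $\rad\tilde\beta = \{e\}$, i.e. when $\tilde\beta$ is nondegenerate. Assuming this and writing $t_p$ for the unique parity element, I would square its defining relation: for every $t\in T$, $\tilde\beta(t_p^2,t) = \tilde\beta(t_p,t)^2 = ((-1)^{p(t)})^2 = 1$, so $t_p^2 \in \rad\tilde\beta = \{e\}$ and hence $t_p^2 = e$. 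Note that $t_p = e$ forces $(-1)^{p(t)} = \tilde\beta(e,t) = 1$ for all $t$, i.e. $T^- = \emptyset$; conversely $t_p \neq e$ and has order exactly $2$ whenever $\D$ is odd, which will hold automatically in case~(2).

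For case (1) I would argue as follows. If $\beta$ is nondegenerate then $\rad\beta = \{e\}$, so \cref{lemma:rad-tilde-beta} gives $\rad\tilde\beta = (\rad\beta)\cap T^+ = \{e\}$ and $\tilde\beta$ is nondegenerate. To locate $t_p$, I would rule out $t_p \in T^-$: if $T_p \subseteq T^-$, then \cref{cor:radical-with-parity} would give $\rad\beta = (\rad\tilde\beta)\cup T_p \ni t_p \neq e$, contradicting $\rad\beta = \{e\}$. Hence $t_p \in T^+$, and the same corollary yields $\rad\beta^+ = (\rad\tilde\beta)\cup T_p = \{e\}\cup\{t_p\} = \langle t_p\rangle$.

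For case (2), assume $\beta$ is degenerate. The workhorse is the containment $\rad\tilde\beta = (\rad\beta)\cap T^+ \subseteq \rad\beta^+$, which is immediate from the definitions (a $t_0\in T^+$ with $\beta(t_0,T)=1$ certainly satisfies $\beta(t_0,T^+)=1$). From this, nondegeneracy of $\beta^+$ forces $\rad\tilde\beta = \{e\}$, so $\tilde\beta$ is nondegenerate. For the converse, suppose $\tilde\beta$ is nondegenerate while $\beta$ is degenerate: then $\rad\beta \neq \{e\}$ but $(\rad\beta)\cap T^+ = \rad\tilde\beta = \{e\}$, so $\rad\beta$ contains a nontrivial element $s\in T^-$. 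By \cref{lemma:set-parity-elements}, $s\in T_p\cap T^-$, so $T_p\cap T^- \neq \emptyset$ and \cref{cor:radical-with-parity} forces $T_p\subseteq T^-$; the same corollary then gives $\rad\beta^+ = \rad\tilde\beta = \{e\}$ (so $\beta^+$ is nondegenerate), together with $t_p\in T^-$ and $\rad\beta = (\rad\tilde\beta)\cup T_p = \langle t_p\rangle$.

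I expect no single computation to be the difficulty; the steps are all short. The real care lies in keeping the even/odd dichotomy and the three radicals aligned simultaneously, and the one genuinely substantive point is the converse in case~(2): one must use the \emph{degeneracy} of $\beta$ to push a nontrivial element of $\rad\beta$ into $T^-$ and then invoke \cref{cor:radical-with-parity} to land the entire coset $T_p$ in $T^-$, which is exactly what makes $\beta^+$ nondegenerate. This is where the hypotheses interlock most tightly.
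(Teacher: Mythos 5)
Correct, and essentially the paper's own approach: the paper states this proposition without proof, as an immediate specialization of \cref{lemma:rad-tilde-beta,lemma:set-parity-elements,cor:radical-with-parity} together with the coset description $T_p = t_p(\rad\tilde\beta)$, and that specialization is exactly what you carry out (including the key step in case~(2), where degeneracy of $\beta$ together with $(\rad\beta)\cap T^+ = \rad\tilde\beta = \{e\}$ produces a nontrivial element of $(\rad\beta)\cap T^-$, forcing $T_p \subseteq T^-$). Your additional observation that $t_p = e$ occurs precisely when $T^- = \emptyset$, so that the phrase ``has order $2$'' should strictly be read as $t_p^2 = e$, is a careful refinement of the paper's wording.
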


\subsubsection{Standard realizations of \texorpdfstring{$\D$}{D} that are simple as a superalgebra}\label{subsubsec:standard-realizations}

We will now demonstrate that case \cref{item:beta-nondegenerate-case} in \cref{lemma:beta-deg-beta-tilde-nondeg} corresponds to superalgebras of type $M$ and case \cref{item:beta-degenerate-case} to superalgebras of type $Q$.

Assuming $\beta$ is nondegenerate, we can construct a graded-division superalgebra associated to $(T,\tilde\beta)$ using matrices. 
For that, we follow \cite[Remark 18]{EK15} (see also \cite[Remark 2.16]{livromicha}).

First, we can decompose the group $T$ as $A\times B$, where the restrictions of $\beta$ to each of the subgroups $A$ and $B$ are trivial (see \cite[page 36]{livromicha}) and, hence, $A$ and $B$ are in duality by $\beta$, \ie, the map $A \to \widehat B$ given by $a \mapsto \beta(a, \cdot)$ is an isomorphism of groups (note that, in particular, $|T|$ is a perfect square). 

\begin{defi}\label{def:standard-realization-M}
Let $V$ be the vector space with basis $\{e_b\}_{b\in B}$ (\ie, $V$ is the vector space underlying the group algebra $\FF B$). 
For each $a\in A$, define $X_a\in \End(V)$ by
\[
    \forall b' \in B, \quad X_a (e_{b'}) \coloneqq \beta(a, b')e_{b'},
\]
and, for each $b\in B$, define $X_b\in \End(V)$ by
\[
    \forall b' \in B, \quad X_b (e_{b'}) \coloneqq e_{bb'}.
\]
Finally, we define $X_{ab} \coloneqq X_a X_b$, for all $a\in A$ and $b\in B$.
Then, the set $\{X_t\}_{t\in T}$ is a basis of $\End(V)$, and it defines a division $T$-grading on it.
The resulting graded-division algebra $\D$ corresponds to $(T, \beta)$ and is called \emph{standard realization} associated to $(T,\beta)$.
Note that, as an algebra, $\D \iso M_\ell(\FF)$, where $\ell = \sqrt{|T|}$.
The partition $T = T^+ \cup T^-$ defines a superalgebra structure on $\D$ corresponding to the pair $(T, \tilde\beta)$.
As a superalgebra, $\D$ will be called a \emph{standard realization of type $M$} associated to $(T, \tilde\beta)$.
\end{defi}

Now suppose $\tilde\beta$ is nondegenerate but $\beta$ is degenerate.
In this case, by \cref{lemma:beta-deg-beta-tilde-nondeg}, $\beta^+$ is nondegenerate, so let $\D\even$ be a standard realization of type $M$ associated with $(T^+,\beta^+)$.
Since $t_p \in T^-$, there is an order two element $h\in G$ such that $t_p = (h, \barr 1)\in G^\#$.
We can grade $Q(1)$ with a $\langle h \rangle$-grading as in \cref{ex:Q(1)-as-grd-div-SA}.
It is easy to check that the superalgebra $\D \coloneqq Q(1) \tensor \D\even$, graded by setting $\deg (a \tensor b) \coloneqq (\deg a)(\deg b)$, is associated with $(T, \tilde\beta)$. 

\begin{defi}\label{def:standard-realization-Q}
    The $G$-graded superalgebra $\D \coloneqq Q(1) \tensor \D\even = \D\even \oplus u\D\even$, where we declare the $G$-degree of $u$ to be $h$, will be referred to as a \emph{standard realization of type $Q$} associated to $(T,\tilde\beta)$ or to $(T^+, \beta^+, h)$.
    Note that $\D \iso Q(\ell)$, where $\ell = \sqrt{|T^+|} = \sqrt{|T|/2}$.
\end{defi}

\begin{prop}\label{prop:simple-tilde-beta}
    As a superalgebra, $\D$ is simple \IFF $\tilde\beta$ is nondegenerate.
\end{prop}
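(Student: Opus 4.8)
The plan is to prove the two implications separately, reducing the ``if'' direction to the standard realizations and the ``only if'' direction to an explicit construction of a superideal out of the supercenter. For the ``if'' direction, suppose $\tilde\beta$ is nondegenerate. By \cref{lemma:beta-deg-beta-tilde-nondeg} there are two cases. If $\beta$ is also nondegenerate, then $\D$ is simple as an \emph{algebra} (by the result recalled just before the statement), and since every superideal is in particular an ideal, $\D$ has no nonzero proper superideals, hence is simple as a superalgebra. If $\beta$ is degenerate, then $\beta^+$ is nondegenerate, and $(T,\tilde\beta)$ is exactly the datum realized by the standard realization of type $Q$ of \cref{def:standard-realization-Q}; by the parametrization \cref{prop:parametrization-T-beta}, $\D$ is isomorphic as a superalgebra to $Q(\ell)$, which is simple as a superalgebra (see \cref{subsec:simple-superalgebras}). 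Either way $\D$ is superalgebra-simple.

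For the ``only if'' direction I would argue the contrapositive: assuming $\tilde\beta$ is degenerate, I will exhibit a nonzero proper superideal. Put $C \coloneqq sZ(\D)$. By \cref{cor:Z-sZ-division}, $C = Z(\D)\cap\D\even$ consists of \emph{even central} elements, and its support is $\rad\tilde\beta$, which by \cref{lemma:rad-tilde-beta} is contained in $T^+$ and is nontrivial by assumption; thus $C = \bigoplus_{t\in\rad\tilde\beta}\FF X_t$ is a commutative $\FF$-subalgebra with $\dim_\FF C = |\rad\tilde\beta| \geq 2$. Since $\FF$ is algebraically closed, $C$ is not a field, so it has a maximal ideal $\mathfrak m$ with $C/\mathfrak m \iso \FF$ and hence $\dim_\FF \mathfrak m = \dim_\FF C - 1 \geq 1$. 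The two-sided ideal $\mathfrak m\D$ is then a superideal, because $\mathfrak m$ is central and contained in $\D\even$, so $\mathfrak m\D = \mathfrak m\D\even \oplus \mathfrak m\D\odd$ is $\ZZ_2$-graded; it is nonzero since $0\neq\mathfrak m\subseteq\mathfrak m\D$, and the crux is to show that it is proper.

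To see that $\mathfrak m\D\neq\D$, I would show that $\D$ is free as a module over $C$. Choosing coset representatives $T = \bigsqcup_i s_i(\rad\tilde\beta)$ together with homogeneous $X_{s_i}\in\D_{s_i}$, each isotypic piece $\D_{s_i(\rad\tilde\beta)}$ equals $X_{s_i}C$ (the products $X_{s_i}X_t$ with $t\in\rad\tilde\beta$ span it and are invertible), so $\D=\bigoplus_i X_{s_i}C$ is $C$-free of rank $[T:\rad\tilde\beta]$; here I use that $G$ is abelian so that degrees combine multiplicatively. Consequently $\mathfrak m\D=\bigoplus_i X_{s_i}\mathfrak m$ and $\D/\mathfrak m\D \iso \bigoplus_i X_{s_i}(C/\mathfrak m)$ has $\FF$-dimension $[T:\rad\tilde\beta]\geq 1$, so $\mathfrak m\D\neq\D$. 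Hence $\D$ is not simple as a superalgebra, completing the contrapositive.

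The step I expect to be the main obstacle is precisely the properness of $\mathfrak m\D$ in the ``only if'' direction. The naive approach would be to pick $t\neq e$ in $\rad\tilde\beta$ and split $\D$ using an idempotent coming from the central even element $X_t$; but over a field of characteristic $p$ the element $X_t$, which satisfies $X_t^{\,\mathrm{ord}(t)}\in\FF^\times$, need not be semisimple, so no such idempotent need exist, and $C = sZ(\D)$ itself may fail to be semisimple. Working instead with a maximal ideal of the possibly non-semisimple commutative algebra $C$, together with the freeness of $\D$ over $C$, sidesteps this difficulty and keeps the argument valid for every $\Char\FF\neq 2$.
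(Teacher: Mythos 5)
Your proof is correct, and while the ``if'' direction coincides with the paper's (nondegenerate $\beta$ reduces superalgebra-simplicity to algebra-simplicity; degenerate $\beta$ is handled via the type-$Q$ standard realization of \cref{def:standard-realization-Q} together with \cref{prop:parametrization-T-beta}), your ``only if'' direction takes a genuinely different route. The paper proves that direction through the classification of finite-dimensional simple associative superalgebras: a simple $\D$ is either of type $M$, hence simple as an algebra, so that $\beta$ and therefore $\tilde\beta$ is nondegenerate, or else $\D \iso Q(n)$, in which case $\D\even \iso M_n(\FF)$ is a graded-division algebra that is simple as an algebra, so $\beta^+$ is nondegenerate and \cref{lemma:beta-deg-beta-tilde-nondeg} concludes. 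You instead prove the contrapositive by exhibiting an explicit superideal: if $\rad\tilde\beta \neq \{e\}$, then $C \coloneqq sZ(\D) = \bigoplus_{t\in\rad\tilde\beta}\FF X_t$ (its support is $\rad\tilde\beta$, contained in $T^+$ by \cref{lemma:rad-tilde-beta}, and $C \subseteq Z(\D)\cap\D\even$ by \cref{cor:Z-sZ-division}) has $\FF$-dimension at least $2$, hence, over algebraically closed $\FF$, a maximal ideal $\mathfrak m \neq 0$ with $C/\mathfrak m \iso \FF$; since $\mathfrak m$ is central and even, $\mathfrak m\D$ is a nonzero superideal, and it is proper because $\D = \bigoplus_i X_{s_i}C$ is free over $C$ on coset representatives of $\rad\tilde\beta$ in $T$ (each $\D_t$ being one-dimensional), so $\D/\mathfrak m\D$ has dimension $[T:\rad\tilde\beta] \geq 1$. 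All of these steps check out, including the grading of $\mathfrak m\D$ (its even and odd parts are $\mathfrak m\D\even$ and $\mathfrak m\D\odd$), and your concern about non-semisimple $X_t$ in positive characteristic is legitimately sidestepped by working with a maximal ideal of the possibly non-reduced algebra $C$ rather than with idempotents. As for what each approach buys: the paper's argument is shorter given that the classification of simple superalgebras is already on record in \cref{subsec:simple-superalgebras}, whereas yours is self-contained and more robust --- it never invokes that classification, works uniformly in all characteristics, and is essentially the mechanism underlying the ungraded fact ``$\D$ is simple as an algebra \IFF $\beta$ is nondegenerate'' that both proofs quote.
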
 

\begin{proof}
    We know that $\D$ is simple as algebra \IFF $\beta$ is nondegenerate. The construction of standard realization of type $Q$ proves that if $\tilde\beta$ is nondegenerate, then $\D$ is simple as superalgebra. It only remains the converse of this last statement.

    If $\D \iso Q(n)$, then $\D\even \iso M(n)$ is a graded-division algebra that is simple as an algebra and, hence, $\beta^+$ is nondegenerate. The result follows from \cref{lemma:beta-deg-beta-tilde-nondeg}.
\end{proof}

\subsection{Graded-simple superalgebras over an algebraically closed field}\label{subsec:grdd-simple-ass-algebraically-closed}

We continue assuming that $\FF$ is algebraically closed and $G$ is abelian.
If $R$ is a finite-dimensional graded-simple superalgebra, it follows from \cref{thm:End-over-D} that $R \iso \End_\D(\U)$, where $\D$ and $\U$ are also finite-dimensional.
Since we know how to parametrize $\D$ and $\U$, we can parametrize $\End_\D(\U)$.
First, we will do it in terms of the group $G^\#$:

\begin{defi}\label{def:E(D,U)}
    Let $\D$ be a finite-dimensional graded-division superalgebra over an algebraically closed field $\FF$ with $\Char \FF \neq 2$, and let $\U$ be a graded right $\D$-module of finite rank. 
	If $\D$ is associated to $(T, \tilde\beta)$ and $\U$ is associated to $\kappa\from G^\#/T \to \ZZ_{\geq 0}$ , we say that $(T, \tilde\beta, \kappa)$ are the \emph{parameters} of the pair $(\D, \U)$.
\end{defi}

Consider the usual $G^\#$-action on the functions $\kappa \from G^\#/T \to \ZZ_{\geq 0}$, \ie, given $g\in G^\#$, set $(g\cdot \kappa) (x) \coloneqq \kappa(g\inv x)$ for all $x\in G^\#/T$. 
Since $G$ is abelian, it is easy to see that, if $\kappa$ is associated to a graded $\D$-module $\U$, then $g \cdot \kappa$ is associated to $\U^{[g]}$. 
Note that if $(g_1, \ldots, g_k)$ is a $k$-tuple realizing $\kappa$, then $(gg_1, \ldots, gg_k)$ is a $k$-tuple realizing $g\cdot \kappa$.

If $\psi_0\from \D \to \D'$ is an isomorphism of graded algebras and $\U'$ is a graded right $\D'$-module associated to $\kappa'$, it is clear that $\dim_{\D'} \U_x' = \dim_\D (\U_x')^{\psi_0}$, for all $x \in G^\#/T$, and, hence, the graded $\D$-module $(\U')^{\psi_0}$ is also associated to $\kappa'$. 
Thus, \cref{thm:iso-abstract} becomes the following:

\begin{thm}\label{thm:iso-End_D-U-with-parameters}
	Let $(\D, \U)$ and $(\D', \U')$ be pairs as in Definition \ref{def:E(D,U)}, and let $(T, \tilde\beta, \kappa)$ and $(T', \tilde\beta', \kappa')$ be their parameters. 
	Then $\End_\D (\U) \iso \End_{\D'} (\U')$ \IFF $T = T'$, $\tilde\beta = \tilde\beta'$, and $\kappa$ and $\kappa'$ belong to the same $G^\#$-orbit. \qed 
\end{thm}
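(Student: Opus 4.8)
The plan is to deduce this result directly from the abstract isomorphism criterion in \cref{thm:iso-abstract} by translating each of its conditions into the language of the parameters $(T,\tilde\beta,\kappa)$. Recall that \cref{thm:iso-abstract} tells us that $R = \End_\D(\U) \iso R' = \End_{\D'}(\U')$ holds \IFF there exists a triple $(g,\psi_0,\psi_1)$ with $g \in G^\#$, an isomorphism of graded superalgebras $\psi_0\from {}^{[g\inv]}\D^{[g]} \to \D'$, and an isomorphism of graded right $\D$-modules $\psi_1\from \U^{[g]} \to (\U')^{\psi_0}$. Since $G$ is abelian, the left and right shifts by $g$ coincide and ${}^{[g\inv]}\D^{[g]} = \D$ as a graded superalgebra, so the existence of $\psi_0$ reduces to the existence of an isomorphism $\D \to \D'$ of graded superalgebras.

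First I would handle the $\psi_0$ condition. By \cref{prop:parametrization-T-beta}, a finite-dimensional graded-division superalgebra is determined up to isomorphism by its pair $(T,\tilde\beta)$, and conversely every such pair arises. The support $T = \supp\D \subseteq G^\#$ and the bicharacter $\tilde\beta$ (defined via \cref{eq:def-tilde-beta}) are intrinsic invariants of the isomorphism class, so $\D \iso \D'$ as graded superalgebras forces $T = T'$ and $\tilde\beta = \tilde\beta'$, and these equalities are also sufficient for such a $\psi_0$ to exist. This gives the first two parameter conditions.

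Next I would address the $\psi_1$ condition, assuming $T = T'$ and $\tilde\beta = \tilde\beta'$ and fixing an isomorphism $\psi_0\from \D \to \D'$. The paragraph preceding the statement already records the two facts I need: first, that $g \cdot \kappa$ is the invariant classifying $\U^{[g]}$ (this follows from \cref{cor:iso-isotypic-components} together with the description of the $G^\#$-action on $\kappa$); and second, that the twisted module $(\U')^{\psi_0}$ is classified by the same $\kappa'$ as $\U'$, because $\dim_{\D'}\U'_x = \dim_\D (\U'_x)^{\psi_0}$ for every $x \in G^\#/T$. By \cref{cor:iso-isotypic-components}\,(2), a graded isomorphism $\psi_1\from \U^{[g]} \to (\U')^{\psi_0}$ of graded right $\D$-modules exists \IFF these two modules have equal $\kappa$-invariants, \ie, $g\cdot\kappa = \kappa'$. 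Thus an appropriate $\psi_1$ exists for \emph{some} $g \in G^\#$ precisely when $\kappa$ and $\kappa'$ lie in the same $G^\#$-orbit.

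Combining the two parts, $R \iso R'$ \IFF both a $\psi_0$ and a compatible $\psi_1$ can be found, which by the above is equivalent to $T = T'$, $\tilde\beta = \tilde\beta'$, and $\kappa,\kappa'$ being in the same $G^\#$-orbit. The one point requiring care, which I expect to be the main (though modest) obstacle, is the interaction between the choice of $g$ in the two conditions: the shift by $g$ enters only through $\psi_1$ and leaves the $\psi_0$-condition untouched (since $T = \supp\D$ is shift-invariant), so one must verify that a single $g$ can simultaneously serve both roles. This is immediate once we observe that the existence of $\psi_0$ is independent of $g$, so we are free to first pick $g$ realizing $g\cdot\kappa = \kappa'$ and then choose any graded-superalgebra isomorphism $\psi_0$; the final clause of \cref{thm:iso-abstract} concerning when two triples define the same $\psi$ is not needed here, as we only require existence.
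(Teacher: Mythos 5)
Your proposal is correct and follows essentially the same route as the paper, which states the theorem with a \qed{} precisely because it is the direct translation of \cref{thm:iso-abstract} via the preceding observations: $\psi_0$-existence reduces to $(T,\tilde\beta)=(T',\tilde\beta')$ by \cref{prop:parametrization-T-beta}, and $\psi_1$-existence reduces to $g\cdot\kappa=\kappa'$ since $\U^{[g]}$ is classified by $g\cdot\kappa$ and $(\U')^{\psi_0}$ by $\kappa'$. Your care about a single $g$ serving both conditions is sound and matches the paper's implicit reasoning, since the shift leaves $\D$ unchanged when $G$ is abelian.
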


The distinction of graded-division algebras being even or odd allowed us to reparametrize their graded modules (see \cref{para:module-over-even-or-odd-D}).
The same can be done here:

\begin{defi}\label{def:E(D-U)-super}
    Let $\D$ be a finite-dimensional graded-division superalgebra associated to $(T, \tilde\beta)$, and let $\U$ be a graded right $\D$-module of finite rank. 
    We parametrize the pair $(\D, \U)$ by:
    \begin{enumerate}
        \item\label{it:even-param} the quadruple $(T, \beta, \kappa_\bz, \kappa_\bo)$, if $\D$ is even and $\U$ is associated to the maps $\kappa_\bz, \kappa_\bo \from G/T \to \ZZ_{\geq 0}$;
        \item\label{it:odd-param} the quadruple $(T, \tilde\beta, \kappa)$, if $\D$ is odd and $\U$ is associated to the map $\kappa\from G/T^+ \to \ZZ_{\geq 0}$.
    \end{enumerate}
\end{defi}

\phantomsection\label{phsec:odd-param-in-terms-of-G}

Observe that in \cref{it:odd-param}, we do not present a parametrization solely in terms of the group $G$, as $T \not\subseteq G$.  
While such a parametrization can be constructed (see \cite[Section 2.3]{caios_thesis}), it requires intricate technical steps, and we have opted to exclude it here for conciseness.  
Nevertheless, in cases where \( T \) can be naturally described in terms of \( G \), as in \cref{def:Gamma-T-beta-kappa-Q}, we will include it explicitly.  

We can easily adapt \cref{thm:iso-End_D-U-with-parameters}:

\begin{thm}\label{thm:iso-D-even}\label{thm:iso-D-odd}
	Let $(\D, \U)$ and $(\D', \U')$ be pairs as in Definition \ref{def:E(D-U)-super}.
    \begin{enumerate}
        \item If one of $\D$ and $\D'$ is even and the other is odd, then $\End_\D (\U) \not\iso \End_{\D'} (\U')$.
        \item If both $\D$ and $\D'$ are even, let $(T, \beta, \kappa_\bz, \kappa_\bo)$ and $(T', \beta', \kappa_\bz', \kappa_\bo')$ be the parameters of $(\D, \U)$ and $(\D', \U')$, respectively.
        Then $\End_\D (\U) \iso \End_{\D'} (\U')$ \IFF $T=T'$, $\beta=\beta'$, and there is $g\in G$ such that either $g \cdot \kappa_{\bar 0}=\kappa_{\bar 0}'$ and $g \cdot \kappa_{\bar 1}=\kappa_{\bar 1}'$, or $g \cdot \kappa_{\bar 0}=\kappa_{\bar 1}'$ and $g \cdot \kappa_{\bar 1}=\kappa_{\bar 0}'$.
        \item If both $\D$ and $\D'$ are odd, let $(T, \tilde\beta, \kappa)$ and $(T', \tilde\beta', \kappa')$ be the parameters of $(\D, \U)$ and $(\D', \U')$, respectively.
        Then $\End_\D (\U) \iso \End_{\D'} (\U')$ \IFF $T=T'$, $\tilde\beta = \tilde\beta'$, and $\kappa$ and $\kappa'$ belong to the same $G$-orbit. \qed
        \end{enumerate}
\end{thm}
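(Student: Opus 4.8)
The plan is to derive all three parts from \cref{thm:iso-End_D-U-with-parameters}, which already characterizes $\End_\D(\U)\iso\End_{\D'}(\U')$ by the three conditions $T=T'$, $\tilde\beta=\tilde\beta'$, and $\kappa,\kappa'$ lying in the same $G^\#$-orbit, where $\kappa,\kappa'\from G^\#/T\to\ZZ_{\geq 0}$. Thus the only genuine task is to translate the single $G^\#$-orbit condition into the reparametrized data of \cref{def:E(D-U)-super}, treating the even and odd cases separately using the analysis in \cref{para:module-over-even-or-odd-D}.

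For part (1), I would note that whether $\D$ is even or odd is determined by $T$ alone: $\D$ is even \IFF $T\subseteq G\times\{\bar 0\}$, \ie, $T^-=\emptyset$, and odd otherwise. Since $T=T'$ is necessary for isomorphism, a mismatch in the parities of $\D$ and $\D'$ already forces $T\neq T'$, hence non-isomorphism, with no further computation. For part (2), both $T=T'=T^+$ lie inside $G\times\{\bar 0\}$, so I identify $T$ with a subgroup of $G$; moreover $\tilde\beta=\beta$ on $T\times T$ since $p\equiv\bar 0$ there, so the condition $\tilde\beta=\tilde\beta'$ is exactly $\beta=\beta'$. The decomposition $G^\#/T=(G/T)\sqcup\big((e,\bar 1)\cdot G/T\big)$ together with $\kappa((g,i)T)=\kappa_i(gT)$ lets me unpack the action of an arbitrary $(g,j)\in G^\#$: evaluating $(g,j)\cdot\kappa$ at $(h,i)T$ gives $\kappa_{i+j}(g\inv hT)=(g\cdot\kappa_{i+j})(hT)$. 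Splitting on the parity of $j$ then yields precisely the two alternatives in the statement: for $j=\bar 0$ the equalities $g\cdot\kappa_{\bar 0}=\kappa_{\bar 0}'$ and $g\cdot\kappa_{\bar 1}=\kappa_{\bar 1}'$, and for $j=\bar 1$ the swapped equalities $g\cdot\kappa_{\bar 0}=\kappa_{\bar 1}'$ and $g\cdot\kappa_{\bar 1}=\kappa_{\bar 0}'$. Hence $\kappa$ and $\kappa'$ share a $G^\#$-orbit \IFF one of these two systems is solvable for some $g\in G$.

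For part (3), both $\D$ and $\D'$ are odd, so $T^-\neq\emptyset$, and I use the bijection $\iota\from G/T^+\to G^\#/T$, $\iota(gT^+)=(g,\bar 0)T$, of \cref{para:module-over-even-or-odd-D} to regard $\kappa,\kappa'$ as functions on $G/T^+$. The key point I would verify is that the $G^\#$-action on $G^\#/T$ descends, under $\iota$, to translations by elements of $G$ only: the even part $(g,\bar 0)$ acts as left translation by $g$, while fixing an element $(g_1,\bar 1)\in T$ (which exists since $T^-\neq\emptyset$) gives the coset identity $(e,\bar 1)T=(g_1\inv,\bar 0)T$, so the odd generator $(e,\bar 1)$ acts, under $\iota$, as translation by an element of $G$. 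Consequently every $(g,j)\in G^\#$ acts as translation by some element of $G$, whence the $G^\#$-orbit of $\kappa$ coincides with its $G$-orbit. The criterion of \cref{thm:iso-End_D-U-with-parameters} then reads $T=T'$, $\tilde\beta=\tilde\beta'$, and $\kappa,\kappa'$ in the same $G$-orbit, as claimed.

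The routine verifications (that $\iota$ is a bijection, that the displayed coset identities hold, and that the reindexing of $\kappa$ is consistent) are straightforward. The only genuinely delicate points are the parity-swap bookkeeping in part (2) and the reduction of the odd generator's action to a $G$-translation in part (3); both rest on the structure of $T$ relative to $T^+$ and on $G$ being abelian, so this is where I would concentrate the care.
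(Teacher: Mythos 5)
Your proposal is correct and takes exactly the route the paper intends: the paper offers no written proof, presenting the theorem as an immediate adaptation of \cref{thm:iso-End_D-U-with-parameters} via the reparametrization of \cref{para:module-over-even-or-odd-D}. Your verification supplies precisely the routine details being omitted—parity of $\D$ read off from $T$ for part (1), the parity-swap bookkeeping for the $(g,\bar 1)$-action in part (2), and the reduction of the $(e,\bar 1)$-action to a $G$-translation via an element of $T^-$ in part (3)—all of which check out.
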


\phantomsection\label{para:gradings-on-M-and-Q}

We now specialize this result for simple superalgebras.
By \cref{prop:simple-tilde-beta}, this is equivalent to considering the case where $\tilde\beta$ is nondegenerate.
For these, we can construct models for the gradings explicitly using elementary gradings and standard realizations (see \cref{para:elementary-grading,subsubsec:standard-realizations}, respectively).

We start with even gradings on $M(m,n)$:

\begin{defi}\label{def:Gamma-T-beta-kappa-even}
    Let $m, n\in \ZZ_{\geq 0}$, not both zero, and consider a finite subgroup $T \subseteq G$, a nondegenerate bicharacter $\beta\from T\times T \to \FF^\times$ and maps $\kappa_\bz, \kappa_\bo \from G/T \to \ZZ_{\geq 0}$ with finite support such that $k_\bz \sqrt{|T|} = m$ and $k_\bo \sqrt{|T|} = n$, where $k_i \coloneqq |\kappa_i|$.
    Choose:
    \begin{enumerate}
        \item $\D$ to be a standard realization of type $M$ associated to $(T,\beta)$;
        \item $\gamma_\bz$ and $\gamma_\bo$ to be tuples realizing $\kappa_\bz$ and $\kappa_\bo$, respectively.
    \end{enumerate}
    Consider the superalgebra $M(k_\bz, k_\bo)$ equipped with the elementary grading associated to $(\gamma_\bz, \gamma_\bo)$.
    The even grading $\Gamma_M(T, \beta, \kappa_\bz, \kappa_\bo)$ on $M(m,n)$ is induced by the identification $M(m,n) \iso M_{k_\bz | k_\bo}(\D) = M(k_\bz, k_\bo) \tensor \D$ via the Kronecker product.
    The resulting graded superalgebra is denoted $M(T, \beta, \kappa_\bz, \kappa_\bo)$. 
\end{defi}

From \cref{thm:iso-D-even}, we have:

\begin{cor}\label{cor:iso-M-even}
    Every even $G$-grading on $M(m,n)$ is isomorphic to one of the form $\Gamma_M(T, \beta, \kappa_\bz, \kappa_\bo)$, as in \cref{def:Gamma-T-beta-kappa-even}.
    Two such gradings $\Gamma_M (T, \beta, \kappa_\bz, \kappa_\bo)$ and $\Gamma_M (T', \beta', \kappa_\bz', \kappa_\bo')$ are isomorphic \IFF $T = T'$, $\beta = \beta'$ and there is $g\in G$ such that either $g \cdot \kappa_{\bar 0}=\kappa_{\bar 0}'$ and $g \cdot \kappa_{\bar 1}=\kappa_{\bar 1}'$, or $g \cdot \kappa_{\bar 0}=\kappa_{\bar 1}'$ and $g \cdot \kappa_{\bar 1}=\kappa_{\bar 0}'$.  \qed
\end{cor}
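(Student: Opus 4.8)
The plan is to obtain this as a direct specialization of \cref{thm:iso-D-even} to the case where $R$ is finite-dimensional, simple as a superalgebra, and of type $M$, using the graded Wedderburn--Artin structure theorem (\cref{thm:End-over-D}) to pass from an abstract even grading to the concrete model $\Gamma_M$. Throughout, I will identify an isomorphism class of gradings on $M(m,n)$ with an isomorphism class of $G$-graded superalgebras via \cref{defi:isomorphic-grds}: a grading $\Gamma$ on $M(m,n)$ is isomorphic to $\Delta$ exactly when $(M(m,n), \Gamma) \iso (M(m,n), \Delta)$ as $G$-graded superalgebras. Since \cref{def:E(D-U)-super} and part~(1) of \cref{thm:iso-D-even} show that the even/odd dichotomy for $\D$ is an invariant of the graded superalgebra, the notion of an \emph{even} grading is unambiguous and means exactly that the associated $\D$ is even.

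For existence, I start from $M(m,n)$ equipped with an arbitrary even $G$-grading. Being finite-dimensional and simple as a superalgebra, it is in particular graded-simple, so \cref{thm:End-over-D} yields a graded-division superalgebra $\D$ and a finite-rank graded right $\D$-module $\U$ with $M(m,n) \iso \End_\D(\U)$ as $G$-graded superalgebras. Because the grading is even, $\D$ is an even graded-division superalgebra, so $T = T^+ \subseteq G$ and $\tilde\beta = \beta$. Since $M(m,n)$ is simple as a superalgebra, \cref{prop:simple-R-D-super} forces $\D$ to be simple as a superalgebra, whence $\beta = \tilde\beta$ is nondegenerate by \cref{prop:simple-tilde-beta}. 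Consequently $\D$ may be taken to be a standard realization of type $M$ associated to $(T, \beta)$, as its isomorphism class is determined by $(T, \tilde\beta)$ (\cref{prop:parametrization-T-beta}), and $\U$ is parametrized by the pair $\kappa_\bz, \kappa_\bo \from G/T \to \ZZ_{\geq 0}$ as in \cref{def:E(D-U)-super}. Writing $\End_\D(\U) = M_{k_\bz|k_\bo}(\D) = M(k_\bz, k_\bo) \tensor \D$ exactly reproduces the model of \cref{def:Gamma-T-beta-kappa-even}, so $M(m,n) \iso \Gamma_M(T, \beta, \kappa_\bz, \kappa_\bo)$.

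For the isomorphism criterion, I note that each model $\Gamma_M(T, \beta, \kappa_\bz, \kappa_\bo)$ is by construction the graded superalgebra $\End_\D(\U)$ for an even $\D$ with parameters $(T, \beta)$ and a module $\U$ with parameters $(\kappa_\bz, \kappa_\bo)$. Hence two models are isomorphic as gradings on $M(m,n)$ precisely when the corresponding graded superalgebras $\End_\D(\U)$ and $\End_{\D'}(\U')$ are isomorphic, and part~(2) of \cref{thm:iso-D-even} translates this into the stated condition: $T = T'$, $\beta = \beta'$, and the existence of $g \in G$ with either $g\cdot\kappa_\bz = \kappa_\bz'$ and $g\cdot\kappa_\bo = \kappa_\bo'$, or $g\cdot\kappa_\bz = \kappa_\bo'$ and $g\cdot\kappa_\bo = \kappa_\bz'$.

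The bulk of the genuine content lives in \cref{thm:End-over-D,thm:iso-D-even}, so the corollary is largely bookkeeping. The one point requiring care is to check that the parameters genuinely land back on $M(m,n)$, rather than on a matrix superalgebra of a different size: since $\beta$ is nondegenerate and alternating, $\sqrt{|T|} \in \ZZ$ and $\dim_\FF \D = |T|$, and matching the block-diagonal even part of $\End_\D(\U)$ with that of $M(m,n)$ forces $m = k_\bz\sqrt{|T|}$ and $n = k_\bo\sqrt{|T|}$, exactly the constraints imposed in \cref{def:Gamma-T-beta-kappa-even}. I expect this verification---that the dimension constraints are not only necessary but also sufficient for the model to be defined on $M(m,n)$---to be the only step needing explicit attention.
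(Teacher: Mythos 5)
Your proposal is correct and follows essentially the same route as the paper, which also obtains the corollary as an immediate specialization of \cref{thm:iso-D-even} via the graded Wedderburn--Artin theorem, \cref{prop:simple-R-D-super}, \cref{prop:simple-tilde-beta}, and the standard-realization models of \cref{def:Gamma-T-beta-kappa-even}. Your final bookkeeping point is handled the same way the paper implicitly handles it: if the dimensions come out swapped ($k_\bz\sqrt{|T|}=n$, $k_\bo\sqrt{|T|}=m$), one simply interchanges $\kappa_\bz$ and $\kappa_\bo$, which produces an isomorphic model by the very swap symmetry appearing in the isomorphism criterion.
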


\begin{remark}\label{rmk:m-different-n-even-grading}
    If $m \neq n$, then $|\kappa_\bz| \neq |\kappa_\bo|$ and, hence, only the case $g \cdot \kappa_{\bar 0}=\kappa_{\bar 0}'$ and $g \cdot \kappa_{\bar 1}=\kappa_{\bar 1}'$ is possible.
\end{remark}

Now, let us consider the odd gradings on $M(m,n)$. 
Recall that, by \cref{lemma:odd-M-m=n}, we have $m = n$. 

\begin{defi}\label{def:Gamma-T-beta-kappa-odd}
    Let $n > 0$ be a natural number and consider a finite subgroup $T \subseteq G^\#$, $T\not\subseteq G$, a nondegenerate alternating bicharacter $\beta\from T\times T \to \FF^\times$ and a map $\kappa\from G/T^+ \to \ZZ_{\geq 0}$ with finite support such that $k \sqrt{|T|} = n$, where $k \coloneqq |\kappa|$.
    Let $p\from G^\#=G\times \ZZ_2 \to \ZZ_2$ be the projection on the second component and define $\tilde\beta\from T\times T \to \FF^\times$ by $\tilde\beta(t,s) \coloneqq \sign{t}{s} \beta(t,s)$ for all $t,s\in T$.
    Choose:
    \begin{enumerate}
        \item $\D$ to be a standard realization of type $M$ associated to $(T,\tilde\beta)$;
        \item $\gamma$ to be a tuple realizing $\kappa$.
    \end{enumerate}
    Consider the algebra $M_{k}(\FF)$ equipped with the elementary grading associated to $\gamma$.
    The odd grading $\Gamma_M(T, \tilde\beta, \kappa)$ on $M(n,n)$ is induced by the identification $M(n,n) \iso M_k(\D) = M_{k}(\FF) \tensor \D$ via the Kronecker product.
    The resulting graded superalgebra is denoted $M(T, \tilde\beta, \kappa)$. 
\end{defi}

\begin{cor}\label{cor:iso-M-odd}
    Every odd $G$-grading on $M(n,n)$ is isomorphic to $\Gamma_M (T, \tilde\beta,\kappa)$ as in \cref{def:Gamma-T-beta-kappa-odd}. 
    Two such gradings $\Gamma_M (T, \tilde\beta, \kappa)$ and $\Gamma_M (T', \tilde\beta', \kappa')$ are isomorphic \IFF $T = T'$, $\tilde\beta = \tilde\beta'$, and there is a $g\in G$ such that $g\cdot \kappa = \kappa'$. \qed
\end{cor}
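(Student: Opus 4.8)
The plan is to derive both assertions from the abstract classification already in hand, using the graded Wedderburn--Artin theorem to bring an arbitrary odd grading into the normal form and then reading off the isomorphism criterion from \cref{thm:iso-D-odd}(3). First I would note that $M(n,n)$ is simple as a superalgebra, so for any compatible $G$-grading it is graded-simple; hence \cref{thm:End-over-D} realizes $(M(n,n),\Gamma)$ as $\End_\D(\U)$ for a finite-dimensional graded-division superalgebra $\D$ and a graded right $\D$-module $\U$ of finite rank. By the terminology fixed in \cref{para:module-over-even-or-odd-D}, an \emph{odd} grading is precisely one whose associated $\D$ is odd, equivalently $T \coloneqq \supp \D \not\subseteq G$ (so that $T^- \neq \emptyset$); this is a well-defined invariant of the grading by \cref{thm:iso-D-odd}(1).

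Next I would pin down the parameters of $\D$. Since $\End_\D(\U)\iso M(n,n)$ is simple as a superalgebra, \cref{prop:simple-R-D-super} forces $\D$ to be simple as a superalgebra, and then \cref{prop:simple-tilde-beta} gives that $\tilde\beta$ is nondegenerate. To upgrade this to nondegeneracy of $\beta$ I would use the center: by \cref{prop:R-and-D-have-the-same-center}, $Z(\D)\iso Z(M(n,n)) = \FF 1$ is one-dimensional, and since the support of $Z(\D)$ equals $\rad\beta$, this forces $\rad\beta = \{e\}$, i.e.\ $\beta$ is nondegenerate (it is automatically alternating). Thus we are in case \cref{item:beta-nondegenerate-case} of \cref{lemma:beta-deg-beta-tilde-nondeg}, so $\D$ is odd of type $M$.

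Because $\D$ is odd, \cref{def:E(D-U)-super}\cref{it:odd-param} parametrizes $(\D,\U)$ by $(T,\tilde\beta,\kappa)$ with $\kappa\from G/T^+ \to \ZZ_{\geq 0}$, and a dimension count (together with \cref{lemma:odd-M-m=n}, which already restricts odd gradings to the equal-block case) confirms the numerical constraints of \cref{def:Gamma-T-beta-kappa-odd}. Since $\Gamma_M(T,\tilde\beta,\kappa)$ is by construction the grading attached to exactly these parameters via a standard realization of type $M$ and the associated elementary grading, I obtain $\Gamma \iso \Gamma_M(T,\tilde\beta,\kappa)$, proving the existence statement. For the isomorphism criterion I would observe that two models $\Gamma_M(T,\tilde\beta,\kappa)$ and $\Gamma_M(T',\tilde\beta',\kappa')$ are isomorphic as gradings exactly when the corresponding $\End_\D(\U)$ and $\End_{\D'}(\U')$ are isomorphic as $G$-graded superalgebras; this is precisely \cref{thm:iso-D-odd}(3), which yields the stated equivalence $T=T'$, $\tilde\beta=\tilde\beta'$, and $g\cdot\kappa=\kappa'$ for some $g\in G$.

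I expect the only genuinely delicate step to be the passage from ``$\tilde\beta$ nondegenerate'' to ``$\beta$ nondegenerate'', that is, excluding the type-$Q$ alternative in \cref{lemma:beta-deg-beta-tilde-nondeg}: an odd graded-division superalgebra with nondegenerate $\tilde\beta$ can a priori be of either type, and it is the center computation via \cref{prop:R-and-D-have-the-same-center} that rules out type $Q$. Everything else is a direct specialization of the general machinery of \cref{subsec:grdd-simple-ass-algebraically-closed}.
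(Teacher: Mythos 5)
Your proposal is correct and follows essentially the same route as the paper: the corollary is precisely the specialization of \cref{thm:iso-D-odd}(3), with graded-simplicity of $M(n,n)$ feeding into \cref{thm:End-over-D} and the parameters $(T,\tilde\beta,\kappa)$ pinned down through \cref{prop:simple-R-D-super,prop:simple-tilde-beta} and the odd-case parametrization of \cref{def:E(D-U)-super}. Your center computation excluding the type-$Q$ alternative (upgrading nondegeneracy of $\tilde\beta$ to that of $\beta$) makes explicit a step the paper leaves implicit in \cref{subsubsec:standard-realizations}, and it is valid since $\supp Z(\D)=\rad\beta$ while $Z(\D)\iso Z(M(n,n))=\FF 1$ by \cref{prop:R-and-D-have-the-same-center}.
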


\phantomsection\label{phsec:Q-assoc-only-G}

Finally, we classify the gradings on the superalgebra $Q(n)$. 
Note that we only have odd gradings in this case, but we have a parametrization with no reference to $G^\#$.

\begin{defi}\label{def:Gamma-T-beta-kappa-Q}
    Let $n > 0$ be a natural number and consider a finite subgroup $T^+ \subseteq G$, a nondegenerate bicharacter $\beta\from T^+ \times T^+ \to \FF^\times$, an element $h\in G$ such that $h^2 = 1$ and a map $\kappa\from G/T^+ \to \ZZ_{\geq 0}$ with finite support such that $k \sqrt{|T^+|} = n$, where $k \coloneqq |\kappa|$.
    Choose:
    \begin{enumerate}
        \item $\D$ to be a standard realization of type $Q$ associated to $(T^+, \beta^+, h)$;
        \item $\gamma$ to be a tuple realizing $\kappa$.
    \end{enumerate}
    Consider the algebra $M_{k}(\FF)$ equipped with the elementary grading associated to $\gamma$.
    The grading $\Gamma_Q (T^+, \beta^+, h, \kappa)$ on $Q(n)$ is induced by the identification $Q(n) \iso M_k(\D) = M_{k}(\FF) \tensor \D$ via the Kronecker product.
    The resulting graded superalgebra is denoted $Q (T^+, \beta^+, h, \kappa)$. 
\end{defi}

Note that $Q (T^+, \beta^+, h, \kappa) = M(T^+, \beta^+, \kappa) \oplus u M(T^+, \beta^+, \kappa)$, where $u^2 = 1$ and the $G^\#$-degree of $u$ is $t_p = (h, \bar 1)$.

\begin{cor}\label{cor:iso-Q}
    Every $G$-grading on $Q(n)$ is isomorphic to $\Gamma_Q (T^+, \beta^+, h, \kappa)$ as in \cref{def:Gamma-T-beta-kappa-Q}. 
    Two such gradings $\Gamma_Q (T^+, \beta^+, h, \kappa)$ and $\Gamma_Q (T'^+, \beta'^+, h', \kappa')$ are isomorphic \IFF $T^+ = T'^+$, $\beta^+ = \beta'^+$, $h = h'$ and there is a $g\in G$ such that $g\cdot \kappa = \kappa'$. \qed
\end{cor}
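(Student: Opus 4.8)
The plan is to recognize \cref{cor:iso-Q} as the type-$Q$ specialization of the general machinery developed in \cref{subsec:grdd-simple-ass-algebraically-closed}, and then to carry out explicitly the reparametrization from $(T,\tilde\beta)$ to $(T^+,\beta^+,h)$. First I would observe that any $G$-grading on $Q(n)$ is automatically graded-simple: since $Q(n)$ is simple as a superalgebra, its only graded superideals are $0$ and itself. Being finite-dimensional, it satisfies the descending chain condition on graded left superideals, so \cref{thm:End-over-D} yields $Q(n) \iso \End_\D(\U)$ for a finite-dimensional graded-division superalgebra $\D$, associated to some pair $(T,\tilde\beta)$, and a graded right $\D$-module $\U$ of finite rank. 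By \cref{prop:simple-R-D-super}, $\D$ is simple as a superalgebra, so \cref{prop:simple-tilde-beta} shows that $\tilde\beta$ is nondegenerate.

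Next, to pin down that $\D$ is odd of type $Q$ rather than even of type $M$, I would use the center. By \cref{prop:R-and-D-have-the-same-center}, $Z(\D) \iso Z(Q(n)) \iso Q(1)$, which is two-dimensional and contains a nonzero odd element. Since $\supp Z(\D) = \rad\beta$, this forces $|\rad\beta| = 2$ with $(\rad\beta)\cap T^- \neq \emptyset$; in particular $\beta$ is degenerate. With $\tilde\beta$ nondegenerate and $\beta$ degenerate, case \cref{item:beta-degenerate-case} of \cref{lemma:beta-deg-beta-tilde-nondeg} applies: $\beta^+$ is nondegenerate, the unique parity element $t_p$ lies in $T^-$ and has order $2$, and $\rad\beta = \langle t_p\rangle$. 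Writing $t_p = (h,\bar 1)$ with $h \in G$, $h^2 = e$, and identifying $T^+ = T\cap(G\times\{\bar 0\})$ with a subgroup of $G$ via the projection $G^\# \to G$, I recover all the data of \cref{def:Gamma-T-beta-kappa-Q}. Using \cref{conv:pick-even-basis} and the bijection $G/T^+ \to G^\#/T$, the module $\U$ is described by a single map $\kappa\from G/T^+ \to \ZZ_{\geq 0}$ with $|\kappa|\sqrt{|T^+|} = n$. Checking that the standard realization of type $Q$ attached to $(T^+,\beta^+,h)$ (\cref{def:standard-realization-Q}) reproduces $\D$ and that $M_k(\FF)\tensor\D \iso Q(n)$ under the elementary grading associated to a tuple realizing $\kappa$ then gives the model $\Gamma_Q(T^+,\beta^+,h,\kappa)$, establishing the first assertion that every grading is isomorphic to such a model.

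Finally, for the isomorphism criterion I would invoke \cref{thm:iso-D-odd}(3): $\End_\D(\U)\iso\End_{\D'}(\U')$ \IFF $T=T'$, $\tilde\beta=\tilde\beta'$, and $\kappa$, $\kappa'$ lie in the same $G$-orbit. It then remains to translate the conditions $T=T'$ and $\tilde\beta=\tilde\beta'$ into the $Q$-parameters, and this is where the main work lies. I would show that, under the type-$Q$ constraint, the assignment $(T^+,\beta^+,h)\mapsto(T,\tilde\beta)$ is a bijection onto the set of pairs with $\tilde\beta$ nondegenerate and $\beta$ degenerate. Concretely, $T = T^+ \sqcup t_p T^+$ with $t_p=(h,\bar 1)$ recovers $T$ from $(T^+,h)$ and, conversely, $T^+ = T\cap(G\times\{\bar 0\})$ while $t_p$ is the unique element of $(\rad\beta)\cap T^-$; moreover $\beta^+ = \tilde\beta|_{T^+\times T^+}$, and $\tilde\beta$ is reconstructed from $\beta^+$ together with the parity-element relation $\tilde\beta(t_p,t) = (-1)^{p(t)}$. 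Hence $T=T'$ and $\tilde\beta=\tilde\beta'$ are equivalent to $T^+=T'^+$, $\beta^+=\beta'^+$ and $h=h'$, and the orbit condition on $\kappa$ carries over verbatim. The obstacle I anticipate is precisely this bookkeeping: making the three-way correspondence between $(T,\tilde\beta)$, the triple $(T^+,\beta^+,h)$, and the decomposition $T = T^+ \sqcup t_p T^+$ tight, with no hidden freedom in extending $\beta^+$ to $\tilde\beta$.
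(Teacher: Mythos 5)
Your proposal is correct and follows essentially the same route the paper intends for this \qed-style corollary: specialize \cref{thm:iso-D-odd}(3) to the type-$Q$ case, identify the type via the (super)center and \cref{lemma:beta-deg-beta-tilde-nondeg}, and use the bijective correspondence $(T,\tilde\beta)\leftrightarrow(T^+,\beta^+,h)$ with $t_p=(h,\bar 1)$ and $\tilde\beta(t_p^i s, t_p^j s')=(-1)^{ij}\beta^+(s,s')$, exactly as in the paper's standard realization of type $Q$. Your resolution of the anticipated ``hidden freedom'' via the parity-element relation $\tilde\beta(t_p,t)=(-1)^{p(t)}$ is precisely the reconstruction the paper relies on, so there is no gap.
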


\section{Graded-Superinvolution-Simple Superalgebras}\label{sec:grdd-sinv-simple}

This section establishes general results on graded-superinvolution-simple associative superalgebras satisfying the descending chain condition on graded left superideals.
Such superalgebras naturally split into two disjoint classes: those that are graded-simple and those that are not.
For both cases, we will need the concept of superdual of a graded $\D$-module, which we introduce in \cref{subsec:superdual}.  

We first consider the non-graded-simple case, for which the classification easily reduces to the results of \cref{sec:grdd-simple-ass}.
Indeed, as shown in \cref{prop:only-SxSsop-is-simple}, these superalgebras decompose as $S \times S\sop$ with exchange superinvolution, where $S$ is a graded-simple superalgebra.
This case is addressed in \cref{subsec:R-phi-not-graded-simple}.
All remaining subsections are dedicated to the graded-simple case, where a theory of super-Hermitian forms on graded $\D$-modules is required.

\subsection{The superdual of a graded \texorpdfstring{$\D$}{D}-module}\label{subsec:superdual}

\begin{defi}\label{def:superdual-supermodule}
    Let $\D$ be a graded-division superalgebra and let $\U$ be a graded right $\D$-supermodule of finite rank. 
    The \emph{superdual of $\U$} is defined to be \[\U\Star \coloneqq \Hom_\D (\U,\D)\,.\] 
    We give $\U\Star$ the structure of a graded \emph{left} $\D$-module using the multiplication on $\D$: if $d \in \D$ and $f \in \U \Star$, we define $d\cdot f$ by
    \[\label{eq:U-Star-left-D-module}
    \forall u\in \mc U,\quad (d\cdot f)(u) = d f(u)\,.
    \]
    Given graded right $\D$-modules of finite rank $\U$ and $\V$, and a homogeneous $\D$-linear map $L\from\U \rightarrow \V$, we define the \emph{superdual of $L$} to be the $\FF$-linear map $L\Star\from \V\Star \rightarrow \U\Star$ determined by
    \[
        \forall f\in (\V\Star)\even \cup (\V\Star)\odd,\quad L\Star (f) = (-1)^{|L||f|} f \circ L\,.
    \] 
    It is easily seen that $L\Star$ is a homomorphism of left $\D$-modules.
    We extend the definition to non-homogeneous maps in $\Hom_\D (\U, \V)$ by linearity. 
\end{defi}

If $\B = \{u_1, \ldots, u_k\}$ is a graded basis, we can consider its two \emph{superdual bases} in $\U\Star$: ${}\Star \mc B = \{{}\Star u_1, \ldots, {}\Star u_k\}$ and $\mc B\Star = \{u_1\Star, \ldots, u_k\Star\}$, where ${}\Star u_i \from \U \rightarrow \D$ is defined by ${}\Star u_i(u_j) = \delta_{ij}$ and $u_i\Star \from \U \rightarrow \D$ is defined by $u_i\Star (u_j) = (-1)^{|u_i||u_j|} \delta_{ij}$. 
Clearly, $\deg ({}\Star u_i) = \deg  (u_i\Star) = (\deg u_i)\inv$.

\begin{remark}\label{rmk:supertranspose-U-Star}
	In the case $\D = \FF$ and $L\from \U \to \U$, if we denote by $[L]$ the matrix of $L$ with respect to the graded basis $\mc B$, with even elements preceding the odd ones, then the supertranspose $[L]\sT$ is the matrix of $L\Star\from \U\Star \to \U\Star$ with respect to the superdual basis $\mc B\Star$.
\end{remark}

It is easy to see that $\D\sop$ is also a graded-division superalgebra and, since $\U\Star$ is a graded left $\D$-module, we will regard $\U\Star$ as a graded right $\D\sop$-module by means of the action
\[\label{eq:U-Star-right-D-sop-module}
    \forall d\in \D\even \cup \D\odd,\,f\in (\U\Star)\even \cup (\U\Star)\odd,\quad  f \cdot \bar d \coloneqq (-1)^{|d||f|} d \cdot f\,.
\]

\begin{lemma}\label{lemma:double-dual}
    Let $\U$ be a $G$-graded $\D$-module of finite rank.
    \begin{enumerate}
        \item \label{it:double-dual} The $\FF$-linear map $\epsilon\from \U \to \U^{\star\star}$ defined by $\epsilon(u)(f) = \sign{u}{f}\, \overline{f(u)}$, for all $u \in \U\even \cup \U\odd$ and $f \in (\U\Star)\even \cup (\U\Star)\odd$, is an isomorphism of graded right $\D$-supermodules. 
        \item \label{it:super-anti-iso} The map $\End_\D (\U) \rightarrow \End_{\D\sop} (\U\Star)$ defined by $L \mapsto L\Star$ is a super-anti-isomorphism.
    \end{enumerate}
\end{lemma}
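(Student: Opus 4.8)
The plan is to prove both parts by direct sign bookkeeping, using the three sources of Koszul signs already in play: the superopposite product (\cref{def:superopposite}), the right $\D\sop$-action on $\U\Star$ given by \eqref{eq:U-Star-right-D-sop-module}, and the superdual of \cref{def:superdual-supermodule}. Throughout I would treat homogeneous elements and maps and extend by $\FF$-linearity at the end.

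For part \cref{it:double-dual}, I would first note that since $\U^{\star\star} = (\U\Star)\Star = \Hom_{\D\sop}(\U\Star, \D\sop)$ and $(\D\sop)\sop = \D$ canonically, the double superdual is again a graded right $\D$-module, so the statement is meaningful. The first step is to check $\epsilon(u)$ really lies in $\U^{\star\star}$, \ie, that it is homogeneous and $\D\sop$-linear. Homogeneity is a degree count: if $\deg u = g$ and $\deg f = h$, then $f(u)\in\D_{hg}$, so $\epsilon(u)(f)=\sign{u}{f}\overline{f(u)}$ has degree $hg$, forcing $\deg\epsilon(u)=g$ (using that $G$ is abelian); hence $\epsilon$ is degree-preserving. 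For $\D\sop$-linearity I would compute $\epsilon(u)(f\cdot\bar d)$ from $f\cdot\bar d=\sign{d}{f}\,d\cdot f$ and $(d\cdot f)(u)=d\,f(u)$, then push $\bar d$ to the right of $\overline{f(u)}$ via $\overline{ab}=\sign{a}{b}\,\bar b\,\bar a$; all the signs cancel to give $\epsilon(u)(f\cdot\bar d)=\epsilon(u)(f)\,\bar d$.

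The second step is to verify that $\epsilon$ is a right $\D$-module map, \ie, $\epsilon(ud)=\epsilon(u)\cdot d$, where the action on $\U^{\star\star}$ is the one induced by the twice-iterated opposite. Evaluating both sides on an arbitrary homogeneous $f$, using $f(ud)=f(u)d$ on the left and unwinding the $\D\sop$-action on the right, the signs again match. The final step is bijectivity: $\epsilon$ is injective because $\U\Star$ separates homogeneous elements (any nonzero homogeneous $u$ belongs to a graded basis, whose dual functional does not vanish on it), and since $\dim_\FF\U=\dim_\FF\U\Star=\dim_\FF\U^{\star\star}$ is finite, injectivity gives surjectivity. For part \cref{it:super-anti-iso}, finite rank gives $\End_{\D\sop}(\U\Star)=\End_{\D\sop}^{\text{gr}}(\U\Star)$, so it suffices to treat homogeneous $L$. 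I would first record that $L\Star$ is a homogeneous right $\D\sop$-linear endomorphism of degree $\deg L$ (a short check that $L\Star(f\cdot\bar d)=L\Star(f)\cdot\bar d$, comparing the signs produced when converting the left $\D$-action into the right $\D\sop$-action), so that $L\mapsto L\Star$ is a degree-preserving $\FF$-linear map. The heart of the argument is super-anti-multiplicativity $(L_1L_2)\Star=\sign{L_1}{L_2}\,L_2\Star L_1\Star$: expanding both factors (written on the left, so $L_2\Star L_1\Star$ is $L_2\Star\circ L_1\Star$) and using associativity $(f\circ L_1)\circ L_2=f\circ(L_1L_2)$, the accumulated signs differ from those of $(L_1L_2)\Star$ exactly by $\sign{L_1}{L_2}$. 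Injectivity is immediate ($L\Star=0$ forces $f(L(u))=0$ for all $f,u$, hence $L=0$), and a dimension count over $\FF$ yields surjectivity.

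The main obstacle in both parts is purely this sign bookkeeping: each construction carries its own Koszul sign, and the entire content of the lemma is that they are mutually consistent. I expect the most error-prone computation to be the right $\D$-linearity of $\epsilon$ in part \cref{it:double-dual}, where the action on the double superdual is the twice-iterated opposite action and the signs must be unwound in the correct order.
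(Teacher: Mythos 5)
Your overall structure and the sign computations match the paper's proof, which likewise dispatches the $\D\sop$-linearity of $\epsilon(u)$, the right $\D$-linearity of $\epsilon$, and the anti-multiplicativity $(L_1L_2)\Star=\sign{L_1}{L_2}\,L_2\Star\circ L_1\Star$ as routine rule-of-signs checks (it even makes your observation that the sign in the definition of $\epsilon$ is forced when $\D$ is odd). But there is one genuine gap: both of your surjectivity arguments rest on finiteness of $\dim_\FF\U$, which is not a hypothesis. In \cref{subsec:superdual} the graded-division superalgebra $\D$ is arbitrary --- its support may be infinite and $\D_e$ is merely a division algebra over $\FF$ --- so a graded $\D$-module of finite rank can be infinite-dimensional over $\FF$; finite-dimensionality of $\D$ enters only in the later specializations to algebraically closed fields. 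Hence ``injective plus $\dim_\FF\U=\dim_\FF\U^{\star\star}<\infty$'' proves nothing in the stated generality, and the same objection applies to your dimension count for $\End_\D(\U)\to\End_{\D\sop}(\U\Star)$ in part \cref{it:super-anti-iso}. This generality is actually used: the lemma feeds into \cref{thm:vphi-iff-vphi0-and-B} and \cref{cor:SxSsop-with-dcc}, which hold for graded-Artinian superalgebras over an arbitrary field.

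The repair is short, and it is what the paper does. For part \cref{it:double-dual}, instead of counting $\FF$-dimensions, one verifies that $\epsilon$ carries a graded $\D$-basis to a graded basis: for a graded basis $\{u_1,\dots,u_k\}$ of $\U$, a one-line computation gives $\epsilon(u_i)(u_j\Star)=\sign{u_i}{u_j}\,\overline{u_j\Star(u_i)}=\delta_{ij}$, \ie, $\epsilon(u_i)={}\Star(u_i\Star)$, the superdual basis of the superdual basis; bijectivity is then immediate. (Alternatively, your injectivity argument can be salvaged by counting rank over $\D$ rather than dimension over $\FF$: the image of $\epsilon$ is a graded submodule of rank $k$ inside the rank-$k$ module $\U^{\star\star}$, hence equals it.) For part \cref{it:super-anti-iso}, the paper sidesteps any count by exhibiting the explicit inverse $L\mapsto\epsilon\inv\circ L\Star\circ\epsilon$, which works regardless of $\FF$-dimension; your injectivity argument stands, but surjectivity needs one of these substitutes.
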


\begin{proof}
    For \ref{it:double-dual}, with routine computations using the rule of signs, one can check that $\epsilon(u) \in \U^{\star\star}$ (\ie, it is a $\D\sop$-linear map from $\U\Star$ to $\D\sop$) and that $\epsilon$ is $\D$-linear.
    We note in passing that the sign in the definition of $\epsilon$ is essential: without it, $\epsilon$ would not be well-defined in the case of odd $\D$.
    To see that $\epsilon$ is an isomorphism, let $\{u_1, \ldots, u_k\}$ be a graded basis of $\U$. 
    We have that $\epsilon_\U(u_i)(u_j\Star) = \sign{u_i}{u_j}\, \overline{u_j\Star(u_i)} = \delta_{ij}$, \ie, $\epsilon(u_i) = {}\Star (u_i \Star) = ({}\Star u_i) \Star$. 
    Therefore, $\epsilon_\U$ sends a graded basis to a graded basis.

    For \ref{it:super-anti-iso}, one can easily check that the inverse of the function given is the map $L \mapsto \epsilon\inv \circ L\Star \circ \epsilon$, for all $L\in \End_{\D\sop} (\U\Star)$, 
    and that $(L\circ S)\Star = \sign{L}{S} S\Star \circ L\Star$ for all $L, S \in \End_\D (\U)\even \cup \End_\D (\U)\odd$.
\end{proof}

We will now find the parameters describing the $G$-graded superalgebra $\End_{\D\sop} (\U\Star)$.

\begin{prop}
    Suppose $\FF$ is algebraically closed.
    Let $\D$ be associated to $(T, \tilde\beta)$ and $\U$ be associated to $\kappa\from G^\#/T \to \ZZ_{\geq 0}$.
    Then $\D\sop$ is associated to $(T, \tilde\beta\inv)$ and $\U\Star$ is associated to $\kappa\Star \from G^\#/T \to \ZZ_{\geq 0}$ defined by $\kappa\Star (x) \coloneqq \kappa (x\inv)$.
\end{prop}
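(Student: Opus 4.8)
The plan is to treat the two assertions separately: first I would pin down the bicharacter of $\D\sop$, and then determine the multiplicity map of $\U\Star$ by transporting a graded basis. Since $\D\sop$ shares the same underlying $G$-graded superspace as $\D$, its support is again $T$, and because the excerpt already observes that $\D\sop$ is a graded-division superalgebra, \cref{prop:parametrization-T-beta} guarantees it is associated to some skew-symmetric bicharacter $\tilde\beta_{\sop}$ on $T$; it remains only to compute it. I would fix nonzero elements $X_t \in \D_t$, so that $\bar X_t$ spans $(\D\sop)_t$, and evaluate $\bar X_t \bar X_s$ using \cref{def:superopposite}. Writing $\bar X_t \bar X_s = (-1)^{p(t)p(s)}\overline{X_s X_t}$, substituting the defining relation $X_s X_t = (-1)^{p(s)p(t)}\tilde\beta(s,t)\,X_t X_s$ for $\tilde\beta$, and converting back via $\overline{X_t X_s} = (-1)^{p(s)p(t)}\bar X_s \bar X_t$, the two parity signs cancel and I obtain $\bar X_t \bar X_s = (-1)^{p(t)p(s)}\tilde\beta(s,t)\,\bar X_s \bar X_t$. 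Comparing with the definition of $\tilde\beta_{\sop}$ yields $\tilde\beta_{\sop}(t,s) = \tilde\beta(s,t)$, and skew-symmetry of $\tilde\beta$ turns this into $\tilde\beta_{\sop} = \tilde\beta\inv$.

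For the module $\U\Star$, I would choose a graded $\D$-basis $\B = \{u_1, \ldots, u_k\}$ of $\U$ with $\deg u_i = g_i$ such that the tuple $(g_1, \ldots, g_k)$ realizes $\kappa$, which is possible by \cref{cor:iso-isotypic-components,defi:gamma-realizes-kappa}. The superdual basis $\B\Star = \{u_1\Star, \ldots, u_k\Star\}$ consists of homogeneous elements with $\deg(u_i\Star) = g_i\inv$, as recorded just after \cref{def:superdual-supermodule}. I then need to confirm that $\B\Star$ is a graded basis of $\U\Star$ regarded as a right $\D\sop$-module via $f\cdot \bar d = (-1)^{|d||f|}\,d\cdot f$: since $\B\Star$ is a left $\D$-basis of $\U\Star$ and this right $\D\sop$-action differs from the left $\D$-action only by a sign and the bar on scalars, $\B\Star$ is also a right $\D\sop$-basis. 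Its degree tuple is $(g_1\inv, \ldots, g_k\inv)$, and for every coset $x \in G^\#/T$ the number of indices $i$ with $g_i\inv \in x$ equals the number with $g_i \in x\inv$, namely $\kappa(x\inv) = \kappa\Star(x)$. Hence $(g_1\inv, \ldots, g_k\inv)$ realizes $\kappa\Star$, so $\U\Star$ is associated to $\kappa\Star$.

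The main obstacle is the sign-bookkeeping in the $\D\sop$ computation: one must simultaneously track the parity sign produced by each of the two applications of the superopposite multiplication and the parity sign built into the definition of $\tilde\beta$, and verify that they cancel to leave precisely $\tilde\beta(s,t)$. By contrast, the module part is essentially transporting a graded basis through the superdual and reindexing the realizing tuple by inversion, which is routine once the degrees $\deg(u_i\Star) = g_i\inv$ are imported from the excerpt; the only point requiring a line of care there is that a left $\D$-basis of $\U\Star$ remains a basis after the parity-twisted reindexing of scalars defining the right $\D\sop$-action.
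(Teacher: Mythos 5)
Your proposal is correct and follows essentially the same route as the paper's proof: the bicharacter of $\D\sop$ is obtained by the same commutation computation with the elements $\overline{X_t}$ (the paper's version combines the superopposite sign and the $\tilde\beta$-sign in one line, then invokes skew-symmetry exactly as you do), and the multiplicity map of $\U\Star$ is read off from the superdual basis, whose degrees are inverted, giving $\dim_\D \U_x = \dim_{\D\sop} \U\Star_{x\inv}$. Your extra care in checking that a left $\D$-basis of $\U\Star$ remains a right $\D\sop$-basis under the sign-twisted action, and that $x \mapsto x\inv$ is well defined on $G^\#/T$ (which the paper notes explicitly, using that $G$ is abelian), only makes explicit what the paper leaves as ``easy to see.''
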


\begin{proof}
    If $(T, \tilde\beta)$ is the pair associated with $\D$, then it is clear that $\supp \D\sop = T$. 
    Moreover, if $s,t \in T$, $0 \neq X_s \in \D_s$ and $0 \neq X_t \in \D_t$, then, following the notation in \cref{def:superopposite}, we have:
    \begin{align}
        \overline{X_s} \, \overline{X_t} &= \sign{s}{t} \, \overline{X_tX_s} =  \tilde\beta(t, s) \overline{X_sX_t}\\ &= \sign{s}{t}\, \tilde\beta(t, s) \overline{X_t} \, \overline{X_s} = \sign{s}{t}\, \tilde\beta(s, t)\inv \, \overline{X_t} \, \overline{X_s}.
    \end{align}
    We conclude that $(T,\tilde\beta\inv)$ is the pair associated to $\D\sop$.

    Since $G$ is abelian, $G^\#/T$ is a group and, hence, the map $G^\#/T \to G^\#/T$ given by $x \mapsto x\inv$ is well-defined. 
    From the construction of the superdual basis, it is easy to see that $\dim_\D \U_x = \dim_{\D\sop} \U\Star_{x\inv}$, for all $x \in G^\#/T$.  
\end{proof}

It is straightforward to translate this to the maps $G/T \to \ZZ_{\geq 0}$ (even $\D$) and $G/T^+ \to \ZZ_{\geq 0}$ (odd $\D$) associated to the $G$-graded supermodule $\U$. 
If $\D$ is even and $\kappa_\bz, \kappa_\bo$ are the maps associated to $\U$, then $\kappa_\bz\Star, \kappa_\bo\Star$ are the maps associated to $\U\Star$. 
If $\D$ is odd and $\kappa$ is the map associated to $\U$, then $\kappa\Star$ is the map associated to $\U\Star$.   

\subsection{Non-graded-simple case}\label{subsec:R-phi-not-graded-simple}

Our goal now is to classify, up to isomorphism, the superalgebras of the form $S\times S\sop$ endowed with exchange superinvolution, where $S$ is graded-simple.
We begin with a straightforward result:

\begin{lemma}\label{lemma:iso-SxSsop}
    Let $S_1$ and $S_2$ be graded-simple superalgebras. 
    Then $S_1\times S_1\sop \iso S_2\times S_2\sop$ as graded superalgebras with superinvolution \IFF $S_1 \iso S_2$ or $S_1 \iso S_2\sop$ as graded superalgebras. \qed
\end{lemma}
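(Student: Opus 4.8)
The plan is to treat the two implications separately, with the reverse direction resting on explicit constructions and the forward direction on the structure of graded superideals of a product.

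For the reverse implication, suppose first that $f\from S_1 \to S_2$ is an isomorphism of graded superalgebras. I would define $\bar f \from S_1\sop \to S_2\sop$ by $\bar f(\bar s) \coloneqq \overline{f(s)}$ and check, using \cref{def:superopposite} together with the fact that $f$ preserves parity, that $\bar f$ is again an isomorphism of graded superalgebras. Then the map $(s_1, \bar s_2) \mapsto (f(s_1), \overline{f(s_2)})$ is an isomorphism $S_1 \times S_1\sop \to S_2 \times S_2\sop$ of graded superalgebras, and a one-line check shows it intertwines the two exchange superinvolutions. If instead $S_1 \iso S_2\sop$, I would apply the previous construction with $S_2\sop$ in place of $S_2$ to get $S_1 \times S_1\sop \iso S_2\sop \times (S_2\sop)\sop$, and then compose with the swap map $\tau\from S_2\sop \times (S_2\sop)\sop \to S_2 \times S_2\sop$ arising from the canonical identification $(S_2\sop)\sop = S_2$. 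The point to verify here is that $\tau$ is a homomorphism of graded superalgebras that commutes with the exchange superinvolutions, which is a direct computation.

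For the forward implication, I would first determine the minimal nonzero graded superideals of $S \times S\sop$ for graded-simple $S$. The candidates are $S \times 0$ and $0 \times S\sop$: each is a graded superideal, and since $S$ (hence $S\sop$) is graded-simple, no smaller nonzero graded superideal fits inside either, so both are minimal. To see there are no others, I would take a minimal nonzero graded superideal $K$ and examine $K \cap (S \times 0)$: if it is nonzero, minimality forces $K = S \times 0$; if it is zero, then for $(a, \bar b) \in K$ the products with $S \times 0$ vanish, forcing $a$ into the annihilator of $S$, which is trivial because $SS \neq 0$ makes this annihilator a proper graded superideal of the graded-simple $S$, so $K = 0 \times S\sop$.

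Given an isomorphism $\psi$ of graded superalgebras with superinvolution, $\psi$ maps the set of minimal nonzero graded superideals of the source onto that of the target, so either $\psi(S_1 \times 0) = S_2 \times 0$ or $\psi(S_1 \times 0) = 0 \times S_2\sop$. In the first case, restricting $\psi$ to $S_1 \times 0 \iso S_1$ yields $S_1 \iso S_2$; in the second, it yields $S_1 \iso S_2\sop$ as graded superalgebras. I note that the superinvolution hypothesis is not actually needed for this direction. The step I expect to be most delicate is the sign bookkeeping in the reverse implication: verifying that $\bar f$ and the swap $\tau$ are genuine homomorphisms for the $\mathrm{sop}$-products and that both intertwine the exchange superinvolutions, since a single omitted sign from the super-opposite rule would break these checks.
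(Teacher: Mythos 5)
Your proposal is correct: the paper states this lemma without proof (it is flagged as straightforward), and your argument is exactly the standard one it leaves implicit — the reverse direction by the componentwise construction $(f,\bar f)$ plus the canonical identification $(S\sop)\sop \iso S$, and the forward direction by observing that $S\times 0$ and $0\times S\sop$ are the only minimal nonzero graded superideals (your annihilator argument via $SS\neq 0$ is the right way to rule out others), so any isomorphism must match them up. Your side remark that the superinvolution is not needed for the forward implication is also accurate, and all the sign checks you flag do go through as you describe.
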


When convenient, we can replace $S\sop$ with an isomorphic graded superalgebra. 
Specifically, if $\theta\from S \to S'$ is a su\-per-an\-ti-iso\-mor\-phism of graded superalgebras, then $S\times S\sop$ with the exchange superinvolution is isomorphic to $S\times S'$ endowed with the superinvolution $(s_1, s_2) \mapsto (\theta\inv (s_2), \theta (s_1))$. 

\begin{defi}\label{defi:superdual-exchange}
    Let $\D$ be a graded-division superalgebra and $\U$ be a graded right $\D$-supermodule of finite rank. 
    Recall that $\U\Star \coloneqq \Hom_\D(\U, \D)$ is a graded right $\D\sop$-module, and that the map $\End_\D (\U) \to \End_{\D\sop} (\U\Star)$ given by $L \mapsto L\Star$ is a super-anti-isomorphism whose inverse is $L \mapsto {}\Star L$. 
    We define $\Eex (\D, \U)$ to be the graded superalgebra $\End_\D (\U) \times \End_{\D\sop} (\U\Star)$ endowed with the superinvolution $(L_1, L_2) \mapsto ({}\Star L_2, L_1\Star)$.
\end{defi}

Combining \cref{thm:iso-D-even,thm:iso-D-odd,lemma:iso-SxSsop} with the description of parameters for $\D\sop$ and $\U\Star$ at the end of the previous subsection, we obtain:

\begin{thm}\label{thm:iso-D-even-ExEsop}
	Let $(\D, \U)$ and $(\D', \U')$ be pairs as in Definition \ref{def:E(D-U)-super}, with both $\D$ and $\D'$ even. 
	Let $(T, \tilde\beta, \kappa_\bz, \kappa_\bo)$ and $(T', \tilde\beta', \kappa_\bz', \kappa_\bo')$ be the parameters of $(\D, \U)$ and $(\D', \U')$, respectively. 
	Then $\Eex(\D, \U) \iso \Eex(\D', \U')$ \IFF $T=T'$ and one of the following holds:
	\begin{enumerate}
	    \item $\tilde\beta'=\tilde\beta$ and there is $g\in G$ such that either $\kappa_{\bar 0}'=g \cdot \kappa_{\bar 0}$ and $\kappa_{\bar 1}'=g \cdot \kappa_{\bar 1}$, or $\kappa_{\bar 0}'=g \cdot \kappa_{\bar 1}$ and $\kappa_{\bar 1}'=g \cdot \kappa_{\bar 0}$;
	    \item $\tilde\beta'=\tilde\beta\inv$ and there is $g\in G$ such that either $\kappa_{\bar 0}'=g \cdot \kappa_{\bar 0}\Star$ and $\kappa_{\bar 1}'=g \cdot \kappa_{\bar 1}\Star$, or $\kappa_{\bar 0}'=g \cdot \kappa_{\bar 1}\Star$ and $\kappa_{\bar 1}'=g \cdot \kappa_{\bar 0}\Star$. \qed
	\end{enumerate}
\end{thm}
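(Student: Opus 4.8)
The plan is to reduce the statement to \cref{lemma:iso-SxSsop} combined with the even case of \cref{thm:iso-D-even}. The key initial observation is that $\Eex(\D,\U)$ is, by its very construction, isomorphic to $S\times S\sop$ with the exchange superinvolution, where $S\coloneqq\End_\D(\U)$. Indeed, by part~\ref{it:super-anti-iso} of \cref{lemma:double-dual} the map $\theta\from L\mapsto L\Star$ is a super-anti-isomorphism $S\to\End_{\D\sop}(\U\Star)$ with inverse $M\mapsto{}\Star M$, and the superinvolution $(L_1,L_2)\mapsto({}\Star L_2,L_1\Star)$ of \cref{defi:superdual-exchange} is precisely the one produced from the exchange superinvolution through $\theta$, as described in the observation following \cref{lemma:iso-SxSsop}. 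The same applies to $(\D',\U')$ with $S'\coloneqq\End_{\D'}(\U')$. Since $S$ and $S'$ are graded-simple, \cref{lemma:iso-SxSsop} shows that $\Eex(\D,\U)\iso\Eex(\D',\U')$ holds \IFF $S\iso S'$ or $S\iso(S')\sop$ as graded superalgebras. Because every step below is an equivalence, I can simply chase these biconditionals, and the two alternatives will correspond to conditions~(1) and~(2).

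For the first alternative, I would apply the even case of \cref{thm:iso-D-even} directly to $\End_\D(\U)\iso\End_{\D'}(\U')$. Recalling that $\D$ even forces $p\equiv\bz$ on $T$ and hence $\tilde\beta=\beta$, the resulting conditions ($T=T'$, $\beta=\beta'$, and a $g\in G$ matching $(\kappa_\bz,\kappa_\bo)$ with $(\kappa_\bz',\kappa_\bo')$ up to a swap) are exactly condition~(1).

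For the second alternative, I would first realize $(S')\sop$ as an endomorphism superalgebra over the even graded-division superalgebra $(\D')\sop$ (again even, since it shares the $\ZZ_2$-grading of $\D'$): the super-anti-isomorphism $L\mapsto L\Star$ gives an isomorphism $(S')\sop\iso\End_{(\D')\sop}((\U')\Star)$, and by the parameter computations at the end of \cref{subsec:superdual} the pair $((\D')\sop,(\U')\Star)$ has parameters $(T',(\tilde\beta')\inv,(\kappa_\bz')\Star,(\kappa_\bo')\Star)$ (using $\tilde\beta'=\beta'$ since $\D'$ is even). Applying the even case of \cref{thm:iso-D-even} to $\End_\D(\U)\iso\End_{(\D')\sop}((\U')\Star)$ then yields $T=T'$, $\tilde\beta'=\tilde\beta\inv$, and a $g\in G$ with $g\cdot\kappa_\bz=(\kappa_\bz')\Star$ and $g\cdot\kappa_\bo=(\kappa_\bo')\Star$ (or the swapped pairing).

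The step requiring the most care is rewriting this last condition in the form appearing in~(2), and it is here that abelianness of $G$ is used. The needed identity is $(g\cdot\kappa)\Star=g\inv\cdot\kappa\Star$, which follows from $(gx)\inv=g\inv x\inv$, together with the fact that $\Star$ is an involution on multisets. Applying $\Star$ to $g\cdot\kappa_\bz=(\kappa_\bz')\Star$ gives $g\inv\cdot\kappa_\bz\Star=\kappa_\bz'$, so that after replacing $g$ by $g\inv$ we obtain $\kappa_\bz'=g\cdot\kappa_\bz\Star$, and identically $\kappa_\bo'=g\cdot\kappa_\bo\Star$; the swapped subcase is handled the same way. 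This is exactly condition~(2). Conversely, reading the equivalences backwards shows that each of~(1) and~(2) produces one of the two graded-superalgebra isomorphisms demanded by \cref{lemma:iso-SxSsop}, so the proof is complete.
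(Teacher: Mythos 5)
Your proposal is correct and follows exactly the route the paper takes, which compresses it into one line: combine \cref{lemma:iso-SxSsop} with \cref{thm:iso-D-even} and the parameter computations for $(\D')\sop$ and $(\U')\Star$ at the end of \cref{subsec:superdual}. Your extra care with the identity $(g\cdot\kappa)\Star = g\inv\cdot\kappa\Star$ and with identifying $\Eex(\D,\U)$ as $S\times S\sop$ via $L\mapsto L\Star$ just makes explicit details the paper leaves to the reader.
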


\begin{thm}\label{thm:iso-D-odd-ExEsop}
    Let $(\D, \U)$ and $(\D', \U')$ be pairs as in Definition \ref{def:E(D-U)-super}, with both $\D$ and $\D'$ odd. 
    Let $(T, \tilde\beta, \kappa)$ and $(T', \tilde\beta', \kappa')$ be the parameters of $(\D, \U)$ and $(\D', \U')$, respectively. 
	Then $\Eex(\D, \U) \iso \Eex(\D', \U')$ \IFF $T=T'$ and one of the following holds:
	\begin{enumerate}
	    \item $\tilde\beta'=\tilde\beta$ and there is $g\in G$ such that $\kappa' = g \cdot \kappa$;
	    \item $\tilde\beta'=\tilde\beta\inv$ and there is $g\in G$ such that $\kappa' = g \cdot \kappa\Star$. \qed
	\end{enumerate}
\end{thm}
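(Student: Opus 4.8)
The plan is to derive this from the odd isomorphism criterion \cref{thm:iso-D-odd} together with \cref{lemma:iso-SxSsop}, tracking the parameters through the superdual. By \cref{defi:superdual-exchange}, $\Eex(\D,\U)$ is $\End_\D(\U)\times\End_{\D\sop}(\U\Star)$ with the superinvolution $(L_1,L_2)\mapsto({}\Star L_2,L_1\Star)$. Since the superdual map $L\mapsto L\Star$ is a super-anti-isomorphism $\End_\D(\U)\to\End_{\D\sop}(\U\Star)$ by part~\ref{it:super-anti-iso} of \cref{lemma:double-dual}, the observation following \cref{lemma:iso-SxSsop} shows that $\Eex(\D,\U)$ is isomorphic, as a graded superalgebra with superinvolution, to $S\times S\sop$ with the exchange superinvolution for $S\coloneqq\End_\D(\U)$; likewise for the primed data. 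Thus \cref{lemma:iso-SxSsop} applies and gives that $\Eex(\D,\U)\iso\Eex(\D',\U')$ \IFF $\End_\D(\U)\iso\End_{\D'}(\U')$ or $\End_\D(\U)\iso\End_{\D'}(\U')\sop$ as graded superalgebras.

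First I would treat the first alternative. Here both endomorphism superalgebras are built from odd graded-division superalgebras, so \cref{thm:iso-D-odd} applies verbatim and yields $\End_\D(\U)\iso\End_{\D'}(\U')$ \IFF $T=T'$, $\tilde\beta=\tilde\beta'$, and $\kappa,\kappa'$ lie in a common $G$-orbit. This is exactly alternative~(1) of the statement.

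For the second alternative, the key step is to compute the parameters of $\End_{\D'}(\U')\sop$. A super-anti-isomorphism onto $\End_{\D'}(\U')$ is the same datum as an isomorphism from $\End_{\D'}(\U')\sop$, so applying $L\mapsto L\Star$ (part~\ref{it:super-anti-iso} of \cref{lemma:double-dual}) gives $\End_{\D'}(\U')\sop\iso\End_{(\D')\sop}((\U')\Star)$. By the proposition computing the parameters of $\D\sop$ and $\U\Star$, the pair $((\D')\sop,(\U')\Star)$ has parameters $(T',(\tilde\beta')\inv,(\kappa')\Star)$. Feeding this into \cref{thm:iso-D-odd}, the second alternative holds \IFF $T=T'$, $\tilde\beta=(\tilde\beta')\inv$ (equivalently $\tilde\beta'=\tilde\beta\inv$), and $\kappa$ lies in the same $G$-orbit as $(\kappa')\Star$.

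The only place requiring care is rewriting this last orbit condition into the form of alternative~(2). Since $G$ is abelian, $G/T^+$ is an abelian group, and from $\kappa\Star(x)=\kappa(x\inv)$ and $(g\cdot\kappa)(x)=\kappa(g\inv x)$ one checks the identity $(g\cdot\kappa)\Star=g\inv\cdot\kappa\Star$; moreover $\Star$ is an involution on these multiplicity functions because $(x\inv)\inv=x$. Hence the $G$-orbit of $(\kappa')\Star$ is the $\Star$-image of the $G$-orbit of $\kappa'$, so ``$\kappa$ and $(\kappa')\Star$ in the same orbit'' is equivalent to the existence of $g\in G$ with $\kappa'=g\cdot\kappa\Star$, which is alternative~(2). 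Combining the two alternatives finishes the argument. I expect no genuine obstacle beyond this bookkeeping with $\Star$ and the $G$-action; all the substantive content is supplied by \cref{thm:iso-D-odd}, \cref{lemma:iso-SxSsop}, and \cref{lemma:double-dual}.
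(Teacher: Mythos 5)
Your proposal is correct and follows exactly the route the paper takes: the paper states this theorem as an immediate consequence of combining \cref{lemma:iso-SxSsop} (together with the remark after it identifying $\Eex(\D,\U)$ with $S\times S\sop$ via the superdual super-anti-isomorphism of \cref{lemma:double-dual}) with \cref{thm:iso-D-odd} and the computation that $(\D\sop,\U\Star)$ has parameters $(T,\tilde\beta\inv,\kappa\Star)$. Your explicit verification of the orbit bookkeeping, $(g\cdot\kappa)\Star = g\inv\cdot\kappa\Star$, is the only detail the paper leaves implicit, and you have it right.
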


\subsection{Super-anti-automorphisms and sesquilinear forms}\label{subsec:sesquilinear-forms}

We begin our study of superinvolutions on graded-simple associative superalgebras.  
It is worthwhile to first consider the general case of su\-per-an\-ti-au\-to\-mor\-phisms, specializing to superinvolutions at a later stage (\cref{subsec:superinv-sesquilinear-forms}).

The main result of this subsection is \cref{thm:vphi-iff-vphi0-and-B}, but we begin with preliminary results that are necessary for its proof.
There are many technical details to pay attention to, for example, whether maps are written on the left or the right because moving a map from one side to the other slightly alters its definition (\cref{def:change-map-to-the-left}).

Fix a graded-division superalgebra $\D$ and a $G$-graded $\D$-module $\U$ of finite rank, and set $R \coloneqq \End_\D(\U)$.  
By construction, $\U$ is an $(R, \D)$-bimodule.  
In \cref{subsec:superdual}, we showed that $\U\Star \coloneqq \Hom_\D(\U, \D)$ is a graded right $\D\sop$-module and a left $\End_{\D\sop}(\U\Star)$-module.  
By \cref{lemma:double-dual}, $R\sop \iso \End_{\D\sop}(\U\Star)$, allowing us to equip $\U\Star$ with a left $R\sop$-module structure:  
\begin{equation}\label{eq:Rsop-on-left-of-U-Star}
    \forall r \in R\even \cup R\odd,\ f \in (\U\Star)\even \cup (\U\Star)\odd,\quad
    \overline{r} \cdot f \coloneqq r\Star(f) = (-1)^{\bar{r}\bar{f}} r \circ f.
\end{equation}
Thus, $\U\Star$ becomes an $(R\sop, \D\sop)$-bimodule.  
Our goal is to reinterpret this as an $(R, \D)$-bimodule using a fixed super-anti-automorphism $\vphi$ on $R$.

\begin{lemma}\label{lemma:U-star-R-sop}
    \begin{enumerate}
        \item As a left $R\sop$-supermodule, $\mc U\Star$ is graded-simple;
        \item The representation $\D\sop \to \End_{R\sop}(\mc U\Star)$ corresponding to the right $\D\sop$-action on $\U\Star$ is an isomorphism.
    \end{enumerate}
\end{lemma}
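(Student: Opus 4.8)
The plan is to reduce both statements to \cref{lemma:converse-density-thm} (the converse of the density theorem) applied to the pair $(\D\sop, \U\Star)$, and then transport the conclusions back to $R\sop$ along the isomorphism furnished by \cref{lemma:double-dual}.

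First I would record the inputs. As noted after \cref{lemma:double-dual}, $\D\sop$ is again a graded-division superalgebra, and $\U\Star = \Hom_\D(\U,\D)$ is a graded right $\D\sop$-module of finite rank (its rank equals that of $\U$, witnessed by either superdual basis ${}\Star\mc B$ or $\mc B\Star$). Hence $\End_{\D\sop}(\U\Star)$ is a graded-simple superalgebra by \cref{thm:End-over-D}, and \cref{lemma:converse-density-thm}, applied with $\D\sop$ and $\U\Star$ in place of $\D$ and $\U$, yields two facts: (i) $\U\Star$ is graded-simple as a left $\End_{\D\sop}(\U\Star)$-module, and (ii) the representation $\D\sop \to \End_{\End_{\D\sop}(\U\Star)}(\U\Star)$ coming from the right $\D\sop$-action is an isomorphism.

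The key step is to identify the left $R\sop$-module structure on $\U\Star$ with this tautological $\End_{\D\sop}(\U\Star)$-module structure. By \cref{lemma:double-dual}, the assignment $L \mapsto L\Star$ is a super-anti-isomorphism $R \to \End_{\D\sop}(\U\Star)$; since a super-anti-isomorphism $R \to S$ is the same as an isomorphism $R\sop \to S$, the map $\bar r \mapsto r\Star$ is a degree-preserving isomorphism of graded superalgebras $R\sop \to \End_{\D\sop}(\U\Star)$. By the very definition of the $R\sop$-action in \eqref{eq:Rsop-on-left-of-U-Star}, namely $\bar r \cdot f = r\Star(f)$, this action is exactly the pullback of the evaluation action of $\End_{\D\sop}(\U\Star)$ on $\U\Star$ along $\bar r \mapsto r\Star$. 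Consequently $\U\Star$ has the same lattice of graded left submodules over $R\sop$ as over $\End_{\D\sop}(\U\Star)$, so statement (1) follows from (i); and the two actions have the same commutant, whence $\End_{R\sop}(\U\Star) = \End_{\End_{\D\sop}(\U\Star)}(\U\Star)$, so statement (2) follows from (ii).

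The only point requiring genuine care is precisely this identification of module structures: one must check that the isomorphism $R\sop \to \End_{\D\sop}(\U\Star)$ intertwines the action \eqref{eq:Rsop-on-left-of-U-Star} with evaluation, which amounts to lining up the sign conventions in \cref{def:superdual-supermodule} and in \eqref{eq:Rsop-on-left-of-U-Star}. Once this is in place there is no further computation, and both conclusions follow formally by transport of structure.
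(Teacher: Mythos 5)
Your proof is correct and follows essentially the same route as the paper: the paper likewise treats the $R\sop$-action on $\U\Star$ as the pullback of the tautological $\End_{\D\sop}(\U\Star)$-action along the isomorphism $\bar r \mapsto r\Star$ from \cref{lemma:double-dual}, deducing graded-simplicity by transport of structure and the second assertion from \cref{lemma:converse-density-thm} applied to $(\D\sop, \U\Star)$. Your write-up merely makes explicit the intertwining and commutant-matching steps that the paper leaves implicit, which is a sound elaboration rather than a different argument.
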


\begin{proof}
    The first assertion follows from $\U\Star$ being a graded-simple left module over $\End_{\D\sop} (\U\Star)$, and the second follows from \cref{lemma:converse-density-thm}.
\end{proof}

Even though $\D\sop$ is defined as the same space as $\D$ but with a different multiplication (\cref{def:superopposite}), it will be essential to distinguish them.
Indeed, using the isomorphism in \cref{lemma:converse-density-thm}, we will identify $\D$ with $\End_R(\U)$ and, using the isomorphism in \cref{lemma:U-star-R-sop}, we will identify $\D\sop$ with $\End_{R\sop}(\mc U\Star)$. 

On the other hand, treating the super-anti-automorphism $\vphi \colon R \to R$ as an isomorphism $R \to R\sop$ (mapping $r$ to $\overline{\vphi(r)}$), we use $\vphi$ to transform the $R\sop$-action in \cref{eq:Rsop-on-left-of-U-Star} into an $R$-action:
\begin{equation}\label{eq:R-action-back-on-the-right}
	\forall r \in R\even \cup R\odd, f\in (\U\Star)\even \cup (\U\Star)\odd,\quad 
    r\cdot f \coloneqq \vphi(r)\Star\cdot f = \sign{r}{f} f \circ \vphi(r)\,.
\end{equation}
Under these identifications, we obtain $\D\sop = \End_{R\sop}(\mc U\Star) = \End_R(\mc U\Star)$.

From \cite[Lemma 2.7]{livromicha}, $R$ has a unique graded-simple supermodule up to isomorphism and shift.  
Thus, there exists an invertible $R$-linear map $\nu \colon \mc U \to \mc U\Star$, homogeneous of degree $(g_0, \alpha) \in G^\#$.  
Fix such a $\nu$.  
Since the $R$-action is left-sided, we adopt the convention of writing $R$-linear maps on the right.  
The following lemma clarifies the nonuniqueness of such maps:

\begin{lemma}\label{lemma:nonuniqueness-of-vphi1}
    A map $\nu' \from \U \to \U\Star$ is $R$-linear and homogeneous \IFF $\nu' = \nu \bar{d}$ for some homogeneous element $\bar{d} \in \D\sop = \End_R(\U\Star)$, where juxtaposition represents composition of maps written on the right. \qed
\end{lemma}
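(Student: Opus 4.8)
The plan is to prove both implications directly, exploiting the fact—established in the paragraph preceding the statement—that the fixed map $\nu\from \U \to \U\Star$ is an \emph{invertible} homogeneous $R$-linear map, its existence following from the uniqueness up to shift of the graded-simple $R$-supermodule. The entire content is that, once $\nu$ is known to be invertible, every homogeneous $R$-linear map $\U \to \U\Star$ differs from $\nu$ precisely by a homogeneous $R$-linear endomorphism of $\U\Star$, and such endomorphisms are exactly the elements of $\D\sop$ under the identification $\D\sop = \End_R(\U\Star)$ recorded just above. So the proof is essentially two one-line composition identities.

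For the backward implication I would simply observe that, since $R$-linear maps between left $R$-modules are written on the right, $\nu\bar d$ denotes the map $u \mapsto ((u)\nu)\bar d$; as a composition of homogeneous $R$-linear maps written on the same side, it is again homogeneous and $R$-linear, of degree the product of the two degrees. For the forward implication, given a homogeneous $R$-linear $\nu'$, I would set $\bar d \coloneqq \nu\inv \nu'$. Since $\nu$ is invertible and homogeneous, its inverse $\nu\inv\from \U\Star \to \U$ is $R$-linear and homogeneous of degree $(\deg\nu)\inv$, so $\bar d$ is a homogeneous $R$-linear endomorphism of $\U\Star$, \ie\ an element of $\End_R(\U\Star) = \D\sop$. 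Associativity of composition then gives $\nu\bar d = (\nu\nu\inv)\nu' = \nu'$, since $\nu\nu\inv = \id_\U$, which is the asserted expression.

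The computation is routine, so the only point needing genuine attention—and the closest thing to an obstacle—is the sign bookkeeping forced by the writing-on-the-right convention (\cref{def:change-map-to-the-left}). I would verify once and for all that, for maps of left $R$-modules written on the right, $R$-linearity reads $(ru)\psi = r\,(u)\psi$ with no sign, even when $\psi$ has odd parity; this follows by unwinding the change-of-side properties. Consequently $R$-linearity is preserved under composition and under taking inverses without introducing signs, so the two displayed identities are sign-free and valid regardless of the parities of $\nu$ and $\nu'$. With that verified, the proof reduces to those identities together with the identification $\D\sop = \End_R(\U\Star)$ and the invertibility of $\nu$ supplied by \cite[Lemma 2.7]{livromicha}.
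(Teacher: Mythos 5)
Your proof is correct and matches the intended argument: the paper marks this lemma with a \qed precisely because it reduces to the two composition identities you give, namely that $\nu\bar d$ is homogeneous and $R$-linear, and conversely that $\bar d \coloneqq \nu\inv\nu'$ lies in $\End_R(\U\Star) = \D\sop$ as a homogeneous element, using the invertibility of $\nu$ from \cite[Lemma 2.7]{livromicha}. Your sign check is also the right observation: with maps written on the right, $R$-linearity and composition carry no signs, so the identities hold regardless of parities.
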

Using the identifications $\D = \End_R(\U)$ and $\D\sop = \End_R(\U\Star)$, define the map $\vphi_0 \colon \D \to \D\sop$ by  
\[
    \forall d\in \D,\quad \vphi_0(d) = (-1)^{\bar{d}\bar{\nu}} \nu^{-1} d \nu,
\]  
where juxtaposition denotes composition of maps on the right. 
Since $\vphi_0$ is an isomorphism, we regard it as a super-anti-automorphism $\vphi_0 \colon \D \to \D$.  
This allows us to equip $\U\Star$ with a $G$-graded right $\D$-module structure via $\vphi_0$:  
\begin{equation}\label{eq:right-D-action}
    \forall f \in \U\Star,\ d \in \D,\quad f \cdot d \coloneqq f \cdot \overline{\vphi_0(d)}.
\end{equation}  
Thus, $\U\Star$ becomes a graded $(R, \D)$-bimodule, as desired. 

However, the invertible $R$-linear map $\nu \colon \U \to \U\Star$ is not $\D$-linear.  
To address this, we use $\theta \coloneqq \nu^\circ$ (see \cref{def:change-map-to-the-left}), which reinterprets $\nu$ as a left-sided map with sign adjustments.
By \cref{lemma:change-of-side-properties}, $\theta$ is not $R$-linear.
But we can check that it is $\D$-linear.
Indeed, for all $u\in \U$ and $d\in \D$,
\begin{align}\label{eq:sesquilinear-before-B}
	\theta(ud) &= (-1)^{|\theta|(|u| + |d|)} (ud)\nu
                = \sign{\theta}{u}\sign{\theta}{d} (u)(d\nu)\\
               &= \sign{\theta}{u}\sign{\theta}{d} (u)(\nu\nu\inv d \nu)
                = \sign{\theta}{u}(u)(\nu\vphi_0(d))
                = \theta(u)\vphi_0(d)\,.
\end{align} 

We will now define a third map to complement $\nu$ and $\theta$.  
Given $u \in \U$, we have $\theta(u) \in \U\Star = \Hom_\D(\U, \D)$.  
For any $v \in \U$, the evaluation $\theta(u)(v)$ lies in $\D$.  
This allows us to define a bilinear map $B \colon \U \times \U \to \D$ via  
\[\label{eq:definition-B}
    \forall u, v \in \U, \quad B(u, v) \coloneqq \theta(u)(v).
\]

\begin{defi}\label{def:sesquilinear-form}
	A map $B \colon \U \times \U \to \D$ is called a \emph{sesquilinear form on $\U$} if it is $\FF$-bilinear, $G^\#$-homogeneous if considered as a linear map $\U\tensor \U \to \D$, and there is a degree-preserving su\-per\--an\-ti\--auto\-mor\-phism $\vphi_0\from \D \to \D$ such that, for all $u,v \in \U$ and $d\in \D$,
	\begin{enumerate}
		\item $B(u,vd) = B(u,v)d$; \label{enum:linear-on-the-second}
		\item $B(ud, v) = (-1)^ {(|B| + |u|)|d|}\vphi_0(d) B(u, v)$. \label{enum:vphi0-linear-on-the-first}
	\end{enumerate}
	If we want to specify the super-anti-automorphism $\vphi_0$, we will say that $B$ is \emph{sesquilinear with respect to $\vphi_0$} or that $B$ is \emph{$\vphi_0$-sesquilinear}.
	The \emph{(left) radical} of $B$ is the set $\rad B \coloneqq \{u\in \U \mid B(u, v) = 0 \text{ for all } v\in \U\}$. 
	If $\rad B = 0$, we say that $B$ is \emph{nondegenerate} .
\end{defi}

It is straightforward to verify that $B$ as defined in \cref{eq:definition-B} is a $\vphi_0$-sesquilinear form.
One can actually prove more:

\begin{prop}\label{prop:sesquilinear-form-iff-D-linear-map}
	Fix a su\-per\--an\-ti\--auto\-mor\-phism $\vphi_0\from \D \to \D$ and consider the right $\D$-module structure on $\U\Star$ given by \cref{eq:right-D-action}.
	There is a bijection between $\vphi_0$-sesquilinear forms $B \colon \U \times \U \to \D$ and homogeneous $\D$-linear maps $\theta \from \U \to \U\Star$, given by $B(u, v) = \theta(u)(v)$.
	Moreover, $B$ is nondegenerate \IFF $\theta$ is bijective. \qed
\end{prop}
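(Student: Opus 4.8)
The plan is to produce two explicit, mutually inverse assignments between $\vphi_0$-sesquilinear forms and homogeneous $\D$-linear maps $\theta\from \U \to \U\Star$, both governed by the single prescription $B(u,v) = \theta(u)(v)$, and then to read off the nondegeneracy clause as a triviality. Since the forward direction (from a map $\theta$ to a form $B$) is essentially the computation already flagged as straightforward before the statement, the real content is the converse together with the verification that the two recipes undo each other. Throughout, the right $\D$-module structure on $\U\Star$ is the twisted one from \eqref{eq:right-D-action}, and I would first record the formula it induces on evaluations: unwinding \eqref{eq:right-D-action} through the $\D\sop$-action \cref{eq:U-Star-right-D-sop-module} and the left action \cref{eq:U-Star-left-D-module}, and using that $\vphi_0$ is degree-preserving, gives $(f\cdot d)(v) = (-1)^{|d|\,|f|}\,\vphi_0(d)\,f(v)$ for homogeneous $f\in \U\Star$ and $d\in \D$. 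This one identity is what links the module axiom to the sesquilinearity conditions.

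For the direction from forms to maps, given a $\vphi_0$-sesquilinear $B$ I would set $\theta_B(u)(v) \coloneqq B(u,v)$. The first condition in \cref{def:sesquilinear-form} says exactly that each $\theta_B(u)$ is right $\D$-linear, so $\theta_B(u)\in \U\Star$; $\FF$-bilinearity of $B$ makes $\theta_B$ itself $\FF$-linear, and the $G^\#$-homogeneity of $B$ forces $\theta_B$ to be homogeneous of degree $|B|$ (if $u\in \U_a$, then $B(u,\cdot)$ sends $\U_b$ into $\D_{|B|ab}$, i.e.\ $\theta_B(u)\in(\U\Star)_{|B|a}$). The only remaining axiom, $\D$-linearity of $\theta_B$ for the twisted action, is precisely the second condition of \cref{def:sesquilinear-form}: evaluating the boxed identity above at $f = \theta_B(u)$, whose internal degree is $|\theta_B(u)| = |B| + |u|$, yields $(\theta_B(u)\cdot d)(v) = (-1)^{(|B|+|u|)|d|}\vphi_0(d)\,\theta_B(u)(v) = B(ud,v) = \theta_B(ud)(v)$, so $\theta_B(ud) = \theta_B(u)\cdot d$.

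Conversely, from a homogeneous $\D$-linear $\theta$ I would set $B_\theta(u,v)\coloneqq\theta(u)(v)$; $\FF$-bilinearity and homogeneity of degree $|\theta|$ are immediate, the first sesquilinearity condition is just the right $\D$-linearity of $\theta(u)\in\U\Star$, and the second follows by reading the same evaluation identity in the opposite direction. The assignments $B\mapsto\theta_B$ and $\theta\mapsto B_\theta$ are visibly inverse, since each is the relation $B(u,v)=\theta(u)(v)$. Finally, $\rad B = \{u : \theta(u)(v)=0 \text{ for all } v\} = \ker\theta$, so $B$ is nondegenerate \IFF $\theta$ is injective; and because the superdual bases exhibit $\U$ and $\U\Star$ as having the same finite $\FF$-dimension, an $\FF$-linear injection is automatically bijective, giving $B$ nondegenerate \IFF $\theta$ bijective.

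I expect the sole genuine obstacle to be bookkeeping of the Koszul signs, concentrated entirely in matching the twisted action \eqref{eq:right-D-action} to the second sesquilinearity condition. The subtle point is that the sign $(-1)^{|d|\,|\theta(u)|}$ produced by the action carries the \emph{internal} degree $|\theta(u)| = |B|+|u|$ rather than $|u|$ alone, and it is exactly this that reconstructs the exponent $(|B|+|u|)|d|$ appearing in \cref{def:sesquilinear-form}; everything else is formal.
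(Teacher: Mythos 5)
Your two mutually inverse assignments, the evaluation identity $(f\cdot d)(v) = (-1)^{|d||f|}\,\vphi_0(d)\,f(v)$ obtained by unwinding \cref{eq:right-D-action}, and the matching of the twisted $\D$-action against condition \ref{enum:vphi0-linear-on-the-first} of \cref{def:sesquilinear-form} --- including the genuinely subtle point that the Koszul sign carries the internal parity $|B|+|u|$ of $\theta(u)$ rather than $|u|$ --- are all correct, as is the identification $\rad B = \ker\theta$. This is precisely the routine verification the paper leaves implicit: the proposition is stated with its proof omitted, immediately after the remark that the form built from $\theta$ as in \cref{eq:definition-B} is $\vphi_0$-sesquilinear. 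So in substance and route you supply exactly the intended argument.

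The one genuine flaw is your final step, injectivity implies bijectivity. You argue that the superdual bases exhibit $\U$ and $\U\Star$ as $\FF$-spaces of the same finite dimension, but the proposition sits in the general part of the paper: only finiteness of the rank of $\U$ over $\D$ is assumed, and $\D$ may be infinite-dimensional over $\FF$ (for instance the graded-division algebra $\FF[\ZZ]$ with its natural $\ZZ$-grading, all of whose nonzero homogeneous elements are invertible). In that case $\U$ and $\U\Star$ are infinite-dimensional over $\FF$ and the ``equal finite $\FF$-dimension'' argument gives nothing. The repair is to count over $\D$ instead: $\theta$ is $\D$-linear for the twisted structure, and the superdual basis shows $\U\Star$ is graded free of the same finite rank $k$ as $\U$ (twisting by the bijection $\vphi_0$ does not affect spanning or independence). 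An injective homogeneous $\D$-linear $\theta$ identifies a shift of $\U$ with the graded submodule $\theta(\U) \subseteq \U\Star$, which therefore has rank $k$; since every graded module over a graded-division superalgebra has a graded basis (the Zorn's-lemma argument recalled in \cref{subsec:artin-wedderburn}), graded submodules admit graded complements, and rank additivity together with the well-definedness of $\dim_\D$ (\cref{cor:iso-isotypic-components}) forces the complement to vanish, so $\theta$ is surjective. With that substitution your proof is complete; note that your $\FF$-dimension count does suffice in the finite-dimensional setting of \cref{subsec:param-R-vphi} and beyond, where the proposition is actually applied.
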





\begin{remark}\label{lemma:B-determines-vphi_0}
	Let $B \neq 0$ be a sesquilinear form on $\U$.
    \begin{enumerate}
        \item \label{it:unique-vphi0} There is a unique super-anti-automorphism $\vphi_0$ on $\D$ such that $B$ is sesquilinear with respect to $\vphi_0$;
        \item \label{it:new-vphi0} Given a homogeneous $0 \neq d \in \D$, the bilinear map $dB\from \U\times \U \to \D$, given by $(u, v) \mapsto dB(u,v)$, is a $\mathrm{sInt}_d \circ \vphi_0$-sesquilinear form, where $\operatorname{sInt}_d\from \D \to \D$ is the \emph{superinner automorphism} $\operatorname{sInt}_d (c) \coloneqq\sign{c}{d} dcd\inv$, for all $c\in \D$;
        \item \label{it:B-and-dB-nondegenerate} The form $B$ is nondegenerate \IFF $dB$ is nondegenerate.
    \end{enumerate}
\end{remark}

\begin{proof}
	For \ref{it:unique-vphi0}, since $B\neq 0$, there are homogeneous elements $u, v\in \U$ such that $0 \neq B(u,v) \in \D$. 
	Suppose $B$ is sesquilinear with respect to super-anti-automorphisms $\vphi_0$ and $\vphi_0'$ on $\D$.
	Then, for all $d\in \D\even \cup \D\odd$, we have
	\[ B(ud,v) = (-1)^ {(|B| + |u|)|d|} \vphi_0(d) B(u,v) = (-1)^ {(|B| + |u|)|d|} \vphi_0'(d) B(u,v) \]
	and, therefore, $\vphi_0(d) = \vphi_0'(d)$.
    Item \ref{it:new-vphi0} is an easy computation, and item \ref{it:B-and-dB-nondegenerate} follows from the fact that $\rad B = \rad dB$.
\end{proof}

We now relate the super-anti-automorphism $\vphi$ on $R$ to $B$.  
Using \cref{lemma:change-of-side-properties} and \eqref{eq:R-action-back-on-the-right},
\[
	\begin{split}
		B(ru,v) &= \theta (ru)(v)
                 = \sign{r}{\theta} \big(r \cdot \theta (u) \big) (v)\\
                &= \sign{r}{\theta} (-1)^{|r|(|\theta| + |u|)} \big(\theta (u) \circ \vphi(r) \big)(v) \\
                &= \sign{r}{u} \theta (u) \big( \vphi(r)v \big)
                 = \sign{r}{u} B(u,\vphi(r)v)\,.
	\end{split}
\]
We have proved one direction of:

\begin{thm}\label{thm:vphi-iff-vphi0-and-B}
	Let $\D$ be a graded-division superalgebra and let $\U$ be a nonzero right graded module of finite rank over $\D$.
	Given a super-anti-automorphism $\vphi$ on $R \coloneqq \End_\D(\U)$, there is a pair $(\vphi_0, B)$, where $\vphi_0$ is a super-anti-automorphism on $\D$ and $B\from \U \times \U \to \D$ is a nondegenerate $\vphi_0$-sesquilinear and 
    \[\label{eq:superadjunction}
		\forall r\in R\even \cup R\odd,\,\forall u, v \in \U\even \cup \U\odd,  \quad B(ru,v) = \sign{r}{u} B(u,\vphi(r)v)\,.
	\]
	Conversely, given a pair $(\vphi_0, B)$ as above, there is a unique su\-per-an\-ti-au\-to\-mor\-phism $\vphi$ on $R$ satisfying \cref{eq:superadjunction}, which we will refer as the \emph{superadjunction} with respect to $B$.
	Moreover, another pair $(\vphi_0', B')$ determines the same super-anti-automorphism $\vphi$ \IFF there is a nonzero $G^\#$-homogeneous element $d\in \D$ such that $B' = dB$ and, hence, $\vphi_0' = \mathrm{sInt}_d \circ \vphi_0$.
\end{thm}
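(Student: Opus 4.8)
The forward implication is precisely the computation displayed just before the statement: starting from $\vphi$ (together with the maps $\nu$, $\theta$ and the form $B$ constructed there) it produces a pair $(\vphi_0, B)$ with $B$ nondegenerate, $\vphi_0$-sesquilinear, and satisfying \eqref{eq:superadjunction}. What remains is (a) the converse, including uniqueness of $\vphi$, and (b) the description of the non-uniqueness of the pair. I work throughout with $G^\#$-homogeneous elements, extending by linearity at the end.

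For (a), start from a nondegenerate $\vphi_0$-sesquilinear $B$ and let $\theta\from \U \to \U\Star$ be the associated homogeneous $\D$-linear bijection (\cref{prop:sesquilinear-form-iff-D-linear-map}), so that $B(u,v) = \theta(u)(v)$ and $\U\Star$ carries the right $\D$-action \eqref{eq:right-D-action}. Since twisting the canonical right $\D\sop$-action on $\U\Star$ by the automorphism $\vphi_0$ does not change which additive maps are $\D$-linear, one has $\End_\D(\U\Star) = \End_{\D\sop}(\U\Star)$ as sets of maps; by \cref{lemma:double-dual}\ref{it:super-anti-iso} every element of this set is $r\Star$ for a unique $r \in R$. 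Consequently $\theta\inv \circ r\Star \circ \theta$ is a $\D$-linear endomorphism of $\U$, i.e.\ an element of $R$, and I \emph{define} $\vphi(r) \in R$ by $\vphi(r)\Star = \sign{r}{B}\,\theta \circ r \circ \theta\inv$. The normalizing sign $\sign{r}{B}$ is multiplicative in $r$, so the super-anti-isomorphism property $(rs)\Star = \sign{r}{s}\, s\Star\, r\Star$ of \cref{lemma:double-dual} transfers to give $\vphi(rs) = \sign{r}{s}\vphi(s)\vphi(r)$; moreover $\vphi$ is degree-preserving (because $|r\Star| = |r|$ and $G$ is abelian) and bijective (because $\theta$ is). Evaluating $\vphi(r)\Star(\theta(u))$ in two ways — once as $\sign{r}{B}\theta(ru)$ and once via $L\Star(f) = (-1)^{|L||f|} f\circ L$ — and pairing against $v$ gives exactly $B(ru,v) = \sign{r}{u} B(u,\vphi(r)v)$, which is \eqref{eq:superadjunction}. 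Uniqueness is then immediate: two super-anti-automorphisms satisfying \eqref{eq:superadjunction} agree on every $r$, since $B(u,(\vphi(r)-\vphi'(r))v) = 0$ for all $u$ forces $\vphi(r) = \vphi'(r)$ by nondegeneracy of $B$ (equivalently, bijectivity of $\theta$).

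For (b), fix $\vphi$ and suppose first $B' = dB$ for a nonzero homogeneous $d \in \D$. Left-multiplying \eqref{eq:superadjunction} by $d$ shows $\vphi$ is also the superadjunction of $B'$, and \cref{lemma:B-determines-vphi_0}\ref{it:new-vphi0} together with the uniqueness in \cref{lemma:B-determines-vphi_0}\ref{it:unique-vphi0} yields $\vphi_0' = \operatorname{sInt}_d \circ \vphi_0$. Conversely, suppose $(\vphi_0,B)$ and $(\vphi_0',B')$ have the same superadjunction $\vphi$. Reversing the chain of equalities preceding the statement, the identity \eqref{eq:superadjunction} for a nondegenerate form is equivalent — through the change-of-side map of \cref{def:change-map-to-the-left} — to the right-written map $\nu$ determined by $\theta = \nu^\circ$ being $R$-linear for the single $\vphi$-action \eqref{eq:R-action-back-on-the-right} on $\U\Star$; the same holds for $\nu'$ and $\theta'$. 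Since this $\vphi$-action and the right $\D\sop$-module structure on $\U\Star$ are canonical (independent of $\vphi_0$), \cref{lemma:nonuniqueness-of-vphi1} applies and gives a homogeneous $\bar d \in \D\sop = \End_R(\U\Star)$ with $\nu' = \nu\bar d$. Translating back through $\theta = \nu^\circ$ by means of the composition rule \eqref{eq:change-of-side-composition}, together with the fact that the change-of-side of right multiplication by $\bar d$ is left multiplication by $d$, produces $B' = dB$ (after absorbing a scalar sign), as required.

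The main obstacle is bookkeeping, not ideas: each of $\U$, $\U\Star$, and its $\vphi_0$-twist carries a module structure, and the maps $\theta$, $\nu$, $r\Star$, $\bar d$ respect different ones. The crux is to reconcile the $\vphi_0$-dependent structures with the canonical right $\D\sop$-structure and $\vphi$-action so that \cref{lemma:double-dual} and \cref{lemma:nonuniqueness-of-vphi1} become applicable, and to verify that all signs arising from $\Star$, from the change-of-side $(\cdot)^\circ$, and from the rule of signs cancel to leave precisely the factor $\sign{r}{u}$ in \eqref{eq:superadjunction}.
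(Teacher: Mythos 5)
Your proposal is correct and takes essentially the same route as the paper's proof: the converse rests on the superdual identity $\big(\vphi(r)\big)\Star = \sign{r}{\theta}\,\theta \circ r \circ \theta\inv$ combined with \cref{lemma:double-dual}, and the non-uniqueness on passing to the right-written $R$-linear maps $\nu, \nu'$ and applying \cref{lemma:nonuniqueness-of-vphi1} with the change-of-side bookkeeping of \cref{def:change-map-to-the-left}. The only differences are cosmetic rearrangements: you define $\vphi$ by the conjugation formula and then verify \eqref{eq:superadjunction}, and you obtain uniqueness directly from nondegeneracy of $B$ (bijectivity of $\theta$), whereas the paper derives the same formula as a chain of equivalences from \eqref{eq:superadjunction} and invokes the invertibility of the superdual map.
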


\begin{proof}
    For the ``conversely'' part, let \(\theta\) correspond to \(B\) via \cref{prop:sesquilinear-form-iff-D-linear-map}. 
    \Cref{eq:superadjunction} becomes:
	\begin{alignat*}{2}
		\forall r\in R\even \cup R\odd,\,\forall u, v & \in \U\even \cup \U\odd,            & \theta (ru)(v)          & = \sign{r}{u} \theta(u)(\vphi(r)v)                                                   \\
		                                              &                                     &                         & = \sign{r}{u} \big(\theta (u)\circ \vphi(r)\big)(v)                                  \\
		\intertext{and, hence, equivalent to}
		\forall r\in R\even \cup R\odd,\,             & \forall u \in \U\even \cup  \U\odd, & \theta (ru)             & = \sign{r}{u} \theta(u) \circ \vphi(r).
		\addtocounter{equation}{1}\tag{\theequation}\label{eq:theta-is-almost-R-superlinear}                                                                                                                 \\
		\intertext{Recalling the definition of superdual of an operator, \cref{eq:theta-is-almost-R-superlinear} simplifies to}
		\forall r\in R\even \cup R\odd,\,\forall u    & \in \U\even \cup \U\odd,            & (\theta \circ r) (u)    & 
                                                      =  \sign{r}{\theta} \big(\big(\vphi(r)\big)\Star \circ \theta \big) (u),
		\intertext{which is the same as}
		\forall r                                     & \in R\even \cup R\odd,              & \theta \circ r          & = \sign{r}{\theta}  \big(\vphi(r)\big)\Star \circ \theta\,,
		\intertext{yielding}
		\forall r                                     & \in R\even \cup R\odd,              & \big(\vphi(r)\big)\Star & = \sign{r}{\theta}\, \theta \circ r \circ \theta\inv.
		\addtocounter{equation}{1}\tag{\theequation}\label{eq:vphi-r-Star-is-a-superconjugation}
	\end{alignat*}
	By \cref{lemma:double-dual}, the map $\End_\D (\U) \to \End_{\D\sop} (\U\Star)$ associating an operator to its superdual is invertible and, hence, $\vphi$ is uniquely determined.
	Also, the properties of the superdual imply that $\vphi$ is a super-anti-automorphism.

    For the last assertion, the ``if'' direction is immediate.
	For the ``only if'' part, let $\theta,\theta'\from \U \to \U\Star$ be defined by $\theta(u) = B(u, \cdot)$ and $\theta' \coloneqq B'(u, \cdot)$.
    Combining \cref{eq:R-action-back-on-the-right,eq:theta-is-almost-R-superlinear}, we have that $\theta(ru) = \sign{\theta}{r} r\cdot \theta(u)$ and similarly for $\theta'$.
    Defining maps $\nu,\nu'\from \U \to \U\Star$, written on the right, by $(u)\nu = \sign{u}{\theta} \theta(u)$ and $(u)\nu' = \sign{u}{\theta'} \theta'(u)$, it follows that both are $R$-linear.
    By \cref{lemma:nonuniqueness-of-vphi1}, there is $\bar d\in \D\sop$ such that $\nu' = \nu \bar d$.
	Applying \cref{eq:change-of-side-composition}, this implies \[\theta' = \sign{\theta}{\bar d} \bar d^\circ \theta.\]
	By the definition of the left $\D$-action on $\U\Star$, we have $\bar d^\circ \theta (u) = d\theta(u)$, concluding the proof.  
\end{proof}

\begin{remark}\label{conv:pick-even-form}
    If $\D$ is an odd graded division superalgebra, then a super-anti-isomorphism $\vphi$ is always determined by an even sesquilinear form, since we can substitute an odd form $B$ by $dB$ using some $d\in \D\odd$.
\end{remark}

\begin{defi}\label{def:superadjunction}
	Let $\D$ be a graded-division superalgebra, $\U$ a $G$-graded right $\D$-module of finite rank and $B$ a nondegenerate sesquilinear form on $\U$.
	We will denote by $E(\D, \U, B)$ the graded superalgebra $\End_\D(\U)$ endowed with the superadjunction $\vphi$ with respect to $B$. 
\end{defi}

We will finish this subsection with results about centers, matrix representation and simplicity.

\phantomsection\label{phsec:identification-centers-phi}

Recall that by \cref{prop:R-and-D-have-the-same-center} we could identify the centers of $\D$ and $\End_\D(\U)$.
More precisely, we have an isomorphism of $G^\#$-graded algebras $\iota\from Z(\D) \to Z(R)$ given by $\iota (d)(u) \coloneqq ud$, for all $d\in Z(\D)$ and $u\in \U$.
We can extend this result to consider the super-anti-automorphisms $\vphi_0$ and $\vphi$:

\begin{lemma}\label{prop:R-and-D-have-the-same-center-vphi}
    $\vphi (\iota (d)) = (-1)^{|B||d|} \iota (\vphi_0(d))$. 
\end{lemma}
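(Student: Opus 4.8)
The strategy is to pin down the operator $\vphi(\iota(d))$ by testing it against the nondegenerate form $B$, using the two identities that characterize the objects in play: the defining relation \eqref{eq:superadjunction} of the superadjunction $\vphi$, and the $\vphi_0$-sesquilinearity of $B$ recorded in \cref{def:sesquilinear-form}. Fix a $G^\#$-homogeneous $d \in Z(\D)$. Since $\iota\from Z(\D) \to Z(R)$ is an isomorphism of $G^\#$-graded algebras, $\iota(d)$ is homogeneous with $|\iota(d)| = |d|$, and by the convention that $R = \End_\D(\U)$ acts on $\U$ on the left we have $\iota(d)\,u = ud$ for every $u \in \U$.

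The heart of the argument is to compute $B(\iota(d)\,u, v) = B(ud, v)$ in two different ways, for homogeneous $u, v \in \U$. On one side, the superadjunction relation \eqref{eq:superadjunction} applied to $r = \iota(d)$ (with $|r| = |d|$) gives $B(ud, v) = (-1)^{|d||u|}\,B\big(u, \vphi(\iota(d))\,v\big)$. On the other side, property~(2) of \cref{def:sesquilinear-form} gives $B(ud, v) = (-1)^{(|B|+|u|)|d|}\,\vphi_0(d)\,B(u,v)$. To recognize the latter as an evaluation of $B$ on its second slot, I would first swap $\vphi_0(d)$ past $B(u,v)$ and then absorb it via property~(1), $B(u, v\,\vphi_0(d)) = B(u,v)\,\vphi_0(d)$, obtaining $B(ud, v) = (-1)^{(|B|+|u|)|d|}\,B\big(u, v\,\vphi_0(d)\big)$.

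That swap $\vphi_0(d)\,B(u,v) = B(u,v)\,\vphi_0(d)$ is precisely where centrality enters: it is valid because $\vphi_0(d) \in Z(\D)$, which holds since $d \in Z(\D)$ and the center is invariant under any super-anti-automorphism (the lemma just before \cref{cor:Q-no-sinv-center}). The same fact is what makes $\iota(\vphi_0(d))$ on the right-hand side of the claim meaningful in the first place, as $\iota$ is only defined on $Z(\D)$. Equating the two expressions for $B(ud,v)$ and cancelling the common factor $(-1)^{|d||u|}$, and using $v\,\vphi_0(d) = \iota(\vphi_0(d))\,v$, I arrive at $B\big(u, \vphi(\iota(d))\,v\big) = B\big(u, (-1)^{|B||d|}\,\iota(\vphi_0(d))\,v\big)$ for all homogeneous $u, v$.

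It remains to cancel the common argument $u$, and this is the only genuinely delicate step. Since $B$ is nondegenerate, the associated $\D$-linear map $\theta\from \U \to \U\Star$ with $\theta(u) = B(u, \cdot)$ is bijective by \cref{prop:sesquilinear-form-iff-D-linear-map}; in particular it is surjective, so if $B(u, w) = 0$ for all $u$ then $f(w) = 0$ for every $f \in \U\Star$, whence $w = 0$ because the homogeneous superdual basis separates points. Applying this to $w = \vphi(\iota(d))\,v - (-1)^{|B||d|}\,\iota(\vphi_0(d))\,v$ for each $v$ yields $\vphi(\iota(d))\,v = (-1)^{|B||d|}\,\iota(\vphi_0(d))\,v$ for all $v$, i.e.\ $\vphi(\iota(d)) = (-1)^{|B||d|}\,\iota(\vphi_0(d))$, as claimed. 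The main obstacle to watch is using \emph{surjectivity} of $\theta$ here, rather than merely injectivity (vanishing of the left radical of $B$), since the two arguments being compared sit in the second slot of $B$.
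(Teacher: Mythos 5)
Your proof is correct and follows essentially the same route as the paper's: computing $B(ud,v)$ in two ways via the superadjunction identity \eqref{eq:superadjunction} and the $\vphi_0$-sesquilinearity of $B$, swapping $\vphi_0(d)$ past $B(u,v)$ by centrality, and concluding by nondegeneracy. Your additional justification of the second-slot cancellation via surjectivity of $\theta$ (rather than mere vanishing of the left radical) is a legitimate point that the paper's proof leaves implicit, and you handle it correctly.
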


\begin{proof}
	Fix $d\in Z(\D)\even \cup Z(\D)\odd$ and let $u, v \in \U\even \cup \U\odd$.
	On the one hand,
	\begin{align*}
		B(ud,v) = B(\iota (d)(u), v) = (-1)^{|d||u|} B(u, \vphi(\iota (d)) v).
	\end{align*}
	On the other hand,
	\begin{align*}
		B(ud, v) &= (-1)^{(|B| + |u|) |d|} \vphi_0(d) B(u, v) = (-1)^{(|B| + |u|) |d|} B(u, v) \vphi_0(d)     \\
		 & = (-1)^{|B||d| + |u||d|} B(u, v \vphi_0(d) ) 
		 = (-1)^{|B||d|} (-1)^{|u||d|} B(u, \iota (\vphi_0(d))(v)). 
	\end{align*}
	Since $B$ is nondegenerate, we conclude that $\vphi (\iota (d)) = (-1)^{|B||d|} \iota (\vphi_0(d))$, as desired. 
\end{proof}

\phantomsection\label{phsec:matrix-representation-vphi}

We now wish to express the super-anti-automorphism \(\vphi\) in terms of matrices over $\D$.
Let $\mc B = \{u_1, \ldots, u_k\}$ be a fixed homogeneous $\D$-basis of $\U$ following Convention \ref{conv:pick-even-basis} (\ie, if $\D$ is odd, we take $\mc B$ with only even elements), and use $\mc B$ to identify $\End_\D (\U)$ with $M_k (\D)$.
For brevity, we denote \(|u_i|\) by \(|i|\) for all \(i \in \{1, \ldots, k\}\) in what follows.  

\begin{defi}
    For \(X = (x_{ij}) \in M_k(\D)\), define \(\vphi_0(X)\) by applying $\vphi_0$ entry-wise, \ie, \(\vphi_0(X) \coloneqq (\vphi_0(x_{ij}))\).  
    Moreover, we extend the definition of the \emph{supertranspose} (see \cref{def:supertranspose}) to matrices over $\D$ by setting \(X\stransp \coloneqq \big( (-1)^{(|i| + |j|)|i|} x_{ji} \big)\), where \(i, j\) are the row and column indices of \(X\).  
    By Convention~\ref{conv:pick-even-basis}, \(X\stransp = X\transp\) (ordinary transpose) when $\D$ is odd.  
\end{defi}

\begin{prop}\label{prop:matrix-vphi}
	Let $\Phi$ be the matrix representing the sesquilinear form $B$, \ie, $\Phi_{}ij \coloneqq B(u_i, u_j)$.
	For \(r \in R\even \cup R\odd\) with matrix \(X \in M_k(\D)\), let \(Y \in M_k(\D)\) represent \(\vphi(r)\).  
    Then, if $\D$ is even, we have
	\begin{align}
		Y & = \Phi\inv\, \vphi_0( X\stransp )\, \Phi, \addtocounter{equation}{1}\tag{\theequation}\label{eq:matrix-vphi-D-even} 
		\intertext{and, if $\D$ is odd (following Convention~\ref{conv:pick-even-basis}),}
		Y & = \sign{B}{r}\,\Phi\inv\, \vphi_0( X\transp )\, \Phi.\addtocounter{equation}{1}\tag{\theequation}\label{eq:matrix-vphi-D-odd}
	\end{align}
\end{prop}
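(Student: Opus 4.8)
The plan is to compute the matrices of $r$ and $\vphi(r)$ directly in the basis $\mc B$ and then compare them through the defining relation of the superadjunction. For a homogeneous $r \in R$, write its matrix $X = (x_{ij}) \in M_k(\D)$ by $r(u_j) = \sum_i u_i x_{ij}$, and let $Y = (y_{ij})$ be the matrix of $\vphi(r)$, so that $\vphi(r)(u_j) = \sum_i u_i y_{ij}$. Since $R$ acts on the left of the right $\D$-module $\U$, a homogeneous $r$ forces $|x_{ij}| = |r| + |i| + |j|$ on every nonzero entry; this single parity relation will control all the signs below. The relation to exploit is the superadjunction identity from \cref{thm:vphi-iff-vphi0-and-B}, namely $B(r u_a, u_b) = \sign{r}{a} B(u_a, \vphi(r) u_b)$, which I evaluate on pairs of basis vectors $(u_a, u_b)$.

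Expanding the left-hand side by $r u_a = \sum_i u_i x_{ia}$ and applying property~\ref{enum:vphi0-linear-on-the-first} of \cref{def:sesquilinear-form} to each summand gives $\sum_i (-1)^{(|B| + |i|)|x_{ia}|} \vphi_0(x_{ia}) \Phi_{ib}$, where $\Phi_{ij} = B(u_i, u_j)$. Expanding the right-hand side by $\vphi(r) u_b = \sum_j u_j y_{jb}$ and applying property~\ref{enum:linear-on-the-second} gives $\sign{r}{a} \sum_j \Phi_{aj} y_{jb}$. Reading the two sums as entries of matrix products, the identity becomes
\[
    \sum_i (-1)^{(|B| + |i|)|x_{ia}|} \vphi_0(x_{ia}) \Phi_{ib} = \sign{r}{a}\, (\Phi Y)_{ab}\,.
\]
The task is then to recognize the left-hand side, after the sign analysis, as $(\vphi_0(X\stransp)\Phi)_{ab}$ up to a global scalar, so that $\Phi Y = c\, \vphi_0(X\stransp)\Phi$ with $c \in \{\pm 1\}$ and hence $Y = c\,\Phi\inv \vphi_0(X\stransp)\Phi$.

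The heart of the argument is the sign bookkeeping, which I would carry out in the two cases separately. If $\D$ is even, every $x_{ia} \in \D\even$, so the factor $(-1)^{(|B|+|i|)|x_{ia}|}$ is trivial, while $|x_{ia}| = 0$ forces $|i| + |a| = |r|$ on nonzero entries; substituting this into the surviving factor $\sign{r}{a}$ turns it into $(-1)^{(|a| + |i|)|a|}$, which is exactly the sign defining the supertranspose entry $(X\stransp)_{ai}$. This matches the left-hand side with $(\vphi_0(X\stransp)\Phi)_{ab}$ and yields \eqref{eq:matrix-vphi-D-even}. If $\D$ is odd, I invoke \cref{conv:pick-even-basis}: all $|i| = 0$, so the supertranspose collapses to the ordinary transpose and $\sign{r}{a} = 1$, while $|x_{ia}| = |r|$ makes the factor $(-1)^{(|B|+|i|)|x_{ia}|}$ equal to the constant $(-1)^{|B||r|}$; pulling this scalar out gives \eqref{eq:matrix-vphi-D-odd}. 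The main obstacle is precisely this disciplined sign tracking: one must keep clearly separate the parities $|x_{ij}|$ of elements of $\D$ from the positional parities $|i|, |j|$ that define the supertranspose, and the decisive observation is that the adjunction sign $\sign{r}{a}$ is exactly what upgrades the naive transpose $\vphi_0(X\transp)$ to the supertranspose $\vphi_0(X\stransp)$ in the even case.
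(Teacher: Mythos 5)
Your proof is correct and follows essentially the same route as the paper's: both evaluate the superadjunction identity $B(ru_a,u_b)=\sign{r}{a}B(u_a,\vphi(r)u_b)$ on pairs of basis vectors, apply the two sesquilinearity properties, and use the parity constraint $|x_{ia}|=|r|+|i|+|a|$ (valid termwise on nonzero entries) to turn $\sign{r}{a}$ into the supertranspose sign when $\D$ is even and into the constant factor $\sign{B}{r}$ when $\D$ is odd. The only cosmetic difference is that the paper first reduces by linearity to a matrix $X$ with a single nonzero homogeneous entry, whereas you keep the full sum and exploit the entry-parity relation directly.
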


\begin{proof}
    First, let us write \cref{eq:superadjunction} in terms of $\B$:
    \[\label{eq:superadjunction-basis}
        \forall u_i, u_j \in \mc B, \quad B(ru_i, u_j) = \sign{r}{i} B(u_i, \vphi(r)u_j)\,.
    \]
    It suffices to consider the case where \(X\) have a single nonzero \(G^\#\)-homogeneous entry \(x_{pq}\) at \((p,q)\).  
    In this case, for all $1 \leq i \leq n$, we have \(r u_i = u_p x_{pi}\).  
    In particular, \(|r| = |p| + |q| + |x_{pq}|\).
    Computing both sides of \cref{eq:superadjunction-basis}, we have:
    \begin{align}
        B(ru_i, u_j) = B (u_p x_{pi}, u_j) & = (-1)^{ (|B| + |p|) |x_{pi}|} \vphi_0(x_{pi}) B(u_p, u_j)
		\\&= (-1)^{ (|B| + |p|) |x_{pi}|} \vphi_0(x_{pi}) \Phi_{pj},
        \intertext{and}
        \sign{r}{i} B(u_i, \vphi(r)u_j) &=\sign{r}{i} B\bigg(u_i, \sum_{\ell=1}^k u_\ell y_{\ell j}\bigg)\\
        &= (-1)^{ (|p| + |q| + |x_{pq}|) |i| } \sum_{\ell=1}^k \Phi_{i \ell} y_{\ell j}.
    \end{align}
    
    For even \(\D\), rearranging the terms:
    \[
        (-1)^{ (|p| + |q|) |i| } \vphi_0(x_{pi}) \Phi_{pj} = \sum_{\ell=1}^k \Phi_{i \ell} y_{\ell j}
    \]
    The left hand-side is the $(i,j)$-entry of $\Phi Y$.
    Using that $X_{pq}$ is the only non-zero entry of $X$, the right-hand side can be rewritten as $\sum_{\ell=1}^k (-1)^{ (|\ell| + |i|) |i| } \vphi_0(x_{\ell i}) \Phi_{\ell j}$, which is the $(i,j)$-entry of $\vphi_0 (X\stransp) \Phi$.
    Thus, $\Phi Y = \vphi_0 (X\stransp) \Phi$, proving \cref{eq:matrix-vphi-D-even}.
    
    For odd \(\D\), we have:
    \[
        (-1)^{|B| |x_{pi}|} \vphi_0(x_{pi}) \Phi_{pj} = \sum_{\ell=1}^k \Phi_{i \ell} y_{\ell j}.
    \]
    Again, the left hand-side the $(i,j)$-entry of $\Phi Y$.
    By the same trick as in the even case, the right-hand side is $(-1)^{|B| |x_{pq}|} \sum_{\ell=1}^k \vphi_0(x_{\ell i}) \Phi_{\ell j}$, which is the $(i,j)$-entry of $(-1)^{|B| |r|} \vphi_0 (X\transp) \Phi$.
    This establishes \cref{eq:matrix-vphi-D-odd}.
\end{proof}

\phantomsection\label{phsec:vphi-R-simple-D-simple}

Using this matrix representation, it is easy to prove the following (compare with \cref{prop:simple-R-D-super}):

\begin{prop}\label{prop:vphi-R-simple-D-simple}
    The $\vphi$-invariant superideals of $M_k(\D)$ are precisely the sets $M_k(I)$, where $I$ is a $\vphi_0$-invariant superideal of $\D$.  
    In particular, the superalgebra $E(\D, \U, B)$ is $\vphi$-simple \IFF the superalgebra $\D$ is $\vphi_0$-simple. \qed
\end{prop}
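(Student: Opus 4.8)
The plan is to combine the structural description of superideals from \cref{prop:simple-R-D-super} with the explicit matrix formula for $\vphi$ established in \cref{prop:matrix-vphi}. By \cref{prop:simple-R-D-super}, every superideal of $M_k(\D)$ is of the form $M_k(I)$ for a unique superideal $I$ of $\D$, and the assignment $I \mapsto M_k(I)$ is injective (one recovers $I$ as the set of, say, $(1,1)$-entries). Hence it suffices to decide precisely when such an $M_k(I)$ is $\vphi$-invariant. I would fix the homogeneous basis $\mc B$ following Convention~\ref{conv:pick-even-basis}, let $\Phi = \big(B(u_i,u_j)\big) \in M_k(\D)$ be the Gram matrix of $B$ (invertible since $B$ is nondegenerate, by \cref{prop:sesquilinear-form-iff-D-linear-map}), and use that $\vphi$ acts by $X \mapsto \Phi\inv\,\vphi_0(X\stransp)\,\Phi$ when $\D$ is even and by $X \mapsto \sign{B}{X}\,\Phi\inv\,\vphi_0(X\transp)\,\Phi$ when $\D$ is odd.

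The key step is to track $M_k(I)$ through each of the three operations occurring in these formulas. The (super)transpose merely permutes the entries up to signs, so $M_k(I)\stransp = M_k(I)$ because $I$ is a subspace; applying $\vphi_0$ entrywise yields $\vphi_0\big(M_k(I)\big) = M_k(\vphi_0(I))$; and, since $M_k(I)$ is a \emph{two-sided} ideal of $M_k(\D)$ and $\Phi, \Phi\inv \in M_k(\D)$, conjugation by $\Phi$ preserves it, giving $\Phi\inv M_k(I)\,\Phi = M_k(I)$. The scalar prefactor in the odd case is irrelevant, as $M_k(I)$ is closed under scalar multiplication. Composing these observations produces the single identity $\vphi\big(M_k(I)\big) = M_k(\vphi_0(I))$.

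Both directions now follow immediately from this identity together with the injectivity of $I \mapsto M_k(I)$: if $\vphi_0(I) = I$ then $\vphi(M_k(I)) = M_k(I)$, and conversely $\vphi(M_k(I)) = M_k(I)$ forces $M_k(\vphi_0(I)) = M_k(I)$ and hence $\vphi_0(I) = I$. This establishes the claimed bijection between $\vphi$-invariant superideals of $M_k(\D) \iso E(\D, \U, B)$ and $\vphi_0$-invariant superideals of $\D$, and the ``in particular'' assertion is then the special case of the trivial and full ideals: $E(\D,\U,B)$ is $\vphi$-simple \IFF $\D$ is $\vphi_0$-simple. I expect the only delicate point to be the sign bookkeeping in the supertranspose and the odd-case prefactor; these never affect set membership, so the one genuinely structural input is that conjugation by the Gram matrix $\Phi$ preserves the two-sided ideal $M_k(I)$, which is exactly where nondegeneracy of $B$ (invertibility of $\Phi$) is used.
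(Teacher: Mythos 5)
Your proof is correct and is exactly the argument the paper intends: the result is stated with a \qed as an immediate consequence of the matrix formulas in \cref{prop:matrix-vphi} together with the description of superideals in \cref{prop:simple-R-D-super}, and you carry out precisely that computation, including the only substantive points (invertibility of the Gram matrix $\Phi$ from nondegeneracy of $B$, and the identity $\vphi(M_k(I)) = M_k(\vphi_0(I))$, with the supertranspose signs and the odd-case prefactor harmless on a subspace). Nothing is missing.
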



    

\subsection{Isomorphisms of graded-simple superalgebras with super-anti-au\-to\-mor\-phism}

We now aim to present necessary and sufficient conditions for $E(\D, \U, B)$ and $E(\D', \U', B')$ to be isomorphic.
In \cref{thm:iso-abstract}, we considered the case without super-anti-automorphisms.
There, we needed isomorphisms and shifts of graded modules. 
We now adapt these concepts to handle sesquilinear forms.

\begin{defi}\label{def:iso-(U,B)}
    Let $\D$ be a graded-division superalgebra.
    Given graded right $\D$-supermodules $\U$ and $\U'$ and sesquilinear forms $B$ and $B'$ on $\U$ and $\U'$, respectively, we define an \emph{isomorphism from $(\U, B)$ to $(\U', B')$} as an isomorphism of graded $\D$-modules $\theta\from \U \to \U'$ such that $B'(\theta(u), \theta(v)) = B(u, v)$ for all $u, v \in \U$.
\end{defi}

\begin{defi}\label{defi:shift-on-B}
    Let $\U$ be a graded right $\D$-module and $B$ be a $\vphi_0$-sesquilinear form on $\U$.
	Given $g\in G^\#$, we define $B^{[g]}\from \U^{[g]}\times \U^{[g]} \to \D$ by $B^{[g]}(u,v) \coloneqq \sign{u}{g} B(u,v)$ for all $u,v \in \U$.
\end{defi}

Note that $\deg B^{[g]} = g^{-2} \deg B$ and, in particular, $|B^{[g]}| = |B|$.
The following can be proved by routine computations using the rule of signs:

\begin{lemma}\label{lemma:B^[g]-does-the-job}
	For every $g\in G^\#$, $B^{[g]}$ is a homogeneous $\vphi_0$-sesquilinear form on $\U^{[g]}$.
	Further, if $B$ is nondegenerate, then so is $B^{[g]}$, and the superadjunction with respect to both is the same super-anti-automorphism $\vphi$ on $\End_\D(\U) = \End_\D(\U^{[g]})$.\qed
\end{lemma}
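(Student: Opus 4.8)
The plan is to verify the three assertions in turn, keeping in mind the one structural fact that drives everything: passing from $\U$ to $\U^{[g]}$ leaves the $\D$-module structure untouched and merely relabels degrees, so that a homogeneous element of $\U$-parity $|u|$ acquires parity $|u| + |g|$ in $\U^{[g]}$ (where $|g| \coloneqq p(g)$). The sign $\sign{u}{g}$ inserted in the definition of $B^{[g]}$ is engineered precisely to absorb this parity shift wherever it appears.

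First I would check that $B^{[g]}$ is a homogeneous $\vphi_0$-sesquilinear form on $\U^{[g]}$ in the sense of \cref{def:sesquilinear-form}. Bilinearity is immediate, since $\sign{u}{g}$ is constant on each parity component of $\U$. For homogeneity, if $u \in \U_{h_1}$ and $v \in \U_{h_2}$ then these carry degrees $h_1 g$ and $h_2 g$ in $\U^{[g]}$, while $B^{[g]}(u,v) \in \D_{(\deg B)\, h_1 h_2}$; this makes $B^{[g]}$ homogeneous of degree $g^{-2}\deg B$, with $|B^{[g]}| = |B|$ as already recorded after \cref{defi:shift-on-B}. The right $\D$-linearity condition passes through unchanged because the inserted sign depends only on $u$ and the element $vd$ is literally the same in $\U$ and $\U^{[g]}$. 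The $\vphi_0$-linearity condition is where the bookkeeping concentrates: expanding $B^{[g]}(ud,v) = \sign{ud}{g}\, B(ud,v)$ with $|ud| = |u|+|d|$ and applying the corresponding identity for $B$, one finds that the resulting $\ZZ_2$-exponent matches the one required for $B^{[g]}$ once $|B^{[g]}| = |B|$ and the shifted parity $|u|+|g|$ are substituted; the stray term $|g||d|$ appears on both sides and cancels.

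Second, nondegeneracy is essentially free: since $B^{[g]}(u,v) = \pm B(u,v)$, the two forms have the same radical as subsets of $\U$, so $B$ is nondegenerate if and only if $B^{[g]}$ is. Third, to identify the superadjunctions I would appeal to the uniqueness clause of \cref{thm:vphi-iff-vphi0-and-B}: it suffices to show that the $\vphi$ attached to $B$ also satisfies the defining identity \eqref{eq:superadjunction} with $B$ replaced by $B^{[g]}$. Here I would first note that $\End_\D(\U) = \End_\D(\U^{[g]})$ as $G^\#$-graded superalgebras (the remark after \cref{defi:shift} applies equally to shifts by $G^\#$), and that an operator $r$ has the same parity $|r|$ in both pictures since $2|g| = 0$. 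Expanding $B^{[g]}(ru,v)$ through the definition and \eqref{eq:superadjunction} for $B$, and comparing with $\sign{r}{u'}\, B^{[g]}(u,\vphi(r)v)$ where $u'$ records the shifted parity $|u|+|g|$, the exponents again coincide after cancellation; uniqueness then forces the superadjunction with respect to $B^{[g]}$ to be exactly $\vphi$.

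The only genuine obstacle is the sign arithmetic in the sesquilinearity and superadjunction identities. In each case the verification reduces to checking that a handful of $\ZZ_2$-valued exponents agree, and in every instance the extra factor $\sign{u}{g}$ cancels against the parity shift $|g|$; no structural difficulty arises.
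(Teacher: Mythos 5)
Your proposal is correct and does exactly what the paper intends: the paper omits the proof, stating that the lemma ``can be proved by routine computations using the rule of signs,'' and your verification carries out precisely those computations --- checking the sesquilinearity conditions of \cref{def:sesquilinear-form} with the shifted parity $|u|+|g|$, observing $\rad B^{[g]} = \rad B$ since $B^{[g]} = \pm B$ on homogeneous elements, and invoking the uniqueness clause of \cref{thm:vphi-iff-vphi0-and-B} after verifying that $\vphi$ satisfies \eqref{eq:superadjunction} for $B^{[g]}$. The sign bookkeeping is right in each case (the term $|g||d|$, respectively $|r||g| + |u||g|$, cancels as you say), so there is nothing to add.
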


We should also consider the effect of replacing the graded-division superalgebra via an isomorphism $\D \to \D'$ (see \cref{def:twist}).

\begin{lemma}\label{lemma:twist-on-(U-B)}
    Let $\D$ and $\D'$ be graded-division superalgebras and let $\psi_0 \from \D \to \D'$ be an isomorphism.
    If $\U'$ is a graded right $\D'$-supermodule and $B'$ is a homogeneous $\vphi_0'$-sesquilinear form on it, then $\psi_0^{-1} \circ B'$ is a homogeneous $(\psi_0^{-1} \circ \vphi_0' \circ \psi_0)$-sesquilinear form on the $\D$-supermodule $(\U')^{\psi_0}$ of the same degree as $B'$.
    Further, if $B'$ is nondegenerate, then so is $\psi_0^{-1} \circ B'$, and the superadjunction with respect to both is the same super-anti-automorphism $\vphi'$ on $\End_\D((\U')^{\psi_0}) = \End_{\D'}(\U')$. \qed
\end{lemma}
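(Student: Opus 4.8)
The plan is to transport everything through $\psi_0$, exploiting that $\psi_0$, being an isomorphism of graded superalgebras, is both parity- and degree-preserving, so that $\psi_0\inv$ is as well. Writing $\vphi_0 \coloneqq \psi_0\inv \circ \vphi_0' \circ \psi_0$ and $B \coloneqq \psi_0\inv \circ B'$, the degree claim is immediate: the source $\U'\tensor \U'$ carries the same $G^\#$-grading whether regarded over $\D'$ or, after twisting, over $\D$, and composing with the degree-preserving map $\psi_0\inv$ does not change degrees, so $\deg B = \deg B'$ and in particular $|B| = |B'|$. That $\vphi_0$ is a super-anti-automorphism of $\D$ is a routine sign check: for homogeneous $a,b \in \D$ one pushes the product through $\psi_0$, applies the super-anti-automorphism rule for $\vphi_0'$ (the signs match because $\psi_0$ preserves parity), and pulls the result back through $\psi_0\inv$.

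First I would verify the two conditions of \cref{def:sesquilinear-form} for $B$ on the twisted module $(\U')^{\psi_0}$, whose right $\D$-action is $u\cdot d = u\,\psi_0(d)$ by \cref{def:twist}. For condition~\ref{enum:linear-on-the-second}, $B(u, v\cdot d) = \psi_0\inv\big(B'(u, v\psi_0(d))\big) = \psi_0\inv\big(B'(u,v)\psi_0(d)\big) = B(u,v)\,d$, using linearity of $B'$ in the second slot and that $\psi_0\inv$ is an algebra homomorphism. For condition~\ref{enum:vphi0-linear-on-the-first}, $B(u\cdot d, v) = \psi_0\inv\big(B'(u\psi_0(d), v)\big)$; expanding via the $\vphi_0'$-sesquilinearity of $B'$ and using $|\psi_0(d)| = |d|$ together with $|B'| = |B|$, the scalar sign is $(-1)^{(|B| + |u|)|d|}$, and pulling the product back through $\psi_0\inv$ turns $\vphi_0'(\psi_0(d))$ into $\vphi_0(d)$, giving exactly the required identity. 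Nondegeneracy transfers for free: since $\psi_0\inv$ is injective, $B(u,v) = 0 \iff B'(u,v) = 0$, so $\rad B = \rad B'$ and $B$ is nondegenerate \IFF $B'$ is.

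Next I would record the identification $\End_\D\big((\U')^{\psi_0}\big) = \End_{\D'}(\U')$ of graded superalgebras. An $\FF$-linear $r$ is $\D$-linear on the twisted module exactly when $r(u\,\psi_0(d)) = r(u)\,\psi_0(d)$ for all $d \in \D$; since $\psi_0$ is surjective, as $d$ ranges over $\D$ the element $\psi_0(d)$ ranges over all of $\D'$, so this is precisely $\D'$-linearity on $\U'$. The underlying graded space is unchanged, so the two endomorphism superalgebras coincide with the same $G^\#$-grading. Finally, to see that the two superadjunctions agree, I would apply $\psi_0\inv$ to the defining adjunction identity \eqref{eq:superadjunction} for $(\vphi', B')$. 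Because $\psi_0\inv$ is an algebra homomorphism and $r$ acts identically on both modules, this yields $B(ru, v) = \sign{r}{u} B(u, \vphi'(r)v)$ for all homogeneous $r, u, v$; that is, $\vphi'$ satisfies the adjunction equation with respect to $B$ on $\End_\D\big((\U')^{\psi_0}\big)$. By the uniqueness clause of \cref{thm:vphi-iff-vphi0-and-B}, the superadjunction with respect to $B$ is uniquely determined, so it must equal $\vphi'$.

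The computations are all routine; the only points demanding care are the sign bookkeeping in condition~\ref{enum:vphi0-linear-on-the-first} (where one must confirm that the parities $|\psi_0(d)| = |d|$ and $|B| = |B'|$ line up) and the clean justification that twisting the module action by a bijective $\psi_0$ leaves the endomorphism superalgebra literally unchanged, which is what lets the uniqueness of superadjunction close the argument without any further computation.
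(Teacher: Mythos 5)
Your proof is correct and matches the paper exactly in spirit: the paper states this lemma with a terminal \qed, omitting the argument as routine, and your write-up is precisely that routine verification carried out properly — the sign bookkeeping in condition \ref{enum:vphi0-linear-on-the-first} using $|\psi_0(d)|=|d|$ and $|B|=|B'|$, the identification $\End_\D\bigl((\U')^{\psi_0}\bigr)=\End_{\D'}(\U')$ via surjectivity of $\psi_0$, and the appeal to the uniqueness clause of \cref{thm:vphi-iff-vphi0-and-B} to conclude the superadjunctions coincide. Nothing to correct.
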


\begin{thm}\label{thm:iso-abstract-vphi}
    Let $\D$ and $\D'$ be graded-division superalgebras, $\U$ and $\U'$ be nonzero graded right supermodules of finite rank over $\D$ and $\D'$, respectively, and $B$ and $B'$ be nondegenerate sesquilinear forms on $\U$ and $\U'$, respectively.
    If $\psi\from E(\D, \U, B) \to E(\D', \U', B')$ is an isomorphism (see \cref{def:superadjunction}), then there exist $g \in G^\#$, a homogeneous element $0 \neq d \in \D$, an isomorphism $\psi_0\from \D \to \D'$, and an isomorphism
    \begin{equation}\label{eq:iso-B-implies-vphi}
        \psi_1 \from (\U^{[g]}, dB^{[g]}) \to ((\U')^{\psi_0}, \psi_0^{-1} \circ B')
    \end{equation}
    such that
    \begin{equation}\label{eq:iso-super-anti-auto}
        \forall r \in R, \quad \psi(r) = \psi_1 \circ r \circ \psi_1^{-1}.
    \end{equation}
    Conversely, for any $g$, $d$, $\psi_0$, and $\psi_1$ as above, the map defined by \cref{eq:iso-super-anti-auto} is an isomorphism $\psi\from E(\D, \U, B) \to E(\D', \U', B')$.
\end{thm}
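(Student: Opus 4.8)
The plan is to first discard the super-anti-automorphisms and apply \cref{thm:iso-abstract} to the underlying isomorphism of graded superalgebras, and then feed the extra hypothesis that $\psi$ preserves the super-anti-automorphisms back in to upgrade the resulting module isomorphism into a form-preserving one. Throughout, write $R = \End_\D(\U)$ and $R' = \End_{\D'}(\U')$, and recall that an isomorphism of graded superalgebras with super-anti-automorphism is by definition an isomorphism $\psi$ of graded superalgebras satisfying $\psi \circ \vphi = \vphi' \circ \psi$.

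For the direct implication, I would regard $\psi\from R \to R'$ merely as an isomorphism of graded superalgebras. \Cref{thm:iso-abstract} then supplies a triple $(g, \psi_0, \psi_1)$ with $g \in G^\#$, an isomorphism $\psi_0\from \D \to \D'$ of graded superalgebras (the two-sided shift ${}^{[g\inv]}\D^{[g]}$ collapses to $\D$ since $G$ is abelian), and an isomorphism of graded right $\D$-modules $\psi_1\from \U^{[g]} \to (\U')^{\psi_0}$ satisfying $\psi(r) = \psi_1 \circ r \circ \psi_1\inv$. What remains is to promote $\psi_1$ to an isomorphism of the pairs appearing in \cref{eq:iso-B-implies-vphi}.

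The crux is to exploit $\psi \circ \vphi = \vphi' \circ \psi$. By \cref{lemma:twist-on-(U-B)}, the form $B'' \coloneqq \psi_0\inv \circ B'$ is a nondegenerate sesquilinear form on $(\U')^{\psi_0}$ whose superadjunction, read on $\End_\D((\U')^{\psi_0}) = \End_{\D'}(\U')$, is still $\vphi'$. I would then pull it back along $\psi_1$, setting $\hat B(u,v) \coloneqq B''(\psi_1(u), \psi_1(v))$ on $\U^{[g]}$; a routine computation with the rule of signs confirms that $\hat B$ is a nondegenerate sesquilinear form (with respect to the twisted super-anti-automorphism $\psi_0\inv \circ \vphi_0' \circ \psi_0$ on $\D$). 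Since $\psi_1$ identifies operators via $r \mapsto \psi_1 \circ r \circ \psi_1\inv = \psi(r)$, the superadjunction of $\hat B$ equals $\psi\inv \circ \vphi' \circ \psi = \vphi$. On the other hand, by \cref{lemma:B^[g]-does-the-job}, the form $B^{[g]}$ also has superadjunction $\vphi$. Two nondegenerate forms on $\U^{[g]}$ sharing a superadjunction differ by a left factor, so the uniqueness clause of \cref{thm:vphi-iff-vphi0-and-B} yields a nonzero $G^\#$-homogeneous $d \in \D$ with $\hat B = d\,B^{[g]}$. Unwinding the definition of $\hat B$, this is exactly the assertion that $\psi_1$ carries the form $d\,B^{[g]}$ to $\psi_0\inv \circ B'$, i.e.\ is an isomorphism of pairs as in \cref{eq:iso-B-implies-vphi}, establishing \cref{eq:iso-super-anti-auto}.

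For the converse, given $g$, $d$, $\psi_0$, and $\psi_1$ as in the statement, I would define $\psi$ by \cref{eq:iso-super-anti-auto}. Forgetting the forms, $(g, \psi_0, \psi_1)$ is a triple as in \cref{thm:iso-abstract}, so $\psi$ is already an isomorphism of graded superalgebras; only $\psi \circ \vphi = \vphi' \circ \psi$ needs checking. Here I would note that $d\,B^{[g]}$ has superadjunction $\vphi$ (by \cref{lemma:B^[g]-does-the-job} together with the final assertion of \cref{thm:vphi-iff-vphi0-and-B}, which says $B^{[g]}$ and $d\,B^{[g]}$ determine the same super-anti-automorphism), while $\psi_0\inv \circ B'$ has superadjunction $\vphi'$ by \cref{lemma:twist-on-(U-B)}; then reversing the computation of the previous paragraph shows that the form-preserving property of $\psi_1$ forces conjugation by $\psi_1$ to intertwine the two superadjunctions, which is precisely $\psi \circ \vphi = \vphi' \circ \psi$. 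The hardest part will be the middle step: transporting $B'$ through both the twist $\psi_0$ and the module isomorphism $\psi_1$ while keeping track of the signs, and recognizing that the super-anti-automorphism-preserving hypothesis is exactly what forces the pulled-back form to share the superadjunction $\vphi$ of $B^{[g]}$, so that the uniqueness part of \cref{thm:vphi-iff-vphi0-and-B} can be brought to bear.
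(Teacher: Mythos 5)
Your proposal is correct and follows essentially the same route as the paper's proof: both apply \cref{thm:iso-abstract} to obtain the triple $(g, \psi_0, \psi_1)$, pull $B'$ back through $\psi_0$ and $\psi_1$ to a form on $\U^{[g]}$ whose superadjunction is $\psi\inv \circ \vphi' \circ \psi$, and then invoke the uniqueness clause of \cref{thm:vphi-iff-vphi0-and-B} together with \cref{lemma:B^[g]-does-the-job} to produce the homogeneous element $d$ with the pulled-back form equal to $dB^{[g]}$. The only cosmetic difference is that the paper phrases the argument as a single chain of equivalences (via $\tilde\vphi \coloneqq \psi\inv \circ \vphi' \circ \psi$, with $\vphi = \tilde\vphi$ \IFF $\widetilde B = dB^{[g]}$) that settles both directions at once, whereas you run the direct and converse implications separately.
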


\begin{proof}
	Given an isomorphism of graded superalgebras $\psi\from R \to R'$, we define
	$\tilde \vphi \coloneqq \psi\inv \circ \vphi' \circ \psi$.
	Then $\psi$ is an isomorphism $(R, \vphi) \to (R', \vphi')$ \IFF $\vphi = \tilde\vphi$.
	Since $\psi$ is an isomorphism of $G^\#$-graded algebras, we can apply \cref{thm:iso-abstract} to conclude that there are $g\in G^\#$, an isomorphism of graded superalgebras $\psi_0\from \D \to \D'$, and an isomorphism of graded modules $\psi_1\from \U^{[g]} \to (\U')^{\psi_0}$ such that $\psi(r) = \psi_1 \circ r \circ \psi_1\inv$, for all $r\in R$.

	Consider $\vphi_0'' \coloneqq \psi_0\inv \circ \vphi_0' \circ \psi_0$ and $B'' \coloneqq \psi_0\inv \circ B'$.
	Then define $\widetilde B \from \U^{[g]} \times \U^{[g]} \to \D$ by
	$
		\widetilde B(u, v) \coloneqq B'' \big( \psi_1(u), \psi_1(v) \big)
	$
	for all $u, v \in \U^{[g]}$.
	It is easy to check that $\widetilde B$ is $\vphi_0''$-sesquilinear and that $\tilde\vphi$ is the superadjunction with respect to $\widetilde B$.
	Hence, applying \cref{thm:vphi-iff-vphi0-and-B} for $\U^{[g]}$ and Lemma \ref{lemma:B^[g]-does-the-job}, we conclude that $\vphi = \tilde\vphi$ \IFF there is a homogeneous $0 \neq d \in \D$ such that $\widetilde B = dB^{[g]}$.
	The result follows.
\end{proof}

\phantomsection\label{subsubsec:isomorphisms-with-actions}

We can interpret \cref{thm:iso-abstract-vphi} in terms of group actions.
For that, fix a graded-division superalgebra $\D$.
We will define three (left) group actions on the class of pairs $(\U, B)$, where $\U \neq 0$ is a graded $\D$-supermodule and $B$ is a nondegenerate sesquilinear form.
Recall that, by \cref{lemma:B-determines-vphi_0}, $B$ is $\vphi_0$-sesquilinear for a unique super-anti-automorphism $\vphi_0$ of $\D$.

Let $\D^\times_{\mathrm{gr}} \coloneqq \big( \bigcup_{g \in G^\#} \D_g \big)\backslash \{ 0 \}$, the group of nonzero homogeneous elements of $\D$.
Given $d\in \D^\times_{\mathrm{gr}}$, we define
\begin{equation}\label{eq:Dx_gr-action}
	d\cdot (\U, B) \coloneqq (\U, dB).
\end{equation}
Note that $dB$ is $(\operatorname{sInt}_d \circ \vphi_0)$-sesquilinear by \cref{lemma:B-determines-vphi_0}.
Let $A \coloneqq \Aut (\D)$, the group of automorphisms of $\D$ as a graded superalgebra.
Given $\tau \in A$, we define
\begin{equation}\label{eq:Aut(D)-action}
	\tau \cdot (\U, B) \coloneqq (\U^{\tau\inv}, \tau \circ B).
\end{equation}
Note that $\tau \circ B$ is $(\tau \circ \vphi_0 \circ \tau\inv)$-sesquilinear by \cref{lemma:twist-on-(U-B)}.
Finally, consider the group $G^\#$.
Given $g \in G^\#$, we define
\begin{equation}\label{eq:G-action}
	g \cdot (\U, B) \coloneqq (\U^{[g]}, B^{[g]}).
\end{equation}
Note that $B^{[g]}$ is $\vphi_0$-sesquilinear by \cref{lemma:B^[g]-does-the-job}.

\begin{lemma}\label{lemma:action-on-(U,B)}
	The three actions defined above give rise to a $(\D^\times_{\mathrm{gr}} \rtimes A) \times G^\#$-action, where $A$ acts on $\D^\times_{\mathrm{gr}}$ by evaluation.
\end{lemma}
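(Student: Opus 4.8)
The plan is to verify the axioms of a group action for the product group $\mathcal{G} \coloneqq (\D^\times_{\mathrm{gr}} \rtimes A) \times G^\#$, where the total action of an element $((d, \tau), g)$ sends $(\U, B)$ to $d \cdot \big(\tau \cdot (g \cdot (\U, B))\big)$. Since $\mathcal{G}$ is assembled from the three groups by one semidirect and one direct product, it suffices to establish three things: (i) each of \eqref{eq:Dx_gr-action}, \eqref{eq:Aut(D)-action}, \eqref{eq:G-action} is individually a left action; (ii) the semidirect-product compatibility $\tau \cdot (d \cdot (\U, B)) = \tau(d) \cdot (\tau \cdot (\U, B))$ relating the $\D^\times_{\mathrm{gr}}$- and $A$-actions (with $A$ acting on $\D^\times_{\mathrm{gr}}$ by evaluation); and (iii) that the $G^\#$-action commutes with each of the other two, so that $G^\#$ genuinely contributes a direct factor.

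For (i), the $\D^\times_{\mathrm{gr}}$-action is immediate, since $(d_1 d_2) B = d_1 (d_2 B)$ straight from the definition $(dB)(u,v) = d\, B(u,v)$, and $1 \cdot (\U, B) = (\U, B)$. For the $A$-action I would use the composition law for twists from \cref{def:twist}: twisting $\U$ first by $\tau_2\inv$ and then by $\tau_1\inv$ yields $\U^{(\tau_1 \tau_2)\inv}$, because the iterated right-module structure is $u \cdot d = u\, \tau_2\inv(\tau_1\inv(d)) = u\,(\tau_1\tau_2)\inv(d)$, while $\tau_1 \circ (\tau_2 \circ B) = (\tau_1 \tau_2)\circ B$ as maps into $\D$; hence $\tau_1 \cdot (\tau_2 \cdot (\U, B)) = (\tau_1 \tau_2)\cdot(\U, B)$. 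For the $G^\#$-action, the module part is $(\U^{[g_2]})^{[g_1]} = \U^{[g_1 g_2]}$ (using that $G^\#$ is abelian, via \cref{defi:shift}), and the form part reduces to the identity $(B^{[g_2]})^{[g_1]} = B^{[g_1 g_2]}$, which I would check from \cref{defi:shift-on-B} by carrying the factor $\sign{u}{g}$ through the two successive shifts.

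For (ii), both sides evaluate to one and the same pair: the $A$-action sends $(\U, dB)$ to $(\U^{\tau\inv}, \tau \circ (dB))$, and since $\tau$ is an algebra homomorphism, $\tau \circ (dB) = \tau(d)\,(\tau \circ B)$, which is precisely the form produced by $\tau(d) \cdot (\U^{\tau\inv}, \tau \circ B) = \tau(d) \cdot (\tau \cdot (\U, B))$. For (iii), the three commutations are short sign-chases: $(dB)^{[g]} = d\,(B^{[g]})$ because the scalar $\sign{u}{g}$ and the left factor $d$ do not interfere; $(\tau \circ B)^{[g]} = \tau \circ (B^{[g]})$ because $\tau$ fixes $\FF$ and so commutes with $\sign{u}{g}$; and twisting commutes with shifting at the level of modules, since one operation alters only the $\D$-action and the other only the $G^\#$-grading. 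The associated super-anti-automorphisms $\vphi_0$ transform consistently under all three moves by \cref{lemma:B-determines-vphi_0,lemma:twist-on-(U-B),lemma:B^[g]-does-the-job}, so no further bookkeeping of $\vphi_0$ is needed.

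I expect the main obstacle to lie in step (i) for the $G^\#$-action, namely the composition of shifts of the \emph{form}. The delicate point is that a shift by $g \in G^\#$ with odd $\ZZ_2$-component reverses the parity of the underlying superspace, so in the iterated shift $(B^{[g_2]})^{[g_1]}$ one must measure $|u|$ in the correct superspace at each stage; a careless reading introduces a spurious factor $\sign{g_1}{g_2}$ coming from the symmetric braiding $\U^{[g_2]} \otimes \U^{[g_1]} \cong \U^{[g_1]} \otimes \U^{[g_2]}$. One dispatches this either by fixing the parity convention consistently across shifts, or by observing that two pairs differing by a global scalar in $\FF^\times$ are isomorphic via right multiplication by $\sqrt{-1} \in \FF$ (which leaves $\vphi_0$ unchanged by \cref{lemma:B-determines-vphi_0}); either way one obtains $(B^{[g_2]})^{[g_1]} = B^{[g_1 g_2]}$. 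Once the parity conventions are pinned down, every remaining identity is routine.
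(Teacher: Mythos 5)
Your decomposition matches the paper's proof on every point where the paper proves anything: your items (ii) and (iii) are exactly the three computations the paper carries out, namely $\tau\circ(dB)=\tau(d)(\tau\circ B)$ for the semidirect compatibility, $(dB)^{[g]}=d\,B^{[g]}$, and $(\U^{\tau\inv})^{[g]}=(\U^{[g]})^{\tau\inv}$ together with $(\tau\circ B^{[g]})=(\tau\circ B)^{[g]}$ — while your item (i), the verification that each family of transformations is itself an action, is left entirely implicit in the paper. Your extra care in (i) is where the two arguments genuinely differ, and your suspicion about iterated shifts of the form is well founded: reading \cref{defi:shift-on-B} as it must be read upon iteration (parities measured in the module currently being shifted, so $|u^{[g_2]}|=|u|+|g_2|$, exactly as in the proof of \cref{lemma:B-g-delta}), one finds $(B^{[g_2]})^{[g_1]}=\sign{g_1}{g_2}\,B^{[g_1g_2]}$, so the $G^\#$-transformations compose on the nose only up to this Koszul sign when both shifts are odd. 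The paper's proof silently skips this point.

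However, your first proposed repair does not work: no parity convention eliminates the factor. If one instead computes the sign in the definition of $B^{[g]}$ using parities in the target $\U^{[g]}$, the two conventions differ only by the constant $(-1)^{|g|}$ and the same discrepancy $\sign{g_1}{g_2}$ reappears; the sign is intrinsic to composing two odd shifts on a bilinear structure. Your second repair is the correct one, properly interpreted: the discrepancy is the central scalar $(-1)^{|g_1||g_2|}\in\FF^\times\subseteq\D^\times_{\mathrm{gr}}$, which already lies inside the acting group, so the transformations give an honest action of a $\ZZ_2$-central extension on pairs, hence a genuine action of $(\D^\times_{\mathrm{gr}}\rtimes A)\times G^\#$ on pairs modulo rescaling of $B$ — equivalently on isomorphism classes, since $u\mapsto u\lambda$ yields $(\U,B)\iso(\U,\lambda^2 B)$ and $\sqrt{-1}\in\FF$ in the algebraically closed setting where the lemma is applied; over general $\FF$, where this subsection lives, it suffices to note that $-1\in\D^\times_{\mathrm{gr}}$, so orbits — the only thing used in \cref{cor:iso-with-actions} — are unaffected. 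Consistently with this, the induced action on inertias in \cref{def:TxG-action} composes strictly, because $(\U,B)$ and $(\U,-B)$ have the same inertia and are therefore isomorphic by \cref{thm:iso-(U,B)}. So your write-up is essentially correct and in fact more complete than the paper's proof; just replace the claim that a convention yields $(B^{[g_2]})^{[g_1]}=B^{[g_1g_2]}$ exactly by the scalar-absorption argument.
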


\begin{proof}
	Let $d\in \D^\times_{\mathrm{gr}}$, $\tau \in A$, $g \in G^\#$ and $u, v \in \U$.
	First, note that the action of $d$ does not change $\U$, so we only have to consider its effect on $B$.
	Since
	\begin{align*}
		(\tau \circ dB)(u,v) & = \tau \big( d B(u,v) \big) = \tau (d) \tau \big( B(u,v) \big) = \big( \tau(d) (\tau \circ B) \big) (u,v),
	\end{align*}
	the $\D^\times_{\mathrm{gr}}$-action combined with the $A$-action gives us a $(\D^\times_{\mathrm{gr}} \rtimes A)$-action.
	The $G^\#$-action commutes with the $\D^\times_{\mathrm{gr}}$-action since
	\begin{align*}
		(dB)^{[g]} (u, v) = \sign{g}{u} dB(u,v) = d B^{[g]}(u,v).
	\end{align*}
	Finally, the $G^\#$-action also commutes with the $A$-action since $(\U^{\tau\inv})^{[g]} = (\U^{[g]})^{\tau\inv}$ and
	\[
		(\tau \circ B^{[g]}) (u,v)
		  = \tau \big( \sign{g}{u} B(u,v) \big)
		  = \sign{g}{u} (\tau \circ B) (u,v)
        = (\tau \circ B)^{[g]} (u,v).
	\]
\end{proof}

\begin{cor}\label{cor:iso-with-actions}
	Under the assumptions of Theorem \ref{thm:iso-abstract-vphi}, if $\D \not \iso \D'$, then $(R, \vphi) \not \iso (R', \vphi')$.
	Otherwise, let $\psi_0\from \D \to \D'$ be any isomorphism.
	Then $(R, \vphi) \iso (R', \vphi')$ \IFF $\big( (\U')^{\psi_0}, \psi_0\inv \circ B' \big)$ is isomorphic to an object in the $(\D^\times_{\mathrm{gr}} \rtimes A) \times G^\#$-orbit of $(\U, B)$. \qed
\end{cor}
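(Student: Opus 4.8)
The plan is to translate \cref{thm:iso-abstract-vphi} into the language of the three actions introduced above. The first assertion is immediate: an isomorphism $(R,\vphi) \iso (R',\vphi')$ is in particular an isomorphism of the underlying graded superalgebras $R \iso R'$, so \cref{thm:iso-abstract} (equivalently, the first part of \cref{thm:iso-abstract-vphi}) produces an isomorphism $\psi_0 \from \D \to \D'$; hence $\D \not\iso \D'$ forces $(R,\vphi) \not\iso (R',\vphi')$. For the remainder, I fix an isomorphism $\psi_0 \from \D \to \D'$ and write $(\V, C) \coloneqq \big( (\U')^{\psi_0}, \psi_0\inv \circ B' \big)$.

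For the forward direction, suppose $(R,\vphi) \iso (R',\vphi')$. By \cref{thm:iso-abstract-vphi} there are $g\in G^\#$, a homogeneous $0\neq d\in\D$, an isomorphism $\psi_0'\from\D\to\D'$, and an isomorphism of pairs
\[
\psi_1 \from (\U^{[g]}, dB^{[g]}) \xrightarrow{\ \sim\ } \big( (\U')^{\psi_0'}, (\psi_0')\inv \circ B' \big).
\]
The key computation is to absorb the discrepancy between $\psi_0'$ and the fixed $\psi_0$ into the $A$-action. Setting $\tau \coloneqq (\psi_0')\inv \circ \psi_0 \in A = \Aut(\D)$, a direct check using \cref{def:twist} gives $\V^{\tau\inv} = (\U')^{\psi_0'}$ (since $\psi_0 \circ \tau\inv = \psi_0'$) and $\tau \circ C = (\psi_0')\inv \circ B'$, so that $\tau \cdot (\V, C) = \big( (\U')^{\psi_0'}, (\psi_0')\inv \circ B' \big)$. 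On the source side, $(\U^{[g]}, dB^{[g]}) = d \cdot \big( g \cdot (\U, B) \big)$ directly from the definitions \eqref{eq:Dx_gr-action} and \eqref{eq:G-action}. Thus $\psi_1$ witnesses $d \cdot (g\cdot(\U,B)) \iso \tau\cdot(\V,C)$, and applying $\tau\inv$ (which preserves isomorphism classes of pairs) yields $(\V,C) \iso \tau\inv\cdot\big(d\cdot(g\cdot(\U,B))\big)$, an element of the orbit of $(\U,B)$.

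For the converse, suppose $(\V,C)$ is isomorphic to an orbit element. Using \cref{lemma:action-on-(U,B)} --- specifically that the $G^\#$-action commutes with the other two and that the semidirect product $\D^\times_{\mathrm{gr}}\rtimes A$ lets one collect all factors into a single pair --- every orbit element can be normalized to the form $\tau \cdot \big( d \cdot (g\cdot(\U,B)) \big)$ for some $\tau\in A$, $d\in\D^\times_{\mathrm{gr}}$, $g\in G^\#$. Hence $\tau\inv\cdot(\V,C) \iso d\cdot(g\cdot(\U,B)) = (\U^{[g]}, dB^{[g]})$. Running the computation of the previous paragraph with $\sigma \coloneqq \tau\inv$ and setting $\psi_0' \coloneqq \psi_0\circ\tau$, one gets $\tau\inv\cdot(\V,C) = \big((\U')^{\psi_0'}, (\psi_0')\inv\circ B'\big)$, so the isomorphism of pairs supplies exactly the data $\psi_1\from (\U^{[g]}, dB^{[g]}) \to \big((\U')^{\psi_0'}, (\psi_0')\inv\circ B'\big)$ required by the converse direction of \cref{thm:iso-abstract-vphi}. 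That theorem then returns an isomorphism $(R,\vphi)\iso(R',\vphi')$, completing the equivalence.

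I expect the main obstacle to be the bookkeeping of twists and shifts: one must track carefully whether an automorphism enters as $\tau$ or $\tau\inv$, and whether the twisted module is $(\U')^{\psi_0\circ\tau}$ or $(\U')^{\psi_0\circ\tau\inv}$, since both \cref{def:twist} and the $A$-action \eqref{eq:Aut(D)-action} involve inverses. A secondary point requiring care is the verification that each of the three actions sends isomorphic pairs to isomorphic pairs and is invertible on isomorphism classes, which is exactly what licenses moving group elements freely across the relation $\iso$ in both directions above.
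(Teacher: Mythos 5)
Your proposal is correct and follows exactly the route the paper intends (the corollary is left as immediate from \cref{thm:iso-abstract-vphi} and \cref{lemma:action-on-(U,B)}): you translate the theorem's data into the three actions, and the one genuinely non-obvious step --- absorbing the discrepancy between the theorem's isomorphism $\psi_0'$ and the fixed $\psi_0$ via $\tau = (\psi_0')\inv \circ \psi_0 \in A$, using $\psi_0 \circ \tau\inv = \psi_0'$ --- is handled correctly in both directions, as is the normalization $d\cdot\tau = \tau\cdot\tau\inv(d)$ in the semidirect product and the observation that each action preserves isomorphism classes of pairs.
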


\subsection{Superinvolutions and super-Hermitian forms}\label{subsec:superinv-sesquilinear-forms}

In this subsection, we investigate conditions on a $\vphi_0$-sesquilinear form $B$ to ensure that the superadjunction with respect to $B$ is a superinvolution.

\begin{defi}\label{def:barB}
	Given a super-anti-automorphism $\vphi_0$ on $\D$ and a $\vphi_0$-sesqui\-li\-near form $B$ on $\U$, we define $\overline {B}\from \U\times \U \to \D$ by $\overline {B} (u,v) \coloneqq \sign{u}{v} \vphi_0\inv (B(v, u))$ for all $u, v \in \U$.
\end{defi}

\begin{prop}\label{prop:barB-determines-vphi-inv}
	The map $\overline {B}$ is a $\vphi_0\inv$-sesquilinear form of the same degree and parity as $B$.
	Further, if $B$ is nondegenerate and $\vphi$ is the superadjunction with respect to $B$, then $\overline{B}$ is nondegenerate and $\vphi\inv$ is the superadjunction with respect to $\barr B$. \qed
\end{prop}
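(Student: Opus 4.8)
The plan is to verify the four assertions in order: the degree/parity statement, $\vphi_0\inv$-sesquilinearity, nondegeneracy, and finally the identification of $\vphi\inv$ as the superadjunction with respect to $\overline B$. The first is immediate: since $\vphi_0$ is degree-preserving, so is $\vphi_0\inv$, and because $G$ is abelian the degree of $\vphi_0\inv(B(v,u))$ equals $(\deg B)(\deg v)(\deg u) = \deg B(u,v)$; the scalar $\sign{u}{v}$ affects neither degree nor parity, so $\deg \overline B = \deg B$ and $|\overline B| = |B|$.

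For $\vphi_0\inv$-sesquilinearity I would check the two axioms of \cref{def:sesquilinear-form} directly. For the first, expanding $\overline B(u, vd)$ via \cref{def:barB}, rewriting $B(vd, u) = (-1)^{(|B|+|v|)|d|}\vphi_0(d)B(v,u)$, and then applying the super-anti-automorphism $\vphi_0\inv$ (using $\vphi_0\inv\vphi_0(d) = d$ together with the sign rule for $\vphi_0\inv(xy)$), all the signs collapse to yield $\overline B(u, vd) = \overline B(u,v)\,d$. For the second axiom, the parallel expansion of $\overline B(ud, v)$ from $B(v, ud) = B(v,u)\,d$ produces exactly $(-1)^{(|\overline B| + |u|)|d|}\vphi_0\inv(d)\,\overline B(u,v)$, where I use $|\overline B| = |B|$. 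These are routine but sign-sensitive. It is convenient to record in passing the involutivity $\overline{\overline B} = B$, which follows from $\sign{u}{v}\sign{v}{u} = 1$ and $\vphi_0\circ\vphi_0\inv = \id$.

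For nondegeneracy, note that $\overline B(u,v) = \sign{u}{v}\vphi_0\inv(B(v,u))$ vanishes for all $v$ precisely when $B(v,u) = 0$ for all $v$, since $\vphi_0\inv$ is bijective; thus $\rad \overline B$ coincides with the \emph{right} radical $\{u \mid B(v,u) = 0 \text{ for all } v\}$ of $B$. To see this right radical is zero when $B$ is nondegenerate, I invoke \cref{prop:sesquilinear-form-iff-D-linear-map}: the associated map $\theta\from \U \to \U\Star$, $\theta(v) = B(v, \cdot)$, is bijective, so $B(v,u) = \theta(v)(u) = 0$ for all $v$ forces $f(u) = 0$ for every $f \in \U\Star$; as the superdual separates points of the free $\D$-module $\U$, this gives $u = 0$.

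Finally, to identify $\vphi\inv$ I would first convert the superadjunction identity $B(rv, u) = \sign{r}{v}B(v, \vphi(r)u)$ into a formula describing the $R$-action on the \emph{second} argument of $B$, namely $B(v, ru) = \sign{r}{v}B(\vphi\inv(r)v, u)$ (substitute $r \mapsto \vphi\inv(r)$, using that $\vphi$ preserves parity so $|\vphi\inv(r)| = |r|$). Inserting this into $\overline B(ru, v) = \sign{ru}{v}\vphi_0\inv(B(v, ru))$ and comparing with the target $\sign{r}{u}\,\overline B(u, \vphi\inv(r)v)$, the accumulated signs reduce to a common $\sign{u}{v}$ on both sides, giving $\overline B(ru,v) = \sign{r}{u}\,\overline B(u, \vphi\inv(r)v)$. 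Since $\overline B$ is a nondegenerate $\vphi_0\inv$-sesquilinear form, \cref{thm:vphi-iff-vphi0-and-B} provides a \emph{unique} superadjunction, which this computation exhibits as $\vphi\inv$. The main obstacle throughout is the sign bookkeeping, and in particular this last step, where transferring the superadjunction relation from the first to the second argument is where a sign slip is most likely; I would therefore check the parity of $\vphi\inv(r)$ and the resulting exponents there with the greatest care.
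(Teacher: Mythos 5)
Your proof is correct. For the $\vphi_0\inv$-sesquilinearity of $\overline{B}$ and the adjunction identity $\overline{B}(ru,v)=\sign{r}{u}\,\overline{B}(u,\vphi\inv(r)v)$ you do essentially what the paper does (it dismisses both as routine sign-rule computations; your derivation of the identity by substituting $r\mapsto\vphi\inv(r)$ into the adjunction relation for $B$ and cancelling signs down to a common $\sign{u}{v}$ is a clean way to organize it, and your sign bookkeeping checks out). Where you genuinely diverge is the nondegeneracy of $\overline{B}$. The paper deduces it from the adjunction identity itself: if $0\neq u\in\rad\overline{B}$, then $\overline{B}(u,\vphi\inv(r)v)=0$ for all homogeneous $r,v$ yields $\overline{B}(Ru,\U)=0$, and graded-simplicity of $\U$ as a left $R$-supermodule forces $\overline{B}(\U,\U)=0$, hence $B(\U,\U)=0$, a contradiction. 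You instead identify $\rad\overline{B}$ with the \emph{right} radical of $B$ and kill it via \cref{prop:sesquilinear-form-iff-D-linear-map}: the map $\theta$ associated to $B$ is surjective onto $\U\Star$, and the superdual separates points of the free $\D$-module $\U$ (via the superdual basis). Both arguments are sound; yours is more linear-algebraic and has the side benefit of showing directly that left-nondegeneracy of a sesquilinear form implies right-nondegeneracy, while the paper's is module-theoretic and derives nondegeneracy formally from the adjunction relation plus simplicity. Note also that the two proofs order the steps differently -- the paper proves the identity first and uses it for nondegeneracy, whereas your nondegeneracy argument is independent of the identity, so your ordering is internally consistent. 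Your final appeal to the uniqueness clause of \cref{thm:vphi-iff-vphi0-and-B} to conclude that $\vphi\inv$ is the superadjunction with respect to $\overline{B}$ is exactly the right closing step, which the paper leaves implicit.
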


\begin{proof}
    Checking that $\barr B$ is $\vphi_0\inv$-sesquilinear is a matter of routine calculations using the sign rule.
    The same is true for checking that
    \begin{equation}\label{eq:superadjunction-barr-B}
		\forall r\in R\even \cup R\odd,\,\forall u, v \in \U\even \cup \U\odd,  \quad \barr B(ru,v) = \sign{r}{u} \barr B(u,\vphi\inv(r)v)\,,
	\end{equation}
    if we assume that $\vphi$ is the superadjunction with respect to $B$.

    Finally, \cref{eq:superadjunction-barr-B} together with $B$ being nondegenerate implies that $\overline{B}$ is nondegenerate.
	To see that, assume there is $0 \neq u \in \rad \overline{B}$.
	For for every $r\in R\even \cup R\odd$ and $v\in \U\even \cup \U\odd$, we would have that $\overline{B}(u, \vphi\inv (r) v) = 0$, hence $\overline{B}(ru, v) = 0$.
	Since $r \in R\even \cup R\odd$ and $v\in \U\even \cup \U\odd$ were arbitrary, this implies $\overline{B} (Ru, \U) = 0$.
	But $\U$ is simple as a graded $R$-supermodule, so we get $\overline{B}(\U, \U) = 0$ and, hence, $B (\U, \U) = 0$, a contradiction.
\end{proof}

\begin{lemma}\label{lemma:bar-dB}
	Let $d$ be a nonzero $G^\#$-homogeneous element of $\D$ and consider $\vphi_0' \coloneqq \operatorname{sInt}_d\circ\, \vphi_0$ and $B' \coloneqq d B$.
	Then $\overline {B'} = (-1)^{|d|} \vphi_0\inv (d) \overline B$. 
\end{lemma}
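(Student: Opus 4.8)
The plan is to compute $\overline{B'}$ directly from \cref{def:barB} and simplify with the sign rule. The one point that requires care is that $\overline{B'}$ is defined through $(\vphi_0')\inv$ rather than through $\vphi_0\inv$, since by item~\ref{it:new-vphi0} of \cref{lemma:B-determines-vphi_0} the form $B' = dB$ is sesquilinear with respect to $\vphi_0' = \operatorname{sInt}_d \circ \vphi_0$, so its bar-form is built from the inverse of this twisted super-anti-automorphism.

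First I would record the elementary fact that $\operatorname{sInt}_d\inv = \operatorname{sInt}_{d\inv}$ (a one-line check from \cref{def:inner-automorphism} using $|d\inv| = |d|$), so that $(\vphi_0')\inv = \vphi_0\inv \circ \operatorname{sInt}_{d\inv}$. Writing $c \coloneqq B(v,u)$, the definition of $\overline{B'}$ then reads $\overline{B'}(u,v) = \sign{u}{v}\, (\vphi_0')\inv(dc)$, and I would evaluate this in two stages. In the first stage, applying $\operatorname{sInt}_{d\inv}$ to $dc$ collapses the conjugation $d\inv (dc) d = cd$ and leaves the sign $(-1)^{|d| + |c||d|}$ (using $|d|^2 = |d|$ in $\ZZ_2$). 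In the second stage, applying $\vphi_0\inv$, which is again a super-anti-automorphism, reverses the product as $\vphi_0\inv(cd) = (-1)^{|c||d|}\,\vphi_0\inv(d)\,\vphi_0\inv(c)$.

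Collecting everything, the two factors $(-1)^{|c||d|}$ combine to $1$ and one is left with the scalar $(-1)^{|d|}$ together with $\vphi_0\inv(d)$ standing to the left of $\vphi_0\inv(c)$. Since $\sign{u}{v}\,\vphi_0\inv(c) = \sign{u}{v}\,\vphi_0\inv(B(v,u)) = \overline{B}(u,v)$ by \cref{def:barB}, and the two occurrences of $\sign{u}{v}$ likewise multiply to $1$, this yields exactly $\overline{B'} = (-1)^{|d|}\,\vphi_0\inv(d)\,\overline{B}$.

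The main (and essentially only) obstacle is the bookkeeping of parity signs: one must correctly track the sign produced by the superinner automorphism, the sign from the super-anti-automorphism reversing the product $cd$, and the two copies of $\sign{u}{v}$, then verify they collapse to precisely $(-1)^{|d|}$. No conceptual difficulty arises once one commits to computing through $(\vphi_0')\inv$ rather than $\vphi_0\inv$; as a consistency check, both sides have parity $|d| + |B|$, which matches the parity of $\overline{B'}$ predicted by \cref{prop:barB-determines-vphi-inv}.
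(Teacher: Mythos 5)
Your proposal is correct and follows essentially the same route as the paper's proof: both observe that $\overline{B'}$ must be computed through $(\vphi_0')\inv = \vphi_0\inv \circ \operatorname{sInt}_{d\inv}$, collapse the conjugation $d\inv(dB(v,u))d$ with the sign $(-1)^{|d|(|d|+|c|)}$, and then use the super-anti-multiplicativity of $\vphi_0\inv$ so that the residual signs reduce to $(-1)^{|d|}$. Your sign bookkeeping matches the paper's computation exactly, so there is nothing to add.
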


\begin{proof}
	Note that $(\vphi_0')\inv = \vphi_0\inv \circ \operatorname{sInt}_{d\inv}$.
	Hence, for all $u, v \in \U\even \cup \U\odd$,
	\begin{align*}
		\overline {B'} (u,v)
          & = \sign{u}{v} (\vphi_0\inv \circ \operatorname{sInt}_{d\inv})  (d B(v, u) )                    \\
		   & = \sign{u}{v} \vphi_0\inv \big( (-1)^{|d| (|d| + |B| + |u| + |v|)}\,  d\inv d B (v, u) d \big) \\
          &= \sign{u}{v} (-1)^{|d|}\, \vphi_0\inv (d) \vphi_0\inv(B(v, u))
          =  (-1)^{|d|}\,\vphi_0\inv (d) \overline {B} (u, v).
	\end{align*}
\end{proof}

\begin{defi}
    We say that a sesquilinear form $B$ on $\U$ is \emph{super-Hermitian} if $\barr B = B$ and \emph{super-skew-Hermitian} if $\barr B = - B$.
\end{defi}

The following is a graded version of \cite[Theorem 7]{racine} (applied to Artinian superalgebras).
The proof is essentially the same as that of \cite[Theorem 3.1]{ELDUQUE202261}:

\begin{thm}\label{thm:vphi-involution-iff-delta-pm-1}
	Let $\D$ be a graded-division superalgebra, $\U$ be a nonzero $G$-graded right module of finite rank over $\D$, and $\vphi$ be a degree-preserving super-anti-automorphism on $R \coloneqq \End_\D (\U)$.
    Then $\vphi$ is a superinvolution \IFF there is a super-Hermitian or super-skew-Hermitian form $B$ such that $\vphi$ is the superadjunction with respect to $B$. 
	In this case, $B$ is $\vphi_0$-sesquilinear where $\vphi_0\from \D \to \D$ is a superinvolution. 
    Moreover, for a nonzero homogeneous $d\in\D$, the sesquilinear form $B'\coloneqq dB$ satisfies $\overline{B'}=(-1)^{|d|}\delta\vphi_0(d)d^{-1}B'$.
\end{thm}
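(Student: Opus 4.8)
The plan is to run everything through the dictionary of \cref{thm:vphi-iff-vphi0-and-B} between super-anti-automorphisms on $R=\End_\D(\U)$ and nondegenerate sesquilinear forms, using the behavior of the bar operation recorded in \cref{prop:barB-determines-vphi-inv} and \cref{lemma:bar-dB}. I would first dispatch the easy implication together with the claim that $\vphi_0$ is a superinvolution. Assume $\overline{B}=\delta B$ with $\delta\in\{1,-1\}$. By \cref{prop:barB-determines-vphi-inv}, $\vphi\inv$ is the superadjunction with respect to $\overline{B}=\delta B$; since $\delta$ is a nonzero even central homogeneous element of $\D$, the uniqueness clause of \cref{thm:vphi-iff-vphi0-and-B} identifies the superadjunction with respect to $\delta B$ with that with respect to $B$, namely $\vphi$, so $\vphi\inv=\vphi$. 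For $\vphi_0$, note that $\overline{B}=\delta B$ is $\vphi_0\inv$-sesquilinear by \cref{prop:barB-determines-vphi-inv} and simultaneously $\vphi_0$-sesquilinear (scaling by the central even $\delta$ leaves the defining automorphism unchanged, as $\operatorname{sInt}_\delta=\id$); the uniqueness in \cref{lemma:B-determines-vphi_0} then forces $\vphi_0\inv=\vphi_0$.

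For the converse I would start from a superinvolution $\vphi$ and pick, via \cref{thm:vphi-iff-vphi0-and-B}, a nondegenerate $\vphi_0$-sesquilinear $B$ whose superadjunction is $\vphi$. Because $\vphi\inv=\vphi$ is the superadjunction with respect to $\overline{B}$ (\cref{prop:barB-determines-vphi-inv}), uniqueness yields $\overline{B}=dB$ for a nonzero homogeneous $d$. The pivotal observation is a degree count: $\overline{B}$ has the same $G^\#$-degree as $B$ (\cref{prop:barB-determines-vphi-inv}) while $\deg(dB)=(\deg d)(\deg B)$, so $\deg d=e$, i.e.\ $d\in\D_e$ is even. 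A one-line sign computation gives $\overline{\overline{B}}=B$, and feeding this into \cref{lemma:bar-dB} for the even element $d$ produces $\vphi_0\inv(d)\,d=1$, that is $\vphi_0\inv(d)=d\inv$. I would then symmetrize: set $B_\pm\coloneqq(1\pm d)B$, which are honest homogeneous sesquilinear forms precisely because $1\pm d\in\D_e$. Applying \cref{lemma:bar-dB} and $\vphi_0\inv(d)=d\inv$ gives $\overline{B_+}=B_+$ and $\overline{B_-}=-B_-$. Since $\Char\FF\neq2$, the elements $1+d$ and $1-d$ cannot both vanish, so at least one $B_\pm$ is the product of $B$ by an invertible element of $\D_e$; that form is nondegenerate (\cref{lemma:B-determines-vphi_0}), has superadjunction $\vphi$ (uniqueness in \cref{thm:vphi-iff-vphi0-and-B}), and is super-Hermitian or super-skew-Hermitian, with its defining automorphism a superinvolution by the first paragraph.

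Finally, the ``moreover'' clause is immediate once $B$ is super-$\delta$-Hermitian, for then $\vphi_0\inv=\vphi_0$ and \cref{lemma:bar-dB} applied to an arbitrary homogeneous $d$ gives $\overline{dB}=(-1)^{|d|}\vphi_0\inv(d)\overline{B}=(-1)^{|d|}\delta\,\vphi_0(d)\,B=(-1)^{|d|}\delta\,\vphi_0(d)d\inv(dB)$, which is the asserted formula for $B'=dB$. I expect the main difficulty to be bookkeeping, rather than any deep structural fact: one must track carefully which super-anti-automorphism $\vphi_0$ each of $B$, $\overline{B}$, and their scalings is sesquilinear with respect to, and get all the signs in \cref{lemma:bar-dB} and $\overline{\overline{B}}=B$ right. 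The conceptual crux is the degree argument pinning $d$ to the even identity component $\D_e$: this is exactly what makes $1\pm d$ homogeneous and invertible, so that the symmetrization $B_\pm$ remains within the class of forms inducing $\vphi$, and it is what allows the argument to go through over an arbitrary graded-division superalgebra rather than only in the split case $\D_e=\FF$.
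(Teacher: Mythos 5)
Your proof is correct. Up to the decisive normalization step it coincides with the paper's: both use \cref{prop:barB-determines-vphi-inv} together with the uniqueness clause of \cref{thm:vphi-iff-vphi0-and-B} to translate $\vphi^2 = \id$ into $\overline{B} = \delta B$ for a nonzero homogeneous $\delta$, the comparison of $G^\#$-degrees to force $\delta \in \D_e$, and $\overline{\overline{B}} = B$ combined with \cref{lemma:bar-dB} to extract $\vphi_0\inv(\delta)\delta = 1$; your treatment of the easy implication, of the involutivity of $\vphi_0$ via the uniqueness in \cref{lemma:B-determines-vphi_0}, and of the ``moreover'' formula is the same as in the paper. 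The genuine divergence is how you dispose of a $\delta$ that is not already $\pm 1$: the paper passes to the subfield $\mathbb{L} \subseteq \D_e$ generated by $\FF$ and $\delta$, notes that $\sigma \coloneqq \vphi_0\inv\!\restriction_{\mathbb{L}}$ is an involutive automorphism with $\sigma(\delta) = \delta\inv$, and when $\sigma \neq \id$ invokes Hilbert's Theorem 90 for the quadratic extension $\mathbb{L}/\mathbb{L}_0$ to write $\delta = \lambda\sigma(\lambda)\inv$ and replace $B$ by the super-Hermitian $\lambda B$. You instead symmetrize: $\overline{(1 \pm \delta)B} = \vphi_0\inv(1 \pm \delta)\,\delta B = (\delta \pm 1)B = \pm(1 \pm \delta)B$, a computation valid even when $\D_e$ is noncommutative because $(1 \pm \delta\inv)\delta = \delta \pm 1$; since $\Char \FF \neq 2$, at least one of $1 \pm \delta$ is nonzero, hence invertible in the division algebra $\D_e$, so the corresponding form is nondegenerate, induces the same $\vphi$ by the uniqueness clause, and is super-Hermitian or super-skew-Hermitian. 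Your route is more elementary and uniform: it avoids the case split on $\sigma$ and all Galois theory, in effect inlining the only instance of Hilbert 90 the paper needs (your $1 + \delta$ is the classical norm-one averaging element, and $\delta = -1$ is precisely the skew-Hermitian boundary case); the paper's route, in exchange, makes visible that over an algebraically closed field, where $\D_e = \FF$, the whole issue trivializes to $\delta \in \{\pm 1\}$. The one point you should state explicitly rather than gesture at is that \cref{lemma:bar-dB} applies only to nonzero homogeneous scalars, so the displayed identity for $B_\pm$ is asserted only for whichever of $1 \pm \delta$ does not vanish; with that caveat the argument is complete.
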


\begin{proof}
    By \cref{prop:barB-determines-vphi-inv}, $\vphi = \vphi\inv$ \IFF $B$ and $\overline B$ have the same superadjunction.
    By \cref{lemma:B-determines-vphi_0}, this is the case \IFF there is a $G^\#$-homogeneous element $0 \neq \delta \in \D$ such that $\overline {B} = \delta B$. 
    Hence, the ``if'' part is straightforward. 
    
    For the ``only if'' part, fix a sesquilinear for $B$ for which $\overline {B} = \delta B$, where $\delta\in\pmone$. 
	Since $B$ and $\overline {B}$ have the same $G^\#$-degree, it follows that $\delta \in \D_e$. 
	By \cref{lemma:bar-dB}, we have that
	\[
		B = \overline {\overline B} = \overline {\delta B} = (-1)^{|\delta|} \vphi_0\inv(\delta) \overline B= \vphi_0\inv(\delta) \overline B = \vphi_0\inv(\delta) \delta B,
	\]
	so $\vphi_0\inv(\delta) \delta = 1$. 

    Let $\mathbb{L}$ be the subfield of the division-algebra $\D_e$ generated by $\FF$ and $\delta$, and denote by $\sigma\from \mathbb L \to \mathbb L$ the restriction of $\vphi_0\inv$ to $\mathbb{L}$.
    Since $\sigma$ fixes $\FF$ and $\sigma(\delta) = \vphi_0\inv(\delta) = \delta\inv$, it follows that $\sigma$ is an involutive automorphism of $\mathbb{L}$ over $\FF$.

    If $\sigma = \id$, then $\delta\inv = \sigma(\delta) = \delta$ and, hence, $\delta \in \pmone$.
    If $\sigma \neq \id$, consider the subfield $\mathbb L_0 \coloneqq \{ x\in \mathbb L \mid \sigma(x) = x \} \subseteq \mathbb L$. 
    Then $\mathbb L/\mathbb L_0$ is a quadratic Galois extension and $\langle \sigma \rangle$ is its Galois group.
    By Hilbert's Theorem 90, there is $\lambda \in \mathbb L \subseteq \D_e$ such that $\delta = \lambda \sigma(\lambda)\inv$.
    If we replace $B$ by $B'\coloneqq\lambda B$, using \cref{lemma:bar-dB}, we have that
    \begin{align}
        \overline {B'} &= (-1)^{|\lambda|}\vphi_0\inv(\lambda) \barr B = \sigma(\lambda) \barr B\\
        &= \lambda \delta\inv \barr B = \lambda\delta\inv \delta B\\
        &= \lambda \lambda\inv B' = B'\,,
    \end{align}
    concluding the ``only if'' direction.
    Also, if $\overline{B}=\delta B$, then we have $\vphi_0\inv = \operatorname{sInt}_\delta \circ \,\vphi_0 = \vphi_0$ by \cref{lemma:B-determines-vphi_0}.
    
    The ``moreover'' part follows from \cref{lemma:bar-dB}. 
\end{proof}

Recall from \cref{lemma:B^[g]-does-the-job} that replacing $(\U, B)$ by $(\U^{[g]}, B^{[g]})$ does not change $\End_\D(\U)$ and $\vphi$.

\begin{lemma}\label{lemma:B-g-delta}
    If $\barr B = \delta B$, then $\overline{B^{[g]}} = (-1)^{|g|} \delta B^{[g]}$.
\end{lemma}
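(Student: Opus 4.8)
The plan is to prove the identity by a direct computation, evaluating both forms on an arbitrary pair of homogeneous elements $u,v \in \U$ and carefully tracking the signs produced by the shift. The first step is to record how the shift affects parities: since $g \in G^\#$, an element of parity $|u|$ in $\U$ acquires parity $|u| + |g|$ in $\U^{[g]}$, where $|g| \coloneqq p(g)$. By \cref{lemma:B^[g]-does-the-job}, $B^{[g]}$ is again $\vphi_0$-sesquilinear, so, applying \cref{def:barB} inside $\U^{[g]}$, the conjugate form is
\[
    \overline{B^{[g]}}(u,v) = (-1)^{(|u|+|g|)(|v|+|g|)}\, \vphi_0\inv\!\big(B^{[g]}(v,u)\big).
\]

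Next I would unwind the right-hand side using the defining relation $B^{[g]}(v,u) = \sign{v}{g} B(v,u)$ from \cref{defi:shift-on-B} together with the identity $\vphi_0\inv(B(v,u)) = \sign{u}{v}\,\barr B(u,v)$, which is just \cref{def:barB} rearranged. Substituting the hypothesis $\barr B = \delta B$ and then converting back via $B(u,v) = \sign{u}{g} B^{[g]}(u,v)$, the whole expression becomes $\delta\, B^{[g]}(u,v)$ multiplied by an accumulated sign whose exponent is
\[
    (|u|+|g|)(|v|+|g|) + |v||g| + |u||v| + |u||g| \pmod 2,
\]
so the task reduces to simplifying this exponent. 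Expanding the product and using $|g|^2 \equiv |g|$ modulo $2$, all mixed terms cancel in pairs and the exponent collapses to $|g|$, yielding $\overline{B^{[g]}}(u,v) = (-1)^{|g|}\delta\, B^{[g]}(u,v)$, as claimed; here the scalar sign $\sign{u}{g}$ may be moved freely past $\delta \in \D_e$, since signs are central.

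The main obstacle is purely bookkeeping: keeping the four separate sign contributions straight (one from the definition of $\overline{B^{[g]}}$, two from the applications of the shift relation, and one from the definition of $\barr B$) and verifying that they combine to the single factor $(-1)^{|g|}$. No structural input beyond \cref{lemma:B^[g]-does-the-job}, which guarantees that $B^{[g]}$ is sesquilinear with respect to the \emph{same} $\vphi_0$ as $B$, is required, and the nondegeneracy of $B$ plays no role.
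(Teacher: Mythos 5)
Your computation is correct and follows essentially the same route as the paper's proof: both evaluate $\overline{B^{[g]}}$ on homogeneous elements, unwind \cref{defi:shift-on-B} and \cref{def:barB} (with the same $\vphi_0$, as guaranteed by \cref{lemma:B^[g]-does-the-job}), and check that the accumulated sign exponent reduces to $|g|$ modulo $2$. Nothing is missing.
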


\begin{proof}
    Let $u, v\ \in \U\even \cup \U\odd$.
    We will write $u^{[g]}$ and $v^{[g]}$ for $u$ and $v$ when they are considered as elements of $\U^{[g]}$. Then
\begin{align*}
	\overline{B^{[g]}}(u^{[g]},v^{[g]}) & = \sign{u^{[g]}}{v^{[g]}} \vphi_0\inv \big( B^{[g]} (v^{[g]}, u^{[g]}) \big)                                              \\
	                                    & = (-1)^{(|g| + |u|) (|g| + |v|)} \vphi_0\inv \big( \sign{g}{v} B(v, u) \big)                                              \\
	                                    & = (-1)^{|g| + |g||u| + |u||v|} \vphi_0\inv \big(B(v, u) \big) 
                                          = (-1)^{|g| + |g||u|} \overline B(u,v) \\
	                                    & = (-1)^{|g|} \sign{g}{u} \delta B(u,v) = (-1)^{|g|} \delta B^{[g]}(u^{[g]},v^{[g]})\,.
\end{align*}
\end{proof}

\begin{remark}\label{rmk:only-super-hermitian}
    Note that reversing parity on $\U$ does not affect the graded superalgebra with superinvolution $E(\D, \U, B)$ and changes $\delta$ to $-\delta$, so we can always make $B$ super-Hermitian.
\end{remark}

Combining with \cref{prop:only-SxSsop-is-simple,thm:End-over-D}, we have:

\begin{cor}\label{cor:SxSsop-with-dcc}
    Let $(R, \vphi)$ be a graded superalgebra with superinvolution and suppose $R$ satisfies the \dcc on graded left superideals. 
    Then $(R, \vphi)$ is graded-superinvolution-simple \IFF there exists a graded-division superalgebra $\D$ and graded right $\D$-supermodule $\U$ of finite rank such that $(R, \vphi)$ is isomorphic either to $\Eex (\D, \U)$ (\cref{defi:superdual-exchange}) or to $E(\D, \U, B)$ for some nondegenerate super-Hermitian form $B$ on $\U$ (\cref{def:superadjunction}). \qed
\end{cor}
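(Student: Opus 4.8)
The plan is to prove the equivalence by assembling the structural results already established, treating the graded-simple and non-graded-simple cases separately via \cref{prop:only-SxSsop-is-simple}.

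For the forward implication, I would start from a graded-superinvolution-simple $(R,\vphi)$ satisfying the \dcc and apply \cref{prop:only-SxSsop-is-simple} to split into two cases. If $R$ is graded-simple, then \cref{thm:End-over-D} gives a graded-division superalgebra $\D$ and a finite-rank graded right $\D$-module $\U$ with $R \iso \End_\D(\U)$. Since $\vphi$ is in particular a super-anti-automorphism, \cref{thm:vphi-iff-vphi0-and-B} produces a pair $(\vphi_0, B)$ with $B$ a nondegenerate $\vphi_0$-sesquilinear form whose superadjunction is $\vphi$; because $\vphi$ is a superinvolution, \cref{thm:vphi-involution-iff-delta-pm-1} lets me take $B$ super-Hermitian or super-skew-Hermitian, and \cref{rmk:only-super-hermitian} (reversing parity on $\U$) reduces to the super-Hermitian case, yielding $(R,\vphi)\iso E(\D,\U,B)$. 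If instead $(R,\vphi)\iso S\times S\sop$ with exchange superinvolution for a graded-simple $S$, I would first observe that the \dcc descends to $S$: a descending chain $I_1\supseteq I_2\supseteq\cdots$ of graded left superideals of $S$ lifts to $I_1\times S\sop \supseteq I_2\times S\sop\supseteq\cdots$ in $R$, which must stabilize. Hence \cref{thm:End-over-D} applies to give $S\iso\End_\D(\U)$. By \cref{lemma:double-dual} the map $L\mapsto L\Star$ is a super-anti-isomorphism $\End_\D(\U)\to\End_{\D\sop}(\U\Star)$, and the observation preceding \cref{defi:superdual-exchange} (replacing $S\sop$ by an isomorphic superalgebra via a super-anti-isomorphism) identifies $(R,\vphi)$ with $\Eex(\D,\U)$.

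For the converse, both models are handled directly. For $E(\D,\U,B)$ with $B$ nondegenerate super-Hermitian, the underlying $R=\End_\D(\U)$ is graded-simple and satisfies the \dcc by the converse part of \cref{thm:End-over-D}; since a graded-simple superalgebra has no nonzero proper graded superideals at all, it is a fortiori graded-superinvolution-simple, and \cref{thm:vphi-involution-iff-delta-pm-1} guarantees that the superadjunction $\vphi$ is genuinely a superinvolution. For $\Eex(\D,\U)$, writing it as $S\times S\sop$ with $S=\End_\D(\U)$ graded-simple, the remark following \cref{defi:exchange-sinv} gives that $S\times S\sop$ is graded-superinvolution-simple, while the \dcc holds because it holds in each factor.

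The main obstacle I anticipate is the bookkeeping in the exchange case: one must verify that the exchange superinvolution on $S\times S\sop$ transports, under the super-anti-isomorphism $L\mapsto L\Star$ of \cref{lemma:double-dual}, to exactly the superinvolution $(L_1,L_2)\mapsto({}\Star L_2, L_1\Star)$ defining $\Eex(\D,\U)$, and that the \dcc transfers correctly between $R$ and $S$. Once these two compatibilities are checked, the corollary follows formally from the cited theorems with no further computation.
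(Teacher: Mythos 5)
Your proposal is correct and follows essentially the same route as the paper, which states the corollary as an immediate combination of \cref{prop:only-SxSsop-is-simple}, \cref{thm:End-over-D}, \cref{thm:vphi-iff-vphi0-and-B}, \cref{thm:vphi-involution-iff-delta-pm-1} and \cref{rmk:only-super-hermitian}, exactly the assembly you carry out. The extra verifications you flag (descent of the \dcc{} to $S$, and that the exchange superinvolution transports under $L \mapsto L\Star$ to the superinvolution defining $\Eex(\D,\U)$) are precisely the routine compatibilities the paper leaves implicit, the latter being the remark following \cref{lemma:iso-SxSsop}.
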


\subsection{Parametrization and classification}\label{subsec:param-R-vphi}

In this section, we will parametrize the objects in \cref{cor:SxSsop-with-dcc} over an algebraically closed field $\FF$.

\subsubsection{Parametrization of \texorpdfstring{$(\D, \vphi_0)$}{(D, phi0)}}

Recall that the isomorphism class of a finite-dimensional graded-division superalgebra $\D$ is determined by a pair $(T, \tilde\beta)$ where $T \coloneqq \supp \D \subseteq G^\#$ is a finite subgroup and $\tilde\beta\from T\times T \to \FF^\times$ is a skew-symmetric bicharacter.
Also, recall the parity map $p\from T\to \ZZ_2$ and the alternating bicharacter $\beta\from T\times T \to \FF^\times$ associated to $\tilde\beta$.

It will be convenient to use the following notation from group cohomology:

\begin{defi}\label{def:coboundary}
	Let $H$ and $K$ be groups, and let $f\from H \to K$ be any map.
	We define $\mathrm{d} f\from H\times H \to K$ as the map given by $(\mathrm{d} f)\, (a,b) = f(ab)f(a)\inv f(b)\inv$ for all $a,b \in H$.
\end{defi}

Since $\FF$ is algebraically closed and $\D$ is finite-dimensional, each component $\D_t$ of $\D$ is one-dimensional, hence an invertible degree-preserving map $\vphi_0\from \D \to \D$ is completely determined by a map $\eta\from T \to \FF^\times$ such that $\vphi_0(X_t) = \eta(t)X_t$ for all $X_t \in \D_t$.

\begin{prop}\label{prop:superpolarization}
    Let $\eta\from T \to \FF^\times$ be any map and let $\vphi_0\from \D \to \D$ be the linear map determined by $\vphi_0(X_t) = \eta(t) X_t$ for all $t\in T$ and $X_t\in \D_t$.
	Then $\vphi_0$ is a super-anti-automorphism \IFF 
	\begin{equation}\label{eq:superpolarization}
		\forall a,b\in T, \quad \eta(ab) = \tilde\beta(a,b) \eta(a) \eta(b)\,
	\end{equation}
    or, in other words, $\mathrm{d} f = \tilde \beta$.
	Moreover, $\D$ admits a super-anti-automorphism \IFF $\tilde\beta$ (or, equivalently, $\beta$) only takes values $\pm 1$.
\end{prop}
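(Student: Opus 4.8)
The plan is to reduce the whole statement to a computation with the structure constants of $\D$ and then invoke the divisibility of $\FF^\times$. Since $\FF$ is algebraically closed and $\D$ is finite-dimensional, each homogeneous component $\D_t$ is one-dimensional (see \cref{subsubsec:param-D}), so fixing a nonzero $X_t \in \D_t$ for each $t \in T$ yields a homogeneous basis, and any invertible degree-preserving linear map indeed has the form $\vphi_0(X_t) = \eta(t) X_t$ with $\eta \from T \to \FF^\times$. Writing the factor set as $X_t X_s = \sigma(t,s) X_{ts}$ for scalars $\sigma(t,s) \in \FF^\times$, I would first record the relation $\sigma(t,s) = (-1)^{p(t)p(s)}\tilde\beta(t,s)\,\sigma(s,t)$, which is merely a restatement of \cref{eq:def-tilde-beta}.

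For the first equivalence, I would test $\vphi_0$ against the defining identity of a super-anti-automorphism on the basis, namely $\vphi_0(X_t X_s) = \sign{X_t}{X_s}\vphi_0(X_s)\vphi_0(X_t)$. Expanding both sides via $\vphi_0(X_t)=\eta(t)X_t$ and the factor set, then eliminating $\sigma(s,t)$ through the relation above, the identity collapses after cancelling the nonzero scalar $\sigma(t,s)$ to $\tilde\beta(t,s)\,\eta(ts) = \eta(t)\eta(s)$ for all $t,s$, that is, $\mathrm{d}\eta = \tilde\beta\inv$ in the notation of \cref{def:coboundary}. The one subtlety to address is that the calculation naturally produces $\tilde\beta\inv$ rather than $\tilde\beta$; this is harmless because $\mathrm{d}\eta$ is automatically symmetric (as $T$ and $\FF^\times$ are abelian), so $\mathrm{d}\eta = \tilde\beta\inv$ forces $\tilde\beta$ to be symmetric, and a bicharacter that is both symmetric and skew-symmetric takes values in $\pmone$, whence $\tilde\beta\inv = \tilde\beta$. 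Thus the condition is equivalent to $\eta(ab) = \tilde\beta(a,b)\eta(a)\eta(b)$, i.e. $\mathrm{d}\eta = \tilde\beta$, as claimed.

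For the ``moreover'' part, the forward direction is immediate from the above: if $\D$ admits a super-anti-automorphism, some $\eta$ satisfies $\mathrm{d}\eta = \tilde\beta$, and since any coboundary is symmetric while $\tilde\beta$ is skew-symmetric, both $\tilde\beta$ and $\beta(t,s)=(-1)^{p(t)p(s)}\tilde\beta(t,s)$ take only the values $\pm 1$. The content is the converse, and this is the main obstacle: assuming $\tilde\beta$ is $\pmone$-valued, hence symmetric, I must produce $\eta$ with $\mathrm{d}\eta = \tilde\beta$, i.e. exhibit a symmetric bicharacter as a coboundary. I would deduce this from the divisibility of $\FF^\times$: a bicharacter is a $2$-cocycle, its class in $H^2(T,\FF^\times)$ is detected by the alternating bicharacter $(a,b)\mapsto \tilde\beta(a,b)\tilde\beta(b,a)\inv$, which is trivial by symmetry, and $\mathrm{Ext}^1(T,\FF^\times)=0$ (as $\FF^\times$ is divisible) identifies the kernel of this map with the coboundaries. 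Hence $\tilde\beta = \mathrm{d}\eta$ for some $\eta \from T \to \FF^\times$, and by the first part this $\eta$ defines a super-anti-automorphism. This is precisely the abelian-group cohomology underlying the classification of graded-division algebras by pairs $(T,\beta)$ (\cref{prop:parametrization-T-beta}), so I would either cite that framework or, alternatively, give the explicit construction on a decomposition $T=\bigoplus_i \langle g_i\rangle$ by choosing square roots $\eta(g_i)$ of $\tilde\beta(g_i,g_i)$ of the appropriate order and setting $\eta(\prod_i g_i^{a_i}) = \prod_i \eta(g_i)^{a_i^2}\prod_{i<j}\tilde\beta(g_i,g_j)^{a_i a_j}$.
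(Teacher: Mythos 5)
Your proof is correct, and while the first equivalence follows essentially the same route as the paper, your treatment of the converse in the ``moreover'' part is genuinely different. For the first part, the paper also tests the super-anti-automorphism identity on the basis $\{X_t\}_{t\in T}$, arrives at $\eta(ab) = \tilde\beta(b,a)\eta(a)\eta(b)$ (your $\mathrm{d}\eta = \tilde\beta\inv$, via skew-symmetry), and then symmetrizes in $a,b$ exactly as you do via the symmetry of coboundaries, concluding $\tilde\beta(a,b)^2 = 1$ and hence $\mathrm{d}\eta = \tilde\beta$; so up to phrasing this portion matches. For the converse, however, the paper avoids constructing $\eta$ altogether: it observes that the isomorphism class of $\D\sop$ is given by $(T,\tilde\beta\inv)$, so when $\tilde\beta$ is $\pmone$-valued one has $\tilde\beta\inv = \tilde\beta$, whence $\D \iso \D\sop$ by the classification (\cref{prop:parametrization-T-beta}), and any such isomorphism \emph{is} a super-anti-automorphism of $\D$. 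Your argument instead produces $\eta$ with $\mathrm{d}\eta = \tilde\beta$ directly, either through the exact sequence $0 \to \mathrm{Ext}^1(T,\FF^\times) \to H^2(T,\FF^\times) \to \Hom(\Lambda^2 T, \FF^\times) \to 0$ together with divisibility of $\FF^\times$, or through the explicit generator-based formula. Both are valid; the paper's route is shorter because it recycles already-established machinery (the parametrization of $\D\sop$ and the classification by $(T,\tilde\beta)$), while yours is more self-contained and constructive, yielding an explicit super-anti-automorphism rather than a bare existence statement. The one delicate point in your explicit construction --- that $\eta(g_i)$ must be a square root of $\tilde\beta(g_i,g_i)$ of appropriate order so that the formula is well defined modulo the orders of the generators (specifically, $\eta(g_i)^{n_i}=1$ is needed when $n_i$ is odd) --- is real, but you flag it correctly, and such a choice always exists since an element of odd order has a unique square root of odd order.
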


\begin{proof}
	For all $a,b \in T$, let $X_a \in \D_a$ and $X_b\in \D_b$. 
	Then:
	\begin{alignat*}{2}
		     &  & \vphi_0(X_a X_b)             & = (-1)^{p(a) p(b)} \vphi_0(X_b) \vphi_0(X_a)          \\
		\iff &  & \,\, \eta(ab)X_a X_b         & = (-1)^{p(a) p(b)} \eta(a) \eta(b) X_b X_a            \\
		\iff &  & \, \eta(ab)X_a X_b           & = \tilde\beta(b,a) \eta(a) \eta(b) X_a X_b \\
		\iff &  & \eta(ab)                     & = \tilde\beta(b,a) \eta(a) \eta(b)
	\end{alignat*}
	If $a$ and $b$ are switched, since $T$ is abelian, we get $\eta(ab) = \tilde\beta(a,b) \eta(a) \eta(b)$, as desired. 
	Also, it follows that $\tilde\beta(b,a) = \tilde\beta(a,b)$. 
	Using that $\tilde\beta$ is skew-symmetric, \ie,  $\tilde\beta(b, a) = \tilde\beta (a, b)\inv$, we have that $\tilde\beta(a,b)^2 = 1$ and, hence, $\tilde\beta$ only takes values $\pm 1$, proving one direction of the ``moreover'' part. 
	The converse follows from the fact that the isomorphism class of $\D\sop$ is determined by $(T, \tilde\beta\inv)$, so if $\beta$ takes only values in $\{ \pm 1 \}$, there must be an isomorphism from $\D$ to $\D\sop$, which can be seen as a super-anti-automorphism of $\D$.
\end{proof}

\begin{cor}\label{cor:super-anti-auto-squares-in-radical}
    A graded-division superalgebra $\D$ admits a super-anti-auto\-mor\-phism \IFF 
    $t^2\in \rad\tilde\beta$, for all $t \in T$. \qed
\end{cor}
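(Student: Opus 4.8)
The plan is to lean entirely on the ``moreover'' part of \cref{prop:superpolarization}, which has already done the substantive work: it establishes that $\D$ admits a super-anti-automorphism if and only if $\tilde\beta$ takes only the values $\pm 1$. Granting this, the corollary reduces to the purely bicharacter-theoretic equivalence
\[
    \big(\forall\, t,s \in T,\ \tilde\beta(t,s) \in \{\pm 1\}\big) \Longleftrightarrow \big(\forall\, t\in T,\ t^2 \in \rad\tilde\beta\big),
\]
so that no further input about superalgebras is needed and the argument is entirely about the abelian group $T$ and the skew-symmetric bicharacter on it.

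To prove this equivalence, I would exploit that $\tilde\beta$ is a bicharacter, so that its first argument is a character and hence $\tilde\beta(t^2, s) = \tilde\beta(t,s)^2$ for all $t,s\in T$. By the definition of the radical, $t^2 \in \rad\tilde\beta$ means $\tilde\beta(t^2, s) = 1$ for every $s\in T$, which by the displayed identity is exactly $\tilde\beta(t,s)^2 = 1$ for every $s$. Quantifying over all $t\in T$ then transforms the condition ``$t^2 \in \rad\tilde\beta$ for all $t$'' into ``$\tilde\beta(t,s)^2 = 1$ for all $t,s$'', which is precisely the statement that $\tilde\beta$ takes only the values $\pm 1$. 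Both implications of the corollary follow at once by chaining this with \cref{prop:superpolarization}.

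I do not expect a genuine obstacle here, as this is essentially a one-line translation. The only points requiring care are, first, that the square should be moved through the \emph{first} argument of $\tilde\beta$ using the character property (not skew-symmetry), and, second, that the two universal quantifiers are handled symmetrically --- this causes no trouble, since the condition $\tilde\beta(t,s)^2 = 1$ for all $t,s$ is manifestly unchanged under swapping $t$ and $s$. Since $\FF$ is algebraically closed and $\D$ is finite-dimensional, each component $\D_t$ is one-dimensional and every scalar $\tilde\beta(t,s)$ is well defined, so the computation is purely elementary.
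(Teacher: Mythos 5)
Your proposal is correct and coincides with the paper's own (implicit) argument: the corollary carries a \qed{} with no written proof precisely because it is the one-line translation of the ``moreover'' part of \cref{prop:superpolarization}, via the same bicharacter identity $\tilde\beta(t^2,s)=\tilde\beta(t,s)^2$ that you use to equate ``$t^2\in\rad\tilde\beta$ for all $t\in T$'' with ``$\tilde\beta$ takes values in $\{\pm 1\}$''. Nothing further is needed.
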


\begin{cor}\label{cor:eta-t-square}
    Suppose $\eta$ determines a superinvolution on $\D$, \ie, $\eta$ takes values in $\pmone$. 
    For every element $t\in T$, we have $\eta(t^2) = (-1)^{p(t)}$ for all $t\in T$. 
    In particular, every element in $T^-$ has order at least $4$. \qed
\end{cor}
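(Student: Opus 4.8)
The plan is to combine the superpolarization identity from \cref{prop:superpolarization} with the hypothesis that $\eta$ takes values in $\pmone$ and the elementary properties of the bicharacters $\beta$ and $\tilde\beta$. The identity $\eta(ab)=\tilde\beta(a,b)\eta(a)\eta(b)$ in \cref{eq:superpolarization} is the only structural input needed for the first assertion.

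First I would specialize \cref{eq:superpolarization} to $a=b=t$, obtaining $\eta(t^2)=\tilde\beta(t,t)\,\eta(t)^2$. Since $\vphi_0$ is a superinvolution, $\eta$ takes values in $\pmone$, so $\eta(t)^2=1$ and hence $\eta(t^2)=\tilde\beta(t,t)$. It then remains to evaluate $\tilde\beta(t,t)$. Using the relation $\tilde\beta(t,s)=(-1)^{p(t)p(s)}\beta(t,s)$ together with the facts that $\beta$ is alternating (so $\beta(t,t)=1$) and that $p(t)^2=p(t)$ once $p(t)$ is read as an element of $\{0,1\}$, I get $\tilde\beta(t,t)=(-1)^{p(t)}$. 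This yields $\eta(t^2)=(-1)^{p(t)}$, as claimed. In particular, taking $t=e$ gives $\eta(e)=1$, which is consistent with the fact that any super-anti-automorphism fixes the identity.

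For the \emph{in particular} part, suppose $t\in T^-$, so that $p(t)=\bar 1$. On the one hand, $p\from T\to\ZZ_2$ is a group homomorphism and $p(t)=\bar 1$ has order $2$, which forces the order of $t$ to be even. On the other hand, the first part gives $\eta(t^2)=(-1)^{\bar 1}=-1\neq 1=\eta(e)$, so $t^2\neq e$; hence $t$ cannot have order $2$. An element of even order different from $2$ has order at least $4$, completing the argument.

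The computations are entirely routine, so I do not expect any serious obstacle. The only point requiring slight care is the final step, where one must invoke \emph{both} the parity homomorphism (to force even order) and the value $\eta(t^2)=-1$ (to exclude order $2$): neither fact on its own yields order at least $4$.
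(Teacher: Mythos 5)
Your proof is correct and is exactly the derivation the paper intends (the corollary is stated with no written proof, as an immediate consequence of \cref{prop:superpolarization}): setting $a=b=t$ in \cref{eq:superpolarization} gives $\eta(t^2)=\tilde\beta(t,t)=(-1)^{p(t)}$, and the order claim follows from $t^2\neq e$ together with the parity homomorphism forcing even order. Your closing remark is also apt: the parity argument is genuinely needed, since $\eta(t^2)=-1$ alone does not rule out odd orders such as $3$.
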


\phantomsection\label{def:Parameters-T-beta-eta}

It follows that we can parametrize graded-division superalgebras with super-anti-automorphism by pairs $(T, \eta)$, where $\mathrm{d} \eta$ is a skew-symmetric bicharacter.
But it will be convenient to keep $\tilde\beta$ as a parameter, so we say that $(\D,\vphi_0)$ is \emph{associated to the triple $(T, \tilde\beta, \eta)$}. 

Note that, by \cref{prop:parametrization-T-beta,prop:superpolarization}, for any finite abelian group $T$, skew-symmetric bicharacter $\tilde\beta\from T\times T \to \FF^\times$ and map $\eta\from T \to \FF^\times$ such that $\mathrm{d}\eta = \tilde\beta$, there is a graded-division superalgebra with super-anti-automorphism associated to $(T, \tilde\beta, \eta)$. 

\phantomsection\label{phsec:quadratic-maps}

\begin{remark}
    In the non-super setting, the map $\eta$ is a multiplicative quadratic form (see, \eg, \cite[Section 2.4]{livromicha}).
    This property fails in the super case, as $\eta(t^{-1}) \neq \eta(t)$ in general. 
    Nevertheless, specific instances where $\eta$ is a quadratic form remain of particular interest and are studied in \cite[Section 4.4]{caios_thesis}.
    Key examples are the standard realizations $\D$ of type $M$ or $Q$, for which the queer supertranspose (\cref{def:supertranspose}) corresponds to a quadratic form.
\end{remark}

\subsubsection{Parametrization of \texorpdfstring{$(\U, B)$}{(U, B)}}\label{ssec:parameters-(U-B)}

Let $(\D, \vphi_0)$ be a graded-division superalgebra with su\-per-an\-ti-au\-to\-mor\-phism, let $\U$ be a graded right $\D$-module.  
Recall that the isomorphism class of a $G$-graded supermodule $\U$ is determined by a map $\kappa\from G^\#/T \to \ZZ_{\geq 0}$ with finite support.
Explicitly, $\kappa (gT) = \dim_\D \U_{gT}$, where $\U_{gT}$ is the isotypic component associated to the coset $gT$.

\phantomsection\label{phsec:paired-by-B}

Let $g_0 \in G^\#$ be the degree of $B$.  
For $x, y \in G^\#/T$, if $B(\U_{x}, \U_{y}) \neq 0$, then $g_0 x y = T$.  
In this case, we say the isotypic components $\U_{x}$ and $\U_{y}$ are \emph{paired by $B$}.  
Define $\V_x \coloneqq \U_x + \U_y$ (so $\V_x = \U_x$ if $x = y$ and $\V_x = \U_x \oplus \U_y$ if $x \neq y$), and let $B_x$ denote the restriction of $B$ to $\V_x$.  
Then $B$ is nondegenerate \IFF $B_x$ is nondegenerate for every $x \in G^\#/T$.  
When this holds, $\U_x$ and $\U_y$ are dual to each other, and consequently $\kappa(x) = \kappa(y)$.

\phantomsection\label{phsec:reduce-B-to-FF-bilinear}

We will reduce $B$ to a collection of $\FF$-bilinear forms.
Fix elements $0 \neq X_t \in \D_t$ for each $t\in T$,  and let $\xi\from G^\#/T \to G^\#$ be a set-theoretic section of the natural homomorphism $G^\# \to G^\#/T$.
Note that $\U_{\xi(x)} \tensor \D \iso \U_x$ via the map $u \tensor d \mapsto ud$, so an $\FF$-basis of $\U_{\xi(x)}$ is a graded $\D$-basis for $\U_x$.
In view of Convention~\ref{conv:pick-even-basis}, if $\D$ is odd, we require $\xi$ to take values in $G = G\times \{ \bar 0 \}$.

For all $x \in G^\#/T$, set $y \coloneqq g_0\inv x\inv \in G^\#/T$ and $t \coloneqq g_0 \xi(x) \xi(y) \in T$, and define the bilinear form $\tilde{B}_x\from V_x \times V_x \to \FF$ by
\begin{equation}\label{eq:B_x-tilde}
	\forall u,v \in V_x, \quad \tilde{B}_x (u,v) \coloneqq X_{t}\inv B_x (u,v)\,.
\end{equation}

\begin{lemma}\label{lemma:B_x-nondeg}
	The sesquilinear form $B_x$ is nondegenerate \IFF the bilinear form $\tilde{B}_x$ is nondegenerate. \qed
\end{lemma}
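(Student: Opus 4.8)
The plan is to exploit the single fact that makes the normalization in \cref{eq:B_x-tilde} reversible: since $\D$ is a graded-division superalgebra, the homogeneous element $X_t \in \D_t$ is invertible, with inverse a nonzero element of $\D_{t\inv}$. Right multiplication by $X_t$ is therefore an $\FF$-linear bijection relating the $\D$-valued form $B_x$ to the $\FF$-valued form $\tilde B_x$ on the chosen homogeneous representatives, and the whole statement reduces to computing the (left) radical of $B_x$ ``degree by degree'' and checking that, on the generating homogeneous subspace, the two forms vanish together.

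First I would record that $\rad B_x$ is a \emph{graded right $\D$-submodule} of $\V_x$. Gradedness is immediate because $B_x$ is $G^\#$-homogeneous: if $B_x(u,\cdot)=0$ then each homogeneous component of $u$ lies in the radical as well. Closure under the $\D$-action is property~\ref{enum:vphi0-linear-on-the-first} of \cref{def:sesquilinear-form}: for $u \in \rad B_x$ and homogeneous $d$ one has $B_x(ud,v) = \pm\,\vphi_0(d)B_x(u,v)=0$ for all $v$. Next, since an $\FF$-basis of $\U_{\xi(x)} + \U_{\xi(y)}$ is a graded $\D$-basis of the free module $\V_x$, and right multiplication by the invertible $X_s$ maps $\U_{\xi(x)}$ bijectively onto $\U_{\xi(x)s}$, every graded submodule $N$ satisfies $N_{\xi(x)s} = N_{\xi(x)}X_s$; hence $N$ is generated over $\D$ by $N \cap (\U_{\xi(x)} + \U_{\xi(y)})$. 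Applying this to $N = \rad B_x$, it suffices to prove that $\rad B_x \cap (\U_{\xi(x)}+\U_{\xi(y)}) = \rad \tilde B_x$.

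For this last identity, take $w$ homogeneous in $\U_{\xi(x)} + \U_{\xi(y)}$. Since $\V_x = (\U_{\xi(x)}+\U_{\xi(y)})\D$ and $B_x$ is $\D$-linear in its second argument (property~\ref{enum:linear-on-the-second} of \cref{def:sesquilinear-form}), we have $B_x(w,\V_x)=0$ \IFF $B_x\big(w,\U_{\xi(x)}+\U_{\xi(y)}\big)=0$. On these representatives all nonzero values of $B_x$ land in $\D_t=\FF X_t$: when $x\neq y$ the diagonal blocks $B_x(\U_{\xi(x)},\U_{\xi(x)})$ and $B_x(\U_{\xi(y)},\U_{\xi(y)})$ vanish, as their values would lie in $\D_{g_0\xi(x)^2}$ and $\D_{g_0\xi(y)^2}$, which are zero components of $\D$ because $g_0\xi(x)^2\notin T$, while when $x=y$ one has $g_0\xi(x)^2=t$. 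As $\tilde B_x = X_t\inv B_x$ there and $X_t$ is invertible, $B_x(w,-)$ and $\tilde B_x(w,-)$ have the same zero set, so $w \in \rad B_x \IFF w \in \rad \tilde B_x$. Combining with the previous paragraph gives $\rad B_x = (\rad \tilde B_x)\D$, whence $\rad B_x = 0 \IFF \rad \tilde B_x = 0$. The only genuine obstacle is the bookkeeping of the coset degrees and the sesquilinear signs, together with separating the self-paired case $x=y$ (where $\tilde B_x$ is a form on the single space $\U_{\xi(x)}$) from the case $x\neq y$ (where $\tilde B_x$ is the off-diagonal pairing of $\U_{\xi(x)}$ with $\U_{\xi(y)}$); neither affects the argument, since invertibility of $X_t$ is all that is used.
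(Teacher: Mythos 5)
Your proof is correct. The paper offers no proof for this lemma (the \qed marks it as routine), and your argument is exactly the expected verification: the left radical of $B_x$ is a graded right $\D$-submodule, hence generated by its intersection with $V_x = \U_{\xi(x)} + \U_{\xi(y)}$, where all values of $B_x$ lie in $\D_t = \FF X_t$, so invertibility of $X_t$ identifies $\rad B_x \cap V_x$ with $\rad \tilde B_x$ and the two nondegeneracy conditions coincide (equivalently, and even more quickly, the Gram matrix of $B_x$ in a graded $\D$-basis drawn from $V_x$ is the Gram matrix of $\tilde B_x$ times the invertible $X_t$).
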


Let $(T, \tilde\beta, \eta)$ be the parameters of $(\D, \vphi_0)$.
We are interested in the case where the superadjunction is involutive.
By \cref{thm:vphi-involution-iff-delta-pm-1}, we can restrict our attention to the case where $\vphi_0$ is involutive.
Hence, we assume that $\eta$ takes values in $\pmone$.

\begin{lemma}\label{lemma:B_x-delta}
	Let $\delta \in \pmone$.
	Then $\overline{B_x} = \delta B_x$ \IFF
	\[
		\forall u, v \in V_x\even \cup V_x\odd, \quad \tilde{B}_x (v, u) = (-1)^{|u| |v|} \eta(t) \delta \tilde{B}_x (u, v)\,.
	\]
\end{lemma}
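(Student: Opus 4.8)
The plan is to unfold both conditions directly from the definition of $\overline{B_x}$ (\cref{def:barB}) and the defining relation \eqref{eq:B_x-tilde}, reducing everything to an identity of scalars in $\FF$ after cancelling an invertible homogeneous element of $\D$. Since $\tilde{B}_x$ is $\FF$-bilinear while $B_x$ and $\overline{B_x}$ are sesquilinear, both conditions are determined by their values on the fixed graded $\D$-basis of $V_x$; hence it suffices to verify the equivalence for homogeneous representatives $u \in \U_{\xi(x)}$ and $v \in \U_{\xi(y)}$. When $x \neq y$, taking $u, v$ in the same isotypic component makes every term vanish by the pairing restriction, so only the cross-pairing carries content.

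For such $u,v$ I would first record the relevant degrees: as $B$ is homogeneous of degree $g_0$ and $G^\#$ is abelian, both $B_x(u,v)$ and $B_x(v,u)$ lie in $\D_t$ with the \emph{same} $t = g_0\,\xi(x)\xi(y)$, so $B_x(u,v) = X_t\,\tilde{B}_x(u,v)$ and $B_x(v,u) = X_t\,\tilde{B}_x(v,u)$ with $\tilde{B}_x(u,v),\tilde{B}_x(v,u) \in \FF$. Then I compute
\begin{align*}
    \overline{B_x}(u,v)
    &= (-1)^{|u||v|}\,\vphi_0\inv\!\big(B_x(v,u)\big)
     = (-1)^{|u||v|}\,\tilde{B}_x(v,u)\,\vphi_0\inv(X_t)\\
    &= (-1)^{|u||v|}\,\eta(t)\,\tilde{B}_x(v,u)\,X_t,
\end{align*}
using $\FF$-linearity of $\vphi_0\inv$ together with $\vphi_0\inv(X_t) = \eta(t)\inv X_t = \eta(t)X_t$, where the last equality holds because $\eta$ takes values in $\pmone$. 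On the other hand, $\delta B_x(u,v) = \delta\,\tilde{B}_x(u,v)\,X_t$.

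Comparing the two expressions and cancelling the invertible element $X_t$, the equation $\overline{B_x}(u,v) = \delta B_x(u,v)$ becomes $(-1)^{|u||v|}\eta(t)\,\tilde{B}_x(v,u) = \delta\,\tilde{B}_x(u,v)$; since $(-1)^{|u||v|}$ and $\eta(t)$ are their own inverses, this rearranges to exactly $\tilde{B}_x(v,u) = (-1)^{|u||v|}\eta(t)\delta\,\tilde{B}_x(u,v)$, as claimed. There is no genuine obstacle beyond careful bookkeeping of the $\ZZ_2$-signs; the one point that needs attention is verifying that the two nonzero pairings $\U_{\xi(x)}\times\U_{\xi(y)}$ and $\U_{\xi(y)}\times\U_{\xi(x)}$ are governed by the same $t$ and hence the same scalar $\eta(t)$. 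This is precisely where the commutativity of $G^\#$ and the fixed section $\xi$ enter, and it is what guarantees that the single displayed identity captures the whole condition $\overline{B_x} = \delta B_x$.
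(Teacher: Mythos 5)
Your proposal is correct and takes essentially the same route as the paper: both expand $\overline{B_x}(u,v) = \sign{u}{v}\,\vphi_0\inv\big(X_t\,\tilde{B}_x(v,u)\big) = \sign{u}{v}\,\eta(t)\inv\,\tilde{B}_x(v,u)\,X_t$ (using $\tilde{B}_x(v,u)\in\FF$ and $\vphi_0\inv(X_t)=\eta(t)\inv X_t$) and compare with $\delta\,\tilde{B}_x(u,v)\,X_t$, cancelling the invertible $X_t$. Your additional bookkeeping --- reduction to homogeneous elements, vanishing of the same-component pairings when $x\neq y$, and the remark that commutativity of $G^\#$ makes the single $t = g_0\xi(x)\xi(y)$ govern both orders of the pairing --- is left implicit in the paper's shorter computation but is accurate.
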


\begin{proof}
	Let $u,v \in \V_x$.
	By definition of $\overline{B_x}$, we have:
	\begin{alignat*}{3}
		\overline{B_x} (u, v) & = \sign{u}{v} \vphi_0\inv( B_x(v, u) )
		                      &                                                  & = \sign{u}{v} \vphi_0\inv( X_t \tilde{B}_x(v, u) ) \\
		                      & = \sign{u}{v} \tilde{B}_x(v, u) \vphi_0\inv(X_t)
		                      &                                                  & = \sign{u}{v} \tilde{B}_x(v, u) \eta(t)\inv X_t,
	\end{alignat*}
	where we have used the fact that $\tilde B_x (v, u) \in \FF$.
    The result follows.
\end{proof}

Recall the identification $M_k (\D) = \M_k(\FF) \tensor \D$.
In the next two propositions we consider two cases: an isotypic component paired to itself, and two different components paired to each other.

\begin{prop}\label{prop:self-dual-components}
	Let $\delta \in \pmone$.
	Suppose $g_0 x^2 = T$,  and  set $t \coloneqq g_0\xi(x)^2 \in T$.
	Then
	\begin{equation}\label{eq:mu_x}
		\mu_{x} \coloneqq (-1)^{|\xi(x)|} \eta(t) \delta \in \pmone
	\end{equation}
	does not depend on the choice of the section $\xi\from G^\#/T \to G^\#$.
	Moreover, the restriction $B_x$ of $B$ to $\U_x$ is nondegenerate and satisfies $\overline{B_x} = \delta B_x$ \IFF there exists a $\D$-basis of $\U_{x}$ consisting only of elements of degree $\xi(x)$ such that the matrix representing $B_x$ is given by
	\begin{enumerate}
		\item $I_{\kappa(x)} \tensor X_t$ if $\mu_x = +1$;
		\item $J_{\kappa(x)} \tensor X_t$ if $\mu_{x} = -1$, where $\kappa (x)$ is even and $J_{\kappa(x)} \coloneqq \begin{pmatrix}
				      0                & I_{\kappa(x)/2} \\
				      -I_{\kappa(x)/2} & 0
			      \end{pmatrix}$.
	\end{enumerate}
\end{prop}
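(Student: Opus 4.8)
The plan is to reduce the whole statement to a single $\FF$-valued bilinear form $\tilde B_x$ on the homogeneous component $\U_{\xi(x)}$ and then invoke the classical normal forms for symmetric and alternating bilinear forms over an algebraically closed field.

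First I would establish that $\mu_x$ is independent of the section $\xi$. Any two sections differ by $\xi'(x) = \xi(x)s$ with $s \in T$, so $|\xi'(x)| = |\xi(x)| + p(s)$ and $t' = g_0\xi'(x)^2 = ts^2$ since $T$ is abelian. Using $\mathrm{d}\eta = \tilde\beta$ I get $\eta(ts^2) = \tilde\beta(t,s^2)\eta(t)\eta(s^2)$; here $\tilde\beta(t,s^2) = \tilde\beta(t,s)^2 = 1$ because $\tilde\beta$ only takes values $\pm 1$ (\cref{prop:superpolarization}), and $\eta(s^2) = (-1)^{p(s)}$ by \cref{cor:eta-t-square}. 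Hence $\eta(t') = (-1)^{p(s)}\eta(t)$, and the two factors of $(-1)^{p(s)}$ coming from $|\xi'(x)|$ and from $\eta(t')$ cancel, giving $\mu_x' = \mu_x$.

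Next I would fix a $\D$-basis of $\U_x$ consisting of elements of degree $\xi(x)$; such a basis exists because an $\FF$-basis of $\U_{\xi(x)}$ is a graded $\D$-basis of the isotypic component $\U_x$, and it has cardinality $\kappa(x)$. By \cref{lemma:B_x-nondeg}, nondegeneracy of $B_x$ is equivalent to nondegeneracy of $\tilde B_x$, and by \cref{lemma:B_x-delta} the condition $\overline{B_x} = \delta B_x$ is equivalent to $\tilde B_x(v,u) = (-1)^{|u||v|}\eta(t)\delta\,\tilde B_x(u,v)$. Since every basis vector has parity $|\xi(x)|$, on this basis $|u||v| = |\xi(x)|$ in $\ZZ_2$, so the relation collapses to $\tilde B_x(v,u) = \mu_x\,\tilde B_x(u,v)$. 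Thus $\tilde B_x$ is symmetric when $\mu_x = +1$ and alternating when $\mu_x = -1$.

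It then remains to apply the normal-form theory over an algebraically closed $\FF$ with $\Char\FF \neq 2$, recalling that a change of basis within $\U_{\xi(x)}$ keeps all vectors of degree $\xi(x)$ and acts on the Gram matrix by congruence. For $\mu_x = +1$, a nondegenerate symmetric form can be diagonalized and then rescaled to $I_{\kappa(x)}$, using algebraic closure to extract square roots; for $\mu_x = -1$, a nondegenerate alternating form forces $\kappa(x)$ even and can be brought to $J_{\kappa(x)}$. Finally, since $B_x(u,v) = X_t\,\tilde B_x(u,v)$, the Gram matrix of $B_x$ in this basis is $I_{\kappa(x)}\tensor X_t$ or $J_{\kappa(x)}\tensor X_t$ under the identification $M_k(\D) = M_k(\FF)\tensor\D$, which yields both directions of the equivalence. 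I expect the main obstacle to be the sign bookkeeping in the well-definedness of $\mu_x$; once the dictionary of \cref{lemma:B_x-nondeg,lemma:B_x-delta} is in place, everything downstream is the standard classification of bilinear forms.
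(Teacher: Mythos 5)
Your proposal is correct and follows essentially the same route as the paper: the well-definedness of $\mu_x$ via $\mathrm{d}\eta = \tilde\beta$, $\tilde\beta(t,s)^2 = 1$, and $\eta(s^2) = (-1)^{p(s)}$ (your cancellation of the two $(-1)^{p(s)}$ factors matches the paper's computation, just phrased with $\tilde\beta$ instead of splitting off $\beta$ and the parity sign), and then the reduction through \cref{lemma:B_x-nondeg,lemma:B_x-delta} to a symmetric or alternating $\FF$-bilinear form on $\U_{\xi(x)}$, finished by the classical normal forms over an algebraically closed field of characteristic $\neq 2$ and the observation that an $\FF$-basis of $\U_{\xi(x)}$ is a $\D$-basis of $\U_x$. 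The only point you leave implicit --- that verifying the $\delta$-symmetry condition on degree-$\xi(x)$ vectors suffices because both $\overline{B_x}$ and $\delta B_x$ are $\vphi_0$-sesquilinear and hence determined by their values on a $\D$-generating set --- is left equally implicit in the paper's proof, so there is no substantive gap.
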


\begin{proof}
    Let $g \coloneqq \xi(x)$. 
    If $\xi'\from G^\#/T \to G^\#$ is another section, then there exists $s \in T$ such that $\xi'(x) = gs$.
    Hence:
    \begin{align*}
        (-1)^{|\xi'(x)|} \eta(g_0 \xi'(x)^2) 
            &= (-1)^{|gs|} \eta(g_0 g^2 s^2) \\
            &= (-1)^{|g| + |s|} \sign{g_0 g^2}{s^2} \beta(g_0 g^2, s^2)\eta(g_0 g^2)\eta(s^2) \\
            &= (-1)^{|g|} \eta(g_0 g^2).
    \end{align*}
    For the ``moreover'' part, \cref{lemma:B_x-nondeg,lemma:B_x-delta} imply that $B_x$ is nondegenerate and $\overline{B_x} = \delta B_x$ \IFF $\tilde{B}_x$ is nondegenerate and $B_x (u,v) = \mu_x B_x(v, u)$, for all $u,v \in V_x = \U_{\xi(x)}$.
    Then the result follows from the well-known classification of (skew-)symmetric bilinear forms over an algebraically closed field of characteristic different from $2$ and the fact that an $\FF$-basis for $\U_{\xi(x)}$ is a $\D$-basis for $\U_x$.
\end{proof}

Note that although $\mu_{x}$ is independent of the choice of $\xi$, the element $t = g_0\xi(x)^2$ may still depend on $\xi$.

\begin{prop}\label{prop:pair-of-dual-components}
	Let $\delta \in \pmone$.
	Suppose $g_0 x y = T$ for $x\neq y$ and set $t \coloneqq g_0\xi(x)\xi(y) \in T$.
	Then the restriction $B_x$ of $B$ to $\U_x \oplus \U_y$ is nondegenerate and satisfies $\overline{B_x} = \delta B_x$ \IFF there is a $\D$-basis of $\U_x$ with all elements having degree $\xi(x)$ and a $\D$-basis of $\U_y$ with all elements having degree $\xi(y)$ such that the matrix representing $B_x$ is
	\[
		\begin{pmatrix}
			0                                                  & I_{\kappa(x)} \\
			\sign{\xi(x)}{\xi(y)} \eta(t) \delta I_{\kappa(x)} & 0
		\end{pmatrix} \tensor X_t.
	\]
\end{prop}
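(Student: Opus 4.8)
The plan is to mirror the proof of \cref{prop:self-dual-components}, the only new feature being that the two paired isotypic components are now distinct.

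First I would pin down the shape of the Gram matrix of $B_x$. Since $B$ is homogeneous of degree $g_0$ and $x\neq y$, either of the equalities $g_0x^2=T$ or $g_0y^2=T$ would force $x=y$ (using $g_0xy=T$); hence $B(\U_x,\U_x)=0=B(\U_y,\U_y)$, so $B_x$ only pairs $\U_x$ with $\U_y$. Choosing a $\D$-basis $u_1,\dots,u_{\kappa(x)}$ of $\U_x$ with all degrees equal to $\xi(x)$ and a $\D$-basis $w_1,\dots,w_{\kappa(y)}$ of $\U_y$ with all degrees equal to $\xi(y)$ — such bases exist because an $\FF$-basis of the homogeneous component $\U_{\xi(x)}$ is a $\D$-basis of $\U_x$, and likewise for $y$ — the surviving entries lie in $\D_t$, since $\deg B(u_i,w_j)=g_0\xi(x)\xi(y)=t$ and $\deg B(w_j,u_i)=g_0\xi(y)\xi(x)=t$. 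Writing $B(u_i,w_j)=P_{ij}X_t$ and $B(w_j,u_i)=Q_{ji}X_t$ with $P_{ij},Q_{ji}\in\FF$, the Gram matrix of $B_x$ becomes block-antidiagonal with upper-right block $P\tensor X_t$ and lower-left block $Q\tensor X_t$, and by \eqref{eq:B_x-tilde} one has $\tilde B_x(u_i,w_j)=P_{ij}$ and $\tilde B_x(w_j,u_i)=Q_{ji}$.

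Next I would translate the two hypotheses into conditions on $P$ and $Q$. By \cref{lemma:B_x-nondeg}, $B_x$ is nondegenerate \IFF $\tilde B_x$ is, which for the block-antidiagonal shape means $P$ is invertible; in particular $\kappa(x)=\kappa(y)$. By \cref{lemma:B_x-delta} applied to $u=u_i$ and $v=w_j$, the identity $\overline{B_x}=\delta B_x$ is equivalent to $Q_{ji}=\sign{\xi(x)}{\xi(y)}\eta(t)\delta\,P_{ij}$ for all $i,j$ (the diagonal blocks contribute only $0=0$), i.e. to $Q=c\,P\transp$ with $c\coloneqq\sign{\xi(x)}{\xi(y)}\eta(t)\delta$. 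Thus the two hypotheses together amount to: $P$ invertible and $Q=c\,P\transp$.

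Finally I would normalize. Replacing the basis $w_1,\dots,w_{\kappa(y)}$ of $\U_y$ by $w_j'\coloneqq\sum_k w_k\,(P\inv)_{kj}$ — still homogeneous of degree $\xi(y)$ and still a $\D$-basis, as the change-of-basis coefficients are central scalars — turns $P$ into the identity, and the relation $Q=c\,P\transp$ then forces the lower-left block to become $c\,I_{\kappa(x)}$. This gives precisely the asserted matrix \[\begin{pmatrix}0 & I_{\kappa(x)}\\ \sign{\xi(x)}{\xi(y)}\eta(t)\delta\,I_{\kappa(x)} & 0\end{pmatrix}\tensor X_t.\] For the converse, the displayed matrix is invertible because $c\neq 0$ and $X_t$ is invertible, and it satisfies $Q=c\,P\transp$, so \cref{lemma:B_x-nondeg,lemma:B_x-delta} guarantee that it represents a nondegenerate $B_x$ with $\overline{B_x}=\delta B_x$. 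I expect no deep obstacle; the delicate points are purely bookkeeping — correctly carrying the parity sign $\sign{\xi(x)}{\xi(y)}$ and the factor $\eta(t)$ through \cref{lemma:B_x-delta}, and checking that the one-sided change of basis on $\U_y$ respects both the grading and the $\D$-module structure while converting $P$ to $I$ and $Q$ to $c\,I$ at once.
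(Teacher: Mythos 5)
Your proof is correct and takes essentially the same route as the paper's: both pass to the scalar-valued form $\tilde B_x$ via \cref{lemma:B_x-nondeg}, use \cref{lemma:B_x-delta} to relate the two off-diagonal blocks by the constant $\sign{\xi(x)}{\xi(y)}\eta(t)\delta$, and then normalize, your change of basis $w_j'=\sum_k w_k (P^{-1})_{kj}$ being exactly the paper's direct choice of the dual basis of $\{u_i\}$ inside $\U_{\xi(y)}$. The only material you add is the explicit verification that $B(\U_x,\U_x)=B(\U_y,\U_y)=0$, which the paper instead takes from the earlier discussion of components paired by $B$.
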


\begin{proof}
    Assume $B_x$ is nondegenerate.
    By \cref{lemma:B_x-nondeg}, the bilinear form $\tilde{B}_x$ on $V_x = \U_{\xi(x)} \oplus \U_{\xi(y)}^*$ is nondegenerate.
    Consequently, the map $\U_{\xi(x)} \to \U_{\xi(y)}$ given by $u \mapsto \tilde{B}_x(u, \cdot)$ is a vector space isomorphism.
    Fix a basis $\{u_1, \ldots, u_{\kappa(x)}\}$ for $\U_{\xi(x)}$ and let $\{v_1, \ldots, v_{\kappa(x)}\}$ be its dual basis in $\U_{\xi(y)}$, \ie, $\tilde{B}_x(u_i, v_j) = \delta_{ij}$.
    If we further assume $\overline{B_x} = \delta B_x$, \cref{lemma:B_x-delta} yields
    \(
        \tilde{B}_x(v_i, u_j) = \sign{\xi(x)}{\xi(y)} \eta(t) \delta \delta_{ij}.
    \)
    This establishes the ``only if'' direction.
    The converse is straightforward.
\end{proof}

\begin{defi}\label{def:parameter-of-(U-B)}
	Let $\U\neq 0$ be a graded $\D$-module of finite rank and let $B$ be a nondegenerate $\vphi_0$-sesquilinear form such that $\overline{B} = \delta B$ for some $\delta \in \pmone$.
    The \emph{inertia of $(\U, B)$} is the quadruple $(\eta, \kappa, g_0, \delta)$, where $\eta$ defines $\vphi_0$ by $\vphi_0(X_t) = \eta(t)X_t$, $g_0 = \deg B \in G^\#$ and $\kappa(x) = \dim_\D \U_x$ for all $x \in G^\#/T$.
\end{defi}

The following definition summarizes the conditions that the quadruple $(\eta, \kappa, g_0, \delta)$ must satisfy.

\begin{defi}\label{defi:X(D)}
	Given $\eta\from T \to \pmone$, $\kappa\from G^\#/T \to \ZZ_{\geq 0}$, $g_0 \in G^\#$ and $\delta \in \pmone$, we say $(\eta, \kappa, g_0, \delta)$ is \emph{admissible} if it satisfies:
	\begin{enumerate}
		\item $\mathrm{d}\eta = \tilde\beta$; \label{item:eta-is-eta}
		\item $\kappa$ has finite support; \label{item:kappa-finite-support}
		\item $\kappa(x) = \kappa(g_0\inv x\inv)$ for all $x \in G^\#/T$; \label{item:kappa-duality}
		\item for any $x\in G^\#/T$, if $g_0 x^2 = T$ and $\mu_x = -1$, then $\kappa (x)$ is even (where \\$\mu_x\coloneqq (-1)^{|g|}\eta(g_0 g^2)\delta$ for $g\in x$, see Proposition \ref{prop:self-dual-components}). \label{item:kappa-parity}
	\end{enumerate}
	The set of all admissible quadruples is denoted by $\mathbf{I}(\D)$ or $\mathbf{I}(T, \tilde\beta)$.
\end{defi}

It is clear that for any quadruple $(\eta, \kappa, g_0, \delta) \in \mathbf{I} (\D)$, we can construct a pair $(\U, B)$ such that $(\eta, \kappa, g_0, \delta)$ is its inertia.
We will give explicit constructions in \cref{subsubsec:construction-U-B}.

\begin{thm}\label{thm:iso-(U,B)}
	Suppose $\FF$ is an algebraically closed field and $\Char \FF \neq 2$. 
	Let $\D$ be a finite-dimensional graded-division superalgebra and let $\vphi_0$ be a degree-preserving superinvolution on $\D$. 
	The assignment of inertia to a pair $(\U, B)$ as in Definition \ref{def:parameter-of-(U-B)} gives a bijection between the isomorphism classes of these pairs and the set $\mathbf{I} (\D)$. 
\end{thm}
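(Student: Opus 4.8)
The plan is to verify that the inertia assignment is well-defined with image in $\mathbf{I}(\D)$, and then to establish that it is both surjective and injective. The organizing principle throughout is the involution $x \mapsto g_0\inv x\inv$ on $G^\#/T$, whose orbits are either fixed points (the self-dual cosets with $g_0 x^2 = T$) or pairs $\{x, y\}$ with $y = g_0\inv x\inv \neq x$. Since $B(\U_a, \U_b) \neq 0$ forces $g_0 a b = T$, the blocks $\V_x = \U_x + \U_y$ of \cref{phsec:paired-by-B} coming from distinct orbits are mutually $B$-orthogonal, so $\U$ is the orthogonal direct sum of the $\V_x$ and $B = \bigoplus_x B_x$. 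This reduces everything to a single block at a time, where \cref{prop:self-dual-components,prop:pair-of-dual-components} supply canonical Gram matrices.

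First I would check well-definedness. That the inertia $(\eta, \kappa, g_0, \delta)$ of a pair $(\U, B)$ satisfies the four conditions of \cref{defi:X(D)} is a direct collection of earlier results: condition~\ref{item:eta-is-eta} is \cref{prop:superpolarization} applied to the superinvolution $\vphi_0$; condition~\ref{item:kappa-finite-support} holds since $\U$ has finite rank; condition~\ref{item:kappa-duality} is the duality $\kappa(x) = \kappa(g_0\inv x\inv)$ forced by nondegeneracy of $B$ (see \cref{phsec:paired-by-B}); and condition~\ref{item:kappa-parity} is exactly the parity constraint in case~(2) of \cref{prop:self-dual-components}. Invariance under isomorphism is immediate: an isomorphism $\theta\from (\U, B) \to (\U', B')$ is in particular an isomorphism of graded $\D$-modules, so it preserves $\kappa$, and the equalities $\deg B' = \deg B$ together with $\overline{B'} = \delta B' \IFF \overline{B} = \delta B$ show that $g_0$, $\delta$, and $\eta$ (which is determined by $\vphi_0$) are all unchanged.

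Surjectivity is handled block-by-block. Given an admissible quadruple, I would take $\U$ to be the graded $\D$-module associated to $\kappa$ (as in \cref{defi:gamma-realizes-kappa}) and, for each orbit, choose a $\D$-basis of the corresponding block concentrated in the degrees $\xi(x)$ (and $\xi(y)$). I then define $B_x$ by prescribing its Gram matrix to be $I_{\kappa(x)} \tensor X_t$ or $J_{\kappa(x)} \tensor X_t$ in the self-dual case according to $\mu_x = \pm 1$, and the off-diagonal matrix of \cref{prop:pair-of-dual-components} in the paired case. Here condition~\ref{item:kappa-parity} guarantees that $\kappa(x)$ is even whenever $\mu_x = -1$ (so that $J_{\kappa(x)}$ exists), and condition~\ref{item:kappa-duality} gives $\kappa(x) = \kappa(y)$ (so that the off-diagonal block is square). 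By the two propositions, each $B_x$ is homogeneous of degree $g_0$, nondegenerate, and satisfies $\overline{B_x} = \delta B_x$; hence $B = \bigoplus_x B_x$ realizes the prescribed quadruple as its inertia.

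The crux is injectivity, where the bookkeeping is heaviest. Suppose $(\U, B)$ and $(\U', B')$ share the inertia $(\eta, \kappa, g_0, \delta)$. On each involution orbit, both restrictions are nondegenerate, satisfy $\overline{B_x} = \delta B_x$, and carry the same $\kappa(x)$ and the same $\mu_x$; moreover the element $t = g_0\xi(x)^2$ (or $g_0\xi(x)\xi(y)$) and hence the fixed scalar factor $X_t$ depend only on $g_0$ and the common section $\xi$. Thus \cref{prop:self-dual-components,prop:pair-of-dual-components} furnish $\D$-bases of $\V_x$ and $\V'_x$ in which $B_x$ and $B'_x$ are represented by the \emph{same} Gram matrix, and sending one basis to the other defines an isometry $(\V_x, B_x) \to (\V'_x, B'_x)$ in the sense of \cref{def:iso-(U,B)}. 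Taking the direct sum over all orbits yields the isomorphism $(\U, B) \iso (\U', B')$. The main obstacle I anticipate is precisely this assembly step: one must verify that the independently chosen block maps glue to a single \emph{graded} $\D$-module isomorphism compatible with the section $\xi$, and that the sign conventions entering the canonical matrices, namely the parities $|\xi(x)|$ in the self-dual case and the factor $\sign{\xi(x)}{\xi(y)}$ in the paired case, are consistent across the two propositions; once the canonical forms are matched, the existence of the isometry on each block is automatic.
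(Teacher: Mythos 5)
Your proposal is correct and follows essentially the same route as the paper: invariance of the inertia under isomorphism is checked by the same direct computation, and the converse is established by using Propositions \ref{prop:self-dual-components} and \ref{prop:pair-of-dual-components} to produce homogeneous $\D$-bases (degree-matched via a common section $\xi$) in which $B$ and $B'$ have identical Gram matrices, the block-orthogonality making the glued basis-matching map the required isometry. Your explicit verification of the admissibility conditions and the blockwise surjectivity construction are details the paper treats as clear or defers to the constructions in \cref{subsubsec:construction-U-B}, but they are the same argument.
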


\begin{proof}
	Suppose that there is an isomorphism $\psi\from (\U, B) \to (\U', B')$.
    Since $\psi$ is an isomorphism of graded $\D$-modules, $\U$ and $\U'$ correspond to the same map $\kappa\from G^\#/T \to \ZZ_{\geq 0}$.
	Also, the compatibility condition $B'(\psi(u), \psi(v)) = B(u,v)$ implies $\deg B' = \deg B = g_0$.
	Moreover, if $\overline{B} = \delta B$, for homogeneous $u,v\in\U$:
	\begin{align*}
		\overline{B'} \big(\psi(u), \psi(v) \big) & = \sign{\psi(u)}{\psi(v)} \vphi_0\inv \Big( B'\big( \psi(v) , \psi(u) \big) \Big) \\
		                                          & = \sign{u}{v} \vphi_0\inv \big( B(v, u) \big)
		= \overline{B} (u, v)                                                                                                         \\
		                                          & = \delta B(u, v) = \delta B' \big( \psi(u), \psi(v) \big),
	\end{align*}
	By bijectivity of $\psi$, we conclude $\overline{B'} = \delta B'$.

	Conversely, assume $(\U,B)$ and $(\U',B')$ share the same inertia $(\eta,\kappa,g_0,\delta)$. 
	To show that $(\U, B)$ and $(\U, B')$ are isomorphic, it suffices to find homogeneous $\D$-bases $\{u_1, \ldots, u_k\}$ of $\U$ and $\{u_1', \ldots, u_k'\}$ of $\U'$ such that $\deg u_i = \deg u_i'$, $1 \leq i \leq k$, and $B$ and $B'$ are represented by the same matrix.
	The existence of such bases follows from $\dim_\D (\U_x) = \dim_\D (\U_x') = \kappa(x)$, for all $x \in G^\#/T$, and \cref{prop:self-dual-components,prop:pair-of-dual-components}.
\end{proof}

\subsubsection{Parametrization of \texorpdfstring{$(R, \vphi)$}{(R,phi)}}\label{subsec:param-(R-phi)}

Combining the parameters for $(\D, \vphi_0)$ and $(\U, B)$, we have:

\begin{defi}\label{def:E(D-U-B)}
	Let $\D$ be a finite-dimensional graded-division superalgebra over an algebraically closed field $\FF$, $\Char \FF \neq 2$, let $\U\neq 0$ be a graded right $\D$-module of finite rank, and let $B$ be a nondegenerate $\vphi_0$-sesquilinear form on $\U$ such that $\overline{B} = \delta B$ for some $\delta \in \pmone$.
	If $\D$ is associated to $(T, \tilde\beta)$ and $(\U, B)$ has inertia $(\eta, \kappa, g_0, \delta) \in \mathbf{I}(\D)=\mathbf{I}(T,\tilde{\beta})$, then we say that $(T, \tilde\beta, \eta, \kappa, g_0, \delta)$ are the parameters of the triple $(\D, \U, B)$.
\end{defi}

In \cref{subsubsec:isomorphisms-with-actions}, we defined actions of the groups $\D^\times_{\mathrm{gr}} \coloneqq \big( \bigcup_{g \in G^\#} \D_g \big)\backslash \{ 0 \}$, $A \coloneqq \Aut (\D)$ and $G^\#$ on the set of pairs $(\U, B)$, which appear in \cref{cor:iso-with-actions}. 
We now translate these actions to the set $\mathbf{I}(T, \tilde\beta)$.  

\begin{lemma}\label{lemma:twist-same-inertia}
    Let $(\D', \U', B')$ be a triple as in \cref{def:E(D-U-B)}, let $\D$ be a graded-division superalgebra, and let $\psi_0\from \D \to \D'$ be an isomorphism.
	The inertias of $\bigl(\U', B'\bigr)$ and $\bigl( (\U')^{\psi_0},\, \psi_0^{-1} \circ B' \bigr)$ coincide.
\end{lemma}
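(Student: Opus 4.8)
The plan is to check directly that the four entries of the inertia (\cref{def:parameter-of-(U-B)}) are unaffected by transporting $(\U', B')$ along $\psi_0$. First I would record that $\psi_0\from \D \to \D'$, being a degree-preserving isomorphism of graded superalgebras with all homogeneous components one-dimensional over $\FF$, sends a nonzero $X_t \in \D_t$ to a nonzero element of $\D'_t$; comparing the defining products of the bicharacters then shows that $\D$ and $\D'$ share the same pair $(T, \tilde\beta)$. Consequently both inertias lie in the same set $\mathbf{I}(T, \tilde\beta)$, so the comparison is meaningful, and it remains to match $(\eta, \kappa, g_0, \delta)$ componentwise.

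The degree $g_0$ and the map $\kappa$ are immediate from results already in hand. By \cref{lemma:twist-on-(U-B)}, the form $\psi_0\inv \circ B'$ has the same degree as $B'$, giving $g_0 = g_0'$. For $\kappa$, I would invoke the observation preceding \cref{thm:iso-End_D-U-with-parameters}, namely that twisting by an isomorphism preserves isotypic-component dimensions, so $\dim_\D\big((\U'_x)^{\psi_0}\big) = \dim_{\D'} \U'_x$ for every $x \in G^\#/T$ and hence $\kappa = \kappa'$. For $\eta$, recall from \cref{lemma:twist-on-(U-B)} that the super-anti-automorphism attached to $\psi_0\inv \circ B'$ is $\vphi_0 = \psi_0\inv \circ \vphi_0' \circ \psi_0$; evaluating this at a nonzero $X_t \in \D_t$, using $\psi_0(X_t) \in \D'_t$ with $\vphi_0'(\psi_0(X_t)) = \eta'(t)\psi_0(X_t)$, and the $\FF$-linearity of $\psi_0\inv$ yields $\vphi_0(X_t) = \eta'(t) X_t$, so $\eta = \eta'$.

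The step with genuine content, and the one I expect to require the most care, is $\delta$, because the bar operation itself changes: $\overline{B'}$ is formed with $(\vphi_0')\inv$ while $\overline{\psi_0\inv \circ B'}$ is formed with $\vphi_0\inv = \psi_0\inv \circ (\vphi_0')\inv \circ \psi_0$. Writing out \cref{def:barB} for $\psi_0\inv \circ B'$ and cancelling the inner $\psi_0\,\psi_0\inv$ shows $\overline{\psi_0\inv \circ B'} = \psi_0\inv \circ \overline{B'}$. Since $\overline{B'} = \delta B'$ with $\delta \in \pmone$, and $\psi_0\inv$ is a unital $\FF$-algebra map fixing $\delta \in \FF\cdot 1$, this equals $\delta\,(\psi_0\inv \circ B')$, so the scalar $\delta$ is preserved. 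With all four entries matching, the two inertias coincide.
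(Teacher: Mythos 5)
Your proof is correct and takes essentially the same route as the paper's: $\eta$ is matched by evaluating $\psi_0\inv \circ \vphi_0' \circ \psi_0$ on the elements $X_t$, $\kappa$ by the fact that twisting preserves isotypic-component dimensions, $g_0$ by degree preservation, and $\delta$ via the identity $\overline{\psi_0\inv \circ B'} = \psi_0\inv \circ \overline{B'}$, which, exactly as in the paper, rests on both superinvolutions being involutive so that the inverses in \cref{def:barB} can be handled. Your opening observation that $\D$ and $\D'$ necessarily share the same pair $(T, \tilde\beta)$, which the paper leaves implicit, is a harmless (and clarifying) addition making the comparison of the two inertias within one set $\mathbf{I}(T, \tilde\beta)$ meaningful.
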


\begin{proof}
    Let $(\eta', \kappa', g_0', \delta')$ be the inertia of $(\U', B')$.
	By \cref{lemma:twist-on-(U-B)}, $\psi_0\inv \circ B'$ is $(\psi_0\inv \circ \vphi_0' \circ \psi_0)$-sesquilinear, where $\vphi_0'$ is the superinvolution given by $\eta'$.
	Given $X_t \in \D_t$,
	we have $\psi_0 (X_t)\in \D_t'$ and, hence, $\vphi_0' \big(\psi_0 (X_t) \big) = \eta' (t) \psi_0 (X_t)$.
	It follows that $(\psi_0\inv \circ \vphi_0' \circ \psi_0) (X_t) = \eta'(t) X_t$, therefore the superinvolution $(\psi_0\inv \circ \vphi_0' \circ \psi_0)$ also corresponds to the map $\eta'\from T \to \FF^\times$.

	Since $\dim_{\D'} \U_x' = \dim_\D (\U_x')^{\psi_0\inv}$, for all $x \in G^\#/T$, the graded $\D$-module $(\U')^{\psi_0\inv}$ corresponds to $\kappa'$.
	Also, it is clear that $\deg (\psi_0\inv \circ B') = g_0'$.
    
    Finally, using that $\psi_0\inv \circ \vphi_0 \circ \psi_0$ and $\vphi_0$ are involutive, we have that
	\begin{align*}
		\overline{(\psi_0\inv \circ B)} (u,v) & = \sign{u}{v} (\psi_0\inv \circ \vphi_0 \circ \psi_0) \big( (\psi_0\inv \circ B)(v, u) \big)                            \\
		                                      & = \psi_0\inv \bigg( \sign{u}{v} \vphi_0 \big( B(v,u) \big) \bigg)                                                       \\
		                                      & = \psi_0\inv \big( \overline B (u,v) \big) = \psi_0\inv \big( \delta B (u,v) \big) = \delta (\psi_0\inv \circ B) (u,v),
	\end{align*}
	for all $u, v \in \U\even \cup \U\odd$.
\end{proof}

The $\D^\times_{\mathrm{gr}}$-action is defined by $d \cdot (\U, B) = (\U, d B)$.  
Let $(\eta', \kappa', g_0', \delta')$ denote the inertia of $(\U, d B)$.  
\begin{itemize}  
    \item Since $d B$ is $\bigl(\mathrm{sInt}_d \circ \vphi_0\bigr)$-sesquilinear, $\eta'$ corresponds to this composition, yielding $\eta'(s) = \tilde{\beta}(t, s) \eta(s)$ for all $s \in T$, where $t \coloneqq \deg d$.  
    \item As $\U$ remains unchanged, $\kappa' = \kappa$.  
    \item Clearly, $g_0' = \deg(d B) = t g_0$.  
    \item By the ``moreover'' part in \cref{thm:vphi-involution-iff-delta-pm-1}, $\overline{d B} = (-1)^{|t|} \eta(t) \delta(d B)$, hence $\delta' = (-1)^{|t|} \eta(t) \delta$.  
\end{itemize}  
The parameters $(\eta', \kappa', g_0', \delta')$ depend only on $t \in T$.  
Thus, the $\D^\times_{\mathrm{gr}}$-action on $\mathbf{I}(T, \tilde\beta)$ factors through the quotient $T \iso \D^\times_{\mathrm{gr}} / \FF^\times$.

Next, by \cref{lemma:twist-same-inertia}, $A$ acts trivially on $\mathbf{I}(T, \tilde\beta)$.
Finally, consider the $G^\#$-action.
Let $g\in G^\#$ and let $(\eta', \kappa', g_0', \delta')$ be the inertia of $g \cdot (\U, B) = (\U^{[g]}, B^{[g]})$.
\begin{itemize}
\item By \cref{lemma:B^[g]-does-the-job}, $B^{[g]}$ is $\vphi_0$-sesquilinear, so $\eta' = \eta$. 
\item By the definition of $\U^{[g]}$, $\kappa' = g\cdot \kappa$ where $(g\cdot \kappa) (x) \coloneqq \kappa(g\inv x)$ for all $x\in G^\#/T$.
\item We have $g_0' = \deg B^{[g]} = g_0 g^{-2}$.
\item From \cref{lemma:B-g-delta}, $\delta' = (-1)^{|g|} \delta$.
\end{itemize}

    Thus, the group $T \times G^\#$ acts on $\mathbf{I}(T, \tilde\beta)$ via
    \[\label{def:TxG-action}
        \begin{split}
        t \cdot (\eta, \kappa, g_0, \delta) & \coloneqq \bigl(\tilde{\beta}(t, \cdot)\eta,\; \kappa,\; t g_0,\; (-1)^{|t|} \eta(t)\delta\bigr)\\
        g \cdot (\eta, \kappa, g_0, \delta) & \coloneqq \bigl(\eta,\; g \cdot \kappa,\; g_0 g^{-2},\; (-1)^{|g|} \delta\bigr)
        \end{split}
    \]
    for all $t \in T$, $g \in G^\#$, and $(\eta, \kappa, g_0, \delta) \in \mathbf{I}(T, \tilde\beta)$.
Now, \cref{cor:iso-with-actions} can be restated as follows:

\begin{thm}\label{thm:iso-(R-vphi)-with-parameters}
    Let $(\D, \U, B)$ and $(\D', \U', B')$ be as in \cref{def:E(D-U-B)}, with parameters $(T, \tilde\beta, \eta, \kappa, g_0, \delta)$ and $(T', \tilde\beta', \eta', \kappa', g_0', \delta')$.  
    Then $E(\D, \U, B) \iso E(\D', \U', B')$ \IFF $T = T'$, $\tilde\beta = \tilde\beta'$, and $(\eta, \kappa, g_0, \delta)$ and $(\eta', \kappa', g_0', \delta')$ lie in the same orbit under the $T \times G^\#$-action on $\mathbf{I}(T, \tilde\beta)$ defined by \cref{def:TxG-action}. \qed
\end{thm}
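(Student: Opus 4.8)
The plan is to deduce the theorem from \cref{cor:iso-with-actions} by translating the abstract orbit condition there into the explicit combinatorial action on inertia recorded in \cref{def:TxG-action}. First I would observe that, by \cref{prop:parametrization-T-beta}, the graded-division superalgebras $\D$ and $\D'$ are isomorphic \IFF $(T, \tilde\beta) = (T', \tilde\beta')$. Hence, if $T \neq T'$ or $\tilde\beta \neq \tilde\beta'$, then $\D \not\iso \D'$, and \cref{cor:iso-with-actions} immediately yields $E(\D, \U, B) \not\iso E(\D', \U', B')$. So I may assume $(T, \tilde\beta) = (T', \tilde\beta')$ and fix an isomorphism $\psi_0 \from \D \to \D'$ once and for all.

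Under this assumption, \cref{cor:iso-with-actions} asserts that $E(\D, \U, B) \iso E(\D', \U', B')$ \IFF the pair $\bigl((\U')^{\psi_0}, \psi_0\inv \circ B'\bigr)$ is isomorphic to some object in the $(\D^\times_{\mathrm{gr}} \rtimes A) \times G^\#$-orbit of $(\U, B)$, where $A = \Aut(\D)$. The next step is to pass from isomorphism classes of pairs to inertia quadruples: by \cref{thm:iso-(U,B)}, the assignment of inertia is a bijection between isomorphism classes of such pairs and the set $\mathbf{I}(T, \tilde\beta)$. Thus the orbit condition becomes a condition purely on inertia, provided the group action descends to a well-defined action on $\mathbf{I}(T, \tilde\beta)$ compatible with this bijection. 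By \cref{lemma:twist-same-inertia}, the inertia of $\bigl((\U')^{\psi_0}, \psi_0\inv \circ B'\bigr)$ coincides with that of $(\U', B')$, namely $(\eta', \kappa', g_0', \delta')$; in particular the target of the comparison does not depend on the choice of $\psi_0$.

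It then remains to compute the induced action of $(\D^\times_{\mathrm{gr}} \rtimes A) \times G^\#$ on $\mathbf{I}(T, \tilde\beta)$ and check that it collapses to the $T \times G^\#$-action of \cref{def:TxG-action}. Here I would argue generator by generator, exactly as in the discussion preceding the theorem. The $A$-action is trivial by \cref{lemma:twist-same-inertia}. For the $\D^\times_{\mathrm{gr}}$-action, a nonzero scalar in $\FF^\times \subseteq \D_e$ fixes every component of the inertia: it preserves $\eta$ and $\kappa$, fixes $g_0 = \deg B$, and leaves $\delta$ unchanged by \cref{lemma:bar-dB} (since such a scalar is even and fixed by $\vphi_0\inv$); hence the action factors through $T \iso \D^\times_{\mathrm{gr}}/\FF^\times$, and for homogeneous $d$ of degree $t$ the form $dB$ is $(\operatorname{sInt}_d \circ \vphi_0)$-sesquilinear, with the $\delta$-sign supplied by the ``moreover'' clause of \cref{thm:vphi-involution-iff-delta-pm-1}, giving the displayed $t$-action. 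For the $G^\#$-action, \cref{lemma:B^[g]-does-the-job} gives $\eta' = \eta$ and $g_0' = g_0 g^{-2}$, \cref{lemma:B-g-delta} gives $\delta' = (-1)^{|g|}\delta$, and the definition of the shift gives $\kappa' = g \cdot \kappa$, which is precisely the $g$-action. Assembling these, $E(\D, \U, B) \iso E(\D', \U', B')$ \IFF $(\eta', \kappa', g_0', \delta')$ lies in the $T \times G^\#$-orbit of $(\eta, \kappa, g_0, \delta)$, as claimed.

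The main obstacle is bookkeeping rather than any single hard estimate: I must confirm that the bijection of \cref{thm:iso-(U,B)} genuinely intertwines the geometric action on pairs $(\U, B)$ with the combinatorial action on inertia, so that ``isomorphic to an object in the orbit'' and ``inertia in the orbit'' coincide. Concretely, the delicate point is verifying that the semidirect-product structure $\D^\times_{\mathrm{gr}} \rtimes A$ effectively collapses---since $A$ acts trivially and the scalar subgroup $\FF^\times$ acts trivially on inertia---so that the true acting group is exactly $T \times G^\#$, with no residual $A$- or $\psi_0$-dependence left hidden in the identification.
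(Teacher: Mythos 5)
Your proposal is correct and takes essentially the same route as the paper: the paper obtains \cref{thm:iso-(R-vphi)-with-parameters} precisely as a restatement of \cref{cor:iso-with-actions}, using the bijection of \cref{thm:iso-(U,B)} and computing generator by generator the induced action on inertia --- triviality of $A$ via \cref{lemma:twist-same-inertia}, factoring of $\D^\times_{\mathrm{gr}}$ through $T \iso \D^\times_{\mathrm{gr}}/\FF^\times$ with the $\delta$-sign from the ``moreover'' clause of \cref{thm:vphi-involution-iff-delta-pm-1}, and the $G^\#$-action via \cref{lemma:B^[g]-does-the-job,lemma:B-g-delta} --- exactly as you do. The only difference is presentational: the paper carries out this computation in the discussion preceding the theorem (which is why it is stated with \qed) rather than in a formal proof environment.
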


\subsubsection{Simplifying the parametrization of \texorpdfstring{$(R, \vphi)$}{(R,phi)}}\label{subsec:simplify-parameters}

We can simplify \cref{thm:iso-(R-vphi)-with-parameters} by reducing the complexity of both the parameter set $\mathbf{I}(T, \tilde\beta)$ and the acting group.
To this end, we will repeatedly employ the following straightforward observation:

\begin{lemma}\label{lemma:lemma-on-actions}
    Let $H$ be a group, let $X$ and $Y$ be $H$-sets, and let $\pi\from X \to Y$ be an $H$-equivariant map. 
    For any $y \in Y$, the $H$-action on $X$ restricts to a $\operatorname{Stab}_H (y)$-action on $\pi\inv(y)$ and, if the $H$-action on $Y$ is transitive, the inclusion map $\pi\inv(y) \hookrightarrow X$ induces a bijection between $\operatorname{Stab}_H (y)$-orbits in $\pi\inv(y)$ and $H$-orbits in $X$. \qed
\end{lemma}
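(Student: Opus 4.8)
The plan is to handle the two assertions separately, since the first is an immediate consequence of equivariance and the second is a standard orbit-transfer argument in which the transitivity hypothesis enters at exactly one point. Writing $S \coloneqq \operatorname{Stab}_H(y)$, I would first verify that $S$ acts on $\pi\inv(y)$: given $h \in S$ and $x \in \pi\inv(y)$, $H$-equivariance gives $\pi(h \cdot x) = h \cdot \pi(x) = h \cdot y = y$, so $h \cdot x \in \pi\inv(y)$. Thus the $H$-action on $X$ restricts to an $S$-action on the fibre, with no further work required.

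Next I would define the candidate map on orbit sets by sending the $S$-orbit of a point $x \in \pi\inv(y)$ to its $H$-orbit in $X$. Because $S \subseteq H$, any two points in a common $S$-orbit also lie in a common $H$-orbit, so this assignment is well defined. It then remains to show, under the transitivity hypothesis, that this map is both surjective and injective.

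For surjectivity I would take an arbitrary $x \in X$ and use transitivity of the $H$-action on $Y$ to choose $h \in H$ with $h \cdot \pi(x) = y$; equivariance then yields $\pi(h \cdot x) = h \cdot \pi(x) = y$, so $h \cdot x$ is a point of $\pi\inv(y)$ in the same $H$-orbit as $x$. For injectivity I would take $x_1, x_2 \in \pi\inv(y)$ lying in a common $H$-orbit, say $x_2 = h \cdot x_1$, and apply $\pi$ to get $y = \pi(x_2) = h \cdot \pi(x_1) = h \cdot y$, forcing $h \in S$ and hence placing $x_1$ and $x_2$ in the same $S$-orbit. The only place the transitivity hypothesis is genuinely needed is surjectivity; otherwise the argument is routine, and I expect no real obstacle beyond keeping the equivariance bookkeeping straight.
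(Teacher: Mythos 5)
Your proof is correct and is exactly the routine argument the paper has in mind: the paper states this lemma with a \qed and no written proof, treating it as a straightforward observation. Your verification — restriction of the action via equivariance, well-definedness of the orbit map since $\operatorname{Stab}_H(y) \subseteq H$, surjectivity from transitivity, and injectivity by applying $\pi$ to force $h \in \operatorname{Stab}_H(y)$ — supplies precisely the omitted details, with the transitivity hypothesis correctly located at the surjectivity step.
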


    

First, consider the case where $\D$ is odd.
Recall from \cref{conv:pick-even-form} that we may assume the sesquilinear form $B$ is even by changing $B$ to $dB$ for some homogeneous $0\neq d \in \D\odd$ if necessary. 
In terms of parameters, this means that we can choose the inertia $(\eta,\kappa,g_0,\delta) \in \mathbf{I}(T,\tilde\beta)$ with $|g_0| = \bar{0}$.
Thus, by \cref{lemma:lemma-on-actions}, it suffices to restrict the action defined by \cref{def:TxG-action} to a $T^+ \times G^\#$-action on the subset
\[
    \mathbf{I}(T, \tilde\beta)_\bz \coloneqq \left\{ (\eta, \kappa, g_0, \delta) \in \mathbf{I}(T, \tilde\beta) \; \middle| \; |g_0| = \bz\right\}.
\]
In the case where $\D$ is even, we keep
\[
\mathbf{I}(T, \tilde\beta)_\bz \coloneqq \mathbf{I}(T, \tilde\beta).
\]

We now address the parameter $\eta$.  
Consider the following equivalence relation:  

\begin{defi}\label{def:equiv-eta-even}  
    Let $\eta, \eta'\colon T \to \pmone$ be maps satisfying $\mathrm{d}\eta = \mathrm{d}\eta' = \tilde\beta$.  
    We say $\eta$ and $\eta'$ are \emph{evenly equivalent} and write $\eta \sim_\bz \eta'$ if there exists $t \in T^+$ such that $\eta' = \tilde\beta(t, \cdot)\eta$.  
\end{defi}  

For a fixed map $\eta$, let $Y_\eta$ denote its equivalence class with respect to $\sim_\bz$.
Consider the $T^+ \times G^\#$-action on \( X_\eta \coloneqq \left\{(\eta', \kappa, g_0, \delta) \in \mathbf{I}(T, \tilde\beta)_\bz \mid \eta' \in Y_\eta\right\} \subseteq \mathbf{I}(T, \tilde\beta)_\bz\). 
Applying \cref{lemma:lemma-on-actions} and recalling that $\rad\tilde\beta \subseteq T^+$ (\cref{lemma:rad-tilde-beta}), we get that the $T^+ \times G^\#$-orbits in $X_\eta$ are in bijection with the $\rad\tilde\beta \times G^\#$-orbits in
\[
    \mathbf{I}(T, \tilde\beta)_{\bz,\eta} \coloneqq \left\{ (\kappa, g_0, \delta) \mid (\eta, \kappa, g_0, \delta) \in \mathbf{I}(T, \tilde\beta)_\bz \right\}.
\]
Note that by choosing a representative $\eta$ for each equivalence class, we now consider a collection of $\rad\tilde\beta \times G^\#$-actions.
In the next section (\cref{prop:osp-p-unique-eta,lemma:2-etas}), we will see that if $(\D, \vphi_0)$ is a superinvolution-simple superalgebra, then we have at most two equivalence classes.

Finally, we wish to fix $\delta = 1$, which corresponds to choosing $B$ super-Hermitian, as in \cref{rmk:only-super-hermitian}.
Note that in every $\rad\tilde\beta \times G^\#$-orbit in $\mathbf{I}(T, \tilde\beta)_{\bz,\eta}$, we have a triple $(\kappa, g_0, \delta)$ with $\delta = 1$, since the action by $(e, \bar1) \in G^\# = G \times \ZZ_2$ changes the sign of $\delta$.

To apply \cref{lemma:lemma-on-actions}, let $\pi\from \mathbf{I}(T, \tilde\beta)_{\bz,\eta} \to \pmone$ be the projection on the third entry.
The action by $\rad\tilde\beta \times G^\#$ on $\pmone$ is given by
\[
    \begin{split}
        t\cdot \delta &\coloneqq \eta(t)\delta\\
        g\cdot \delta &\coloneqq (-1)^{|g|}\delta\,,
    \end{split}
\] 
for all $t\in \rad \tilde\beta$, $g\in G^\#$ and $\delta \in \pmone$.
It is well defined since $\eta$ is fixed and $\eta\!\restriction_{\rad \tilde\beta}$ is a group homomorphism.
Taking $y\coloneqq +1$, we reduce to the action of
\[\label{eq:mathcal-G}
	\mathcal G_\eta \coloneqq \{ (t,g) \in (\rad \tilde\beta) \times G^\# \mid \eta(t) = (-1)^{|g|} \}
\]
on
\[\label{eq:I-eta-plus}
	\mathbf{I}(T, \tilde\beta)_{\bz,\eta,+} \coloneqq \{ (\kappa, g_0) \mid (\kappa, g_0, 1) \in \mathbf{I}(T, \tilde\beta)_{\bz,\eta} \}
\]
given by
\begin{equation}\label{eq:mc-G-action}
\begin{split}
    t \cdot (\kappa, g_0) & \coloneqq (\kappa, t g_0)\\
    g \cdot (\kappa, g_0) & \coloneqq (g\cdot \kappa, g_0 g^{-2})\,.
\end{split}
\end{equation}
From \cref{thm:iso-(R-vphi)-with-parameters}, we conclude:

\begin{cor}\label{cor:collection-of-orbits}
    Let $(T, \tilde\beta)$ be the pair associated to a graded-division superalgebra $\D$, and let $\{\eta_i\}_{i\in I}$ be a choice of representatives for each equivalence class of superinvolutions on $\D$ (\cref{def:equiv-eta-even}). 
    Then the map sending $(\kappa, g_0)\in \mathbf{I}(T, \tilde\beta)_{\bz,\eta, +}$ to $E(\D,\U, B)$ with parameters $(T, \tilde\beta, \eta, \kappa, g_0, +1)$ as in \cref{def:E(D-U-B)} induces a bijection between the collection of the orbits of the $\mc G_{\eta_i}$-actions on $\mathbf{I}(T, \tilde\beta)_{\bz, \eta, +}$, as defined in \cref{eq:mathcal-G,eq:I-eta-plus,eq:mc-G-action,eq:mc-G-action}, and the isomorphism classes of $E(\D', \U', B')$ with $\D \iso \D'$. \qed
\end{cor}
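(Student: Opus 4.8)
The plan is to obtain the asserted bijection as a composition of three reductions, each an instance of \cref{lemma:lemma-on-actions}, starting from the classification already achieved in \cref{thm:iso-(R-vphi)-with-parameters}. That theorem identifies the isomorphism classes of $E(\D', \U', B')$ with $\D \iso \D'$ — equivalently, with $(T, \tilde\beta)$ fixed — with the orbits of the $T \times G^\#$-action on $\mathbf{I}(T, \tilde\beta)$ given by \cref{def:TxG-action}. The remaining work is purely combinatorial: peel off, one at a time, the constraint $|g_0| = \bz$, the choice of $\eta$ inside its $\sim_\bz$-class, and the normalization $\delta = +1$, each time passing to a stabilizer subgroup and a fiber. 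At the very end I would note, using the realizability of inertia (\cref{thm:iso-(U,B)}), that the composite bijection is precisely the one induced by the stated assignment $(\kappa, g_0) \mapsto E(\D, \U, B)$, since under each fiber identification a parameter tuple corresponds to the triple $(\D, \U, B)$ carrying that inertia.

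First I would handle the parity of $g_0$ via the map $\pi\from \mathbf{I}(T, \tilde\beta) \to \ZZ_2$, $(\eta, \kappa, g_0, \delta) \mapsto |g_0|$. Reading off \cref{def:TxG-action} (and using $2|g| = \bz$), this $\pi$ is equivariant for the $T \times G^\#$-action on $\ZZ_2$ with $t \cdot i = |t| + i$ and $g \cdot i = i$. If $\D$ is even then $T^- = \emptyset$, this action is trivial, $\mathbf{I}(T, \tilde\beta)_\bz = \mathbf{I}(T, \tilde\beta)$, and $T^+ = T$, so there is nothing to do. If $\D$ is odd the action is transitive; applying \cref{lemma:lemma-on-actions} at $y = \bz$, with $\operatorname{Stab}(\bz) = T^+ \times G^\#$ and $\pi\inv(\bz) = \mathbf{I}(T, \tilde\beta)_\bz$, identifies the $T \times G^\#$-orbits on $\mathbf{I}(T, \tilde\beta)$ with the $T^+ \times G^\#$-orbits on $\mathbf{I}(T, \tilde\beta)_\bz$.

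Next I would dispose of $\eta$. Partitioning $\mathbf{I}(T, \tilde\beta)_\bz$ into the $T^+ \times G^\#$-invariant subsets $X_\eta$ indexed by $\sim_\bz$-classes $Y_\eta$, it suffices to treat each $X_\eta$. The first-coordinate projection $X_\eta \to Y_\eta$ is equivariant; by \cref{def:equiv-eta-even} the $T^+$-action is transitive on $Y_\eta$ while $G^\#$ fixes $\eta$, and the stabilizer of a chosen representative is $\{t \in T^+ \mid \tilde\beta(t, \cdot) = 1\} \times G^\# = (\rad\tilde\beta) \times G^\#$, the last equality using $\rad\tilde\beta \subseteq T^+$ (\cref{lemma:rad-tilde-beta}). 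Thus \cref{lemma:lemma-on-actions} identifies the $T^+ \times G^\#$-orbits on $X_\eta$ with the $(\rad\tilde\beta) \times G^\#$-orbits on $\mathbf{I}(T, \tilde\beta)_{\bz,\eta}$; ranging over representatives $\{\eta_i\}$ produces the announced \emph{collection} of orbit sets.

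Finally I would fix $\delta$. The third-coordinate projection $\mathbf{I}(T, \tilde\beta)_{\bz,\eta} \to \pmone$ is equivariant for the $(\rad\tilde\beta) \times G^\#$-action $t \cdot \delta = \eta(t)\delta$, $g \cdot \delta = (-1)^{|g|}\delta$; this is well defined because $\eta\!\restriction_{\rad\tilde\beta}$ is a homomorphism (as $\tilde\beta$ is trivial on $\rad\tilde\beta$) and transitive because $(e,\bar1) \in G^\#$ negates $\delta$. Applying \cref{lemma:lemma-on-actions} at $y = +1$ yields stabilizer $\mathcal G_\eta$ of \cref{eq:mathcal-G} and fiber $\mathbf{I}(T, \tilde\beta)_{\bz,\eta,+}$ of \cref{eq:I-eta-plus}, with the restricted action being exactly \cref{eq:mc-G-action}. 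Composing the three bijections gives the result. The only genuinely delicate point — and the step I would verify most carefully — is the even/odd bookkeeping in the first reduction: the transitivity of the $\ZZ_2$-action rests on $T^- \neq \emptyset$, so one must confirm separately that in the even case the passage to $\mathbf{I}(T, \tilde\beta)_\bz$ is vacuous and that $T^+ = T$, ensuring no orbits are merged or lost.
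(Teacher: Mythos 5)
Your proposal is correct and follows essentially the same route as the paper: the corollary's \qed{} rests on the discussion in the subsection on simplifying the parametrization, which performs exactly your three successive applications of \cref{lemma:lemma-on-actions} to the $T \times G^\#$-action of \cref{thm:iso-(R-vphi)-with-parameters} --- first restricting to $|g_0| = \bz$ (with the even case kept vacuous, as you note), then fixing $\eta$ within its $\sim_\bz$-class using $\rad\tilde\beta \subseteq T^+$, then normalizing $\delta = +1$ to arrive at $\mc G_\eta$ acting on $\mathbf{I}(T,\tilde\beta)_{\bz,\eta,+}$ via \cref{eq:mc-G-action}. Your verifications of equivariance, transitivity (including the $T^- \neq \emptyset$ point for the first reduction), and the well-definedness of the $\delta$-action via $\eta\!\restriction_{\rad\tilde\beta}$ being a homomorphism match the paper's reasoning exactly.
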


\phantomsection\label{subsubsec:construction-U-B}

To conclude this section, for a given graded-division superalgebra with superinvolution $(\D, \vphi_0)$, associated to $(T, \tilde\beta, \eta)$, and a given $(\kappa, g_0)\in \mathbf{I}(T,\tilde\beta)_{\bz,\eta,+}$, we will show how to construct a pair $(\U, B)$, and hence the graded-superinvolution-simple superalgebra $E(\D,\U,B)$.
To this end, as done in \cref{phsec:reduce-B-to-FF-bilinear}, we fix elements $0 \neq X_t \in \D_t$, for every $t\in T$, and a set-theoretic section $\xi\from G/T^+ \to G$ of the natural homomorphism $G \to G/T^+$.

\begin{defi}\label{inertia-even-and-odd-case}\label{defi:odd-D-kappa-g_0-admissible}
    Let $g_0 \in G^\#$ and let $\kappa\from G^\#/T \to \FF^\times$ be a map with finite support.
    Recall from \cref{phsec:kappas-even-and-odd} that we reinterpret $\kappa$ as follows: for even $\D$, as two maps $\kappa_\bz, \kappa_\bo\from G/T \to \FF^\times$; for odd $\D$, as a map $\kappa\from G/T^+ \to \FF^\times$.
    We will say that $(\kappa_\bz, \kappa_\bo)$ or $\kappa$ (depending on the case) is \emph{$g_0$-admissible} if $(\kappa, g_0)\in \mathbf{I}(T,\tilde\beta)_{\bz,\eta,+}$.
\end{defi}

We have three cases to consider: even $\D$ and even $B$, even $\D$ and odd $B$, and odd $\D$ and even $B$.
We will spell out the $g_0$-admissibility conditions for each case below.


\paragraph{Even $\D$ and even $B$}\label{para:even-D-even-B}
If $T=T^+$ and $g_0 = (h_0, \bar 0)$,  where $h_0 \in G$, then $(\kappa_\bz, \kappa_\bo)$ is $g_0$-admissible \IFF, for all $i\in \ZZ_2$ and $x\in G/T$, 
\begin{enumerate}
    \item $\kappa_i(x) = \kappa_i(h_0\inv x\inv)$;
    \item if $h_0 x^2 = T$ and $\eta(h_0 g^2) = -(-1)^{i}$ for some (and hence any) $g\in x$, then $\kappa_i(x)$ is even.
\end{enumerate}
Given such a pair $(\kappa_\bz, \kappa_\bo)$, define
\[
    \forall i\in\ZZ_2, \quad \U^i \coloneqq \bigoplus_{x \in \supp \kappa_i} (\D^{\kappa_i(x)})^{[\xi(x)]}\,.
\]
We will define the matrix $\Phi\in M_{k_\bz|k_\bo}(\D)$, with $k_i \coloneqq |\kappa_i|$, representing the sesquilinear form $B$ as follows.
For $x, y \in \supp \kappa_i$ satisfying $h_0xy = T$, set $t \coloneqq h_0 \xi(x)\xi(y) \in T$.
If $x = y$, define $\Phi(i, x)$ as the $\kappa_i(x) \times \kappa_i(x)$-block given by
\begin{enumerate}
    \item $I_{\kappa_i(x)} \tensor X_{t}$ if $(-1)^i \eta(t) = 1$;
    \item $J_{\kappa_i(x)} \tensor X_{t}$, where $J_{\kappa_i(x)} \coloneqq \begin{pmatrix}
                  0                & I_{\kappa_i(x)/2} \\
                  -I_{\kappa_i(x)/2} & 0
              \end{pmatrix}$, if $(-1)^i \eta(t) = -1$ (recall that $\kappa_i(x)$ is even in this case by \cref{inertia-even-and-odd-case}). 
\end{enumerate}
If $x \neq y$, define $\Phi(i, x)$ as the $2\kappa_i(x) \times 2\kappa_i(x)$-block with entries in $\D$:
\begin{enumerate}[resume]
    \item $\begin{pmatrix}
        0                                                  & I_{\kappa_i(x)} \\
        (-1)^{i} \eta(t)I_{\kappa_i(x)} & 0
    \end{pmatrix} \tensor X_t$. 
\end{enumerate}
Then we define $\Phi$ as the block matrix:
\[\label{eq:puting-the-blocks-of-Phi-together}
    \sbox0{$\begin{matrix}
        \Phi(\bar 0, x_1)&& \\
        & \ddots &\\
        && \Phi(\bar 0, x_{\ell_\bz})
    \end{matrix}$}
    \sbox1{$\begin{matrix}
        \Phi(\bar 1, x_1')&& \\
        & \ddots &\\
        && \Phi(\bar 1, x_{\ell_\bo}')
    \end{matrix}$}
    \Phi \coloneqq
    \left(\begin{array}{c|c}
            \usebox{0} & 0\\
            \hline
            \rule{0pt}{6ex}
            0 & \usebox{1}
        \end{array}\right)\,,
\] 
where $\ell_i$ is the number of pairs $x, y \in \supp \kappa_i$ satisfying $h_0xy = T$. 


\paragraph{Even $\D$ and odd $B$}\label{para:even-D-odd-B}
If $T=T^+$ and $g_0 = (h_0, \bar 1)$, where $h_0 \in G$, then $(\kappa_\bz, \kappa_\bo)$ is $g_0$-admissible \IFF $\kappa_\bo(x) = \kappa_\bz(h_0\inv x\inv)$ for all $x \in G/T$.
In particular, $\kappa_\bo$ is determined by $\kappa_\bz$.
Given any $\kappa_\bz$, we set
\[
    \U\even \coloneqq \bigoplus_{x\in \supp{\kappa_\bz}}(\D^{\kappa_\bz(x)})^{[\xi(x)]}\, \AND \,\U\odd \coloneqq ((\U\even)\Star)^{[h_0\inv]}\,,
\]
and define $B\from \U\times\U \to \D$ by
\[\label{eq:defi-B-for-P(n)}
    \forall u_\bz, u_\bz' \in \U\even, \, u_\bo, u_\bo' \in \U\odd, \quad B(u_\bz + u_\bo, u_\bz' + u_\bo') \coloneqq u_\bo(u_\bz') + \vphi_0( u_\bo' (u_\bz) )\,.
\]
For any graded basis $\B$ of $\U\even$, $\B \cup \B\Star$ is a graded basis of $\U$, and the matrix representing $B$ is given by
\[
    \Phi = \left(\begin{array}{c|c}
            0 & I_{k}\\
            \hline
            I_{k} & 0
        \end{array}\right) \in M_{k|k}(\D),\, \text{where}\, k \coloneqq |\kappa_\bz|.
\]


\paragraph{Odd $\D$ and even $B$}\label{para:odd-D-odd-B}
If $T \neq T^+$ and $g_0 = (h_0, \bar 0) \in G^\#$, then $\kappa\from G/T^+ \to \ZZ_{\geq 0}$ is $g_0$-admissible \IFF, for all $x \in G/T^+$,
\begin{enumerate}
    \item $\kappa(x) = \kappa(h_0\inv x\inv)$;
    \item if $h_0 x^2 = T^+$ and $\eta(h_0 g^2) = -1$ for some (hence any) $g \in x$, then $\kappa(x)$ is even.
\end{enumerate}
Given such $\kappa$, we define
\[
    \U \coloneqq \bigoplus_{x \in \supp \kappa} (\D^{\kappa(x)})^{[\xi(x)]}
\]
and the matrix $\Phi\in M_{k}(\D)$, with $k \coloneqq |\kappa|$, representing the sesquilinear form $B$ by
\[
    \Phi \coloneqq 
    \begin{pmatrix}
        \Phi(\bar 0, x_1)&& \\
        & \ddots &\\
        && \Phi(\bar 0, x_{\ell})
    \end{pmatrix}\,,
\]
where the blocks $\Phi(\bar 0, x)$ are as above and $\ell$ is the number of pairs $x, y \in \supp \kappa$ satisfying $h_0xy = T^+$.

\section{Gradings on Superinvolution-Simple Associative Superalgebras}\label{sec:gradings-on-vphi-simple}

In \cref{subsec:grdd-simple-ass-algebraically-closed}, we presented classification results for graded-simple associative superalgebras over an algebraically closed field $\FF$, $\Char \FF \neq 2$, and then specialized these results to gradings on simple superalgebras. 
In \cref{sec:grdd-sinv-simple}, we established general classification theorems for graded-superinvolution-simple superalgebras. 
Specializing these results to gradings on superinvolution-simple superalgebras is more involved, so we dedicate a whole section to it.  

Throughout this section, we assume $\FF$ is algebraically closed and $\Char \FF \neq 2$.
Recall that finite-dimensional superinvolution-simple associative superalgebras are of types $M$, $M\times M\sop$ or $Q\times Q\sop$ (\cref{subsec:simple-superalgebras}).

\subsection{Simple superalgebras of type \texorpdfstring{$M$}{M} with superinvolution}\label{sec:vphi-grds-on-type-M}

Let $(R, \vphi)$ be a finite-dimensional graded superalgebra with superinvolution of type $M$. 
Then $(R, \vphi) \iso E(\D, \U, B)$ for some triple $(\D, \U, B)$ as in \cref{def:E(D-U-B)}, with $\D$ also of type $M$. 
Let $\vphi_0$ be the superinvolution on $\D$ such that $B$ is $\vphi_0$-sesquilinear and let $(T, \tilde\beta, \eta, \kappa, g_0, \delta)$ be the parameters of $(\D, \U, B)$.

\subsubsection{\texorpdfstring{$(\D,\vphi_0)$}{(D, phi0)} as a matrix superalgebra with superinvolution}\label{subsec:D-vphi_0-matrix}

Since $(\D, \vphi_0)$ is of type $M$, we have important restrictions on $(T, \tilde\beta)$. 
The next result can be found in \cite[Theorem 4.3]{TT} (assuming that $G$ is finite and $\Char \FF = 0$): 

\begin{lemma}\label{lemma:sinv-type-M}
    Let $\D$ be a finite-dimensional graded-division superalgebra of type $M$.
    If $\D$ admits a degree-preserving superinvolution, then $\D$ is even and $T = T^+$ is an elementary $2$-group.
    In particular $\beta = \tilde\beta$.
\end{lemma}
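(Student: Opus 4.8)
The plan is to derive the statement almost entirely from the two corollaries on super-anti-automorphisms established above, together with the characterization of type $M$ in terms of nondegeneracy. First I would record what type $M$ gives us: a graded-division superalgebra of type $M$ is isomorphic to $M(m,n)$, hence simple as an ordinary algebra, so $\beta$ is nondegenerate; by \cref{lemma:beta-deg-beta-tilde-nondeg}, case \cref{item:beta-nondegenerate-case}, this forces $\tilde\beta$ to be nondegenerate as well, so that $\rad\tilde\beta = \{e\}$.

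Next I would exploit the hypothesis. A degree-preserving superinvolution is in particular a degree-preserving super-anti-automorphism, so \cref{cor:super-anti-auto-squares-in-radical} applies and yields $t^2 \in \rad\tilde\beta$ for every $t \in T$. Since $\rad\tilde\beta = \{e\}$, we obtain $t^2 = e$ for all $t \in T$; as $T$ is abelian, this says exactly that $T$ is an elementary $2$-group, settling one of the two assertions. To get that $\D$ is even, that is $T^- = \emptyset$, I would now use that we have an actual superinvolution and not merely a super-anti-automorphism: by \cref{cor:eta-t-square}, every element of $T^-$ has order at least $4$. This is incompatible with $T$ being an elementary $2$-group unless $T^-$ is empty, whence $T = T^+$ and $\D$ is even.

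The final ``in particular'' is then immediate: once $T = T^+$, the parity map $p$ vanishes on all of $T$, so $\tilde\beta(t,s) = (-1)^{p(t)p(s)}\beta(t,s) = \beta(t,s)$ for all $t,s \in T$, giving $\beta = \tilde\beta$. I do not expect a genuine computational obstacle here, since the heavy lifting is done by the earlier corollaries; the only point requiring care is to pair them correctly, using \cref{cor:super-anti-auto-squares-in-radical} (valid for any super-anti-automorphism) to pin down the group structure of $T$, and then \cref{cor:eta-t-square} (which genuinely needs $\eta$ to take values in $\pmone$, i.e. the involutivity of $\vphi_0$) to rule out $T^-$. One should also double-check at the outset that type $M$ is precisely the case where $\beta$ is nondegenerate, so that $\tilde\beta$ is nondegenerate and $\rad\tilde\beta$ is trivial.
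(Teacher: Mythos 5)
Your proof is correct and follows essentially the same route as the paper's: establish $\rad\tilde\beta = \{e\}$, apply \cref{cor:super-anti-auto-squares-in-radical} to get that $T$ is an elementary $2$-group, and then use \cref{cor:eta-t-square} to rule out $T^-$. The only cosmetic difference is that you obtain nondegeneracy of $\tilde\beta$ via nondegeneracy of $\beta$ and \cref{lemma:beta-deg-beta-tilde-nondeg}, whereas the paper invokes \cref{prop:simple-tilde-beta} directly from simplicity of $\D$ as a superalgebra; both are valid.
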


\begin{proof}
    By \cref{prop:simple-tilde-beta,cor:super-anti-auto-squares-in-radical}, we have that $t^2 
    \in \rad\tilde\beta = \{e\}$ for all $t\in T$, so $T$ is an elementary $2$-group. 
    Hence, by \cref{cor:eta-t-square}, $T^-$ is empty. 
\end{proof}

We can write $\D$ explicitly as a matrix algebra with involution by revisiting \cref{subsubsec:standard-realizations}. 
Decompose $T$ as $A\times B$ where $\beta(A,A) = \beta(B,B) = 1$, and indentify $\D$ with the corresponding standard realization $\End_\FF (N)$, where $N$ is a vector space with a fixed basis $\{ e_b \}_{b\in B}$. 
Then, the transposition on the $\D = \End(N)$ seem as a matrix algebra by means of this basis is the adjunction with respect the symmetric bilinear form determined by:
\[\label{eq:bilinear-form-transposition}
\forall b, b'\in B, \quad
        \langle e_b , e_{b'} \rangle
        \coloneqq
        \begin{cases}
            1 & \text{if}\ b=b' \\
            0 & \text{if}\ b\neq b'\ .
        \end{cases}
    \]

\begin{lemma}\label{lemma:transp-std-realization}
    The transposition in the standard realization $\D = \End_\FF (N)$ is given by $X_{ab}\transp = \beta(a,b) X_{ab\inv}$, for all $a\in A$ and $b\in B$. 
\end{lemma}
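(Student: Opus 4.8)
The plan is to compute the transposes of the two types of generators $X_a$ (for $a \in A$) and $X_b$ (for $b \in B$) separately, using the defining adjunction property of the transposition with respect to the symmetric form \eqref{eq:bilinear-form-transposition}, and then to combine them through the anti-multiplicativity $(X_a X_b)\transp = X_b\transp X_a\transp$. Recall that by \cref{def:standard-realization-M} we have $X_{ab} = X_a X_b$, so it suffices to understand $X_a\transp$ and $X_b\transp$ and then reorder a product.

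First I would treat the diagonal generators. Each $X_a$ acts by $X_a(e_{b'}) = \beta(a,b') e_{b'}$, so it is diagonal in the basis $\{e_b\}_{b\in B}$. Since $\langle e_b, e_{b'}\rangle = \delta_{b,b'}$, a diagonal operator is self-adjoint, giving $X_a\transp = X_a$. Next I would treat the shift generators. Each $X_b$ acts by $e_{b'}\mapsto e_{bb'}$, so $\langle X_b e_{b_1}, e_{b_2}\rangle = \delta_{bb_1,b_2}$; comparing this with $\langle e_{b_1}, X_b\transp e_{b_2}\rangle$ forces $X_b\transp(e_{b_2}) = e_{b\inv b_2}$, that is, $X_b\transp = X_{b\inv}$.

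With these in hand, I would compute
\[
    X_{ab}\transp = (X_a X_b)\transp = X_b\transp X_a\transp = X_{b\inv} X_a.
\]
It then remains to move $X_{b\inv}$ past $X_a$. Using that $\beta(a,\cdot)$ is a character on $B$, together with the defining relation \eqref{eq:def-beta} in the form $X_a X_{b\inv} = \beta(a,b\inv) X_{b\inv} X_a$, one obtains $X_{b\inv} X_a = \beta(a,b\inv)\inv X_a X_{b\inv} = \beta(a,b) X_{ab\inv}$, which is exactly the asserted formula.

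There is no genuine obstacle here: the argument is short and self-contained once the transposes of the two families of generators are identified. The only point requiring care is the bookkeeping of the bicharacter values—specifically the use of $\beta(a,b\inv) = \beta(a,b)\inv$—and the fact that transposition reverses the order of multiplication, which is precisely what produces the factor $\beta(a,b)$ (rather than its inverse) in the final expression.
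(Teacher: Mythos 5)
Your proof is correct and takes essentially the same route as the paper's: both compute $X_a\transp = X_a$ and $X_b\transp = X_{b\inv}$ from the adjunction property with respect to the symmetric form \eqref{eq:bilinear-form-transposition}, then use anti-multiplicativity to get $X_{ab}\transp = X_{b\inv}X_a$ and the commutation relation \eqref{eq:def-beta} to produce the factor $\beta(a,b)$. The only cosmetic difference is that the paper reorders via $X_{b\inv}X_a = \beta(b\inv,a)X_aX_{b\inv}$ while you invert $X_aX_{b\inv} = \beta(a,b\inv)X_{b\inv}X_a$; these are the same bookkeeping.
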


\begin{proof}
    Let $a \in A$. 
    For all $b',b'' \in B$, the following is true since its non-zero only if $b=b'$:
    \[
        \langle X_a e_{b'}, e_{b''} \rangle = \beta(a, b') \langle e_{b'}, e_{b''} \rangle = \beta(a, b'') \langle e_{b'} , e_{b''} \rangle = \langle e_{b'}, X_a e_{b''} \rangle\,.
    \]
    Hence, $X_a\transp = X_a$. 
    Now consider $b \in B$. 
    For all $b',b'' \in B$,
    \[
        \langle X_b e_{b'}, e_{b''} \rangle = \langle e_{bb'}, e_{b''} \rangle = \langle e_{b'}, e_{b\inv b''} \rangle = \langle e_{b'}, X_{b\inv} e_{b''} \rangle,
    \]
    so $X_b\transp = X_{b\inv}$. 
    Finally, 
    \[
        X_{ab}\transp \!= (X_a X_b)\transp \!= X_{b\inv}X_a \!= \beta(b\inv, a) X_a X_{b\inv} \!= \beta(b,a)\inv X_{ab\inv} \!= \beta(a,b) X_{ab\inv}.
    \]
\end{proof}

When $T$ is an elementary $2$-group, \cref{lemma:transp-std-realization} tells us that the transposition is a degree-preserving superinvolution, with $\eta(ab) = \beta(a,b)$ for all $a\in A$ and $b\in B$.

\begin{prop}\label{prop:osp-p-unique-eta}
    There is a unique equivalence class with respect to $\sim_\bz$ (see \cref{def:equiv-eta-even}) of maps $\eta\from T \to \FF^\times$ such that $\mathrm{d}\eta = \tilde\beta = \beta$.
\end{prop}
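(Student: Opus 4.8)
The plan is to show that the maps $\eta\from T \to \FF^\times$ with $\mathrm{d}\eta = \beta$ form a single coset of the image of $T$ in $\widehat T$ under the pairing $\beta$, and that, by nondegeneracy of $\beta$, this image is all of $\widehat T$; the classes under $\sim_\bz$ then collapse to one. First I would record that such $\eta$ exist at all: by \cref{lemma:sinv-type-M}, $T = T^+$ is an elementary $2$-group, so $\beta$ takes values in $\pmone$, and the transposition of the standard realization (\cref{lemma:transp-std-realization}) provides an explicit $\eta_0$ with $\eta_0(ab) = \beta(a,b)$ for $a\in A$, $b\in B$, hence $\mathrm{d}\eta_0 = \beta$ by \cref{prop:superpolarization}.

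Next, given any two maps $\eta, \eta'$ with $\mathrm{d}\eta = \mathrm{d}\eta' = \beta$, I would form $\chi \coloneqq \eta'\eta\inv$ and check, by the definition of the coboundary, that $\mathrm{d}\chi = (\mathrm{d}\eta')(\mathrm{d}\eta)\inv = \beta\beta\inv = 1$ pointwise, so that $\chi \in \widehat T$. Since $T$ is an elementary $2$-group, every character automatically takes values in $\pmone$, so the $\FF^\times$-valued formulation of the statement and the $\pmone$-valued setting of \cref{def:equiv-eta-even} agree and the relation $\sim_\bz$ applies verbatim.

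The crux is to realize $\chi$ through $\beta$. Because $(\D, \vphi_0)$ is of type $M$, the superalgebra $\D$ is simple, so $\tilde\beta = \beta$ is nondegenerate by \cref{prop:simple-tilde-beta}; equivalently, $\rad\beta = \{e\}$. Hence the group homomorphism $T \to \widehat T$, $t \mapsto \beta(t, \cdot)$, has trivial kernel, and since $T$ is finite with $|\widehat T| = |T|$, it is an isomorphism. Consequently there is $t\in T = T^+$ with $\chi = \beta(t,\cdot) = \tilde\beta(t,\cdot)$, which gives $\eta' = \tilde\beta(t,\cdot)\eta$ with $t \in T^+$, that is, $\eta \sim_\bz \eta'$ by \cref{def:equiv-eta-even}. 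Combined with the existence of $\eta_0$, this proves that there is exactly one equivalence class.

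The main obstacle is precisely this surjectivity of $t\mapsto \beta(t,\cdot)$ onto $\widehat T$; everything else is a routine coboundary computation. That surjectivity rests entirely on the nondegeneracy of $\beta$ together with the finiteness of $T$, both of which are available here because $\D$ is a finite-dimensional graded-division superalgebra of type $M$ admitting a superinvolution.
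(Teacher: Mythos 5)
Your proof is correct and takes essentially the same route as the paper: both arguments reduce to showing that the quotient of two such maps is a character of $T$ and then use the nondegeneracy of $\beta$ to realize that character as $\tilde\beta(t,\cdot)$ with $t \in T^+ = T$, giving $\eta' \sim_\bz \eta$. The only cosmetic difference is that you verify $\mathrm{d}(\eta'\eta^{-1}) = 1$ by a direct coboundary computation, whereas the paper obtains $\eta' = \chi\eta$ by writing the second super-anti-automorphism as the transposition composed with a degree-preserving automorphism and invoking \cref{lemma:Aut(D)-widehat-T}.
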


\begin{proof}
    Let $\eta'\from T \to \FF^\times$ be a map corresponding to another degree-preserving involution $\vphi_0'$ on $\D$. 
    Clearly, $\vphi_0'$ is the transposition composed with a degree-preserving automorphism of $\D$, so, by \cref{lemma:Aut(D)-widehat-T}, $\eta' = \chi \eta$, for some $\chi \in \widehat{T}$. 
    Since $\tilde\beta = \beta$ is nondegenerate, there is a $t\in T = T^+$ such that $\chi = \tilde\beta(t, \cdot)$, and hence $\eta' \sim_\bz \eta$.
\end{proof}

\subsubsection{\texorpdfstring{$E(\D, \U, B)$}{E(D, U, B)} as a superalgebra with superinvolution}

Disregarding the $G$-grading, $E(\D, \U, B)$ is a simple (super)algebra equipped with a superinvolution.
Our goal now is to write it as a superalgebra of $\FF$-linear operators $\End_\FF(U)$, and to find a bilinear form $\langle \cdot, \cdot\rangle$ on $U$ whose superadjunction corresponds to $\vphi$.

We will identify $\D$ with $\End_\FF(N)$, 
where $N = N\even$.
Since $N$ is a left $\D$-module, we can define $U \coloneqq \U \tensor_\D N$. 
It follows that $U$ is a superspace, with $U\even = \U\even \tensor_\D N$ and $U\odd = \U\odd \tensor_\D N$, but it is not $G$-graded since $N$ is not $G$-graded.

The following two lemmas are standard, so we omit the proofs.

\begin{lemma}\label{lemma:basis-of-N}
    If $\{ u_1, \ldots, u_k \}$ is a graded $\D$-basis of $\U$ and $\{ v_1, \ldots, v_\ell \}$ is an $\FF$-basis of $N$, then $\{ u_i \tensor_\D v_j \mid 1\leq i \leq k \AND 1\leq j \leq \ell\}$ is an $\FF$-basis of $U$. \qed
\end{lemma}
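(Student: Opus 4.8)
The plan is to exploit the freeness of $\U$ as a right $\D$-module and reduce everything to the canonical isomorphism $\D \tensor_\D N \iso N$. Since $\{u_1, \ldots, u_k\}$ is a $\D$-basis of $\U$, the map $\D^k \to \U$ sending the standard tuples to the $u_i$ is an isomorphism of right $\D$-modules, so $\U \iso \D^k$. Tensoring on the right by the left $\D$-module $N$ and using that $\tensor_\D N$ commutes with finite direct sums, I would obtain $\FF$-linear isomorphisms
\[
    U = \U \tensor_\D N \iso \D^k \tensor_\D N \iso (\D \tensor_\D N)^k \iso N^k,
\]
where the last step is $d \tensor v \mapsto dv$ for the left action of $\D = \End_\FF(N)$ on $N$.

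Next I would track the image of each $u_i \tensor_\D v_j$ through this chain. The element $u_i$ corresponds to the $i$-th standard tuple, so $u_i \tensor_\D v_j$ is sent to the element of $N^k$ with $v_j$ in the $i$-th coordinate and $0$ elsewhere. As $i$ ranges over $\{1, \ldots, k\}$ and $j$ over $\{1, \ldots, \ell\}$, and since $\{v_1, \ldots, v_\ell\}$ is an $\FF$-basis of $N$, these images are exactly the standard $\FF$-basis of $N^k$. Hence $\{u_i \tensor_\D v_j\}$ is an $\FF$-basis of $U$, as claimed. A dimension count confirms consistency: $\dim_\FF U = k \dim_\FF N = k\ell$.

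Alternatively, one can argue directly: for spanning, write an arbitrary simple tensor $u \tensor v$ with $u = \sum_i u_i d_i$ and move the coefficients across the balanced tensor, $u \tensor v = \sum_i u_i \tensor_\D (d_i v)$, then expand each $d_i v$ in the basis $\{v_j\}$; for independence, apply the isomorphism above. The only point requiring care—and the sole ``obstacle,'' such as it is—is bookkeeping the one-sided module structures: $\U$ is a right $\D$-module while $N$ is the natural left $\D$-module, and the identity $u_i d \tensor_\D v = u_i \tensor_\D dv$ rests precisely on the balanced property of $\tensor_\D$. A final remark is that the asserted basis is compatible with the superspace structure $U\even = \U\even \tensor_\D N$ and $U\odd = \U\odd \tensor_\D N$, since $N = N\even$ forces $u_i \tensor_\D v_j$ to have parity $|u_i|$; this is not needed for the statement but explains why the basis respects the decomposition of $U$.
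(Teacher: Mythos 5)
Your proof is correct. The paper omits the proof of this lemma as standard, and your argument---freeness of $\U$ as a right $\D$-module, compatibility of $-\tensor_\D N$ with finite direct sums, and the canonical isomorphism $\D \tensor_\D N \iso N$ (valid since $\D$ is unital and $N$ is a unital left $\D$-module), followed by tracking the elements $u_i \tensor_\D v_j$ to the standard basis of $N^k$---is precisely the standard argument the paper has in mind, with the balanced-tensor bookkeeping handled correctly.
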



\begin{lemma}\label{lemma:End-over-D-becomes-over-FF}
    The map $\psi\from \End_\D(\U) \to \End_\FF (U) = \End_\FF (\U \tensor_\D N)$ given by $\psi(L) \coloneqq L \tensor_\D \id_N$ is an isomorphism of superalgebras. \qed
\end{lemma}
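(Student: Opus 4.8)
The statement is an instance of the Morita equivalence between $\D = \End_\FF(N)$ and $\FF$ implemented by the $(\D,\FF)$-bimodule $N$: the functor $-\tensor_\D N$ sends right $\D$-modules to $\FF$-vector spaces, and under any such equivalence the endomorphism algebra of a module is identified with that of its image via $L \mapsto L\tensor\id$. The plan is therefore to dispatch the formal properties of $\psi$ (well-definedness, multiplicativity, parity-preservation), all of which follow from functoriality of $-\tensor_\D N$ together with $N$ being purely even, and then to prove bijectivity, which is the only place the hypothesis $\D=\End_\FF(N)$ is genuinely used.

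First I would check that $\psi$ is a well-defined homomorphism of superalgebras. Since $L\from\U\to\U$ is right $\D$-linear and $\id_N\from N\to N$ is left $\D$-linear, the map $L\tensor_\D\id_N$ descends to the balanced tensor product $U=\U\tensor_\D N$: it is determined by $u\tensor_\D v\mapsto L(u)\tensor_\D v$, and compatibility with the relation $ud\tensor_\D v = u\tensor_\D dv$ holds precisely because $\id_N$ is $\D$-linear. Functoriality gives $\psi(\id_\U)=\id_U$ and $\psi(L_1L_2)=\psi(L_1)\psi(L_2)$ for composition; because $N=N\even$ and hence $\id_N$ is even, the sign rule for composing tensor products of homogeneous maps introduces no Koszul signs, so $\psi$ is an honest algebra homomorphism. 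For the grading, recall that $U^i=\U^i\tensor_\D N$ for $i\in\ZZ_2$; an operator $L$ homogeneous of parity $i$ carries $\U^j$ into $\U^{i+j}$, so $\psi(L)$ carries $U^j$ into $U^{i+j}$, i.e.\ $\psi$ preserves parity. Thus $\psi$ is a homomorphism of superalgebras, and once it is shown bijective its inverse is automatically parity-preserving.

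Next I would establish bijectivity by combining an explicit injectivity argument with a dimension count. Fix a graded $\D$-basis $\{u_1,\ldots,u_k\}$ of $\U$ and an $\FF$-basis $\{v_1,\ldots,v_\ell\}$ of $N$, so that the vectors $u_i\tensor_\D v_j$ form an $\FF$-basis of $U$ by \cref{lemma:basis-of-N}. Writing $L(u_i)=\sum_m u_m d_{mi}$ with $d_{mi}\in\D$, we compute $\psi(L)(u_i\tensor_\D v_j)=\sum_m u_m\tensor_\D d_{mi}v_j$. If $\psi(L)=0$, then for each $i$ and $j$ the linear independence of the $u_m$ forces $d_{mi}v_j=0$; letting $j$ range over a basis of $N$ shows that each $d_{mi}\in\End_\FF(N)$ annihilates $N$ and hence vanishes, so $L=0$ and $\psi$ is injective. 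Finally, identifying $\D\iso M_\ell(\FF)$ with $\ell=\dim_\FF N$, we have $\dim_\FF\End_\D(\U)=k^2\ell^2$, while $U\iso N^k$ gives $\dim_\FF\End_\FF(U)=(k\ell)^2=k^2\ell^2$; an injective $\FF$-linear map between finite-dimensional spaces of equal dimension is bijective.

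Everything here is routine, and the only substantive content is the bijectivity, which is exactly the fullness and faithfulness of $-\tensor_\D N$, equivalently the fact that $N$ is a faithful simple $\D$-module with $\End_\D(N)=\FF$. The one point to watch is the bookkeeping of module sides — treating $N$ as a left $\D$-module and $\U$ as a right one so that both the balanced tensor product and the factorization $L\tensor_\D\id_N$ are meaningful — together with the (here harmless) observation that no signs intervene because $N$ is concentrated in even parity. If one preferred to avoid the dimension count, surjectivity could instead be read off directly from the Morita equivalence, but the count is the shortest route given finite-dimensionality.
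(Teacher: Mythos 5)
Your proof is correct; the paper deliberately omits this argument (``standard, so we omit the proofs''), and what you give is precisely the standard proof it has in mind: well-definedness and multiplicativity via functoriality of $-\tensor_\D N$ on the balanced tensor product (sign-free because $N = N\even$ makes $\id_N$ even), parity-preservation from $U^i = \U^i \tensor_\D N$, injectivity by expanding over a free graded $\D$-basis and using faithfulness of $N$ as a module over $\D = \End_\FF(N)$, and surjectivity from the dimension count $k^2\ell^2 = (k\ell)^2$, with \cref{lemma:basis-of-N} supplying the needed $\FF$-basis of $U$. All the bookkeeping you flag (module sides, absence of Koszul signs, gradedness of the inverse of a bijective graded homomorphism) is handled correctly, so nothing is missing.
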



Having established the isomorphism $\psi\from \End_\D(\U) \to \End_\FF (U)$, we can transfer the superinvolution $\vphi$ on $\End_\D(\U)$ to a superinvolution on $\End_\FF (U)$ by taking $\psi \circ \vphi \circ \psi\inv$. 
This sends $L\tensor_\D \id_N$ to $\vphi(L)\tensor_\D \id_N$.
We now wish to transfer the sesquilinear form $B$ on $\U$ to a bilinear form on $U$.
Recall that $\barr B = \delta B$, where $\delta \in \pmone$.

First, let us consider the involution $\vphi_0$ on $\D = \End_\FF(N)$. 
It is well known (and also follows from \cref{thm:vphi-involution-iff-delta-pm-1} with trivial grading) that there is a bilinear form $\langle \, , \rangle_N \from N\times N \to \FF$ such that $\vphi_0$ is the adjunction with respect to $\langle \, , \rangle_N$. 
Since $\vphi_0$ is an involution, $\langle \, , \rangle_N$ is either symmetric or skew-symmetric. 
We will write $\delta_N \coloneqq 1$ in the former case and $\delta_N \coloneqq -1$ in the latter. 

\begin{lemma}\label{lemma:the-same-vphi}
    There is an $\FF$-bilinear map $\langle \, , \rangle_U \from U\times U \to \FF$ determined by 
    \[
        \langle u \tensor_\D v , u' \tensor_\D v' \rangle_U \coloneqq \langle v , B(u, u') v' \rangle_N,
    \]
    for all $u, u'\in \U$ and $v, v' \in N$. 
    This bilinear map is supersymmetric if $\delta \delta_N =1$ and super-skew-symmetric if $\delta \delta_N = -1$. 
    Moreover, $\vphi \tensor \id_N$ is the superadjunction with respect to $\langle \, , \rangle_U$. 
\end{lemma}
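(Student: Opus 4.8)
The plan is to verify the three assertions --- well-definedness, (super)symmetry, and the superadjunction identity --- in turn, exploiting that $\D$ and $N$ are purely even by \cref{lemma:sinv-type-M}, which annihilates nearly all the signs coming from the rule of signs. I would record at the outset that a simple tensor $u\tensor_\D v$ has parity $|u|$, since $v\in N\even$, and that $\langle\,,\rangle_U$ will turn out to be $\ZZ_2$-homogeneous of the same parity as $B$.

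The one genuinely delicate step, and the main obstacle, is that the defining formula must descend to the balanced tensor product $\U\tensor_\D N$; this is exactly where the hypotheses interlock. It suffices to move a homogeneous $d\in\D$ across the tensor sign. For the left argument, the $\vphi_0$-sesquilinearity of $B$ (\cref{def:sesquilinear-form}) gives $B(ud,u')=\vphi_0(d)B(u,u')$, with no sign since $|d|=\bz$, and because $\vphi_0$ is by construction the adjunction of $\langle\,,\rangle_N$ we have $\langle v,\vphi_0(d)w\rangle_N=\langle dv,w\rangle_N$; applying this with $w=B(u,u')v'$ yields $\langle ud\tensor v,u'\tensor v'\rangle_U=\langle u\tensor dv,u'\tensor v'\rangle_U$. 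For the right argument, $B(u,u'd)=B(u,u')d$ together with associativity of the $\D=\End_\FF(N)$-action on $N$ closes the check, and $\FF$-bilinearity is evident.

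For the (super)symmetry I would begin from the hypothesis $\barr B=\delta B$. Unfolding \cref{def:barB} and using $\vphi_0\inv=\vphi_0$ gives $B(u',u)=\sign{u}{u'}\delta\,\vphi_0(B(u,u'))$. Substituting this into $\langle u'\tensor v',u\tensor v\rangle_U=\langle v',B(u',u)v\rangle_N$, transferring $\vphi_0(B(u,u'))$ to the other argument by the adjunction of $\langle\,,\rangle_N$, and then applying $\langle w,w'\rangle_N=\delta_N\langle w',w\rangle_N$, I obtain $\langle u'\tensor v',u\tensor v\rangle_U=\sign{u}{u'}\,\delta\delta_N\,\langle u\tensor v,u'\tensor v'\rangle_U$. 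Since $\sign{u}{u'}$ is the sign prescribed by the parities of the two tensors, this is supersymmetry when $\delta\delta_N=1$ and super-skew-symmetry when $\delta\delta_N=-1$.

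Finally, for the superadjunction claim I would check the defining relation directly. Writing $(L\tensor\id_N)(u\tensor v)=L(u)\tensor v$ and unfolding $\langle\,,\rangle_U$, the decisive input is \cref{eq:superadjunction}, namely $B(L(u),u')=\sign{L}{u}B(u,\vphi(L)(u'))$. This rewrites $\langle(L\tensor\id_N)(u\tensor v),u'\tensor v'\rangle_U$ as $\sign{L}{u}\langle u\tensor v,(\vphi(L)\tensor\id_N)(u'\tensor v')\rangle_U$, and $|u\tensor v|=|u|$ turns the coefficient into $\sign{L}{u\tensor v}$, which is precisely the superadjunction identity for $\vphi\tensor\id_N$. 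To ensure this is meaningful I would also note that $\langle\,,\rangle_U$ is nondegenerate --- a consequence of the nondegeneracy of both $B$ and $\langle\,,\rangle_N$ under the identification $U=\U\tensor_\D N$ --- so that the superadjunction exists and is unique. Apart from the single balancing issue in the first step, the whole argument is sign bookkeeping that collapses because $\D=\D\even$ and $N=N\even$.
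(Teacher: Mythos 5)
Your proof is correct and takes essentially the same route as the paper's: the same three-step structure of (i) well-definedness via $\D$-balancedness, using that $\D=\D\even$ kills the sign in $B(ud,u')=\vphi_0(d)B(u,u')$ and that $\vphi_0$ is the adjunction for $\langle\,,\rangle_N$, (ii) the symmetry computation obtained by unfolding $\barr B=\delta B$ and transferring $\vphi_0(B(u,u'))$ across $\langle\,,\rangle_N$, and (iii) the superadjunction identity checked on $L\tensor_\D\id_N$ via \cref{eq:superadjunction}. Your closing remark on nondegeneracy of $\langle\,,\rangle_U$ is a harmless addition that the paper leaves implicit.
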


\begin{proof}
    For every $u\in \U$, let $C_u \from \U \times N \to \FF$ be defined by $C_u(u', v') \coloneqq B(u, u') v'$. 
    Clearly, $C_u$ is $\FF$-bilinear and $\D$-balanced. 
    Hence, there is a unique $\FF$-linear map $\U \tensor_\D N \to N$, which we will also denote by $C_u$, such that $u' \tensor_\D v' \mapsto B(u, u')v'$. 
    It is easy to see that the mapping $\U \to \Hom_\FF(\U \tensor N, N)$ given by $u \mapsto C_u$ also is $\FF$-linear. 
    
    Now let $u'\tensor_\D v' \in \U \tensor_\D N$ be fixed, and let $C' = C'_{u'\tensor_\D v'}\from \U\times N \to \FF$ be given by $C'(u,v) \coloneqq \langle v, C_u(u',v') \rangle_N$, \ie, $C'(u,v) = \langle v, B(u, u')v' \rangle_N$. 
    Clearly, $C'$ is $\FF$-bilinear. 
    It is $\D$-balanced since, for all $u \in \U\even \cup \U\odd$, $v\in N$ and $d \in \D = \D\even$,
    \begin{align}
        C'(ud,v) = \langle v, B(ud, u')v' \rangle_N &= \langle v, (-1)^{(|B| + |u|)|d|} \vphi_0(d) B(u, u')v' \rangle_N \\
        &= \langle v, \vphi_0(d) B(u, u')v' \rangle_N = \langle d v, B(u, u')v' \rangle_N\,.
    \end{align}
    It follows that there is a unique $\FF$-linear map $\U \tensor_\D N \to \FF$ such that $u\tensor_\D v \mapsto C' (u,v) = C'_{u'\tensor_\D v'}(u,v)$. 
    We conclude that the $\FF$-bilinear map $\langle \, , \rangle_U \from U\times U \to \FF$ is well-defined. 
    
    The form $\langle \, , \rangle_U$ is supersymmetric or super-skew-symmetric, depending on $\delta \delta_N$, since
    \begin{align}
        \langle u' \tensor_\D v' ,\,  u \tensor_\D v \rangle_U 
        &= \langle v' , B(u', u) v \rangle_N 
        = \langle v' , \sign{u}{u'}\delta\, \vphi_0( B(u, u') ) v \rangle_N \\
        &= \sign{u}{u'}\delta \langle B(u, u') v' , v \rangle_N \\
        &= \sign{u}{u'} \delta \delta_N \langle v, B(u, u') v' \rangle_N \\
        &= \sign{u}{u'} \delta \delta_N \langle u \tensor_\D v , u' \tensor_\D v' \rangle_U, 
    \end{align}
    for all $u, u' \in \U\even \cup \U\odd$ and $v \in N = N\even$.
    
    For the ``moreover'' part, recall from \cref{lemma:End-over-D-becomes-over-FF} that every element in $\End_D(U)$ is of the form $L\tensor_\D \id_N$ for some $L \in \End_\D (\U)$. 
    Hence:
    {\allowdisplaybreaks
    \begin{align}
        \langle (L\tensor_\D \id_N) (u \tensor_D v), u' \tensor_D v' &\rangle_U 
        = \langle L(u) \tensor_\D v, u' \tensor_D v' \rangle_U \\
        &= \langle v, B(L(u), u') v' \rangle_N \\
        &= \langle v, \sign{L}{u} B(u, \vphi(L) (u')) v' \rangle_N \\
        &= \sign{L}{u} \langle u \tensor_\D v, \vphi(L) (u') \tensor_D v' \rangle_U \\
        &= \sign{L}{u} \langle  u \tensor_D v, ( \vphi(L)\tensor_\D \id_N) (u' \tensor_D v') \rangle_U,
    \end{align}
    }
    for all $L \in \End_\D (\U)\even \cup \End_\D (\U)\odd$, $u, u'\in \U\even \cup \U\odd$ and $v,v' \in N$. 
\end{proof}

\begin{prop}\label{prop:g_0-goes-to-p_0}
    As a superalgebra with superinvolution, $E(\D, \U, B)$, with $\barr B = \delta B$, is isomorphic to $M^*(m,n, p_0)$, where $p_0 \coloneqq |B|$ and
    \begin{enumerate}
        \item if $\delta\delta_N = 1$, then $m \coloneqq \dim \U\even \tensor_\D N$ and $n \coloneqq \dim \U\odd \tensor_\D N$;\label{it:delta-delta-1}
        \item if $\delta\delta_N = -1$, then $m \coloneqq \dim \U\odd \tensor_\D N$ and $n \coloneqq \dim \U\even \tensor_\D N$.
    \end{enumerate}
\end{prop}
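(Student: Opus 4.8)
The plan is to transport the whole structure to the $\FF$-linear picture furnished by the two preceding lemmas and then read off the answer from the definition of $M^*(m,n,p_0)$. Writing $U \coloneqq \U \tensor_\D N$, \cref{lemma:End-over-D-becomes-over-FF} provides a superalgebra isomorphism $\psi\from \End_\D(\U) \to \End_\FF(U)$, $\psi(L) = L\tensor_\D \id_N$, under which $\psi\circ\vphi\circ\psi\inv$ sends $L\tensor_\D\id_N$ to $\vphi(L)\tensor_\D\id_N$, so that $\psi\circ\vphi\circ\psi\inv = \vphi\tensor\id_N$. By \cref{lemma:the-same-vphi}, this map is exactly the superadjunction with respect to the $\FF$-bilinear form $\langle\,,\rangle_U$ on $U$, which is nondegenerate (since $B$ and $\langle\,,\rangle_N$ are) and is supersymmetric when $\delta\delta_N = 1$ and super-skew-symmetric when $\delta\delta_N = -1$. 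Thus it suffices to identify the pair $(\End_\FF(U),\, \vphi\tensor\id_N)$ with the appropriate $M^*$.

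For the case $\delta\delta_N = 1$, I would first compute the parity of $\langle\,,\rangle_U$. Since $\langle\,,\rangle_N$ is even and $N = N\even$, the defining identity $\langle u\tensor_\D v, u'\tensor_\D v'\rangle_U = \langle v, B(u,u')v'\rangle_N$ is nonzero only when $|u| + |u'| = |B|$; because $|u\tensor_\D v| = |u|$, this means $\langle\,,\rangle_U$ is homogeneous of parity $|B| = p_0$. Viewing $U = \FF^{m|n}$ with $m = \dim U\even = \dim \U\even\tensor_\D N$ and $n = \dim U\odd = \dim\U\odd\tensor_\D N$, the pair $(\End_\FF(U),\, \vphi\tensor\id_N)$ is, by \cref{defi:M(m-n-p_0)}, precisely $M^*(m,n,p_0)$. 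Composing with $\psi$ then yields $E(\D,\U,B) \iso M^*(m,n,p_0)$, which is the first item.

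For the case $\delta\delta_N = -1$, rather than passing directly to the parity-reversed space $U^{[\barr 1]}$ — which would force a delicate sign correction, since the superadjunctions of one and the same form with respect to the two parity functions on $U$ differ by a sign on odd operators — I would reduce to the previous case via \cref{rmk:only-super-hermitian}. Reversing parity on $\U$ leaves the graded superalgebra with superinvolution $E(\D,\U,B)$ unchanged, preserves $\vphi_0$ (hence $\delta_N$) and $p_0 = |B|$, and replaces $\delta$ by $-\delta$. Consequently $(\D, \U^{[\barr 1]}, B^{[\barr 1]})$ falls under the supersymmetric case, and the previous paragraph gives $E(\D,\U,B) \iso M^*(m,n,p_0)$ with $m = \dim (\U^{[\barr 1]})\even \tensor_\D N = \dim\U\odd\tensor_\D N$ and $n = \dim\U\even\tensor_\D N$, as required by the second item.

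The step I expect to require the most care is the sign and parity bookkeeping: verifying that $\langle\,,\rangle_U$ is genuinely homogeneous of parity $|B|$, and, in the skew case, resisting the direct passage to $U^{[\barr 1]}$ in favour of reversing parity on $\U$. The latter route is decisive precisely because that operation is already known to fix $(E(\D,\U,B),\vphi)$ on the nose, so it converts the skew case into the symmetric one without disturbing the superinvolution.
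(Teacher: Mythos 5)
Your proof is correct and follows the same skeleton as the paper's: transport through the isomorphism $\psi$ of \cref{lemma:End-over-D-becomes-over-FF}, identification of the transported superinvolution as the superadjunction with respect to $\langle \, , \rangle_U$ via \cref{lemma:the-same-vphi}, the observation that this form is homogeneous of parity $|B| = p_0$ (your argument using $\D = \D\even$ is exactly what makes the paper's ``it is clear'' clear), and a parity reversal to reduce the super-skew case to the supersymmetric one. You deviate in two minor but noteworthy ways. For the skew case, the paper reverses parity downstairs, replacing $U$ by $U^{[\bar 1]}$ as in \cref{phsec:parity-reversed}, while you reverse parity upstairs on $\U$ via \cref{rmk:only-super-hermitian}; these are the same maneuver, and your packaging is marginally safer because the compensating sign twist of \cref{defi:shift-on-B} --- genuinely needed when $p_0 = \bar 1$, where reversing parity of the \emph{same} bilinear map neither turns skew into symmetric nor preserves the superadjunction on odd operators --- is already built into \cref{lemma:B^[g]-does-the-job,lemma:B-g-delta}, on which the Remark rests. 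For the final identification, the paper exhibits, via \cref{prop:self-dual-components,prop:pair-of-dual-components}, a $\ZZ_2$-graded basis of $U$ in which the superinvolution is represented by the standard matrix of \cref{defi:M(m-n-p_0)}, whereas you invoke the fact (stated just before \cref{defi:M(m-n-p_0)}) that the isomorphism class depends only on $(m,n,p_0)$. That shortcut is legitimate, but it makes nondegeneracy of $\langle \, , \rangle_U$ load-bearing, and you only assert it; it does hold and deserves one line: in the basis $\{u_i \tensor_\D v_j\}$ of \cref{lemma:basis-of-N}, with $k \coloneqq \dim_\D \U$ and $\ell \coloneqq \dim_\FF N$, the Gram matrix of $\langle \, , \rangle_U$ is the product of the invertible block-diagonal matrix $I_k \tensor \Psi_N$ (where $\Psi_N$ is the Gram matrix of $\langle \, , \rangle_N$) and the image of the invertible Gram matrix of $B$ under the isomorphism $M_k(\D) \iso M_{k\ell}(\FF)$, hence invertible. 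The paper's basis-by-basis route yields this automatically; yours trades that explicitness for brevity.
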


\begin{proof}
    It is clear that the bilinear form $\langle \, , \rangle_U \from U\times U \to \FF$ defined in \cref{lemma:the-same-vphi} has the same parity as $B$, which is $p_0$. 
    If $\delta\delta_N = -1$, as noted in \cref{phsec:parity-reversed}, we can replace $U$ by $U^{[\barr 1]}$ so $\langle \, , \rangle_U$ becomes supersymmetric.
    By \cref{prop:self-dual-components,prop:pair-of-dual-components}, we can find a ($\ZZ_2$-graded) basis of $U$ where the matrix representing $\vphi \tensor_\D \id_N$ is the one as in \cref{defi:M(m-n-p_0)}, so $(\End_\FF (U), \vphi \tensor_\D \id_N) \iso M^*(m, n, p_0)$. 
    By \cref{lemma:End-over-D-becomes-over-FF}, we have the desired result. 
\end{proof}

\subsubsection{Parametrization of gradings on \texorpdfstring{$M^*(m, n, p_0)$}{M*(m, n, p0)}}

We are now in a position to define explicit models for gradings on $M^*(m,n,p_0)$.
For each pair $(T, \beta)$ where $T$ is a finite $2$-elementary subgroup of $G$ and $\beta\from {T\times T} \to \FF^\times$ is a nondegenerate alternating bicharacter, we set $\tilde\beta \coloneqq \beta$ and fix a standard realization $\D = \End(N)$ of type $M$ (\cref{def:standard-realization-M}) associated to $(T, \tilde\beta)$.

Let $\vphi_0\from \D \to \D$ be the transposition on $\D$ and $\eta\from T \to \FF^\times$ be the map corresponding to it.
Recall that we have a unique equivalence class of $\sim_\bz$ (\cref{prop:osp-p-unique-eta}), and that $\vphi_0$ is the adjunction with respect to the symmetric bilinear form in \cref{eq:bilinear-form-transposition}, so $\delta_N = 1$.
By \cref{rmk:only-super-hermitian}, we can always choose $\delta = 1$, so $\delta\delta_N = 1$ and we are in \cref{it:delta-delta-1} of \cref{prop:g_0-goes-to-p_0}.
Therefore, we are aligned with all the simplifications of parameters done in \cref{subsec:simplify-parameters}.

\begin{defi}\label{def:model-grd-M(m-n-0)}
    Let $T \subseteq G$ be a finite $2$-elementary subgroup, let $\beta\from {T\times T} \to \FF^\times$ be a nondegenerate alternating bicharacter, let $g_0 = (h_0, p_0) \in G^\#$ and let $(\kappa_\bz, \kappa_\bo)$ be a $g_0$-admissible pair of maps $G/T \to \ZZ_{\geq 0}$ (\cref{inertia-even-and-odd-case}). 
    Construct the corresponding graded $\D$-module $\U$ and nondegenerate sesquilinear form $B\from \U\times \U \to \D$ as in \cref{subsubsec:construction-U-B}.
    We denote the $G$-graded superalgebra with superinvolution $E(\D, \U, B)$ by $M^*(T, \beta, \kappa_\bz, \kappa_\bo, g_0)$. 
    By choosing a graded basis for $\U$, we get an isomorphism $E(\D, \U, B) \iso M_{k_\bz | k_\bo}(\D)$, with $k_i \coloneqq |\kappa_i|$.
    Then, via Kronecker product, this becomes the superalgebra with superinvolution $M^*(m,n, p_0)$, where $m \coloneqq k_\bz\sqrt{|T|}$ and $n \coloneqq k_\bo\sqrt{|T|}$ (compare with \cref{def:Gamma-T-beta-kappa-even}), endowed with the superinvolution $\vphi$ given by 
    \[
        \forall X \in M(m,n),\quad \vphi(X) \coloneqq \Phi\inv X\stransp \Phi,
    \]
    where $\Phi$ is the matrix representing $B$ with respect to the chosen basis (interpreted as a matrix with entries in $\FF$ rather than $\D$). 
\end{defi}

Note that, in the case $|g_0| = \bar{1}$, the matrix $\Phi \in M(n, n)$ takes a specially simple form, given in \cref{eq:matrix-for-P}.

\begin{thm}\label{thm:osp-and-p-associative}
    Suppose the superalgebra with superinvolution $M^*(m,n,p_0)$ is endowed with a $G$-grading. 
    Then it is isomorphic, as a graded superalgebra with superinvolution, to $M^*(T,\beta, \kappa_\bz, \kappa_\bo, g_0)$ as in \cref{def:model-grd-M(m-n-0)}, with $p_0 = |g_0|$. 
    Moreover, $M^*(T, \beta, \kappa_\bz, \kappa_\bo, g_0) \iso M^*(T', \beta', \kappa_\bz', \kappa_\bo', g_0')$ \IFF $T =T'$, $\beta = \beta'$ and there is $g \in G$ such that $\kappa_\bz' = g\cdot\kappa_\bz$, $\kappa_\bo' = g\cdot\kappa_\bo$ and $g_0' = g^{-2}g_0$. 
\end{thm}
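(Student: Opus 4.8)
The plan is to specialize the general theory of \cref{sec:grdd-sinv-simple} to the type-$M$ situation. First I would prove existence. Fix a $G$-grading on $(R, \vphi) = M^*(m,n,p_0)$; since $\vphi$ is then degree-preserving, $(R,\vphi)$ is a finite-dimensional graded superalgebra with superinvolution, and as the underlying superalgebra $M(m,n)$ is simple it is graded-simple, hence graded-superinvolution-simple. Finite-dimensionality supplies the descending chain condition, so \cref{cor:SxSsop-with-dcc} applies; the $\Eex$ alternative, being never graded-simple, is excluded, so $(R,\vphi) \iso E(\D, \U, B)$ for a nondegenerate super-Hermitian $B$ (we normalize $\delta = 1$ using \cref{rmk:only-super-hermitian}). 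By \cref{prop:simple-R-D-super} the superalgebra $\D$ is simple; it carries the superinvolution $\vphi_0$, so by \cref{cor:Q-no-sinv-center} it is not of type $Q$ and must be of type $M$. \Cref{lemma:sinv-type-M} then forces $\D$ to be even with $T = T^+$ an elementary $2$-group and $\tilde\beta = \beta$, while \cref{prop:simple-tilde-beta} shows $\tilde\beta$ (hence $\beta$) is nondegenerate.

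With $\D$ identified, I would take $\eta$ to be the transposition map, which by \cref{prop:osp-p-unique-eta} represents the unique $\sim_\bz$-class, and read off the pair $(\U, B)$ from its inertia $(\eta, \kappa, g_0, 1)$ via the explicit construction in \cref{subsubsec:construction-U-B}; this identifies $(R, \vphi)$ with the model $M^*(T, \beta, \kappa_\bz, \kappa_\bo, g_0)$ of \cref{def:model-grd-M(m-n-0)}. The equality $p_0 = |g_0|$ is then immediate from \cref{prop:g_0-goes-to-p_0}, since $g_0 = \deg B$ and $p_0 = |B|$.

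For the classification I would invoke \cref{cor:collection-of-orbits}, which puts the isomorphism classes of objects $E(\D', \U', B')$ with $\D' \iso \D$ in bijection with the orbits of the $\mc G_{\eta_i}$-actions on $\mathbf{I}(T,\tilde\beta)_{\bz,\eta,+}$; the uniqueness of $\eta$ above reduces this to a single action. It then remains to compute $\mc G_\eta$. As $\tilde\beta = \beta$ is nondegenerate, $\rad\tilde\beta = \{e\}$, so $\mc G_\eta = \{(e,g) \mid \eta(e) = (-1)^{|g|}\}$ collapses to $\{e\} \times (G \times \{\bar 0\}) \iso G$, acting by $g \cdot (\kappa, g_0) = (g\cdot\kappa,\, g_0 g^{-2})$. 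Unwinding the reinterpretation of $\kappa$ as a pair $(\kappa_\bz, \kappa_\bo)$ (\cref{phsec:kappas-even-and-odd}) turns $g\cdot\kappa$ into $(g\cdot\kappa_\bz, g\cdot\kappa_\bo)$, and $g_0 g^{-2} = g^{-2}g_0$ since $G$ is abelian; combined with the equalities $T = T'$ and $\beta = \beta'$ encoded in $\D \iso \D'$ (\cref{prop:parametrization-T-beta}), this is exactly the stated condition.

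Most of the substance is front-loaded into \cref{sec:grdd-sinv-simple}, so the chief difficulty here is organizational: verifying that each reduction of \cref{subsec:simplify-parameters} is valid in the type-$M$ case — normalizing $\delta$ to $1$, collapsing the family of $\mc G_{\eta_i}$-actions to a single one, and noting that $\D$ even imposes no parity constraint on $g_0$ (so both $p_0 = \bar 0$ and $p_0 = \bar 1$ arise) — and then faithfully translating the abstract orbit condition into the concrete one. The one genuinely delicate point is the collapse $\mc G_\eta \iso G$: it is the nondegeneracy of $\tilde\beta$, itself forced by $\D$ being of type $M$, that trivializes $\rad\tilde\beta$ and thereby removes the sign twists present in the general action of \cref{def:TxG-action}.
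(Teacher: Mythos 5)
Your proposal is correct and follows essentially the same route as the paper: the paper's proof consists precisely of the ``discussion above'' (graded-simplicity, reduction to $E(\D,\U,B)$ with $\D$ even of type $M$ via \cref{lemma:sinv-type-M}, the unique $\sim_\bz$-class of \cref{prop:osp-p-unique-eta}, and the normalization $\delta=\delta_N=1$ feeding into \cref{prop:g_0-goes-to-p_0}), followed by \cref{prop:parametrization-T-beta} and \cref{cor:collection-of-orbits} with the observation that $\mc G_\eta = \{e\}\times G$. Your computation of $\mc G_\eta$ from the nondegeneracy of $\tilde\beta$ and your unwinding of the orbit condition into the stated one (including the absence of a $\kappa_\bz \leftrightarrow \kappa_\bo$ swap, since only even shifts occur) are exactly the points the paper's terse proof relies on.
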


\begin{proof}
    The first assertion follows from the discussion above. 
    The isomorphism condition follows from \cref{prop:parametrization-T-beta,cor:collection-of-orbits} by noting that, in this case, $\mc G_\eta = \{e\} \times G$ (see \cref{eq:mathcal-G}). 
\end{proof}

\subsection{Superinvolution-simple superalgebras of types \texorpdfstring{$M\times M\sop$ and $Q\times Q\sop$}{MxMsop and QxQsop}}\label{sec:MxM-and-QxQ-associative}

We now consider the case $R \coloneqq S \times S\sop$ equipped with the exchange superinvolution $\vphi$, where $S$ is a finite-dimensional simple superalgebra. 
When endowed with a grading, $(R, \vphi)$ becomes graded-superinvolution-simple but not necessarily graded-simple.
As in \cref{sec:grdd-sinv-simple}, we will address the non-graded-simple case first as it is simpler.

\subsubsection{Non-graded-simple case}\label{subsubsec:non-graded-simple-MxMop-or-QxQsop}

By \cref{cor:SxSsop-with-dcc}, since $R$ is not graded-simple, $(R, \vphi) \iso \Eex (\D, \U)$. 
Given that $S \times \{ 0 \}$ and $\{ 0 \} \times S\sop$ are the only nonzero proper superideals of $R$, it follows one of them must be isomorphic to $\End_\D(\U) \times \{0\}$ and the other to $\{0\}\times \End_{\D\sop}(\U\Star)$.
In particular, $\D$ must be simple as a superalgebra and of the same type as $S$.
The results below are straightforward specializations of \cref{thm:iso-D-even-ExEsop,thm:iso-D-odd-ExEsop} to the simple superalgebras of types $M$ and $Q$ (see \cref{para:gradings-on-M-and-Q}).
Recall that $\kappa\Star \from G^\#/T \to \ZZ_{\geq 0}$ is defined by $\kappa\Star (x) \coloneqq \kappa (x\inv)$.

\begin{thm}\label{thm:MxM-type-I}
    Let $(R, \vphi)$ be a superinvolution-simple superalgebra of type ${M\times M\sop}$ endowed with a $G$-grading that does not make it graded-simple.
    As a graded superalgebra with superinvolution, $(R, \vphi)$ is isomorphic to $S\times S\sop$ with exchange superinvolution, where either $S$ is isomorphic to $M(T, \beta, \kappa_\bz, \kappa_\bo)$, as in \cref{def:Gamma-T-beta-kappa-even}, or $S$ is isomorphic to $M(T, \tilde\beta, \kappa)$, as in \cref{def:Gamma-T-beta-kappa-odd}, with these cases being mutually exclusive.
    Moreover,
    \[M(T, \beta, \kappa_\bz, \kappa_\bo) \times M(T, \beta, \kappa_\bz, \kappa_\bo)\sop \iso M(T', \beta', \kappa_\bz', \kappa_\bo') \times M(T', \beta', \kappa_\bz', \kappa_\bo')\sop\]
    \IFF $T = T'$ and one of the following conditions holds:
	\begin{enumerate}
	    \item $\beta'=\beta$ and there is $g\in G$ such that either $\kappa_{\bar 0}' = g \cdot \kappa_{\bar 0}$ and $\kappa_{\bar 1}'= g \cdot \kappa_{\bar 1}$, or $\kappa_{\bar 0}'=g \cdot \kappa_{\bar 1}$ and $\kappa_{\bar 1}'=g \cdot \kappa_{\bar 0}$; 
	    \item $\beta'=\beta\inv$ and there is $g\in G$ such that either $\kappa_{\bar 0}'=g \cdot \kappa_{\bar 0}\Star$ and $\kappa_{\bar 1}'=g \cdot \kappa_{\bar 1}\Star$, or $\kappa_{\bar 0}'=g \cdot \kappa_{\bar 1}\Star$ and $\kappa_{\bar 1}'=g \cdot \kappa_{\bar 0}\Star$;
	\end{enumerate}
    and
    \[{M (T, \tilde\beta, \kappa) \times M (T, \tilde\beta, \kappa)\sop} \iso {M (T', \tilde\beta', \kappa') \times M (T', \tilde\beta', \kappa')\sop}\] 
    \IFF $T = T'$ and one of the following conditions holds:
    \begin{enumerate}
        \setcounter{enumi}{2}
	    \item $\tilde\beta'=\tilde\beta$, and there is $g\in G$ such that $\kappa' = g \cdot \kappa$;
	    \item $\tilde\beta'=\tilde\beta\inv$, and there is $g\in G$ such that $\kappa' = g \cdot \kappa\Star$. \qed
	\end{enumerate}
\end{thm}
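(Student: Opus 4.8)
The plan is to reduce the statement to the abstract classification of superalgebras of the form $\Eex(\D, \U)$ already established in \cref{thm:iso-D-even-ExEsop,thm:iso-D-odd-ExEsop}, and then to re-express the factor $\End_\D(\U)$ as one of the explicit models $M(T, \beta, \kappa_\bz, \kappa_\bo)$ or $M(T, \tilde\beta, \kappa)$.

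First I would invoke \cref{cor:SxSsop-with-dcc}: since $R$ is finite-dimensional, hence graded-Artinian, and the grading does not make $(R, \vphi)$ graded-simple, there are a graded-division superalgebra $\D$ and a graded right $\D$-module $\U$ of finite rank with $(R, \vphi) \iso \Eex(\D, \U) = \End_\D(\U) \times \End_{\D\sop}(\U\Star)$, equipped with the superinvolution $(L_1, L_2) \mapsto ({}\Star L_2, L_1\Star)$. As $R$ is of type $M\times M\sop$, its simple factor $S \iso M(m,n)$ is simple as a superalgebra; since $\End_\D(\U)$ is a factor of $\Eex(\D, \U)$ and thus isomorphic to $S$, \cref{prop:simple-R-D-super} forces $\D$ to be simple as a superalgebra, and it must be of type $M$ (a type-$Q$ $\D$ would give $\End_\D(\U) \iso M_k(\FF)\tensor \D$ of type $Q$). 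By \cref{lemma:double-dual} the assignment $L \mapsto L\Star$ is a super-anti-isomorphism $\End_\D(\U) \to \End_{\D\sop}(\U\Star)$, so by the discussion following \cref{lemma:iso-SxSsop} we may write $\Eex(\D, \U) \iso S \times S\sop$ with exchange superinvolution, where $S \coloneqq \End_\D(\U)$.

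Next I would feed $S = \End_\D(\U)$ into the classification of \cref{subsec:grdd-simple-ass-algebraically-closed}. If $\D$ is even, then $T = T^+$, so $\tilde\beta = \beta$, and \cref{cor:iso-M-even} (through \cref{def:Gamma-T-beta-kappa-even}) gives $S \iso M(T, \beta, \kappa_\bz, \kappa_\bo)$. If $\D$ is odd, then \cref{lemma:odd-M-m=n} forces $m = n$ and \cref{cor:iso-M-odd} (through \cref{def:Gamma-T-beta-kappa-odd}) gives $S \iso M(T, \tilde\beta, \kappa)$. These two possibilities are mutually exclusive, since an even and an odd grading on $M(m,n)$ are never isomorphic by \cref{thm:iso-D-odd}. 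This proves the first assertion.

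Finally, the two isomorphism criteria are transcriptions of the abstract results. An isomorphism between the even models is the same as an isomorphism $\Eex(\D, \U) \iso \Eex(\D', \U')$ with $\D, \D'$ even, so \cref{thm:iso-D-even-ExEsop} applies; substituting $\tilde\beta = \beta$ and $\tilde\beta' = \beta'$ turns its two alternatives into conditions (1) and (2). Likewise the odd case is literally \cref{thm:iso-D-odd-ExEsop}, yielding (3) and (4). The only step requiring genuine care is the bookkeeping of parameters: one must confirm that the tuple $(T, \beta, \kappa_\bz, \kappa_\bo)$ (resp.\ $(T, \tilde\beta, \kappa)$) attached to the explicit model $S$ agrees with the parameters of the pair $(\D, \U)$ used in \cref{thm:iso-D-even-ExEsop,thm:iso-D-odd-ExEsop}, and that the appearance of $\kappa\Star$ together with $\beta\inv$ (resp.\ $\tilde\beta\inv$) correctly encodes the replacement of $S$ by $S\sop$ permitted in \cref{lemma:iso-SxSsop}, via the parameters of $\D\sop$ and $\U\Star$ computed at the end of \cref{subsec:superdual}. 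I expect this parameter matching to be the main obstacle, the remainder being immediate from the cited theorems.
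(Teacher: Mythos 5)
Your proposal is correct and follows essentially the same route as the paper: the paper also reduces via \cref{cor:SxSsop-with-dcc} to $(R,\vphi)\iso \Eex(\D,\U)$, deduces from the superideal structure that $\D$ is simple of the same type as $S$ (hence of type $M$, even or odd), and then obtains the isomorphism criteria as direct specializations of \cref{thm:iso-D-even-ExEsop,thm:iso-D-odd-ExEsop}, using the parameters $(T,\tilde\beta\inv)$ and $\kappa\Star$ of $(\D\sop,\U\Star)$ computed at the end of \cref{subsec:superdual}. The parameter bookkeeping you flag as the main obstacle is exactly what the paper relies on, so no gap remains.
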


\begin{thm}\label{thm:QxQ-type-I}
    Let $(R, \vphi)$ be a superinvolution-simple superalgebra of type ${Q\times Q\sop}$ endowed with a $G$-grading that does not make it graded-simple.
    As a graded superalgebra with superinvolution, $(R, \vphi)$ is isomorphic to
    $S\times S\sop$ with exchange superinvolution, where $S$ is isomorphic to $Q (T^+, \beta^+, h, \kappa)$, as in \cref{def:Gamma-T-beta-kappa-Q}.
    Moreover,
    \[Q (T^+, \beta^+, h, \kappa) \times Q (T^+, \beta^+, h, \kappa)\sop \iso Q (T^+, \beta^+, h, \kappa) \times Q (T^+, \beta^+, h, \kappa)\sop\]
    \IFF $T^+ = T'^+$, $h = h'$ and one of the following conditions holds: 
    \begin{enumerate}
	    \item $\beta'^+ = \beta^+$ and there is $g\in G$ such that $\kappa' = g \cdot \kappa$;
	    \item $\beta'^+ = (\beta^+)\inv$ and there is $g\in G$ such that $\kappa' = g \cdot \kappa\Star$. \qed
	\end{enumerate}
\end{thm}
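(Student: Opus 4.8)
The plan is to reduce the statement to the isomorphism theorem for the exchange superalgebras $\Eex(\D, \U)$ in the odd case (\cref{thm:iso-D-odd-ExEsop}) and then to translate its parameters from $(T, \tilde\beta, \kappa)$ into $(T^+, \beta^+, h, \kappa)$. First, since $(R, \vphi)$ is graded-superinvolution-simple but, by hypothesis, not graded-simple, I would invoke \cref{cor:SxSsop-with-dcc} to obtain $(R, \vphi) \iso \Eex(\D, \U)$ for a graded-division superalgebra $\D$ and a graded right $\D$-supermodule $\U$ of finite rank. As observed in the discussion preceding this statement, the two minimal superideals of $R$ force $\D$ to be simple as a superalgebra and of the same type as $S$; since $R$ is of type $Q\times Q\sop$, the superalgebra $\D$ is of type $Q$ and hence odd. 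Setting $S \coloneqq \End_\D(\U)$, which is graded-simple of type $Q$, \cref{cor:iso-Q} writes $S \iso Q(T^+, \beta^+, h, \kappa)$; and by \cref{defi:superdual-exchange} together with the super-anti-isomorphism $L \mapsto L\Star$ from \cref{lemma:double-dual}, $\Eex(\D, \U)$ is exactly $S \times S\sop$ with the exchange superinvolution. This will establish the first assertion.

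For the isomorphism criterion, I would observe that both superalgebras being compared are of type $Q \times Q\sop$, so their associated graded-division superalgebras $\D$ and $\D'$ are both odd, and \cref{thm:iso-D-odd-ExEsop} applies directly: the two exchange superalgebras are isomorphic \IFF $T = T'$ and either $\tilde\beta' = \tilde\beta$ with $\kappa' = g\cdot\kappa$, or $\tilde\beta' = \tilde\beta\inv$ with $\kappa' = g\cdot\kappa\Star$, for some $g \in G$. The remaining work is to rephrase the conditions on $T$ and $\tilde\beta$ in terms of $T^+$, $\beta^+$, and $h$. By \cref{lemma:beta-deg-beta-tilde-nondeg} (the case where $\beta$ is degenerate), the parity element $t_p = (h, \bar 1) \in T^-$ is unique, of order two, and satisfies $T = T^+ \sqcup t_p T^+$, so that $T = T'$ \IFF $T^+ = T'^+$ and $h = h'$. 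Using the relation $\tilde\beta(t, s) = \sign{t}{s}\beta(t, s)$ and $\tilde\beta(t_p, t) = (-1)^{p(t)}$ for all $t \in T$, I would check that $\tilde\beta$ is determined by its restriction $\beta^+$ to $T^+\times T^+$ together with $t_p$; restricting $\tilde\beta = \tilde\beta'$ (respectively $\tilde\beta' = \tilde\beta\inv$) to $T^+ \times T^+$ then gives $\beta'^+ = \beta^+$ (respectively $\beta'^+ = (\beta^+)\inv$), while these restrictions conversely recover $\tilde\beta'$. Since inversion preserves the defining identity of the parity element, $t_p$ — and thus $h$ — is unchanged under $\tilde\beta \mapsto \tilde\beta\inv$, so $h = h'$ in both cases. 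Feeding this dictionary into the criterion above produces exactly the two stated conditions, with the $\kappa$-conditions carrying over unchanged.

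The hard part will be precisely this last translation: confirming that the triple $(T^+, \beta^+, h)$ carries the same information as the pair $(T, \tilde\beta)$, and in particular that passing from $\tilde\beta$ to $\tilde\beta\inv$ leaves $h$ fixed. This is exactly where the uniqueness of the parity element in \cref{lemma:beta-deg-beta-tilde-nondeg} and the sign-rule relation between $\beta$ and $\tilde\beta$ do the real work; the rest is bookkeeping inherited from the general theory.
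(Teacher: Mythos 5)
Your proposal is correct and follows essentially the same route as the paper, which states this theorem (together with its type $M\times M\sop$ companion) as a ``straightforward specialization'' of \cref{thm:iso-D-odd-ExEsop} after the superideal argument identifying $\D$ as simple of the same type as $S$. The only content the paper leaves implicit is the dictionary $(T,\tilde\beta)\leftrightarrow(T^+,\beta^+,h)$, and you supply it correctly — in particular the key check that the unique parity element $t_p=(h,\bar 1)$ from \cref{lemma:beta-deg-beta-tilde-nondeg} satisfies the defining identity for $\tilde\beta\inv$ as well, so $h$ is invariant under $\tilde\beta\mapsto\tilde\beta\inv$ (note only that your intermediate claim ``$T=T'$ \IFF{} $T^+=T'^+$ and $h=h'$'' is not an equivalence on its own, since $h$ depends on $\tilde\beta$ and not just $T$; your subsequent invariance argument is what actually yields $h=h'$).
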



\subsubsection{Graded-division superalgebras of types \texorpdfstring{$M\times M\sop$}{MxMsop} and \texorpdfstring{$Q\times Q\sop$}{QxQsop}}

We start with examples of division gradings on the superalgebras $\FF \times \FF\sop$, $Q(1)\times Q(1)\sop$, and $M(1,1)\times M(1,1)\sop$, all endowed with exchange superinvolution.

\begin{ex}\label{ex:now-FZ2-is-division}
    The superalgebra with superinvolution $\FF \times \FF\sop$ of \cref{ex:FxF-iso-FZ2} admits a $\ZZ_2$-grading that makes it isomorphic to $\FF\ZZ_2$: $\deg (1, 1) = \bar 0$ and $\deg (1, -1) = \bar 1$.
    The exchange superinvolution is determined by $\eta(\bar 0) = 1$ and $\eta(\bar 1) = -1$.
\end{ex}

\begin{ex}\label{ex:now-FZ4-is-division}
    The superalgebra with superinvolution $Q(1) \times Q(1)\sop$ of \cref{ex:FZ2xFZ2sop-iso-FZ4} admits a $\ZZ_4$-grading that makes it isomorphic to $\FF\ZZ_4$:
    $\deg (1,1) = \bar 0$,
    $\deg (u, \bar u) = \bar 1$,
    $\deg (1,-1) = \bar 2$,
    and $\deg (u,- \bar u) = \bar 3$.
    The exchange superinvolution is determined by $\eta(\bar 0) = 1$, $\eta(\bar 1) = 1$, $\eta(\bar 2) = -1$, and $\eta(\bar 3) = -1$.
\end{ex}

\newlength{\origarraycolsep}
\setlength{\origarraycolsep}{\arraycolsep}
\newcommand{\restorearraystretch}{\renewcommand{\arraystretch}{1}}

\setlength{\arraycolsep}{2.9pt}
\renewcommand{\arraystretch}{1}

\begin{ex}\label{ex:superalgebra-O}
	Let $\mc O$ denote the superalgebra $M(1,1) \times M(1,1)\sop$ endowed with the $\ZZ_2\times \ZZ_4$-grading determined by: 
	\begin{align*}
		\deg\! \left(\begin{pmatrix}
			\phantom{.}1 & \phantom{-}0\phantom{.} \\
			\phantom{.}0 & \phantom{-}1\phantom{.}
		\end{pmatrix}, \overline{\begin{pmatrix}
				\phantom{.}1 & \phantom{-}0\phantom{.} \\
				\phantom{.}0 & \phantom{-}1\phantom{.}
			\end{pmatrix}}\right) = (\bar 0, \bar 0),\quad &
		\deg\! \left(\begin{pmatrix}
			\phantom{.}1 & \phantom{-}0\phantom{.} \\
			\phantom{.}0 & \phantom{-}1\phantom{.}
		\end{pmatrix}, -\overline{\begin{pmatrix}
				\phantom{.}1 & \phantom{-}0\phantom{.} \\
				\phantom{.}0 & \phantom{-}1\phantom{.}
			\end{pmatrix}}\right) = (\bar 0, \bar 2),    \\
		\deg\! \left(\begin{pmatrix}
			\phantom{.}1 & \phantom{-}0\phantom{.} \\
			\phantom{.}0 & -1\phantom{.}
		\end{pmatrix}, \overline{\begin{pmatrix}
				\phantom{.}1 & \phantom{-}0\phantom{.} \\
				\phantom{.}0 & -1\phantom{.}
			\end{pmatrix}}\right) = (\bar 1, \bar 0),\quad &
		\deg\! \left(\begin{pmatrix}
			\phantom{.}1 & \phantom{-}0\phantom{.} \\
			\phantom{.}0 & -1\phantom{.}
		\end{pmatrix}, -\overline{\begin{pmatrix}
				\phantom{.}1 & \phantom{-}0\phantom{.} \\
				\phantom{.}0 & -1\phantom{.}
			\end{pmatrix}}\right) = (\bar 1, \bar 2),    \\
		\deg\! \left(\begin{pmatrix}
			\phantom{.}0 & \phantom{-}1\phantom{.} \\
			\phantom{.}1 & \phantom{-}0\phantom{.}
		\end{pmatrix}, \overline{\begin{pmatrix}
				\phantom{.}0 & \phantom{-}1\phantom{.} \\
				\phantom{.}1 & \phantom{-}0\phantom{.}
			\end{pmatrix}}\right) = (\bar 0, \bar 1),\quad &
		\deg\! \left(\begin{pmatrix}
			\phantom{.}0 & \phantom{-}1\phantom{.} \\
			\phantom{.}1 & \phantom{-}0\phantom{.}
		\end{pmatrix}, -\overline{\begin{pmatrix}
				\phantom{.}0 & \phantom{-}1\phantom{.} \\
				\phantom{.}1 & \phantom{-}0\phantom{.}
			\end{pmatrix}}\right) = (\bar 0, \bar 3),    \\
		\deg\! \left(\begin{pmatrix}
			\phantom{.}0 & -1\phantom{.}           \\
			\phantom{.}1 & \phantom{-}0\phantom{.}
		\end{pmatrix}, \overline{\begin{pmatrix}
				\phantom{.}0 & -1\phantom{.}           \\
				\phantom{.}1 & \phantom{-}0\phantom{.}
			\end{pmatrix}}\right) = (\bar 1, \bar 3),\quad &
		\deg\! \left(\begin{pmatrix}
			\phantom{.}0 & -1\phantom{.}           \\
			\phantom{.}1 & \phantom{-}0\phantom{.}
		\end{pmatrix}, -\overline{\begin{pmatrix}
				\phantom{.}0 & -1\phantom{.}           \\
				\phantom{.}1 & \phantom{-}0\phantom{.}
			\end{pmatrix}}\right) = (\bar 1, \bar 1).
	\end{align*}
    \setlength{\arraycolsep}{\origarraycolsep}
    \restorearraystretch
	Then $\mc O$ is an odd graded division superalgebra, and the exchange superinvolution on it preserves degrees.
    The corresponding map $\eta$ takes the value $1$ on $(\bar 0, \bar 0)$, $(\bar 1, \bar 0)$, $(\bar 0, \bar 1)$, and $(\bar 1, \bar 3)$, and the value $-1$ on $(\bar 0, \bar 2)$, $(\bar 1, \bar 2)$, $(\bar 0, \bar 3)$, and $(\bar 1, \bar 1)$.
\end{ex}

\begin{remark}\label{cor:associative-type-II-odd-m=n}
    If $M(m,n) \times M(m,n)\sop$ admits an odd division grading, then the same argument as in \cref{lemma:odd-M-m=n} shows that $m = n$.
\end{remark}

    Given a graded-division superalgebra $\D$, a general method to refine the grading on $\D \times \D\sop$ into a division grading is developed in \cite[Section 4.4]{caios_thesis}.
    The examples above are special cases where $\D$ is, respectively, $\FF \iso M(1,0)$, $Q(1)$ and $M(1,1)$, with the last two graded as in \cref{ex:Q(1)-as-grd-div-SA,ex:Pauli-2x2-super}.

\phantomsection\label{phsec:param-D-MxM-or-QxQ}

We now fix a graded-division superalgebra with superinvolution $(\D, \vphi_0)$ of type either $M \times M\sop$ or $Q \times Q\sop$.
Our goal is to understand how the type is reflected in the parameters $(T, \tilde\beta, \eta)$ (see \cref{def:Parameters-T-beta-eta}).

\begin{lemma}\label{cor:T+-is-elem-2-grp}
    If $(\D, \vphi_0)$ of type either $M \times M\sop$ or $Q \times Q\sop$, then 
    \begin{enumerate}
        \item\label{it:rad-is-<f>} $\rad \tilde\beta = \langle f \rangle$, where $f$ is an order $2$ element such that $\eta(f) = -1$;
        \item\label{it:squares-of-t} for every $t \in T^+$, we have $t^2 = e$, and for every $t \in T^-$, we have $t^2 = f$.
    \end{enumerate}
\end{lemma}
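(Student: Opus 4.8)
The plan is to read both assertions off the structure of the center and supercenter of $\D$, which are already pinned down by the type of $(\D, \vphi_0)$, and then to translate statements about $\rad \tilde\beta$ and $\eta$ into the desired statements about squares in $T$. The key point is that $(\D, \vphi_0)$ is itself a superalgebra with superinvolution of the prescribed type, so the earlier center computations apply to it verbatim.

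First I would establish the first assertion. Applying \cref{prop:types-of-SA-via-center} directly to $(\D, \vphi_0)$ identifies $(Z(\D), \vphi_0)$ with the superalgebra with superinvolution of \cref{ex:FxF-iso-FZ2} in the case $M\times M\sop$, and with that of \cref{ex:FZ2xFZ2sop-iso-FZ4} in the case $Q\times Q\sop$. In both examples the even part of the center is spanned by $1$ together with a single even element $z$ satisfying $z^2 = 1$ and $\vphi_0(z) = -z$ (namely $\zeta$, respectively $\omega^2$). Using \cref{cor:Z-sZ-division} to write $sZ(\D) = Z(\D)\cap \D\even$, I conclude in both cases that $sZ(\D) = \FF 1 \oplus \FF z$ is two-dimensional with $z$ even, $z^2 = 1$ and $\vphi_0(z) = -z$. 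Since $\D$ is a finite-dimensional graded-division superalgebra over an algebraically closed field, $\D_e = \FF$ and every homogeneous component is one-dimensional; as $sZ(\D)$ is a graded subsuperalgebra (\cref{lemma:center-is-graded}), its support $\rad \tilde\beta$ therefore has exactly two elements, $\{e, f\}$, where $f \coloneqq \deg z \in T^+$. Finally, $z \in \D_f$ together with $z^2 = 1 \in \D_e$ forces $f^2 = e$, and $f \neq e$, so $f$ has order $2$; and $\vphi_0(z) = \eta(f)\, z = -z$ gives $\eta(f) = -1$.

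Second, for the statement about squares I would argue uniformly. Since $\vphi_0$ is in particular a super-anti-automorphism, \cref{cor:super-anti-auto-squares-in-radical} gives $t^2 \in \rad\tilde\beta = \{e, f\}$ for every $t \in T$. To decide between $e$ and $f$ I would invoke \cref{cor:eta-t-square}, which (since $\eta$ takes values in $\pmone$) yields $\eta(t^2) = (-1)^{p(t)}$. For $t \in T^+$ we have $(-1)^{p(t)} = 1$, and since $\eta(e) = 1$ while $\eta(f) = -1$, the only possibility is $t^2 = e$. For $t \in T^-$ we have $(-1)^{p(t)} = -1$, and the same comparison forces $t^2 = f$. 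This proves the second assertion.

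The argument is largely bookkeeping on top of earlier results, and I do not expect a genuine obstacle. The one point demanding care is the uniform extraction, from the two \emph{different} model centers of \cref{ex:FxF-iso-FZ2,ex:FZ2xFZ2sop-iso-FZ4}, of a single even supercentral element $z$ with $z^2 = 1$ and $\vphi_0(z) = -z$: this is exactly what makes both the order of $f$ and the value $\eta(f) = -1$ come out correctly and lets the two types be treated at once. Once that observation is in place, \cref{cor:super-anti-auto-squares-in-radical,cor:eta-t-square} finish the proof with no further computation.
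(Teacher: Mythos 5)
Your proof is correct and follows essentially the same route as the paper's: part (1) via the identification of $\rad\tilde\beta$ with the support of $sZ(\D) = Z(\D)\cap\D\even$ (\cref{cor:Z-sZ-division}) together with the type-dependent description of the center from \cref{prop:types-of-SA-via-center}, and part (2) via \cref{cor:super-anti-auto-squares-in-radical} combined with the identity $\eta(t^2) = (-1)^{p(t)}$ (which the paper recomputes directly from \cref{prop:superpolarization}, whereas you cite its packaged form \cref{cor:eta-t-square}). Your explicit extraction of the even element $z$ with $z^2 = 1$ and $\vphi_0(z) = -z$ from the two model centers merely spells out what the paper's terser proof leaves implicit.
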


\begin{proof}
    For \cref{it:rad-is-<f>}, recall that $\rad \tilde\beta$ is the support of the supercenter of $\D$.
    By \cref{cor:Z-sZ-division}, the supercenter of a graded-division superalgebra is the even part of its center, and we know the centers of superalgebras of types $M \times M\sop$ and $Q \times Q\sop$ by \cref{prop:types-of-SA-via-center}.

    For \cref{it:squares-of-t}, let $t \in T$. 
    By \cref{cor:super-anti-auto-squares-in-radical}, $t^2 \in \rad \tilde\beta = \{e, f\}$. 
    Note that, by \cref{prop:superpolarization}, $\eta(t^2) = \tilde\beta(t, t)\eta(t)^2$. 
    If $t$ is even, then $\eta(t^2) = \eta(t)^2 = 1$, hence $t^2 = e$.
    If $t$ is odd, then $\eta(t^2) = -\eta(t)^2 = -1$, hence $t^2 = f$.
\end{proof}

As we will see in \cref{thm:tensor-prd-decomposition}, the converse of \cref{cor:T+-is-elem-2-grp} also holds.

\begin{cor}\label{cor:eta-parity-element}
    There exist exactly two parity elements $t_p, t_p' \in T$ (\cref{def:parity-element}), and they satisfy $\eta(t_p) = -\eta(t_p')$.
\end{cor}

\begin{proof}
    It follows from the fact that the set of parity elements is the coset $t_p (\rad \tilde\beta)$, so $t_p' = t_p f$ and, hence, $\eta(t_p f) = \tilde\beta(t_p, f) \eta(f)\eta(t_p) = -\eta(t_p)$.
\end{proof}

\begin{defi}\label{def:eta-parity-element}
    The unique parity element $t_p \in T$ such that $\eta(t_p) = 1$ is called the \emph{$\eta$-parity element} or the \emph{$\vphi_0$-parity element}.
    (Note that $t_p = e$ \IFF $T = T^+$.)
\end{defi}

We can refine \cref{cor:T+-is-elem-2-grp} to precisely identify the type of $(\D, \vphi_0)$.
The labels (a), (b) and (c) of the cases below will be used consistently for the rest of the section.

\begin{cor}\label{cor:rad-beta+-MxM-QxQ}
    Let $t_p$ be the $\eta$-parity element. Then:
    \begin{enumerate}[label=(\alph*)]
        \item If $(\D, \vphi_0)$ is even of type $M \times M\sop$, then $t_p = e$ and $\rad \beta^+ = \rad \beta = \langle f \rangle \iso \ZZ_2$.
        \label{item:even-MxM}
        \item If $(\D, \vphi_0)$ is odd of type $M \times M\sop$, then $e \neq t_p \in T^+$, $\rad \beta^+ = \langle f, t_p \rangle \iso \ZZ_2 \times \ZZ_2$ and $\rad \beta = \langle f \rangle \iso \ZZ_2$.
        \label{item:odd-MxM}
        \item If $(\D, \vphi_0)$ is of type $Q \times Q\sop$, then $t_p \in T^-$ and $\rad \beta^+ = \langle f \rangle \iso \ZZ_2$ and $\rad \beta = \langle t_p \rangle \iso \ZZ_4$.
        \label{item:QxQ}
    \end{enumerate}
\end{cor}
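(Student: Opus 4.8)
The plan is to read off the three radicals $\rad\tilde\beta$, $\rad\beta$, and $\rad\beta^+$ one at a time and feed them into \cref{cor:radical-with-parity}, which already expresses $\rad\beta$ and $\rad\beta^+$ in terms of $\rad\tilde\beta$ and the set $T_p$ of parity elements once we know whether $T_p$ lies in $T^+$ or in $T^-$. First I would invoke \cref{cor:T+-is-elem-2-grp}: it gives $\rad\tilde\beta = \langle f\rangle \iso \ZZ_2$ with $\eta(f) = -1$, together with the squaring rule $t^2 = e$ for $t\in T^+$ and $t^2 = f$ for $t\in T^-$. Combined with \cref{cor:eta-parity-element}, which yields $T_p = \{t_p, t_p f\}$ where $t_p$ is the $\eta$-parity element (so $\eta(t_p) = 1$), this fixes all the ``small'' data; the only thing left to decide in each case is the side of $t_p$ and the resulting group structure.

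The key remaining input is $\rad\beta = \supp Z(\D)$, which I would compute from the type using \cref{prop:types-of-SA-via-center}. In the $M\times M\sop$ case the center is the two-dimensional $\FF[\zeta]$ of \cref{ex:FxF-iso-FZ2} with $\zeta$ \emph{even} and $\vphi_0(\zeta) = -\zeta$; since $\zeta$ is even and central it lies in $sZ(\D)$, so by \cref{cor:Z-sZ-division} we have $\deg\zeta \in \rad\tilde\beta = \langle f\rangle$, forcing $\deg\zeta = f$ and hence $\rad\beta = \langle f\rangle = \rad\tilde\beta$. In the $Q\times Q\sop$ case the center is $\FF[\omega] \iso \FF\ZZ_4$ of \cref{ex:FZ2xFZ2sop-iso-FZ4} with $\omega$ \emph{odd} of order $4$; the same reasoning applied to $\omega^2 \in sZ(\D)$ gives $(\deg\omega)^2 = f$, so $\deg\omega \in T^-$ has order $4$ and $\rad\beta = \langle\deg\omega\rangle \iso \ZZ_4 \supsetneq \rad\tilde\beta$.

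With $\rad\beta$ in hand, the three cases fall out of \cref{cor:radical-with-parity}. For \cref{item:even-MxM}, $\D$ even means $T = T^+$, so $T_p \subseteq T^+$ automatically and the first assertion of \cref{lemma:set-parity-elements} gives $\rad\beta^+ = \rad\beta = \rad\tilde\beta = \langle f\rangle$; since $\eta(t_p) = 1$ while $\eta(f) = -1$, the parity element is $t_p = e$. For \cref{item:odd-MxM}, $\rad\beta = \rad\tilde\beta$ forces the branch $T_p \subseteq T^+$ of \cref{cor:radical-with-parity} (the branch $T_p\subseteq T^-$ would enlarge $\rad\beta$), so $t_p \in T^+$, and $t_p \neq e$ because $T \neq T^+$; then $\rad\beta^+ = \rad\tilde\beta \cup T_p = \{e, f, t_p, t_p f\} = \langle f, t_p\rangle \iso \ZZ_2\times\ZZ_2$, using $t_p^2 = e$ and $t_p \notin\langle f\rangle$. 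For \cref{item:QxQ}, $\rad\beta \supsetneq \rad\tilde\beta$ forces the branch $T_p\subseteq T^-$, giving $t_p \in T^-$, $\rad\beta^+ = \rad\tilde\beta = \langle f\rangle$, and $\rad\beta = \rad\tilde\beta\cup T_p = \{e, f, t_p, t_p f\} = \langle t_p\rangle \iso \ZZ_4$ since $t_p^2 = f$. The only real subtlety, and the step I would be most careful about, is the matching of degrees in the center computation: identifying $\deg\zeta$ with $f$ and $(\deg\omega)^2$ with $f$ relies on \cref{cor:Z-sZ-division} to locate the even central elements inside $sZ(\D)$, after which everything else is bookkeeping with the squaring rule.
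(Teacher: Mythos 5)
Your proof is correct and takes essentially the same route as the paper, whose entire proof is the citation of \cref{prop:types-of-SA-via-center,cor:radical-with-parity,cor:T+-is-elem-2-grp}: you expand exactly that chain, reading $\rad\beta = \supp Z(\D)$ off the center via \cref{prop:types-of-SA-via-center}, using \cref{cor:T+-is-elem-2-grp} and \cref{cor:Z-sZ-division} for the squaring rules and the location of even central elements, and letting the branch dichotomy of \cref{cor:radical-with-parity} settle each case. The only informality --- writing $\deg \zeta$ and $\deg \omega$ before observing that $Z(\D)$ is graded with one-dimensional homogeneous components, so that homogeneous central elements of the required parity exist --- is harmless, and you flag the relevant subtlety yourself.
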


\begin{proof}
    This follows from \cref{prop:types-of-SA-via-center,cor:radical-with-parity,cor:T+-is-elem-2-grp}. 
\end{proof}

The next result will be used to simplify the parameters in \cref{subsubsec:grdd-simple-MxM-or-QxQ}.  
We recall the equivalence relation $\sim_\bz$ on the maps $T \to \FF^\times$ that determine superinvolutions on $\D$ (\cref{def:equiv-eta-even}).

\begin{lemma}\label{lemma:2-etas}
    Let $\vphi_0'$ be a superinvolution on $\D$, corresponding to $\eta'\from T \to \pmone$.
    If $(\D, \vphi_0')$ is super\-in\-vo\-lu\-tion-sim\-ple, then the $\vphi_0'$-parity element is the same as the $\vphi_0$-parity element \IFF $\eta' \sim_\bz \eta$.
    In particular, we have at most two $\sim_\bz$-equivalence classes.
\end{lemma}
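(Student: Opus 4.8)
The plan is to reduce everything to the single character $\chi \coloneqq \eta'\eta\inv$ and to read off both conditions from its value on a parity element. First I would note that, since $\eta$ and $\eta'$ both determine superinvolutions on the same $\D$, \cref{prop:superpolarization} gives $\mathrm{d}\eta = \mathrm{d}\eta' = \tilde\beta$; hence $\chi = \eta'\eta\inv$ is an honest character of $T$ and $\eta' = \chi\eta$ as pointwise products. By \cref{def:equiv-eta-even}, proving the stated equivalence amounts to characterizing, in terms of parity elements, exactly when $\chi = \tilde\beta(t, \cdot)$ for some $t \in T^+$.

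Next I would bring in superinvolution-simplicity. As $\D$ is of type $M\times M\sop$ or $Q\times Q\sop$ it is not simple as a superalgebra, so by \cref{prop:only-SxSsop-is-simple} the pair $(\D, \vphi_0')$ is again of one of these two types, and \cref{cor:T+-is-elem-2-grp} applies to it: $\rad\tilde\beta = \langle f \rangle$ with $\eta'(f) = -1$. Comparing with $\eta(f) = -1$ yields $\chi(f) = 1$, so $\chi$ is trivial on $\rad\tilde\beta$ and descends to $\overline{T} \coloneqq T/\rad\tilde\beta$. Since the bicharacter induced by $\tilde\beta$ on $\overline{T}$ is nondegenerate (as in the proof of \cref{prop:all-central-automorphisms-of-D-are-superinner}), I can write $\chi = \tilde\beta(s, \cdot)$ for some $s \in T$, unique modulo $\langle f \rangle$. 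Because $f \in T^+$, the coset $s\langle f\rangle$ lies entirely in $T^+$ or entirely in $T^-$; thus a representative in $T^+$ exists --- that is, $\eta' \sim_\bz \eta$ --- exactly when $p(s) = \bar 0$.

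Finally I would connect $p(s)$ to the parity elements. Recall from \cref{cor:eta-parity-element} and \cref{def:eta-parity-element} that the two parity elements are $t_p$ and $t_p f$, with $\eta(t_p) = 1$ and $\eta(t_p f) = -1$. Using $\eta' = \chi\eta$, $\chi(f) = 1$ and $\tilde\beta(t_p, f) = 1$, a short computation gives $\eta'(t_p) = \chi(t_p)$ and $\eta'(t_p f) = -\chi(t_p)$, so the $\vphi_0'$-parity element equals $t_p$ precisely when $\chi(t_p) = 1$. Since $t_p$ is a parity element and $\tilde\beta$ is skew-symmetric, $\chi(t_p) = \tilde\beta(s, t_p) = \tilde\beta(t_p, s)\inv = (-1)^{p(s)}$, and therefore $\chi(t_p) = 1 \iff p(s) = \bar 0 \iff \eta' \sim_\bz \eta$, which is the asserted equivalence. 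The ``at most two classes'' claim follows at once: there are only two parity elements, and the argument shows the $\sim_\bz$-class of a superinvolution-simple $\eta'$ is determined by which of them is its $\vphi_0'$-parity element.

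The one point demanding care is the bookkeeping that keeps these steps honest: that $\chi$ is truly a character (which needs $\mathrm{d}\eta = \mathrm{d}\eta'$), that the order-two radical generator $f$ is common to $\eta$ and $\eta'$ so that superinvolution-simplicity pins down $\eta'(f)$, and that $s\langle f\rangle$ has constant parity. These are where the super signs and the passage between $T$ and $\overline{T}$ must be tracked precisely, but none presents a genuine difficulty.
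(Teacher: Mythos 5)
Your proof is correct and follows essentially the same route as the paper: both form the character $\chi = \eta'\eta$, use superinvolution-simplicity via \cref{cor:T+-is-elem-2-grp} to get $\chi(f)=1$, descend to $T/\langle f\rangle$ where the induced bicharacter is nondegenerate to write $\chi = \tilde\beta(t_0,\cdot)$, and then split on the parity of $t_0$. Your closing computation $\chi(t_p) = \tilde\beta(t_p, t_0)^{-1} = (-1)^{p(t_0)}$ is just a slightly more explicit version of the paper's evaluation of $\eta'(t_p)$, so there is no substantive difference.
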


\begin{proof}
    If $(\D, \vphi_0')$ is su\-per\-in\-vo\-lu\-tion-sim\-ple, then it must be of type $Q\times Q\sop$ or $M \times M\sop$.  
    Hence, by \cref{cor:T+-is-elem-2-grp}, we have $\eta'(f) = -1$.  
    Define $\chi \from T \to \pmone$ by $\chi(t) = \eta(t) \eta'(t)$ for all $t \in T$.  
    Since $\mathrm{d} \eta' = \tilde\beta = \mathrm{d} \eta$, we have $\mathrm{d} \chi = 1$, \ie, $\chi$ is a character of $T$.  
    Also, $\chi(f) = 1$.  
    Hence, $\chi$ induces a character of $T / \langle f \rangle$, and since $\tilde\beta$ induces a nondegenerate skew-symmetric bicharacter on $T / \langle f \rangle$, there exists $t_0 \in T$ such that $\chi = \tilde\beta(t_0, \cdot)$.  
    We conclude that $\eta' = \tilde\beta(t_0, \cdot) \eta$. 
    If $t_0\in T^+$, then $\eta' \sim_\bz \eta$ and $\eta'(t_p) = \tilde\beta(t_0, t_p)\eta(t_p) = 1$.
    If $t_0\not\in T^+$, then $\eta' \not\sim_\bz \eta$ and $\eta'(t_p) = \tilde\beta(t_0, t_p)\eta(t_p) = - 1$.
\end{proof}

The next result describes the general structure of the graded-division superalgebras we are studying.
In particular, it establishes the converse of \cref{cor:T+-is-elem-2-grp,cor:rad-beta+-MxM-QxQ}.

\begin{thm}\label{thm:tensor-prd-decomposition}
    Let $\D$ be a graded-division superalgebra associated to $(T, \tilde\beta)$ that is not simple as a superalgebra.
    Then there is a superinvolution $\vphi_0$ on $\D$ making it superinvolution-simple \IFF $\rad \tilde\beta = \langle f \rangle$ for an element $e \neq f \in T^+$, $T^+$ is an elementary $2$-group, and $t^2 = f$ for every $t \in T^-$.
    For any such $\vphi_0$, let $t_p \in T$ be the $\vphi_0$-parity element.
    Then there are superinvolution-simple graded-division superalgebras $(\mc C,\vphi_{\mc C})$ and $(\mc M,\vphi_{\mc M})$ such that $(\D, \vphi_0) \iso (\mc C \tensor \mc M, \vphi_{\mc C} \tensor \vphi_{\mc M})$, where $(\mc M,\vphi_{\mc M})$ is of type $M$ and:  
	\begin{enumerate}[label=(\alph*)]
        \item\label{it:even-MxM-sop} If $t_p = e$, then $(\mc C,\vphi_{\mc C}) \iso \FF\ZZ_2$, where $\FF\ZZ_2$ is the even graded-division superalgebra with superinvolution from \cref{ex:now-FZ2-is-division}, and $\ZZ_2$ is identified with $\rad \tilde\beta$.  
        In this case, $(\D, \vphi_0)$ is even of type $M\times M\sop$.  
	    \item\label{it:odd-MxM-sop} If $e \neq t_p \in T^+$, then $(\mc C,\vphi_{\mc C}) \iso \mc O$, where $\mc O$ is the odd graded-division superalgebra with superinvolution from \cref{ex:superalgebra-O}, and the standard generators of $\ZZ_2 \times \ZZ_4$ are identified with $t_p$ and an arbitrary element $t_1 \in T^-$.  
        In this case, $(\D, \vphi_0)$ is odd of type $M\times M\sop$.  
        \item\label{it:QxQ-sop} If $t_p \in T^-$, then $(\mc C,\vphi_{\mc C}) \iso \FF\ZZ_4$, where $\FF\ZZ_4$ is the odd graded-division superalgebra with superinvolution from \cref{ex:now-FZ4-is-division}, and the standard generator of $\ZZ_4$ is identified with $t_p$.
        In this case, $(\D, \vphi_0)$ is of type $Q\times Q\sop$.  
	\end{enumerate}
\end{thm}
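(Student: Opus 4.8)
The plan is to separate the statement into the equivalence (existence of a superinvolution making $\D$ superinvolution-simple) and the structural tensor factorization, and to obtain both the converse of the equivalence and the factorization from one uniform construction. I would first dispose of the forward implication. Since $\D$ is a graded-division superalgebra it is automatically graded-simple, yet by hypothesis it is not simple as an ungraded superalgebra; hence if some $\vphi_0$ makes $(\D,\vphi_0)$ superinvolution-simple, the classification recalled in \cref{subsec:simple-superalgebras} forces $(\D,\vphi_0)$ to be of type $M\times M\sop$ or $Q\times Q\sop$. \Cref{cor:T+-is-elem-2-grp} then delivers exactly the three asserted conditions: $\rad\tilde\beta=\langle f\rangle$ with $f$ of order $2$ (and $f\in T^+$ by \cref{lemma:rad-tilde-beta}), $T^+$ an elementary $2$-group, and $t^2=f$ for all $t\in T^-$.

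For the factorization I would fix a valid $\vphi_0$, with associated map $\eta$ and $\vphi_0$-parity element $t_p$, and split into the three cases according to whether $t_p=e$, $e\neq t_p\in T^+$, or $t_p\in T^-$; by \cref{cor:radical-with-parity,cor:rad-beta+-MxM-QxQ} this trichotomy is in fact already encoded in $(T,\tilde\beta)$ through the radicals of $\beta$ and $\beta^+$. In each case I would single out a subgroup $T_{\mc C}\subseteq T$ that absorbs the radical: $\langle f\rangle$ in case \ref{it:even-MxM-sop}, $\langle t_p,t_1\rangle$ for an arbitrary $t_1\in T^-$ in case \ref{it:odd-MxM-sop}, and $\langle t_p\rangle$ in case \ref{it:QxQ-sop}. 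Using $t_p^2\in\{e,f\}$, $t_1^2=f$, $\eta(f)=-1$, and $\tilde\beta(t_p,\cdot)=(-1)^{p(\cdot)}$, a short check shows that the radical of $\tilde\beta\!\restriction_{T_{\mc C}}$ equals $\langle f\rangle=\rad\tilde\beta$, and comparing $\eta\!\restriction_{T_{\mc C}}$ (pinned down by $\eta(t_p)=1$, $\eta(f)=-1$ and \cref{prop:superpolarization}) with the maps recorded in \cref{ex:now-FZ2-is-division,ex:superalgebra-O,ex:now-FZ4-is-division} identifies the graded-division superalgebra with superinvolution attached to $(T_{\mc C},\tilde\beta\!\restriction_{T_{\mc C}})$ as $\FF\ZZ_2$, $\mc O$, or $\FF\ZZ_4$ respectively.

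The core step is then an orthogonal-complement argument. Setting $T_{\mc C}^\perp\coloneqq\{t\in T\mid \tilde\beta(T_{\mc C},t)=1\}$, I would verify, using the nondegeneracy of the bicharacter induced by $\tilde\beta$ on $T/\rad\tilde\beta$, that $T=T_{\mc C}T_{\mc C}^\perp$, that $T_{\mc C}\cap T_{\mc C}^\perp=\rad\tilde\beta$, and that $\rad(\tilde\beta\!\restriction_{T_{\mc C}^\perp})=\rad\tilde\beta$. In every case $T_{\mc C}^\perp\subseteq T^+$, so it is an elementary $2$-group and the order-$2$ subgroup $\langle f\rangle$ splits off, yielding a complement $T_{\mc M}\subseteq T_{\mc C}^\perp$ to $\rad\tilde\beta$ on which $\tilde\beta$ is nondegenerate. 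This gives $T=T_{\mc C}\times T_{\mc M}$ with $\tilde\beta(T_{\mc C},T_{\mc M})=1$, so by \cref{prop:parametrization-T-beta}, together with the fact that an orthogonal product of supports corresponds to a tensor product of the associated graded-division superalgebras (exactly the mechanism used in \cref{def:standard-realization-Q}), I obtain $\D\iso\D_{\mc C}\tensor\D_{\mc M}$ with $\D_{\mc M}$ even and $\tilde\beta\!\restriction_{T_{\mc M}}$ nondegenerate, hence of type $M$. Since the cross terms of $\tilde\beta$ are trivial, $\eta$ factors as $\eta\!\restriction_{T_{\mc C}}\cdot\,\eta\!\restriction_{T_{\mc M}}$, so $\vphi_0=\vphi_{\mc C}\tensor\vphi_{\mc M}$, completing case \ref{it:even-MxM-sop}--\ref{it:QxQ-sop}. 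For the remaining ``if'' direction I would run the same construction starting only from $(T,\tilde\beta)$: the case is read off from $\rad\beta$ and $\rad\beta^+$, the complement $T_{\mc M}$ is chosen as above, and $\vphi_{\mc C}\tensor\vphi_{\mc M}$ transports to a superinvolution $\vphi_0$ on $\D$; because $Z(\D_{\mc M})=\FF$, the pair $(\D,\vphi_0)$ has the same type as the superinvolution-simple factor $\mc C$ and is therefore superinvolution-simple.

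I expect the main obstacle to be the construction of $T_{\mc M}$ in case \ref{it:odd-MxM-sop}, where it must be chosen not merely as a complement of $\rad\beta^+$ in $T^+$ but inside $T_{\mc C}^\perp$, i.e.\ inside the kernel of the character $t\mapsto\tilde\beta(t_1,t)$ on $T^+$, so that the odd generator $t_1$ is genuinely $\tilde\beta$-orthogonal to all of $\D_{\mc M}$. Arranging that a single elementary-abelian subgroup can simultaneously split off the order-$2$ radical and be orthogonal to the whole of $T_{\mc C}$ is the technical heart of the argument; once that is in place, the identifications with the model superalgebras $\FF\ZZ_2$, $\mc O$, $\FF\ZZ_4$ and the verification that $\vphi_0=\vphi_{\mc C}\tensor\vphi_{\mc M}$ reduce to routine sign-rule computations.
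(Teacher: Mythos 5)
Your proposal is correct, and its overall architecture coincides with the paper's: the forward implication via \cref{prop:only-SxSsop-is-simple,cor:T+-is-elem-2-grp}, the same subgroup $T_{\mc C}$ (the paper's $C$) in each of the three cases, the identification of its graded-division superalgebra with the models of \cref{ex:now-FZ2-is-division,ex:superalgebra-O,ex:now-FZ4-is-division}, and the tensor decomposition obtained from $T = T_{\mc C}\times T_{\mc M}$ with $\tilde\beta(T_{\mc C}, T_{\mc M})=1$. Where you genuinely diverge is precisely at the step you flag as the technical heart, the construction of the complement. The paper works with an explicit basis of $T^+$ as a vector space over the field with $2$ elements: it completes a basis of $\rad\beta^+$ to a basis of $T^+$ and, in case \ref{it:odd-MxM-sop}, repairs each basis vector $b$ with $\tilde\beta(t_1,b)=-1$ by replacing it with $t_p b$ (legitimate because $t_p\in\rad\beta^+$ and $\tilde\beta(t_1,t_p)=-1$), so the subgroup $K$ generated by the corrected basis is orthogonal to $t_1$ by fiat. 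You instead take the $\tilde\beta$-orthogonal complement $T_{\mc C}^\perp$, compute $\rad\big(\tilde\beta\!\restriction_{T_{\mc C}\times T_{\mc C}}\big)=\langle f\rangle$ in all three cases, pass to the nondegenerate bicharacter induced on $T/\langle f\rangle$ to get $T = T_{\mc C}T_{\mc C}^\perp$ and $T_{\mc C}\cap T_{\mc C}^\perp=\langle f\rangle$, and then split $\langle f\rangle$ off inside the elementary $2$-group $T_{\mc C}^\perp\subseteq T^+$. This is sound (the radical computations check out; e.g., in case \ref{it:odd-MxM-sop}, on $\langle t_p,t_1\rangle\iso\ZZ_2\times\ZZ_4$ the radical is indeed $\{e,f\}$), and it buys you uniformity: the orthogonality of $T_{\mc M}$ to all of $T_{\mc C}$ — including the odd generator $t_1$ — is automatic rather than arranged by hand, which dissolves the obstacle you anticipated in case \ref{it:odd-MxM-sop}. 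What the paper's explicit route buys in exchange is a complement $K$ produced in a form directly reusable for the standard realizations (\cref{def:std-realization-MxM-QxQ}) and compatible with the reversed order of quantification in \cref{lemma:K-before-t_1}, where $K$ is fixed first and $t_1$ chosen afterwards. One small point to make explicit in your ``routine'' identification of $(\mc C,\vphi_{\mc C})$ with $\mc O$: the value $\eta(t_1)$ is not pinned down by $\eta(t_p)=1$, $\eta(f)=-1$ and $\mathrm d\eta=\tilde\beta$ alone, so one must replace $t_1$ by $ft_1$ if $\eta(t_1)=-1$ (cf.\ \cref{rmk:eta-for-MxM-or-QxQ}) before matching the table of \cref{ex:superalgebra-O}.
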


\begin{proof}
    The ``only if'' direction of the first assertion is already established.
    Let us prove the ``if'' direction.
    Let $t_p \in T$ be a parity element, which exists by \cref{cor:t_p-exists}.
    
    Since $T^+$ is an elementary $2$-group, it is a vector space over the field with $2$ elements.  
    Fix a basis $\mc B$ for $\rad \beta^+$ and complete it to a basis $\mc B \cup \mc B'$ of $T^+$.  
    In the case where $e \neq t_p \in T^+$, for any $t_1 \in T^-$, we can choose $\mc B'$ such that $\tilde\beta(t_1, b) = 1$ for all $b \in \mc B'$.  
    Indeed, since $t_p \in \rad \beta^+$, if $\tilde\beta(t_1, b) = -1$, we can replace $b$ with $t_p b$.  
    
    Let $K \subseteq T^+$ be the subgroup generated by $\mc B'$.  
    Then $T^+ = (\rad \beta^+) \times K$, and hence $\beta^+\!\restriction_{K\times K}$ is a nondegenerate alternating bicharacter.
    Let $\mc M$ be a standard realization of type $M$ (\cref{def:standard-realization-M}) associated to $(K, \beta^+\!\restriction_{K\times K})$ and consider the involution $\vphi_\mc M$ defined by transposition (\cref{lemma:transp-std-realization}).
    
    Now define $C \subseteq T$ as follows:
    \begin{enumerate}[label=(\alph*)]
        \item If $t_p = e$, take $C \coloneqq \langle f \rangle \iso \ZZ_2$.
        \item If $e \neq t_p \in T^+$, take $C \coloneqq \langle t_p, t_1 \rangle \iso \ZZ_2 \times \ZZ_4$.
        \item If $t_p \in T^-$, take $C \coloneqq \langle t_p \rangle \iso \ZZ_4$.
    \end{enumerate}
    Let $\mc C$ be a graded-division superalgebra associated to $(C, \tilde\beta\!\restriction_{C \times C})$.
    It is easy to see that, in each case, $\mc C$ is as in the statement of the Theorem and, hence, it has a superinvolution $\vphi_\mc C$ (see \cref{ex:now-FZ2-is-division,ex:superalgebra-O,ex:now-FZ4-is-division}).
    
    By construction, we have $T = C \times K$ and $\tilde\beta(C, K) = 1$, and it is straightforward that this implies $\D \iso \mc C \tensor \mc M$.
    Therefore, the superinvolution $\vphi_\mc C \tensor \vphi_\mc M$ on $\mc C \tensor \mc M$ corresponds to a superinvolution $\vphi_0$ on $\D$.

    For the second assertion, let $\vphi_0$ be a superinvolution on $\D$ and $t_p$ the $\vphi_0$-parity element.
    Define the subgroups $K$ and $C$ as above and let $\mc M$ and $\mc C$ be the corresponding graded subsuperalgebras of $\D$.
    Then $\D \iso \mc C \tensor \mc M$ and $\vphi_0$ restricts to each factor.
    The restriction to $\mc C$ is as in the statement.
\end{proof}

Note that, in cases \ref{it:even-MxM-sop} and \ref{it:QxQ-sop} in the proof above, we chose $K$ to be an arbitrary complement of $\rad \beta^+$ in $T^+$. 
The next result shows that we can proceed similarly in case \ref{it:odd-MxM-sop}, and then choose the element $t_1 \in T^-$ depending on $K$.

\begin{lemma}\label{lemma:K-before-t_1}
    Let $(T, \tilde\beta)$ be as in \cref{thm:tensor-prd-decomposition}\ref{it:odd-MxM-sop}, and let $K \subseteq T^+$ be any subgroup such that $T^+ = (\rad \beta^+) \times K$. 
    Then $S \coloneqq \{ t_1 \in T^- \mid \tilde\beta(t_1, K) = 1 \}$ is a coset of $\rad \beta^+$. 
\end{lemma}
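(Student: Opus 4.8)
The plan is to realize $S$ as a single fiber of a group homomorphism and then use that $T^-$ is a coset of $T^+$. First I would record the elementary but crucial simplification that on $T^- \times T^+$ the two bicharacters coincide: since $p$ vanishes on $T^+$, for $t_1 \in T^-$ and $k \in K \subseteq T^+$ we have $\tilde\beta(t_1, k) = \beta(t_1, k)$. Hence $S = \{ t_1 \in T^- \mid \beta(t_1, k) = 1 \text{ for all } k \in K \}$, and the whole problem becomes one about the alternating bicharacter $\beta$ restricted to pairings between $T^-$ and $K$.

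Next I would introduce the homomorphism $\rho\from T^+ \to \widehat K$ defined by $\rho(s) \coloneqq \beta(s, \cdot)\!\restriction_K$, which is a character of $K$ for each $s$ because $\beta$ is a bicharacter, and which is itself a group homomorphism for the same reason. The key computation is the identification of its kernel, $\ker \rho = \rad \beta^+$. One inclusion is immediate, since elements of $\rad \beta^+$ pair trivially with all of $T^+ \supseteq K$. For the reverse inclusion, suppose $\beta(s, K) = 1$ for some $s \in T^+$; using that $\rad \beta^+$ lies in the radical of $\beta^+$ together with the skew-symmetry of $\beta$ gives $\beta(s, \rad \beta^+) = 1$ as well, and then the direct product decomposition $T^+ = (\rad \beta^+) \times K$ from the hypothesis forces $\beta(s, T^+) = 1$, i.e.\ $s \in \rad \beta^+$. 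I would also note that $\rho$ is surjective: since $\beta^+\!\restriction_{K \times K}$ is nondegenerate, the map $K \to \widehat K$, $k \mapsto \beta(k, \cdot)\!\restriction_K$, is already an isomorphism, so $\rho\!\restriction_K$ is onto.

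With $\rho$ in hand the conclusion is quick. To see $S \neq \emptyset$, pick any $t_1' \in T^-$; by surjectivity of $\rho$ there is $s_0 \in T^+$ with $\rho(s_0) = \big(\beta(t_1', \cdot)\!\restriction_K\big)\inv$, and then $t_1 \coloneqq s_0 t_1' \in T^-$ satisfies $\beta(t_1, \cdot)\!\restriction_K = 1$, so $t_1 \in S$. Finally, fixing this $t_1 \in S$ and using that $T^-$ is a coset of $T^+$, for any $t_1' \in T^-$ the element $t_1' t_1\inv$ lies in $T^+$ and $\rho(t_1' t_1\inv) = \beta(t_1', \cdot)\!\restriction_K$; hence $t_1' \in S$ if and only if $t_1' t_1\inv \in \ker \rho = \rad \beta^+$, that is, if and only if $t_1' \in t_1(\rad \beta^+)$. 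This proves $S = t_1(\rad \beta^+)$, a coset of $\rad \beta^+$.

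I expect the main obstacle to be the kernel identification $\ker \rho = \rad \beta^+$, and specifically deducing trivial pairing against all of $T^+$ from trivial pairing against $K$ alone. This is exactly where the hypotheses enter: the decomposition $T^+ = (\rad \beta^+) \times K$ reduces pairing against $T^+$ to pairing against the two factors, and skew-symmetry handles the $\rad \beta^+$ factor automatically. Everything else is routine bookkeeping with the bicharacter identities.
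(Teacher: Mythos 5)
Your proof is correct, and its coset half is essentially what the paper dismisses as ``straightforward'': your identification $\ker\rho = \rad\beta^+$ is precisely the observation needed to see that, once some $t_1 \in S$ exists, $t_1' \in S$ \IFF $t_1\inv t_1' \in \rad \beta^+$. Where you genuinely diverge is the nonemptiness step. The paper argues globally: it prescribes a character $\chi$ of $T^+$ by $\chi(K) = \chi(f) = 1$ and $\chi(t_p) = -1$ (using $\rad\beta^+ = \langle f, t_p\rangle$ from \cref{cor:rad-beta+-MxM-QxQ}), extends it to $T$, notes that it kills $\rad\tilde\beta = \langle f \rangle$, and invokes nondegeneracy of the bicharacter induced by $\tilde\beta$ on $T/\rad\tilde\beta$ to realize $\chi = \tilde\beta(t_1, \cdot)$; the oddness of $t_1$ then falls out of $\tilde\beta(t_1, t_p) = -1$, because $t_p$ is a parity element. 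You instead argue locally against $K$: surjectivity of $\rho\from T^+ \to \widehat K$ — which follows from your kernel computation, since $\ker\rho \cap K = \rad\beta^+ \cap K = \{e\}$ makes $\rho\!\restriction_K$ injective and finiteness makes it onto $\widehat K$ — lets you correct an arbitrary $t_1' \in T^-$ by an even element $s_0$ so that $t_1 = s_0 t_1'$ pairs trivially with $K$. Your route buys generality and economy: it never uses the parity element $t_p$, the relation $t^2 = f$ on $T^-$, or that $T^+$ is an elementary $2$-group, only finiteness, the decomposition $T^+ = (\rad\beta^+) \times K$, and $T^- \neq \emptyset$. The paper's route buys the oddness of $t_1$ for free (duality manufactures an odd element rather than assuming one) and keeps the construction aligned with the $f$, $t_p$ bookkeeping used throughout \cref{sec:MxM-and-QxQ-associative}. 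The only point you should make explicit is why you may pick $t_1' \in T^-$ at all: in case \ref{it:odd-MxM-sop} of \cref{thm:tensor-prd-decomposition} the parity element satisfies $t_p \neq e$, which forces $T \neq T^+$ by \cref{def:eta-parity-element}, so $T^-$ is indeed nonempty.
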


\begin{proof}
    Given $t_1 \in S$, it is straightforward that $t_1' \in S$ \IFF $t_1\inv t_1' \in \rad \beta^+$. 
    Hence we only have to show that $S \neq \emptyset$. 
    Let $\chi$ be the character of $T^+$ determined by $\chi(K) = \chi(f) = 1$ and $\chi(t_p) = -1$, and extend $\chi$ to a character of $T$. 
    Since $\rad \tilde\beta = \langle f \rangle$, $\chi$ induces a character on $T/\rad \tilde\beta$ and, by the nondegeneracy of the bicharacter induced by $\tilde \beta$ on $T/\rad \tilde\beta$, there is $t_1 \in T$ such that $\chi = \tilde\beta(t_1, \cdot)$. 
    Since $\tilde\beta(t_1, t_p) = \chi(t_p) = -1$, $t_1\in T^-$, and hence, since $\tilde\beta(t_1, K) = \chi(K) = 1$, $t_1 \in S$. 
\end{proof}

In \cref{subsubsec:standard-realizations}, we defined standard realizations of types $M$ and $Q$.
We will now define standard realizations of types $M\times M\sop$ and $Q \times Q\sop$.

\begin{defi}\label{def:std-realization-MxM-QxQ}
    Let $T \subseteq G^\#$ be a finite subgroup and let $\tilde\beta\from T\times T \to \pmone$ be a skew-symmetric bicharacter such that $\rad \tilde \beta = \langle f \rangle$ for an element $e \neq f\in T^+$, $T^+$ is an elementary $2$-group, and $t^2 = f$ for every $t\in T^-$.
    It follows that we have precisely $2$ parity elements in $T$; 
    let $t_p \in T$ be one of them, taking $t_p \coloneqq e$ in the case $T = T^+$. 
    Then choose:
    \begin{enumerate}
        \item a subgroup $K \subseteq T^+$ such that $(\rad \beta^+) \times K = T^+$;
        \label{item:K-can-be-orthogonal-to-t_1}
        \item a standard realization $\mc M$ (see \cref{def:standard-realization-M}) of type $M$ associated to $(K, \tilde\beta\!\restriction_{K\times K})$;
        \label{item:choose-mc-M}
        \item if $e\neq t_p \in T^+$, an element $t_1 \in T^-$ such that $\beta(t_1, K) = 1$ (which exists by \cref{lemma:K-before-t_1}). 
        \label{item:choose-t_1-std-realization}
    \end{enumerate}
    Let $\vphi_{\mc M}$ denote the transposition on $\mc M$, and let $(\mc C, \vphi_{\mc C})$ be as in \cref{thm:tensor-prd-decomposition}.
    We say that $(\mc C \tensor \mc M, \vphi_{\mc C} \tensor \vphi_{\mc M})$ is a \emph{standard realization of a superinvolution-simple graded-division superalgebra} (of type $M\times M\sop$ or $Q \times Q\sop$) associated to $(T, \tilde\beta, t_p)$.
    Note that, as a superalgebra, $\mc C \tensor \mc M$ is isomorphic to:
    \begin{enumerate}[label=(\alph*)]
        \item $M_\ell(\FF)\times M_\ell(\FF)\sop$ where $\ell = \sqrt{|T|/2}$, if $t_p =e$;
        \item $M(\ell,\ell) \times M(\ell,\ell)\sop$ where $\ell = \sqrt{|T|/8}$, if $e \neq t_p \in T^+$;
        \item $Q(\ell)\times Q(\ell)\sop$ where $\ell = \sqrt{|T|/4}$, if $t_p\in T^-$.
    \end{enumerate}
\end{defi}

\begin{remark}\label{rmk:eta-for-MxM-or-QxQ}
    The map $\eta\from T \to \pmone$ corresponding to $ \vphi_{\mc C} \tensor \vphi_{\mc M}$ is characterized by the following conditions: $\mathrm{d}\eta = \tilde\beta$, $\eta\!\restriction_{K}$ is the map associated to the transposition on $\mc M$ and one of 
    \begin{enumerate}[label=(\alph*)]
        \item\label{it:std-realization-(a)} if $t_p =e$, then $\eta(f) = -1$;
        \item if $e \neq t_p \in T^+$, then $\eta(t_p) = 1$ and $\eta(t_1) = 1$;
        \item\label{it:std-realization-(c)} if $t_p \in T^-$, then $\eta(t_p) = 1$.
    \end{enumerate} 
    In particular, in all cases $\eta(f) = -1$ and the chosen $t_p$ is the $\eta$-parity element of $T$. 
    Also, a different choice $t_1'$ in \cref{item:choose-t_1-std-realization} leads to $\eta' = \eta$ if $t_1' \in \{ t_1, f t_p t_1\}$, and to $\eta' = \tilde\beta(t_p, \cdot) \eta$ if $t_1' \in \{ f t_1, t_p t_1\}$; 
    in both cases, $\eta'\restriction_{T^+} = \eta\restriction_{T^+}$.
\end{remark}

\phantomsection\label{phsec:std-realization-QxQsop}

In \cref{subsubsec:standard-realizations}, we presented distinct standard realizations for types $M$ and $Q$, where the standard realizations for $Q$ were defined exclusively in terms of the group $G$.  
We can do the same for type $Q \times Q\sop$.  

Let $(\D, \vphi_0)$ be of type $Q \times Q\sop$.  
There exists an invertible homogeneous element $w \in Z(\D)\odd$ of degree $t_p$ such that $\vphi_0(w) = w$.  
Then $\D = \D\even \oplus w \D\even$. 
Scaling $w$ if necessary, we may assume $w^2 = \zeta$, where $\zeta \coloneqq (1, -\bar{1}) \in Z(\D)\even$.  
Note that $(\D\even, \vphi_0\!\restriction_{\D\even})$ is an even graded-division superalgebra of type $M \times M\sop$. 
Let $h \in G$ denote the projection of $t_p$ to $G$ (\ie, $t_p = (h, \bar{1})$).  
Then, $h^2 = f$ and $(T^+, \beta^+, e)$ are the parameters of $\D\even$.

Conversely, given $(\D\even, \vphi_0)$ even of type $M \times M\sop$ associated to $(T^+, \beta^+, e)$ and an element $h \in G$ such that $h^2 = f$, let $w$ be a new symbol of $G$-degree $h$ that commutes with every element of $\D\even$ and satisfies $w^2 = \zeta \in \D\even$.
Define $\D\odd \coloneqq w \D\even$, and extend $\vphi_0$ by setting $\vphi_0(w) = w$.  
One can check that the resulting superalgebra with superinvolution is a graded-division superalgebra of type $Q \times Q\sop$.
The parameters of $(\D, \vphi_0)$ are $(T, \tilde\beta, t_p)$, where $t_p \coloneqq (h, \bar{1})$, $T \coloneqq T^+ \cup t_p T^+$, and $\tilde\beta\from T \times T \to \FF^\times$ is given by $\tilde\beta(s t_p^i, t t_p^j) = (-1)^{ij} \beta^+(s,t)$ for all $s, t \in T^+$ and $i, j \in \ZZ$.   

As an alternative to \cref{def:std-realization-MxM-QxQ} in the case \ref{it:std-realization-(c)}, we will use the following:

\begin{defi}\label{def:std-realization-QxQsop}
    Let $T^+$ be an elementary $2$-group, $\beta^+$ be an alternating bicharacter with $\rad \beta^+ = \langle f\rangle$ for $e \neq f \in T^+$, and an element $h\in G$ such that $h^2 = f$.
    Let $\D\even$ be a standard realization of type $M\times M\sop$ associated to $(T^+, \beta^+, e)$.
    Then $\D\even \oplus w\D\even$ with $\vphi_0$ as above will be called a \emph{standard realizations of type $Q\times Q\sop$} associated to $(T^+, \beta^+, h)$.
    Note that $\D \iso Q(\ell) \times Q(\ell)$ where $\ell = \sqrt{|T^+|/2}$.
\end{defi}  
 

\subsubsection{Graded-simple superalgebras of types \texorpdfstring{$M\times M\sop$}{MxMsop} and \texorpdfstring{$Q\times Q\sop$}{QxQsop}}\label{subsubsec:grdd-simple-MxM-or-QxQ}

Let $(\D, \U, B)$ be a triple as in \cref{def:E(D-U-B)}, with parameters $(T, \tilde\beta, \eta, \kappa, g_0, \delta)$.
We now consider the case where $(R, \vphi) \coloneqq E(\D, \U, B)$ is a superalgebra with superinvolution of type $M\times M\sop$ or $Q\times Q\sop$.
Since $(R, \vphi)$ is superinvolution-simple but not simple, \cref{prop:simple-R-D-super,prop:vphi-R-simple-D-simple} imply that $(\D, \vphi_0)$ is also superinvolution-simple but not simple. 
By \cref{prop:R-and-D-have-the-same-center}, $Z(R)$ is isomorphic to $Z(\D)$, and by \cref{prop:types-of-SA-via-center}, different types have nonisomorphic centers, hence $(R, \vphi)$ has the same type as $(\D, \vphi_0)$.
It follows that the type of $(R, \vphi)$ can be recognized from $(T, \tilde\beta)$ via \cref{cor:rad-beta+-MxM-QxQ}.

We will define explicit models for $E(\D, \U, B)$ where $\D$ is (a) even of type $M\times M\sop$, (b) odd of type $M\times M\sop$, or (c) of type $Q\times Q\sop$, and proceed to classify them up to isomorphism.
To this end, we will specialize simplified parametrization defined in \cref{subsec:simplify-parameters} to these cases.
We start by noting that, by \cref{lemma:2-etas}, in the even case we have a unique equivalence class of $\sim_\bz$, and in the odd cases the equivalence classes of $\sim_\bz$ are in bijection with the set of parity elements $T_p \subseteq T$.
Moreover, since we have $\rad\tilde\beta = \langle f \rangle$, where $f\in T^+$ is such that $\eta(f) = -1$, note that the group $\mc G_\eta$ defined in \cref{eq:mathcal-G} is
\[\label{eq:mathcal-G-MxM-QxQ}
    \mc G_\eta = \big( \{e\} \times (G \times \{ \bar 0 \}) \big) \cup \big( \{f\} \times (G \times \{ \bar 1 \}) \big) \subseteq \rad \tilde \beta \times G^\#\,,
\]
which does not depend on $\eta$.

In what follows, for each triple $(T, \tilde\beta, t_p)$, we fix a standard realization $(\D, \vphi_0)$ as in \cref{def:std-realization-MxM-QxQ}, and for each triple $(T^+, \beta^+, h)$, we fix a standard realization $(\D, \vphi_0)$ as in \cref{def:std-realization-QxQsop}.
In particular, this fixes the map $\eta\from T \to \FF^\times$ corresponding to $\vphi_0$.
The results below are a consequence of \cref{thm:vphi-iff-vphi0-and-B,thm:vphi-involution-iff-delta-pm-1,cor:collection-of-orbits}.


\begin{defi}\label{def:model-grd-MxM-even}
    Let $T$ and $\tilde\beta$ be as in \cref{def:std-realization-MxM-QxQ} with $T^+ = T$ (so $\beta = \tilde\beta$ and $t_p = e$), let $g_0 \in G^\#$ be any element, and let $(\kappa_\bz, \kappa_\bo)$ be a $g_0$-admissible pair of maps $G/T \to \ZZ_{\geq 0}$ (see \cref{inertia-even-and-odd-case}). 
    Construct the corresponding pair $(\U, B)$ as in \cref{subsubsec:construction-U-B}.
    The graded superalgebra with superinvolution $M^{\mathrm{ex}}(T, \beta, \kappa_\bz, \kappa_\bo, g_0)$ is defined to be $E(\D, \U, B)$. 
    By choosing graded bases for $\U\even$ and $\U\odd$, this becomes
    the graded superalgebra $M_{k_\bz | k_\bo} (\D)$, with $k_\bz \coloneqq |\kappa_\bz|$ and $k_\bo \coloneqq |\kappa_\bo|$, endowed with the superinvolution $\vphi$ given by 
    \[
        \forall X \in M_{k_\bz | k_\bo}(\D), \quad \vphi(X) \coloneqq \Phi\inv \vphi_0(X\stransp) \Phi,
    \]
    where $\Phi$ is the matrix representing $B$ with respect to the chosen bases. 
    Note that $M^{\mathrm{ex}}(T, \beta, \kappa_\bz, \kappa_\bo, g_0) \iso M(m,n)\times M(m,n)\sop$ as a superalgebra with superinvolution, where $m \coloneqq k_\bz \sqrt{|T|/2}$ and $n \coloneqq k_\bo \sqrt{|T|/2}$.
\end{defi}

\begin{rmk}\label{rmk:g_0-odd-implies-m=n}
    If $m\neq n$, then $g_0\in G^\#$ is necessarily even (see \cref{para:even-D-odd-B}).
\end{rmk}

\begin{thm}\label{thm:MxM-even}
    Let $M(m,n) \times M(m,n)\sop$, as superalgebra with superinvolution, be endowed with an even $G$-grading making it graded-simple. 
    Then it is isomorphic to a graded superalgebra with superinvolution $M^{\mathrm{ex}}(T,\beta, \kappa_\bz, \kappa_\bo, g_0)$ as in \cref{def:model-grd-MxM-even}. 
    Moreover, $M^{\mathrm{ex}} (T, \beta, \kappa_\bz, \kappa_\bo, g_0) \iso M^{\mathrm{ex}} (T', \beta', \kappa_\bz', \kappa_\bo', g_0')$ \IFF $T =T'$, $\beta = \beta'$ and there is $g \in G$ such that one of the following conditions holds:
    \begin{enumerate}[label=(\roman*)]
        \item $\kappa_\bz' = g\cdot\kappa_\bz$, $\kappa_\bo' = g\cdot\kappa_\bo$ and $g_0' = g^{-2}g_0$;
        \item $\kappa_\bz' = g\cdot\kappa_\bo$, $\kappa_\bo' = g\cdot\kappa_\bz$ and $g_0' = fg^{-2}g_0$. \qed
    \end{enumerate}
\end{thm}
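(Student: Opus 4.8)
The plan is to derive both assertions from the parametrization developed in \cref{subsec:param-R-vphi,subsec:simplify-parameters}, specialized to even graded-division superalgebras of type $M\times M\sop$, where the isomorphism classes are governed by \cref{cor:collection-of-orbits}. For the first assertion, I would start from the fact that $R=M(m,n)\times M(m,n)\sop$ is finite-dimensional and hence satisfies the descending chain condition on graded left superideals, so \cref{cor:SxSsop-with-dcc} applies. Since the grading makes $(R,\vphi)$ graded-simple, we are in the case $(R,\vphi)\iso E(\D,\U,B)$ for a graded-division superalgebra $\D$, a finite-rank graded right $\D$-module $\U$, and a nondegenerate super-Hermitian form $B$; by \cref{rmk:only-super-hermitian} we may take $\delta=1$. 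By \cref{prop:vphi-R-simple-D-simple,prop:simple-R-D-super}, $(\D,\vphi_0)$ is superinvolution-simple but not simple as a superalgebra, and because the grading is even the associated $\D$ is even, so $T=T^+$ and $\tilde\beta=\beta$. The even, type-$M\times M\sop$ case of \cref{cor:rad-beta+-MxM-QxQ} then forces $t_p=e$ and $\rad\beta=\langle f\rangle$. By \cref{lemma:2-etas} there is a single $\sim_\bz$-class of admissible $\eta$ in the even case, so after fixing the standard realization of \cref{def:std-realization-MxM-QxQ} and reinterpreting $\kappa$ as a pair $(\kappa_\bz,\kappa_\bo)$ (\cref{inertia-even-and-odd-case}), the construction of \cref{def:model-grd-MxM-even} yields $(R,\vphi)\iso M^{\mathrm{ex}}(T,\beta,\kappa_\bz,\kappa_\bo,g_0)$ with $g_0=\deg B$.

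For the isomorphism conditions, \cref{cor:collection-of-orbits} reduces the problem entirely to orbits: $M^{\mathrm{ex}}(T,\beta,\kappa_\bz,\kappa_\bo,g_0)\iso M^{\mathrm{ex}}(T',\beta',\kappa_\bz',\kappa_\bo',g_0')$ \IFF $\D\iso\D'$ and the pairs $(\kappa,g_0)$, $(\kappa',g_0')$ lie in the same orbit of the $\mc G_\eta$-action of \cref{eq:mc-G-action}. Here $\D\iso\D'$ is equivalent to $T=T'$ and $\beta=\beta'$ by \cref{prop:parametrization-T-beta}, and $\mc G_\eta$ is the explicit subgroup of $\rad\tilde\beta\times G^\#$ given in \cref{eq:mathcal-G-MxM-QxQ}. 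Thus the remaining task is purely combinatorial: to rewrite this single orbit condition in terms of the pair $(\kappa_\bz,\kappa_\bo)$ and the element $g_0$.

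To carry this out I would split $\mc G_\eta$ according to \cref{eq:mathcal-G-MxM-QxQ} into the elements $(e,(g,\bar 0))$ and $(f,(g,\bar 1))$ with $g\in G$. Using \cref{eq:mc-G-action}, the decomposition $G^\#/T=(G/T)\sqcup(e,\bar 1)(G/T)$, and the identity $\kappa((g,i)T)=\kappa_i(gT)$ from \cref{phsec:kappas-even-and-odd}, one checks that an even element $(g,\bar 0)$ acts on the pair by $(\kappa_\bz,\kappa_\bo)\mapsto(g\cdot\kappa_\bz,g\cdot\kappa_\bo)$ and sends $g_0\mapsto g^{-2}g_0$, producing condition (i); while an odd element $(g,\bar 1)$ interchanges the two parity blocks, acting by $(\kappa_\bz,\kappa_\bo)\mapsto(g\cdot\kappa_\bo,g\cdot\kappa_\bz)$, and combined with the simultaneous $T$-action by $f$ sends $g_0\mapsto fg^{-2}g_0$, producing condition (ii). Since $g^{-2}$ and $f$ are even, $g_0'$ keeps the parity $|g_0|=|B|$ in both cases, as it must. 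The step I expect to be the main obstacle is precisely verifying that the odd part of the $G^\#$-action genuinely swaps $\kappa_\bz$ and $\kappa_\bo$; this comes down to the computation $(g,\bar 1)^{-1}(g',\bar 0)T=(g^{-1}g',\bar 1)T$, showing that an even coset is carried into the odd block and vice versa, so that no separate check of admissibility is needed because $\mathbf{I}(T,\beta)_{\bz,\eta,+}$ is $\mc G_\eta$-stable by construction. With this identification the two families of elements of $\mc G_\eta$ give exactly conditions (i) and (ii), completing the argument.
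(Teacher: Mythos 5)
Your proposal is correct and follows essentially the same route as the paper, which states this theorem (with \qed) as a direct consequence of \cref{thm:vphi-iff-vphi0-and-B,thm:vphi-involution-iff-delta-pm-1,cor:collection-of-orbits} together with the specialization $\mc G_\eta = \big(\{e\}\times(G\times\{\bar 0\})\big)\cup\big(\{f\}\times(G\times\{\bar 1\})\big)$ from \cref{eq:mathcal-G-MxM-QxQ} and the single $\sim_\bz$-class guaranteed by \cref{lemma:2-etas}. Your explicit verification that even elements of $\mc G_\eta$ preserve the parity blocks (giving condition (i)) while the elements $(f,(g,\bar 1))$ swap $\kappa_\bz$ and $\kappa_\bo$ and send $g_0\mapsto fg^{-2}g_0$ (giving condition (ii)) correctly fills in the combinatorial translation the paper leaves implicit.
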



We will now consider odd gradings.

\begin{defi}\label{def:model-grd-MxM-odd-or-QxQ}
    Let $T$, $\tilde\beta$, and $t_p$ be as in \cref{def:std-realization-MxM-QxQ} with $T^+ \neq T$, let $g_0 \in G$ be an arbitrary element, and let $\kappa\from G/T^+ \to \ZZ_{\geq 0}$ be a $g_0$-admissible map (\cref{defi:odd-D-kappa-g_0-admissible}).
    Construct the corresponding pair $(\U, B)$ as in \cref{subsubsec:construction-U-B}. 
    The graded superalgebra with superinvolution $E(\D, \U, B)$ is denoted by $M^{\mathrm{ex}}(T, \tilde\beta, t_p, \kappa, g_0)$ if $t_p \in T^+$, and by $Q^{\mathrm{ex}}(T, \tilde\beta, t_p, \kappa, g_0)$ if $t_p \in T^-$. 
    By choosing an even graded basis for $\U$, $E(\D, \U, B)$ becomes $M_k(\D)$, where $k \coloneqq |\kappa|$, endowed with the superinvolution $\vphi$ given by 
    \[
        \forall X \in M_k(\D),\quad \vphi(X) \coloneqq \Phi^{-1} \vphi_0(X^{\transp}) \Phi,
    \]
    where $\Phi$ is the matrix representing $B$ with respect to the chosen basis.
    Note that $M^{\mathrm{ex}}(T, \tilde\beta, t_p, \kappa, g_0) \iso M(n,n)\times M(n,n)\sop$, as a superalgebra with superinvolution, where $n \coloneqq k \sqrt{|T|/8}$, and $Q^{\mathrm{ex}}(T, \tilde\beta, t_p, \kappa, g_0) \iso Q(n)\times Q(n)\sop$, where $n \coloneqq k \sqrt{|T|/4}$.
\end{defi}

\begin{thm}\label{thm:MxM-odd}
    Let $M(n,n) \times M(n,n)\sop$ (respectively, $Q(n) \times Q(n)\sop$), as a superalgebra with superinvolution, be endowed with an odd $G$-grading that makes it graded-simple. 
    Then it is isomorphic to a graded superalgebra with superinvolution $M^{\mathrm{ex}}(T,\tilde\beta, t_p, \kappa, g_0)$ (resp., $Q^{\mathrm{ex}}(T, \tilde\beta, t_p, \kappa, g_0)$) as in \cref{def:model-grd-MxM-odd-or-QxQ}. 
    Moreover, the graded superalgebras with superinvolution $M^{\mathrm{ex}}(T, \tilde\beta,  t_p, \kappa, g_0)$ and $M^{\mathrm{ex}}(T', \tilde\beta',  t_p', \kappa', g_0')$ (resp., $Q^{\mathrm{ex}}(T, \tilde\beta,  t_p, \kappa, g_0)$ and $Q^{\mathrm{ex}} (T', \tilde\beta',  t_p', \kappa', g_0')$) are isomorphic \IFF $T =T'$, $\tilde\beta = \tilde\beta'$, $t_p = t_p'$ and there exists $g \in G$ such that $\kappa' = g\cdot\kappa$ and $g_0' = g^{-2}g_0$. \qed
\end{thm}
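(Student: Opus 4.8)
The plan is to derive both assertions from the general machinery of \cref{sec:grdd-sinv-simple}, specialized to the two types at hand. For the existence statement, I would begin by invoking \cref{cor:SxSsop-with-dcc}: since the odd grading makes $(R,\vphi)$ graded-superinvolution-simple and $R$ is finite-dimensional (hence graded-Artinian), and since the grading is assumed to make $R$ graded-simple, we have $(R,\vphi)\iso E(\D,\U,B)$ for a graded-division superalgebra $\D$, a graded right $\D$-module $\U$ of finite rank, and a nondegenerate super-Hermitian form $B$. By \cref{prop:vphi-R-simple-D-simple} together with the identification of centers in \cref{prop:R-and-D-have-the-same-center} and \cref{prop:types-of-SA-via-center}, the pair $(\D,\vphi_0)$ is superinvolution-simple of the same type as $(R,\vphi)$, that is, type $M\times M\sop$ with $\D$ odd, or type $Q\times Q\sop$. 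Then \cref{thm:tensor-prd-decomposition} and \cref{cor:rad-beta+-MxM-QxQ} supply the parameters $(T,\tilde\beta,t_p)$, and I would fix the standard realization of \cref{def:std-realization-MxM-QxQ} (or \cref{def:std-realization-QxQsop} in the $Q\times Q\sop$ case).

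Next I would carry out the parameter simplifications of \cref{subsec:simplify-parameters}. Because $\D$ is odd, \cref{conv:pick-even-form} lets me replace $B$ by $dB$ for a homogeneous $d\in\D\odd$ so that $|g_0|=\bz$, i.e.\ $g_0\in G$; \cref{rmk:only-super-hermitian} lets me take $\delta=1$. Choosing an even graded basis (\cref{conv:pick-even-basis}) and applying the construction in \cref{para:odd-D-odd-B}, the resulting $g_0$-admissible map $\kappa\from G/T^+\to\ZZ_{\geq 0}$ together with $\Phi$ yields precisely the model $M^{\mathrm{ex}}(T,\tilde\beta,t_p,\kappa,g_0)$ (resp.\ $Q^{\mathrm{ex}}(T,\tilde\beta,t_p,\kappa,g_0)$) of \cref{def:model-grd-MxM-odd-or-QxQ}. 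This establishes the first assertion.

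For the isomorphism criterion I would apply \cref{cor:collection-of-orbits}: for fixed $(T,\tilde\beta)$, two models correspond to isomorphic superalgebras with superinvolution iff their data lie in the same orbit of the relevant $\mc G_\eta$-action, with distinct $\sim_\bz$-classes of $\eta$ indexing distinct components of the collection; the necessity of $T=T'$ and $\tilde\beta=\tilde\beta'$ is already built into \cref{thm:iso-(R-vphi)-with-parameters}. By \cref{lemma:2-etas}, in the odd case the $\sim_\bz$-classes are in bijection with the parity elements, so the $\eta$-parity element $t_p$ is a complete invariant of the component, forcing $t_p=t_p'$. It then remains to compute the $\mc G_\eta$-orbits inside a single component, where $\mc G_\eta=\bigl(\{e\}\times(G\times\{\bz\})\bigr)\cup\bigl(\{f\}\times(G\times\{\bo\})\bigr)$ by \cref{eq:mathcal-G-MxM-QxQ} and the action is given by \cref{eq:mc-G-action}.

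The main obstacle, and the only genuinely computational step, is to show that after reinterpreting $\kappa$ and $g_0$ through the bijection $G^\#/T\cong G/T^+$ (even representatives, \cref{conv:pick-even-basis}), the parity-reversing coset $\{f\}\times(G\times\{\bo\})$ acts exactly as the plain $G$-action, so that $\mc G_\eta$ collapses to the single condition $\kappa'=g\cdot\kappa$, $g_0'=g^{-2}g_0$ of the statement. Concretely, for $\hat g=(g,\bo)$ one rewrites $\hat g\cdot\kappa$ using an odd element $t_1=(h_1,\bo)\in T^-$: since $(g,\bo)^{-1}(h,\bz)T=(g^{-1}hh_1^{-1},\bz)T$, the reinterpreted shift is by $gh_1\in G$. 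For the base point, using $t_1^2=f$ and $f^2=e$ (\cref{cor:T+-is-elem-2-grp} and the fact that $T^+$ is an elementary $2$-group from \cref{thm:tensor-prd-decomposition}), the transformation $g_0\mapsto fg_0\hat g^{-2}$ becomes $g_0\mapsto (gh_1)^{-2}g_0$ in $G$. Thus the $f$-coset element with parameter $g$ induces the same map as the $G$-element $gh_1$, and the two families of generators of $\mc G_\eta$ produce identical orbits. Combining this reduction with \cref{cor:collection-of-orbits} gives the stated isomorphism condition for both $M^{\mathrm{ex}}$ and $Q^{\mathrm{ex}}$.
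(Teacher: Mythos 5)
Your proposal is correct and takes essentially the same route as the paper, which states the theorem as a direct consequence of \cref{thm:vphi-iff-vphi0-and-B,thm:vphi-involution-iff-delta-pm-1,cor:collection-of-orbits} together with the reductions of \cref{subsec:simplify-parameters} and \cref{lemma:2-etas} --- exactly the machinery you invoke. Your explicit check that the coset $\{f\}\times(G\times\{\bo\})$ of $\mc G_\eta$ acts, after reinterpretation via even representatives, as the plain $G$-shift by $gh_1$ (with $t_1=(h_1,\bo)\in T^-$, $t_1^2=f$, and $f^2=e$), so that the odd case yields a single isomorphism condition unlike \cref{thm:MxM-even}, is precisely the computation the paper leaves implicit, and you carry it out correctly.
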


For superalgebras of type $Q \times Q\sop$, we can use the standard realization defined solely in terms of $G \subseteq G^\#$ (\cref{def:std-realization-QxQsop}).
We could also classify the odd gradings on superalgebras of type $M \times M\sop$ solely in terms of $G$, but, in the same way as in \cref{phsec:odd-param-in-terms-of-G}, we exclude it from this work for conciseness; the interested reader can find it in \cite[Corollary~4.76]{caios_thesis}.

\begin{defi}\label{def:model-grd-QxQ-only}
    Given $T^+$, $\beta^+$ and $h$ as in \cref{def:std-realization-QxQsop}, consider the corresponding $T$, $\tilde\beta$ and $t_p$.
    We define $Q^{\mathrm{ex}} (T^+, \beta^+,  h, \kappa, g_0)$ to be the graded superalgebra with superinvolution $Q^{\mathrm{ex}}(T, \tilde\beta,  t_p, \kappa, g_0)$ as in \cref{def:model-grd-MxM-odd-or-QxQ}.
\end{defi}

Note that $Q^{\mathrm{ex}} (T^+, \beta^+,  h, \kappa, g_0) = M^{\mathrm{ex}}(T^+, \beta^+, \kappa, g_0) \oplus w M^{\mathrm{ex}}(T^+, \beta^+, \kappa, g_0)$, where $w^2 = (1,-\bar 1)\in M^{\mathrm{ex}}(T^+, \beta^+, \kappa, g_0)$, $\vphi(w) = w$ (see \cref{prop:R-and-D-have-the-same-center-vphi}), and the $G^\#$-degree of $w$ is $t_p = (h, \bar 1)$.

By \cref{thm:MxM-odd}, we have:

 \begin{cor}\label{cor:QxQ-reduced-to-MxM}
    Let $Q(n) \times Q(n)\sop$, as a superalgebra with superinvolution, be endowed with a $G$-grading that makes it graded-simple.
    Then it is isomorphic to a graded superalgebra with superinvolution $Q^{\mathrm{ex}}(T^+,\beta^+, h, \kappa, g_0)$ as in \cref{def:model-grd-QxQ-only}. 
    Moreover, the graded superalgebras with superinvolution $Q^{\mathrm{ex}} (T^+, \beta^+,  h, \kappa, g_0)$ and $Q^{\mathrm{ex}} (T'^+, \beta'^+,  h', \kappa', g_0')$ are isomorphic \IFF $T^+ =T'^+$, $\beta^+ = \beta'^+$, $h = h'$ and there is $g \in G$ such that $\kappa' = g\cdot\kappa$ and $g_0' = g^{-2}g_0$. \qed
\end{cor}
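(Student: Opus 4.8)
The plan is to deduce this directly from the $Q \times Q\sop$ case of \cref{thm:MxM-odd} by reparametrizing the data $(T, \tilde\beta, t_p)$ in terms of $(T^+, \beta^+, h)$, exactly as prescribed in \cref{def:model-grd-QxQ-only}. The statement is a change of parameters, not new structural input, so the work consists in checking that the two descriptions carry the same information and that the isomorphism conditions match up.

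First I would justify that \cref{thm:MxM-odd} applies, i.e., that any $G$-grading making $Q(n) \times Q(n)\sop$ graded-simple is automatically odd. Writing $(R, \vphi) \iso E(\D, \U, B)$ as in the opening discussion of \cref{subsubsec:grdd-simple-MxM-or-QxQ}, the type of $(R, \vphi)$ coincides with that of $(\D, \vphi_0)$; since our superalgebra is of type $Q \times Q\sop$, so is $\D$, and by \cref{cor:rad-beta+-MxM-QxQ} (the type $Q\times Q\sop$ case) this forces $t_p \in T^-$, hence $T \neq T^+$ and $\D$ is odd. Consequently the grading on $R$ is odd, and \cref{thm:MxM-odd} yields an isomorphism onto some $Q^{\mathrm{ex}}(T, \tilde\beta, t_p, \kappa, g_0)$ with $t_p \in T^-$.

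Next I would invoke the bijective correspondence set up in \cref{phsec:std-realization-QxQsop}: for $\D$ of type $Q \times Q\sop$, the triple $(T, \tilde\beta, t_p)$ with $t_p \in T^-$ carries exactly the same information as the triple $(T^+, \beta^+, h)$ with $h^2 = f$ (where $\langle f \rangle = \rad \beta^+$ and $f = t_p^2$ by \cref{cor:T+-is-elem-2-grp}). Explicitly, $t_p = (h, \bar 1)$, $T = T^+ \cup t_p T^+$, and $\tilde\beta(s t_p^i, t t_p^j) = (-1)^{ij} \beta^+(s,t)$; conversely $T^+ = T \cap (G \times \{\bar 0\})$, $h$ is the $G$-component of $t_p$, and $\beta^+ = \tilde\beta\!\restriction_{T^+ \times T^+}$. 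Under this correspondence, \cref{def:model-grd-QxQ-only} defines $Q^{\mathrm{ex}}(T^+, \beta^+, h, \kappa, g_0)$ to be precisely $Q^{\mathrm{ex}}(T, \tilde\beta, t_p, \kappa, g_0)$, which immediately gives the first assertion.

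Finally, for the isomorphism criterion, I would translate the conditions of \cref{thm:MxM-odd} through this correspondence. The portion involving $g$, namely $\kappa' = g \cdot \kappa$ and $g_0' = g^{-2} g_0$, is identical in both formulations, so it suffices to check that $(T = T',\ \tilde\beta = \tilde\beta',\ t_p = t_p')$ is equivalent to $(T^+ = T'^+,\ \beta^+ = \beta'^+,\ h = h')$. This is a short verification: taking $G$-components, $t_p = t_p'$ is the same as $h = h'$; intersecting with $G \times \{\bar 0\}$ shows $T = T'$ gives $T^+ = T'^+$, while conversely $T = T^+ \cup t_p T^+$ is recovered from $T^+$ and $t_p$; and $\beta^+ = \tilde\beta\!\restriction_{T^+ \times T^+}$ together with the displayed formula for $\tilde\beta$ shows $\tilde\beta = \tilde\beta' \iff \beta^+ = \beta'^+$ once $t_p = t_p'$. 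Since the argument is purely a reparametrization, there is no genuine obstacle; the only point requiring care is confirming that $(T, \tilde\beta, t_p) \leftrightarrow (T^+, \beta^+, h)$ is a bijection, which is already established in \cref{phsec:std-realization-QxQsop}.
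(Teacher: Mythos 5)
Your proposal is correct and follows essentially the same route as the paper, which obtains \cref{cor:QxQ-reduced-to-MxM} directly from \cref{thm:MxM-odd} via the reparametrization $(T, \tilde\beta, t_p) \leftrightarrow (T^+, \beta^+, h)$ established in \cref{phsec:std-realization-QxQsop} and \cref{def:model-grd-QxQ-only}. Your explicit verification that any graded-simple $G$-grading on $Q(n)\times Q(n)\sop$ is necessarily odd (via \cref{cor:rad-beta+-MxM-QxQ}) is left implicit in the paper but is a correct and welcome addition to the same argument.
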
 

\section{Gradings on Classical Lie Superalgebras}\label{sec:Lie}

We finally turn to Lie superalgebras over an algebraically closed field \(\FF\) of characteristic zero.
In \cref{subsec:aut-and-transfer}, we show how the classification of \(G\)‑gradings on finite‑dimensional superinvolution‑simple associative superalgebras carries over to nonexceptional classical Lie superalgebras, excluding type \(A(1,1)\).
In the following subsections, we adapt the classification from the superinvolution‑simple associative superalgebras to each family of nonexceptional classical Lie superalgebras.

\subsection{Automorphism groups and transfer of gradings}\label{subsec:aut-and-transfer}

\subsubsection{Gradings and actions}\label{subsec:grds-and-actions}

We will now review the well-known correspondence between gradings by an abelian group $G$ and actions by the group of (multiplicative) characters $\widehat G \coloneqq \Hom(G, \FF^\times)$.
Let $A$ be a finite-dimensional algebra equipped with any number of multilinear operations $A^{\tensor n} \to A$, and let $\Gamma\from A = \bigoplus_{g\in G} A_g$ be a $G$-grading. 
(For a detailed treatment of gradings in this setting, see \cite[Section 1.2]{caios_thesis}; in this work, we focus on algebras with a multiplication $A \tensor A \to A$, a superalgebra structure defined by the operation $A \to A$ that sends every element to its even part, and possibly a superinvolution $A \to A$.)  
We define a $\widehat G$-action on $A$ by
\[
    \forall \chi \in \widehat G,\  g \in G,\  a \in A_g, \quad \chi \cdot a \coloneqq \chi(g) a\,.
\] 
It is straightforward the elements of $\widehat G$ act by automorphisms of $A$.

If $G$ is finitely generated, then $\widehat G$ is an algebraic group and the action above is an algebraic group action.
If, furthermore, $\FF$ is algebraically closed with $\Char \FF = 0$, we have a bijection between $G$-gradings on $A$ and algebraic $\widehat G$-actions by automorphisms.
As a consequence, if $A$ and $B$ are finite-dimensional algebras (possibly with different sets of multilinear operations) satisfying $\Aut(A) \iso \Aut(B)$ as algebraic groups, then there exists a bijection between the sets of $G$-gradings on $A$ and on $B$, which induces a bijection between the isomorphism classes of gradings.

We will use this correspondence to transfer our classification of gradings from su\-per\-in\-vo\-lu\-tion-sim\-ple associative superalgebras to non-exceptional classical Lie superalgebras different from $A(1,1)$.

\subsubsection{Automorphisms of (superinvolution-)simple associative superalgebras}\label{subsubsec:aut-associative}

We start by listing the automorphism groups of the associative superalgebras $M(m,n)$ and $Q(n)$, which are well known and can be computed from \cref{thm:iso-abstract}.

Every automorphism of $M(m,n)$ is conjugation by an invertible homogeneous element.  
We define $\mc E(m,n)$ as the normal subgroup consisting of the conjugations by \emph{even} elements, \ie, elements of the form
\[\label{eq:even-matrix}
\begin{pmatrix}
    x & 0 \\
    0 & y
\end{pmatrix}, \text{ with } x \in \GL_m \AND y \in \GL_n.
\]
Thus, we have \[
    \mc E(m,n) \iso (\GL_m \times \GL_n) / \FF^\times,
\]
where $\FF^\times$ is identified with scalar matrices. 
Odd invertible elements in $M(m,n)$ exist \IFF $m=n$; they are of the products of 
\[\label{eq:pi-is-conjugation-by}
    \Pi \coloneqq \begin{pmatrix}
    0 & I_n \\
    I_n & 0
\end{pmatrix}
\]
with even invertible elements.
The conjugation by $\Pi$ is the so-called \emph{parity transpose} and will be denoted by $\pi$.
Therefore,
\[
    \Aut(M(m,n)) =
    \begin{cases}
        \mc E(m,n) & \text{if } m \neq n;\\
        \mc E(n,n) \rtimes \langle \pi \rangle & \text{if } m = n.
    \end{cases}
\]

The automorphism group of $Q(n)$ is generated by the conjugations by invertible even elements of $Q(n)$, which are matrices of the form 
$\begin{pmatrix}
    x & 0\\
    0 & x
\end{pmatrix}$ 
with $x \in \GL_{n}$, and the parity automorphism $\nu\from Q(n) \to Q(n)$. 
Thus, we have
\[
\Aut(Q(n)) = \Aut(M_n(\FF)) \times \langle \nu \rangle \iso \operatorname{PGL}_n \times \operatorname C_2\,.
\]

Now consider $R = S \times S\sop$, where $S$ is either $M(m,n)$ or $Q(n)$, and let $\vphi\from R \to R$ be the exchange superinvolution. 
The automorphisms of $(R, \vphi)$ can be computed using \cref{lemma:iso-SxSsop} and are either of the form
\[\label{eq:psi_theta}
    \forall x,y \in S, \quad \psi_\theta (x, \bar{y}) \coloneqq (\theta(x), \overline{\theta(y)}),
\]
where $\theta\from S \to S$ is an automorphism, or of the form
\[\label{eq:psi_omega}
    \forall x,y \in S, \quad \psi_\sigma (x, \bar{y}) \coloneqq (\sigma(y), \overline{\sigma(x)}),
\]
where $\sigma\from S \to S$ is a super-anti-automorphism.
It is straightforward to verify that \cref{eq:psi_theta,eq:psi_omega} define an isomorphism $\overline{\Aut}(S) \to \Aut(R, \vphi)$, where $\overline{\Aut}(S)$ is the group of automorphisms and super-anti-automorphisms of $S$. 
Note that unless $S = \FF$, $\Aut(S)$ is a subgroup of index $2$ in $\overline{\Aut}(S)$; the extension is split for type $M$ and non-split for type $Q$ (\cref{cor:Q-no-sinv-center}).
More precisely,
\[\label{eq:barr-Aut-S}
    \overline{\Aut}(S) \iso
    \begin{cases}
        \mc E(m,n) \rtimes \operatorname C_2 & \text{if } S= M(m,n) \text{ with } m\neq n;\\
        (\mc E(n,n) \rtimes \operatorname C_2) \rtimes \operatorname C_2 & \text{if } S= M(n,n);\\
        \operatorname{PGL}_n \times \operatorname C_4 & \text{if } S= Q(n).
    \end{cases}
\]

Finally, let us consider the case $(R, \vphi) = M^*(m,n, p_0)$.
The automorphism groups can be derived from \cref{thm:iso-abstract-vphi}.
We have that $\Aut(M^*(m,n, p_0)) \subseteq \mc E(m,n)$, \ie, it consists of conjugations by matrices as in \cref{eq:even-matrix}.
For $M^*(m,n, \bar 0)$, we have that $x \in \operatorname{O}_m$ and $y \in \operatorname{Sp}_n$, thus
\[
    \Aut(M^*(m,n, \bar 0)) \iso \frac{\operatorname{O}_m \times \operatorname{Sp}_n}{\{\pm I_{m+n}\}}.
\] 
For $M(n,n, \bar 1)$, we have that $x \in \operatorname{GL}_n$ can be any element, but then $y = (x\transp)\inv$, thus
\[
    \Aut(M^*(m,n, \bar 1)) \iso \frac{\operatorname{GL}_n}{\{ \pm I_n\}}\,. 
\]

\subsubsection{Automorphisms of classical Lie superalgebras and types of gradings}\label{subsubsec:type-I-type-II}

The (outer) automorphism groups of the finite-dimensional simple Lie superalgebras were computed in \cite{serganova}.
We will compare them with those presented in \cref{subsubsec:aut-associative} to use the transfer strategy presented in \cref{subsec:grds-and-actions}.
Note that the algebraic groups of automorphisms for these Lie superalgebras appear in \cite{Pianzola}, whose description aligns more closely with ours.

Let $L$ be a finite-dimensional simple Lie superalgebra in one of the series $B$, $C$, $D$ or $P$.
By definition (see \cref{subsec:def-Lie-superalgebras}), $L = \Skew(R, \vphi)^{(1)}/Z(\Skew(R, \vphi)^{(1)})$ where $(R, \vphi) = M^*(m,n,p_0)$ for some $m,n \in \ZZ_{\geq 0}$ and $p_0\in \ZZ_2$.
Every automorphism $\psi\from (R, \vphi) \to (R, \vphi)$ restricts to an automorphism of $\Skew(R, \vphi)$, which in turn induces an automorphism on $\Skew(R, \vphi)^{(1)}/Z(\Skew(R, \vphi)^{(1)})$.
This yields an algebraic group homomorphism $\Aut(R, \vphi) \to \Aut(L)$.
Comparing the automorphism group of $(R, \vphi)$ to the one of $L$ (\cite[Theorem 1]{serganova} and \cite[Theorem 4.1]{Pianzola}), we see that the map $\Aut(R, \vphi) \to \Aut(L)$ above is an isomorphism.

Now consider the case that $L$ is a finite-dimensional simple Lie superalgebra in either series $A$ or $Q$.
Then $L = S^{(1)}/Z(S^{(1)})$, where $S$ is a finite-dimensional associative superalgebra of type $M$ or $Q$.
Analogous to the previous case, every automorphism $\psi\from S \to S$ restricts to an automorphism of $S^{(1)}$, which in turn induces an automorphism of $S^{(1)}/Z(S^{(1)})$.
This yields an algebraic group homomorphism $\Aut(S) \to \Aut(L)$.
However, comparing the automorphism group of $S$ with the automorphism group of $L$, we see that in this case the homomorphism $\Aut(S) \to \Aut(L)$ is injective but not surjective.

To recover all automorphisms of $L$, we can use another model.
Consider $R \coloneqq S \times S\sop$ and denote the exchange superinvolution by $\vphi$.
The map $S^{(-)} \to \Skew(R, \vphi)$ sending $s$ to $(s, -\barr s)$ is an isomorphism of Lie superalgebras that induces an isomorphism $L \to \Skew(R, \vphi)^{(1)}/Z(\Skew(R, \vphi)^{(1)})$.
Now, every automorphism $\psi\from (R, \vphi) \to (R, \vphi)$ induces an automorphism on $\Skew(R, \vphi)^{(1)}/Z(\Skew(R, \vphi)^{(1)})$, and if $L \not\iso A(1,1)$, this yields an isomorphism of algebraic groups $\Aut(R, \vphi) \to \Aut(L)$.

\begin{cor}\label{cor:bijection-gradings}
    Let $(R, \vphi)$ be a finite-dimensional simple associative superalgebra with superinvolution over an algebraically closed field of characteristic zero, and suppose $L \coloneqq \Skew(R, \vphi)^{(1)}/Z(\Skew(R, \vphi)^{(1)})$ is simple of type different from $A(1,1)$.
    The $G$-gradings on $(R, \vphi)$ and on $L$ are in bijection via the map that restricts a grading on $(R, \vphi)$ to $\Skew(R, \vphi)^{(1)}$ and then passes to the quotient modulo the center.
    This map also induces a bijection between the isomorphism classes of $G$-gradings. \qed
\end{cor}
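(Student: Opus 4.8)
The plan is to deduce the statement from the general transfer principle of \cref{subsec:grds-and-actions}, using that the relevant automorphism groups have already been matched in the preceding discussion. First I would reduce to the case where $G$ is finitely generated: since $R$ is finite-dimensional, any $G$-grading on $(R,\vphi)$ or on $L$ has finite support and is therefore a grading by the finitely generated subgroup it generates; the restriction-and-quotient map respects this subgroup, so there is no loss of generality in assuming $\widehat G$ is an algebraic group, which is what the correspondence of \cref{subsec:grds-and-actions} requires.

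Next I would check that the map in the statement is well-defined. If $\Gamma$ is a $G$-grading on $(R,\vphi)$, then $\vphi$ is degree-preserving, so $\Skew(R,\vphi)$ is a graded subspace; its derived superalgebra $\Skew(R,\vphi)^{(1)}$ is a graded subsuperalgebra, being spanned by supercommutators of homogeneous elements; and its center is graded by the same argument as in \cref{lemma:center-is-graded}. Hence $L = \Skew(R,\vphi)^{(1)}/Z(\Skew(R,\vphi)^{(1)})$ inherits a $G$-grading, and passing to this quotient grading is precisely the operation described in the corollary.

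The heart of the argument is the correspondence of \cref{subsec:grds-and-actions} between $G$-gradings and algebraic $\widehat G$-actions by automorphisms. Under it, a grading on $(R,\vphi)$ is a homomorphism $\rho\from \widehat G \to \Aut(R,\vphi)$, and the grading it induces on any graded subquotient is recovered as the simultaneous eigenspace decomposition for the induced $\widehat G$-action. In the preceding paragraphs, the restriction-to-$\Skew(R,\vphi)^{(1)}$-then-quotient construction was shown to define an algebraic group homomorphism $\Theta\from \Aut(R,\vphi) \to \Aut(L)$ which, in every case under consideration (series $B$, $C$, $D$, $P$ directly, and series $A$, $Q$ via the model $R = S\times S\sop$ provided $L\not\iso A(1,1)$), is an isomorphism of algebraic groups. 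Consequently the grading on $L$ produced by the restriction-quotient map corresponds exactly to the action $\Theta\circ\rho$; since $\Theta$ is a bijection, $\rho \mapsto \Theta\circ\rho$ is a bijection between homomorphisms $\widehat G \to \Aut(R,\vphi)$ and $\widehat G \to \Aut(L)$, yielding the claimed bijection of gradings. For the statement about isomorphism classes, I would note that two gradings are isomorphic precisely when the corresponding $\widehat G$-actions are conjugate inside the respective automorphism group; as $\Theta$ is a group isomorphism it carries conjugacy classes to conjugacy classes, so the bijection descends to isomorphism classes, in accordance with the general transfer statement of \cref{subsec:grds-and-actions}.

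The step I expect to require the most care is the identification of the abstractly transferred grading with the concrete restriction-quotient map, i.e.\ the functoriality of the grading/$\widehat G$-action correspondence under passage to graded subsuperalgebras and graded quotients: one must confirm that restricting a grading corresponds to restricting the $\widehat G$-action and that passing to the quotient corresponds to the induced action on the quotient. Once this compatibility is in place, the remainder is a formal consequence of $\Theta$ being an isomorphism of algebraic groups, which is the genuinely substantive input and has already been supplied by the case analysis citing \cite{serganova,Pianzola}.
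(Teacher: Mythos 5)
Your proposal is correct and takes essentially the same route as the paper: the corollary is obtained by combining the grading/$\widehat G$-action correspondence of \cref{subsec:grds-and-actions} with the fact, established case by case in \cref{subsubsec:type-I-type-II} (series $B$, $C$, $D$, $P$ directly, and series $A$, $Q$ through the model $R = S \times S\sop$, excluding $A(1,1)$), that the restriction-quotient construction gives an isomorphism of algebraic groups $\Aut(R,\vphi) \to \Aut(L)$. The additional housekeeping you supply --- reduction to a finitely generated grading group, gradedness of $\Skew(R,\vphi)^{(1)}$ and of its center, and the functoriality of the correspondence under graded subsuperalgebras and quotients --- is exactly what the paper leaves implicit in stating the corollary with no separate proof.
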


Hence, the classifications of $G$-gradings are the same for $(R, \vphi)$ and $L$, and the former are classified in \cref{sec:gradings-on-vphi-simple}.
For completeness, we will slightly adapt this classification to the Lie case and present it in the following subsections.
It is convenient to divide the gradings into two types for Lie superalgebras in series $A$ and $Q$.

\begin{prop}
    Let $S$ be an associative superalgebra of type $M$ or $Q$, and set $(R, \vphi) = S\times S\sop$.
    A grading on $S^{(1)}/Z(S^{(1)}) \iso \Skew(R, \vphi)^{(1)}/Z(\Skew(R, \vphi)^{(1)})$ is induced from a grading on $S$ \IFF it is induced from a grading on $(R, \vphi)$ such that $R$ is not graded-simple.
\end{prop}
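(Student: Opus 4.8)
The plan is to work directly with the two quotient presentations of $L$ and to translate the condition ``$R$ is not graded-simple'' into a statement about graded superideals. First I would record that, because $S$ is simple as a superalgebra, the only nonzero proper superideals of $R = S \times S\sop$ are $I_1 \coloneqq S \times 0$ and $I_2 \coloneqq 0 \times S\sop$ (as noted in \cref{subsubsec:non-graded-simple-MxMop-or-QxQsop}), and that the exchange superinvolution $\vphi$ interchanges them. Hence, for a $G$-grading on $(R, \vphi)$ — in particular one for which $\vphi$ is degree-preserving — the subspace $I_1$ is a graded superideal \IFF $I_2 = \vphi(I_1)$ is, and $R$ fails to be graded-simple precisely in this case. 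Thus ``$R$ not graded-simple'' is equivalent to ``$I_1$ is a graded subspace of $R$''.

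Next I would exploit the graded isomorphism $\iota\from S^{(-)} \to \Skew(R, \vphi)$, $s \mapsto (s, -\bar s)$, which induces the identification $L \iso \Skew(R, \vphi)^{(1)}/Z(\Skew(R, \vphi)^{(1)})$ used in \cref{cor:bijection-gradings}. For the direction $(\Leftarrow)$, suppose the grading on $L$ comes from a grading $\Gamma_R$ on $(R, \vphi)$ with $R$ not graded-simple. Then $I_1$ is graded, so $\Gamma_R$ restricts to a grading $\Gamma_S$ on $I_1 \iso S$; moreover, since $\vphi$ is degree-preserving, $\deg_R(0, \bar s) = \deg_R \vphi(s,0) = \deg_R(s,0)$, so $\Gamma_R$ is exactly the product grading of $\Gamma_S$ with the same grading on $S\sop$ (which is a $G$-graded superalgebra because $G$ is abelian). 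Consequently $(s, -\bar s)$ is homogeneous of degree $\deg_{\Gamma_S} s$, i.e.\ $\iota$ is an isomorphism of $G$-graded Lie superalgebras. Restricting $\iota$ to derived superalgebras and passing to the quotient by the center then shows that the grading induced on $L$ from $\Gamma_R$ coincides with the one induced from $\Gamma_S$ via $S^{(1)} \twoheadrightarrow S^{(1)}/Z(S^{(1)}) = L$.

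For the converse $(\Rightarrow)$, I would reverse this construction: given a grading $\Gamma_S$ on $S$ inducing the grading on $L$, equip $R = S \times S\sop$ with the product grading (both factors carrying $\Gamma_S$). This makes $I_1$ a graded superideal, so $R$ is not graded-simple, and it makes $\vphi$ degree-preserving because the exchange swaps the degree-$g$ component of $S \times 0$ with that of $0 \times S\sop$. The same graded $\iota$ then identifies the grading on $L$ induced from $\Gamma_R$ with the one induced from $\Gamma_S$, completing the equivalence.

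The only genuinely delicate point — and the one I would check most carefully — is that the two ``induction'' operations agree under $\iota$: namely that $\iota$ is degree-preserving and that it carries $S^{(1)}$ and its center onto $\Skew(R,\vphi)^{(1)}$ and its center, so that the induced quotient gradings match. Everything else is bookkeeping with the degree-preserving property of $\vphi$ and the description of superideals of $R$. I note that the statement can alternatively be phrased via the character-group action of \cref{subsec:grds-and-actions}: under the embedding $\Aut(S) \hookrightarrow \Aut(R, \vphi)$, the image consists exactly of the automorphisms $\psi_\theta$ fixing each factor, and ``$I_1$ graded'' is equivalent to the associated $\widehat G$-action landing in this image — but the direct argument above is self-contained and avoids invoking $\Aut(R,\vphi) \iso \Aut(L)$.
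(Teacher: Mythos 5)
Your proposal is correct and takes essentially the same approach as the paper: both hinge on the observation that when $R$ is not graded-simple the superideals $S\times 0$ and $0\times S\sop$ are graded and the grading on $R$ is the product grading, so that $s \mapsto (s,-\bar s)$ is a degree-preserving isomorphism $S^{(-)} \to \Skew(R,\vphi)$ (the paper phrases this via the graded projection $R \to S$, whose restriction to $\Skew(R,\vphi)$ is the inverse of this map). Your write-up merely makes explicit the converse direction and the product-grading verification that the paper compresses into ``the result follows.''
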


\begin{proof}
    If $R$ is not graded-simple, then by \cref{prop:only-SxSsop-is-simple}, the superideals  $S\times \{0\}$ and $\{0\}\times S\sop$ are both graded, so the projection $R \to S$ is a homomorphism of graded superalgebras.
    The restriction of this projection to $\Skew(R, \vphi)$ is the inverse of the isomorphism $S^{(-)} \to \Skew(R, \vphi)$, $s \mapsto (s, -\barr s)$.
    The result follows.
\end{proof}

\begin{defi}\label{defi:type-I-type-II}
    Let $S$ be an associative superalgebra of type $M$ or $Q$, and set $(R, \vphi) = S\times S\sop$.
    A grading on $L \coloneqq S^{(1)}/Z(S^{(1)})$ is said to be of \emph{Type I} if it is induced by a grading on $S$, and of \emph{Type II} if it is induced by a grading on $(R, \vphi)$ such that $R$ is graded-simple.
\end{defi}

It follows that for a simple Lie superalgebra in series $A$ or $Q$ except $A(1,1)$, every grading is either of Type I or Type II.
In this work, Type II gradings will only be described using the model derived from $R = S\times S\sop$.
They can also be described in terms of the defining model $L = S^{(1)}/Z(S^{(1)})$, but this approach is more involved.
The interested reader may consult \cite[Section 5.3]{caios_thesis} for details.
For type $A(1,1)$, see \cite{gradingsExceptional}.

\subsection{Orthosymplectic Lie superalgebras (\texorpdfstring{Series $B$, $C$ and $D$}{Series B, C and D})}\label{sec:grds-osp-Lie}

The gradings on $\osp(m|n)$ correspond to the gradings on $M^*(m, n, \bar 0)$. 
By \cref{thm:osp-and-p-associative}, if we endow $M^*(m, n, \bar 0)$ with a grading, then it becomes isomorphic to $(R, \vphi) \coloneqq M^*(T, \beta, \kappa_\bz, \kappa_\bo, g_0)$ as in \cref{def:model-grd-M(m-n-0)}, with $|g_0| = \bar 0$. 
We define $\osp(T, \beta, \kappa_\bz, \kappa_\bo, g_0)$ to be the graded Lie superalgebra $\Skew(R, \vphi)$. 
The following result is a direct consequence of \cref{thm:osp-and-p-associative}:

\begin{thm}\label{thm:grds-osp-final}
    Let $L$ be an orthosymplectic simple Lie superalgebra endowed with any $G$-grading. 
    Then $L$ is isomorphic to some $\osp(T, \beta, \kappa_\bz, \kappa_\bo, g_0)$. 
    Moreover, $\osp(T, \beta, \kappa_\bz, \kappa_\bo, g_0) \iso \osp(T', \beta', \kappa_\bz', \kappa_\bo', g_0')$ \IFF $T =T'$, $\beta = \beta'$ and there is an element $g \in G$ such that $\kappa_\bz' = g\cdot\kappa_\bz$, $\kappa_\bo' = g\cdot\kappa_\bo$ and $g_0' = g^{-2}g_0$. \qed
\end{thm}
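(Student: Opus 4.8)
The plan is to prove Theorem~\ref{thm:grds-osp-final} by transferring the already-established associative classification across the isomorphism of automorphism groups, so that almost no new work is required. The key observation is that $\osp(m|n)$ is, by definition (see~\ref{phsec:series-B}), the skew elements $\Skew(M^*(m,n,\bz))$ of an associative superalgebra with superinvolution, and for orthosymplectic types $B$, $C$, $D$ one has $L=\Skew(R,\vphi)^{(1)}/Z(\Skew(R,\vphi)^{(1)})$ with $(R,\vphi)=M^*(m,n,\bz)$. Since none of these types is $A(1,1)$, \cref{cor:bijection-gradings} applies and gives a bijection between $G$-gradings on $(R,\vphi)$ and on $L$ that also induces a bijection on isomorphism classes of gradings.

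First I would invoke \cref{cor:bijection-gradings} to reduce the entire statement to the corresponding classification for $(R,\vphi)=M^*(m,n,\bz)$, which is precisely \cref{thm:osp-and-p-associative}. That theorem tells us every $G$-grading on $M^*(m,n,\bz)$ is isomorphic to one of the form $M^*(T,\beta,\kappa_\bz,\kappa_\bo,g_0)$ with $p_0=|g_0|$; since we are in the even-parity case $p_0=\bz$, the relevant gradings are exactly those with $|g_0|=\bz$. The definition of $\osp(T,\beta,\kappa_\bz,\kappa_\bo,g_0)$ as the graded Lie superalgebra $\Skew(R,\vphi)$ attached to $(R,\vphi)=M^*(T,\beta,\kappa_\bz,\kappa_\bo,g_0)$ then makes the existence half immediate: any grading on $L$ comes, via the bijection, from a grading on $(R,\vphi)$, which by \cref{thm:osp-and-p-associative} is isomorphic to some $M^*(T,\beta,\kappa_\bz,\kappa_\bo,g_0)$, and restricting to skew elements yields $L\iso\osp(T,\beta,\kappa_\bz,\kappa_\bo,g_0)$.

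For the isomorphism criterion, the bijection from \cref{cor:bijection-gradings} preserves isomorphism classes of gradings in both directions, so two gradings $\osp(T,\beta,\kappa_\bz,\kappa_\bo,g_0)$ and $\osp(T',\beta',\kappa_\bz',\kappa_\bo',g_0')$ on $L$ are isomorphic \emph{if and only if} the corresponding gradings $M^*(T,\beta,\kappa_\bz,\kappa_\bo,g_0)$ and $M^*(T',\beta',\kappa_\bz',\kappa_\bo',g_0')$ are isomorphic as graded superalgebras with superinvolution. The latter is governed precisely by the condition in \cref{thm:osp-and-p-associative}: $T=T'$, $\beta=\beta'$, and there exists $g\in G$ with $\kappa_\bz'=g\cdot\kappa_\bz$, $\kappa_\bo'=g\cdot\kappa_\bo$ and $g_0'=g^{-2}g_0$. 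This yields exactly the stated criterion.

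\textbf{The main point requiring care} is verifying that the bijection of \cref{cor:bijection-gradings} is truly functorial with respect to isomorphism on both the associative and Lie sides, i.e.\ that an isomorphism of graded $\osp$'s lifts to an isomorphism of the ambient graded $(R,\vphi)$'s rather than merely to an abstract automorphism of $L$. This is precisely what the identification of algebraic automorphism groups $\Aut(R,\vphi)\iso\Aut(L)$ in \cref{subsubsec:type-I-type-II} guarantees, via the correspondence between $G$-gradings and algebraic $\widehat G$-actions recalled in \cref{subsec:grds-and-actions}; one must simply note that since $L$ is orthosymplectic and hence not of type $A(1,1)$, the relevant automorphism-group isomorphism holds and the transfer is valid. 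Once this functoriality is in hand, the theorem is a formal consequence of \cref{thm:osp-and-p-associative}, so I do not expect any genuinely hard computation—the only subtlety is bookkeeping the even-parity constraint $|g_0|=\bz$ coming from $p_0=\bz$.
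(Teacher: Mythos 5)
Your proposal is correct and follows exactly the paper's own route: the paper likewise obtains Theorem~\ref{thm:grds-osp-final} as a direct consequence of Theorem~\ref{thm:osp-and-p-associative} via the transfer of gradings in Corollary~\ref{cor:bijection-gradings} (valid since orthosymplectic types are not $A(1,1)$), with the parity constraint $|g_0|=\bar 0$ coming from $p_0=\bar 0$. Nothing is missing.
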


It should be noted that, if we restrict ourselves only to the series $B$, \ie, the superalgebras $\osp(m,n)$ for which $m$ is odd, since $T$ is an elementary $2$-group and $m = k_0\sqrt{|T|}$, we must have that $T$ is trivial.
Also, in this case, $g_0$ is a square, so we can make $g_0 = e$.
Gradings for series $B$ were classified in \cite{Helens_thesis}.

\subsection{Periplectic Lie superalgebras (Series \texorpdfstring{$P$}{P})}\label{sec:grds-P}

Gradings on the Lie superalgebras in the series $P$ were classified in \cite{paper-MAP} using a different method, but we can also obtain them in a manner similar to what we did for series $B$, $C$, and $D$.
Gradings on $P(n-1)$ correspond to the gradings on $M^*(n, n, \bar 1)$. 
By \cref{thm:osp-and-p-associative}, if we endow $M^*(n, n, \bar 1)$ with a grading, then it becomes isomorphic to $(R, \vphi) \coloneqq M^*(T, \beta, \kappa_\bz, \kappa_\bo, g_0)$ as in \cref{def:model-grd-M(m-n-0)}, with $g_0 = (h_0, \bar 1)$, $h_0 \in G$. 
Recall from \cref{para:even-D-odd-B} that, in this case, $\kappa_\bo$ is determined by $\kappa_\bz$, so we denote by $P(T, \beta, \kappa_\bz, h_0)$ the graded Lie superalgebra $\Skew(R, \vphi)^{(1)}$. 

The following result follows from \cref{thm:osp-and-p-associative}:

\begin{thm}\label{thm:last-one-for-P}
    Let $L$ be the simple Lie superalgebra $P(n)$, $n \geq 2$, endowed with any $G$-grading.
    Then $L$ is isomorphic to some $P(T, \beta, \kappa_\bz, h_0)$. 
    Moreover, $P (T, \beta, \kappa_\bz, h_0) \iso P(T', \beta', \kappa_\bz', h_0')$ \IFF $T = T'$, $\beta = \beta'$ and there is an element $g\in G$ such that $\kappa_\bz' = g \cdot \kappa_\bz$ and $h_0' = g^{-2} h_0$. \qed 
\end{thm}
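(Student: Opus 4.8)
The plan is to transfer the classification from the associative side, exactly as was done for the orthosymplectic series, and then record the single simplification forced by $p_0 = \bar 1$. First I would set $(R, \vphi) \coloneqq M^*(n+1, n+1, \bar 1)$, so that $P(n) = \Skew(R, \vphi)^{(1)}$ (simple for $n \geq 2$, and never of type $A(1,1)$ since it lies in series $P$). Since $P(n)$ is centerless, the general quotient in \cref{cor:bijection-gradings} is trivial here, and that corollary supplies a bijection between $G$-gradings on $(R, \vphi)$ and on $P(n)$ that descends to a bijection between isomorphism classes. Thus it suffices to read off the classification of gradings on $M^*(n+1, n+1, \bar 1)$ from \cref{thm:osp-and-p-associative} and carry it across this bijection.

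By \cref{thm:osp-and-p-associative}, any grading on $M^*(n+1, n+1, \bar 1)$ is isomorphic to some $M^*(T, \beta, \kappa_\bz, \kappa_\bo, g_0)$ with $|g_0| = \bar 1$, so $g_0 = (h_0, \bar 1)$ for a unique $h_0 \in G$. The feature special to $p_0 = \bar 1$ is \cref{para:even-D-odd-B}: the admissibility condition forces $\kappa_\bo(x) = \kappa_\bz(h_0\inv x\inv)$ for all $x \in G/T$, so $\kappa_\bo$ is completely determined by the pair $(\kappa_\bz, h_0)$. This is precisely why we may omit $\kappa_\bo$ from the parameters and set $P(T, \beta, \kappa_\bz, h_0) \coloneqq \Skew(R, \vphi)^{(1)}$, which yields the existence statement.

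For the isomorphism criterion I would begin from the criterion in \cref{thm:osp-and-p-associative}: the two associative models are isomorphic \IFF $T = T'$, $\beta = \beta'$, and there is $g \in G$ with $\kappa_\bz' = g \cdot \kappa_\bz$, $\kappa_\bo' = g \cdot \kappa_\bo$, and $g_0' = g^{-2} g_0$ (equivalently, recalling $g_0 = (h_0,\bar 1)$ and $g^{-2} = (g^{-2},\bar 0)$ in $G^\#$, simply $h_0' = g^{-2} h_0$). The only thing to verify is that, under the standing relations $\kappa_\bo(x) = \kappa_\bz(h_0\inv x\inv)$ and $\kappa_\bo'(x) = \kappa_\bz'((h_0')\inv x\inv)$, the middle condition $\kappa_\bo' = g \cdot \kappa_\bo$ is redundant: it follows automatically from $\kappa_\bz' = g \cdot \kappa_\bz$ and $h_0' = g^{-2} h_0$. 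This is a one-line computation in the abelian group $G$:
\[
    \kappa_\bo'(x) = \kappa_\bz'\big((h_0')\inv x\inv\big) = \kappa_\bz\big(g\inv (g^{-2} h_0)\inv x\inv\big) = \kappa_\bz\big(g\, h_0\inv x\inv\big) = \kappa_\bo\big(g\inv x\big) = (g\cdot\kappa_\bo)(x).
\]

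Consequently the associative isomorphism criterion collapses to the asserted condition $T = T'$, $\beta = \beta'$, $\kappa_\bz' = g \cdot \kappa_\bz$, $h_0' = g^{-2} h_0$; transporting this along the bijection of \cref{cor:bijection-gradings} gives the claim for $P(T, \beta, \kappa_\bz, h_0)$. The main obstacle is nothing computational but rather the careful bookkeeping in this last step: one must match the $G^\#$-shift $g_0' = g^{-2} g_0$ with the $G$-action on the $\kappa$'s and confirm that discarding the determined parameter $\kappa_\bo$ is compatible with the group action, which is exactly what the displayed identity guarantees. Everything else is a direct citation of \cref{cor:bijection-gradings}, \cref{thm:osp-and-p-associative}, and \cref{para:even-D-odd-B}.
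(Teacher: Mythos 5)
Your proposal is correct and follows essentially the same route as the paper: transfer via \cref{cor:bijection-gradings} to $M^*(n+1,n+1,\bar 1)$, apply \cref{thm:osp-and-p-associative}, and use the admissibility relation $\kappa_\bo(x) = \kappa_\bz(h_0\inv x\inv)$ from \cref{para:even-D-odd-B} to drop $\kappa_\bo$ from the parameters. Your explicit one-line verification that $\kappa_\bo' = g \cdot \kappa_\bo$ is automatic given $\kappa_\bz' = g \cdot \kappa_\bz$ and $h_0' = g^{-2} h_0$ is a detail the paper leaves implicit, and it checks out.
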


\subsection{Queer Lie superalgebras (Series \texorpdfstring{$Q$}{Q})}\label{subsec:gradings-Lie-Q}

Gradings on the Lie superalgebra $L$ of type $Q(n)$ were classified in \cite{paper-Qn} by reduction to its even part, which is the Lie algebra of type $A_n$.
Using the theory developed in this work, the classification of gradings on $L$ is the same as on the associative superalgebra with superinvolution $Q(n+1) \times Q(n+1)\sop$.

By \cref{cor:iso-Q}, Type I gradings on $L$ are induced by gradings of the form $\Gamma_Q (T^+, \beta^+, h, \kappa)$ on the associative superalgebra $Q(n+1)$ (see \cref{def:Gamma-T-beta-kappa-Q}).
We denote $L$ equipped with this grading by $Q^{\mathrm{(I)}}(T^+, \beta^+, h, \kappa)$.
Note that the classification of these graded superalgebras follows from \cref{thm:QxQ-type-I}, not from \cref{cor:iso-Q}.
By \cref{cor:QxQ-reduced-to-MxM}, Type II gradings on $L$ come from graded superalgebras with superinvolution $Q^{\mathrm{ex}} (T^+, \beta^+, h, \kappa, g_0)$ (see \cref{def:model-grd-QxQ-only}).
We denote $L$ with the induced grading by $Q^{\mathrm{(II)}}(T^+, \beta^+, h, \kappa, g_0)$.

\begin{thm}\label{thm:final-Q(n)}
    Let $n \geq 2$. 
    The simple Lie superalgebra $Q(n)$ endowed with any $G$-grading is isomorphic to either $Q^{\mathrm{(I)}}(T^+, \beta^+, h, \kappa)$ or $Q^{\mathrm{(II)}}(T^+, \beta^+, h, \kappa, g_0)$, and these cases are mutually exclusive. 
    Furthermore:
    
    \smallskip
    \noindent
    \fbox{\emph{Type I}} $Q^{\mathrm{(I)}}(T^+, \beta^+, h, \kappa) \iso Q^{\mathrm{(I)}}(T'^+, \beta'^+, h', \kappa')$ \IFF $T'^+ = T^+$, $h' = h$, and there exists $g \in G$ such that one of the following holds:
	\begin{enumerate}
	    \item $\beta'^+ = \beta^+$ and $\kappa' = g \cdot \kappa$; 
	    \item $\beta'^+ = (\beta^+)\inv$ and $\kappa' = g \cdot \kappa^{\Star}$.
	\end{enumerate}

    \noindent
    \fbox{\emph{Type II}} $Q^{\mathrm{(II)}}(T^+, \beta^+, h, \kappa, g_0) \iso Q^{\mathrm{(II)}}(T'^+, \beta'^+, h', \kappa', g'_0)$ \IFF $T'^+ = T^+$, $\beta'^+ = \beta^+$, $h' = h$, and there exists $g \in G$ such that $\kappa' = g \cdot \kappa$ and $g'_0 = g^{-2} g_0$. \qed
\end{thm}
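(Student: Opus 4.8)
The plan is to reduce the classification of $G$-gradings on $L = Q(n)$ to the classification already obtained in \cref{sec:gradings-on-vphi-simple} for the associative model with superinvolution, and then to read off the two families directly. Recall from the definition of series $Q$ that $L \iso S^{(1)}/Z(S^{(1)})$ with $S = Q(n+1)$, and that to realize all of $\Aut(L)$ one passes to $(R, \vphi) \coloneqq S \times S\sop$ with the exchange superinvolution, under which $L \iso \Skew(R, \vphi)^{(1)}/Z(\Skew(R, \vphi)^{(1)})$. Since $n \geq 2$, the superalgebra $L$ is simple of type $Q$, hence different from $A(1,1)$, so \cref{cor:bijection-gradings} supplies a bijection between $G$-gradings on $(R, \vphi)$ and on $L$ that descends to isomorphism classes. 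Thus it suffices to classify $G$-gradings on $(R, \vphi)$.

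First I would record the Type~I/Type~II dichotomy. Because $(R, \vphi)$ is superinvolution-simple as an ungraded object (by \cref{prop:only-SxSsop-is-simple}, as $S = Q(n+1)$ is simple), it has no proper nonzero $\vphi$-invariant superideals at all, and in particular every $G$-grading makes it graded-superinvolution-simple. By \cref{cor:SxSsop-with-dcc}, each such grading either makes $R$ graded-simple or exhibits $(R, \vphi)$ as $S' \times S'\sop$ with exchange superinvolution for some graded-simple $S'$. These alternatives are mutually exclusive, and via the bijection they correspond exactly to Type~II and Type~I gradings on $L$ (see \cref{defi:type-I-type-II} and the proposition preceding it). This settles at once the exhaustiveness of the two families and their mutual exclusivity.

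For the isomorphism conditions I would treat the two types separately. Type~II gradings arise from the graded-simple models $Q^{\mathrm{ex}}(T^+, \beta^+, h, \kappa, g_0)$, whose isomorphism classification is precisely \cref{cor:QxQ-reduced-to-MxM}; transporting this along the bijection of \cref{cor:bijection-gradings} reproduces the stated Type~II condition verbatim. Type~I gradings arise from gradings $\Gamma_Q(T^+, \beta^+, h, \kappa)$ on $S = Q(n+1)$ (enumerated by \cref{cor:iso-Q}), but the relevant equivalence on $L$ is isomorphism of the associated $(R,\vphi) = S \times S\sop$ as graded superalgebras with superinvolution. This is exactly what \cref{thm:QxQ-type-I} computes, and its two alternatives---namely $\beta'^+ = \beta^+$ with $\kappa' = g\cdot\kappa$, versus $\beta'^+ = (\beta^+)\inv$ with $\kappa' = g\cdot\kappa\Star$---are precisely the two cases in the Type~I statement.

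The subtle point, and the one I would be most careful about, is that the Type~I isomorphism relation is governed by \cref{thm:QxQ-type-I} and not by \cref{cor:iso-Q}: two gradings on $S$ that are \emph{not} isomorphic as graded superalgebras can nonetheless induce isomorphic gradings on $L$, precisely when one is isomorphic to the superopposite of the other (\cref{lemma:iso-SxSsop}). This is the origin of the second alternative, with $(\beta^+)\inv$ and $\kappa\Star$, which reflects the automorphism $\psi_\sigma$ of $(R, \vphi)$ built from a super-anti-automorphism $\sigma$ of $S$ (see \eqref{eq:psi_omega}) and has no counterpart among the automorphisms of $S$ alone. Confirming that these are the only additional identifications---that the governing symmetry group is $\overline{\Aut}(S)$ rather than $\Aut(S)$---is exactly the content already packaged into \cref{lemma:iso-SxSsop} and \cref{thm:QxQ-type-I}, so beyond invoking those results no further computation should be required.
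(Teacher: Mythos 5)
Your proposal is correct and follows essentially the same route as the paper: transfer via \cref{cor:bijection-gradings} to the model $(R,\vphi) = Q(n+1)\times Q(n+1)\sop$, obtain the exhaustive and mutually exclusive Type~I/Type~II dichotomy from graded-simplicity of $R$, and then quote \cref{thm:QxQ-type-I} for Type~I and \cref{cor:QxQ-reduced-to-MxM} for Type~II. You also correctly flag the one subtlety the paper itself emphasizes, namely that the Type~I isomorphism relation is governed by \cref{thm:QxQ-type-I} rather than \cref{cor:iso-Q}, which is where the second alternative with $(\beta^+)\inv$ and $\kappa\Star$ comes from.
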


\subsection{Special linear Lie superalgebras (Series \texorpdfstring{$A$}{A})}\label{subsec:gradings-Lie-A-m-n}

We break the classification of gradings on $A(m,n)$ into two cases, $m \neq n$ and $m=n$.
We first deal with the case $m \neq n$, since it has fewer possibilities and simpler isomorphism conditions, so let $L = \Sl(m+1 \,|\, n+1)$.

Every grading on $L$, whether of Type I or Type II, must be even (see \cref{lemma:odd-M-m=n,cor:associative-type-II-odd-m=n}).
Hence, by \cref{cor:iso-M-even}, Type I gradings on $L$ are induced by gradings of the form $\Gamma_M(T, \beta, \kappa_\bz, \kappa_\bo)$ on the associative superalgebra $M(m+1, n+1)$ (see \cref{def:Gamma-T-beta-kappa-even}).
We denote $L$ with this grading by $A^{\mathrm{(I)}}(T, \beta, \kappa_\bz, \kappa_\bo)$.
Note that the classification of these graded algebras follows from \cref{thm:MxM-type-I}, not from \cref{cor:iso-M-even}.
By \cref{thm:MxM-even}, Type II gradings on $L$ come from graded superalgebras with superinvolution $M^{\mathrm{ex}}(T, \beta, \kappa_\bz, \kappa_\bo, g_0)$ (see \cref{def:model-grd-MxM-even}) with $g_0 \in G = G\times \{\bz\} \subseteq G^\#$ since $m\neq n$ (see \cref{rmk:g_0-odd-implies-m=n}).
We denote $L$ with the induced grading by $A^{\mathrm{(II)}}(T, \beta, \kappa_\bz, \kappa_\bo, g_0)$.
The classification follows from these results and \cref{rmk:m-different-n-even-grading}:

\begin{thm}\label{thm:final-m-not-n} 
    Let $m > n \geq 0$.
    The simple Lie superalgebra $\mathfrak{sl}(m+1 \,|\, n+1)$ with any $G$-grading is isomorphic to either $A^{\mathrm{(I)}}(T, \beta, \kappa_\bz, \kappa_\bo)$ or $A^{\mathrm{(II)}}(T, \beta, \kappa_\bz, \kappa_\bo, g_0)$, and these cases are mutually exclusive. 
    Furthermore:
    
    \smallskip\noindent
    \fbox{\emph{Type I}} $A^{\mathrm{(I)}}(T, \beta, \kappa_\bz, \kappa_\bo) \iso A^{\mathrm{(I)}}(T', \beta', \kappa_\bz', \kappa_\bo')$ \IFF $T' = T$ and there exists $g \in G$ such that one of the following holds:
	\begin{enumerate}
	    \item $\beta' = \beta$, $\kappa_{\bar 0}' = g \cdot \kappa_{\bar 0}$ and $\kappa_{\bar 1}' = g \cdot \kappa_{\bar 1}$; 
	    \item $\beta' = \beta\inv$, $\kappa_{\bar 0}' = g \cdot \kappa_{\bar 0}\Star$ and $\kappa_{\bar 1}' = g \cdot \kappa_{\bar 1}\Star$.
	\end{enumerate}
    
    \noindent
    \fbox{\emph{Type II}} $A^{\mathrm{(II)}}(T, \beta, \kappa_\bz, \kappa_\bo, g_0) \iso A^{\mathrm{(II)}}(T', \beta', \kappa_\bz', \kappa_\bo', g'_0)$ \IFF
    $T' = T$, $\beta' = \beta$, and there exists $g \in G$ such that
    $\kappa_\bz' = g \cdot \kappa_\bz$, $\kappa_\bo' = g \cdot \kappa_\bo$, and $g'_0 = g^{-2} g_0$. \qed
\end{thm}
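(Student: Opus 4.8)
The plan is to reduce everything to the associative classification already obtained and then simplify using $m \neq n$. Set $S \coloneqq M(m+1\,|\,n+1)$ and $R \coloneqq S \times S\sop$ with the exchange superinvolution $\vphi$, so that $L = \Sl(m+1\,|\,n+1) \iso S^{(1)}/Z(S^{(1)}) \iso \Skew(R,\vphi)^{(1)}/Z(\Skew(R,\vphi)^{(1)})$. Since $m > n$ forces $m \neq n$, the Lie superalgebra $L$ is not of type $A(1,1)$, so \cref{cor:bijection-gradings} applies: classifying $G$-gradings on $L$ up to isomorphism is the same as classifying $G$-gradings on $(R,\vphi)$ up to isomorphism. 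First I would invoke the dichotomy established in the proposition preceding \cref{defi:type-I-type-II}, which partitions the gradings on $(R,\vphi)$ according to whether $R$ is graded-simple or not; this yields precisely the mutually exclusive Type I ($R$ not graded-simple) and Type II ($R$ graded-simple) cases and establishes the first sentence of the theorem.

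Next I would observe that, because $m+1 \neq n+1$, every grading in sight is even. For Type I this follows from \cref{lemma:odd-M-m=n} (an odd grading on a simple type-$M$ superalgebra forces equal block sizes), and for Type II from \cref{cor:associative-type-II-odd-m=n}. Thus only the even classifications are needed.

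For Type I, the even gradings on $S$ are exactly the $\Gamma_M(T,\beta,\kappa_\bz,\kappa_\bo)$ of \cref{def:Gamma-T-beta-kappa-even} by \cref{cor:iso-M-even}, and the induced graded Lie superalgebra is by definition $A^{\mathrm{(I)}}(T,\beta,\kappa_\bz,\kappa_\bo)$. The key point --- and the one easiest to get wrong --- is that isomorphism of these gradings on $L$ is governed not by the isomorphism of the underlying gradings on $S$ but by the isomorphism of $S \times S\sop$ as a graded superalgebra with superinvolution, which is \cref{thm:MxM-type-I}. That theorem offers four alternatives; since we are in the even case, only the first two ($\beta'=\beta$ and $\beta'=\beta\inv$) are relevant, and within each the $\kappa$-swapping option is ruled out by \cref{rmk:m-different-n-even-grading} (because $m\neq n$ gives $|\kappa_\bz|\neq|\kappa_\bo|$). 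What remains is exactly the stated pair of conditions.

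For Type II, \cref{thm:MxM-even} classifies the even gradings making $R$ graded-simple as the $M^{\mathrm{ex}}(T,\beta,\kappa_\bz,\kappa_\bo,g_0)$ of \cref{def:model-grd-MxM-even}, whose induced grading on $L$ is $A^{\mathrm{(II)}}(T,\beta,\kappa_\bz,\kappa_\bo,g_0)$; here \cref{rmk:g_0-odd-implies-m=n} guarantees $g_0 \in G\times\{\bz\}$. The isomorphism criterion of \cref{thm:MxM-even} again splits into a non-swapping alternative (i) and a swapping alternative (ii); the latter requires $|\kappa_\bz|=|\kappa_\bo|$ and is therefore excluded by $m \neq n$, leaving precisely condition (i). The main obstacle is not any single deep step but the careful bookkeeping of these transfers: one must consistently track that the grading bijection is with $(R,\vphi)$ rather than $S$, apply the superinvolution-aware isomorphism theorems in place of their graded-simple-only counterparts, and justify the removal of the swapping cases by the block-size inequality $|\kappa_\bz|\neq|\kappa_\bo|$ coming from $m\neq n$.
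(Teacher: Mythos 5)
Your proposal is correct and follows essentially the same route as the paper: transfer via \cref{cor:bijection-gradings}, the Type I/II dichotomy, evenness of all gradings from \cref{lemma:odd-M-m=n} and \cref{cor:associative-type-II-odd-m=n}, the isomorphism criteria from \cref{thm:MxM-type-I} and \cref{thm:MxM-even} (rather than \cref{cor:iso-M-even}), and elimination of the swapping alternatives via $|\kappa_\bz|\neq|\kappa_\bo|$ as in \cref{rmk:m-different-n-even-grading}. You even flag the same pitfall the paper warns about --- that Type~I isomorphism is governed by $S\times S\sop$ with the exchange superinvolution, not by the graded superalgebra $S$ alone --- so nothing is missing.
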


\subsection{Projective special linear Lie superalgebras (Series \texorpdfstring{$A$}{A})}\label{subsec:gradings-Lie-A-n-n}

Finally, consider $L \coloneqq \mathfrak{psl}(n+1 \,|\, n+1)$, the simple Lie superalgebra of type $A(n,n)$.
The gradings on $L$ can be divided into five subtypes, each of which corresponds, in view of the bijection between gradings and actions (\cref{subsec:grds-and-actions}), to one of the five subgroups of the Klein group $\Aut(L)/\mc E(n+1, n+1) \iso C_2 \times C_2$ (\cref{eq:barr-Aut-S}) according to the image of $\widehat G$.

\paragraph{Even Type I}
The even Type I gradings on $L$ are similar to the Type I gradings on $A(m,n)$ with $m \neq n$. 
By \cref{cor:iso-M-even}, they are induced by gradings of the form $\Gamma_M(T, \beta, \kappa_\bz, \kappa_\bo)$ on the associative superalgebra $M(n+1, n+1)$ (see \cref{def:Gamma-T-beta-kappa-even}).
Since these gradings come from an associative superalgebra of type $M$, we denote $L$ endowed with such grading by $A_M^{\mathrm{(I)}}(T, \beta, \kappa_\bz, \kappa_\bo)$.

\paragraph{Odd Type I}
The odd Type I gradings on $L$ are similar to the Type I gradings on $Q(n)$.
By \cref{cor:iso-M-odd}, they are induced by gradings of the form $\Gamma_M(T, \tilde\beta, \kappa)$ on the associative superalgebra $M(n+1, n+1)$ (see \cref{def:Gamma-T-beta-kappa-odd}).
We denote $L$ endowed with such grading by $A_Q^{\mathrm{(I)}}(T, \tilde\beta, \kappa)$.

\bigskip
Recall that Type II gradings on $L$ are induced from gradings on the superalgebra with superinvolution $M(n+1, n+1)\times M(n+1, n+1)\sop$ making it isomorphic to $E(\D, \U, B)$ (see \cref{thm:vphi-iff-vphi0-and-B,def:superadjunction}).
Hence, the Type II gradings can be divided into three cases as in the construction in \cref{para:even-D-even-B}.

\paragraph{Even Type II with even $B$}
The case of even $\D$ and even $B$ is similar to orthosymplectic superalgebras.
By \cref{thm:MxM-even}, these gradings on $L$ come from graded superalgebras with superinvolution $M^{\mathrm{ex}}(T, \beta, \kappa_\bz, \kappa_\bo, g_0)$ (see \cref{def:model-grd-MxM-even}) with $g_0 \in G = G\times \{\bz\} \subseteq G^\#$.
We denote $L$ with the induced grading by $A_{\mathfrak{osp}}^{\mathrm{(II)}}(T, \beta, \kappa_\bz, \kappa_\bo, g_0)$.

\paragraph{Even Type II with odd $B$}
The case of even $\D$ and odd $B$ is similar to periplectic superalgebras.
By \cref{thm:MxM-even}, these gradings on $L$ come from graded superalgebras with superinvolution $M^{\mathrm{ex}}(T, \beta, \kappa_\bz, \kappa_\bo, g_0)$ (see \cref{def:model-grd-MxM-even}) with $g_0 = (h_0, \bo) \in G^\#$.
Recall from \cref{para:even-D-odd-B} that, in this case, $\kappa_\bo$ is determined by $\kappa_\bz$, so we denote $L$ with the induced grading by $A_{P}^{\mathrm{(II)}}(T, \beta, \kappa_\bz, h_0)$.

\paragraph{Odd Type II}
The case of odd $\D$ and even $B$ is similar to Type II gradings on queer Lie superalgebras.
By \cref{thm:MxM-odd}, these gradings on $L$ come from graded superalgebras with superinvolution $M^{\mathrm{ex}}(T, \tilde\beta, t_p, \kappa, g_0)$
(see \cref{def:model-grd-MxM-odd-or-QxQ}).
We denote $L$ with the induced grading by $A_{Q}^{\mathrm{(II)}}(T, \tilde\beta, t_p, \kappa, g_0)$.

\medskip
The classification for Type I follows from \cref{thm:MxM-type-I} (not from \cref{cor:iso-M-even,cor:iso-M-odd}), and for Type II from \cref{thm:MxM-even,thm:MxM-odd}.

\begin{thm}\label{thm:final-A(n-n)}
    Let $n > 1$.
    The simple Lie superalgebra $\mathfrak{psl}(n+1 \,|\, n+1)$ with any $G$-grading is isomorphic to one of 
    $A_M^{\mathrm{(I)}}(T, \beta, \kappa_\bz, \kappa_\bo)$, 
    $A_Q^{\mathrm{(I)}}(T, \tilde\beta, \kappa)$, 
    $A_{\mathfrak{osp}}^{\mathrm{(II)}}(T, \beta, \kappa_\bz, \kappa_\bo, g_0)$, 
    $A_{P}^{\mathrm{(II)}}(T, \beta, \kappa_\bz, h_0)$, or 
    $A_{Q}^{\mathrm{(II)}}(T, \tilde\beta, t_p, \kappa, g_0)$, 
    and these cases are mutually exclusive.
    Furthermore:
    
    \smallskip\noindent
    \fbox{\emph{Type I\textsubscript{M}}} $A_M^{\mathrm{(I)}}(T, \beta, \kappa_\bz, \kappa_\bo) 
    \iso 
    A_M^{\mathrm{(I)}}(T', \beta', \kappa_\bz', \kappa_\bo')$ \IFF $T = T'$ and there exists $g \in G$ such that one of the following holds:
    \begin{enumerate}
        \item $\beta' = \beta$ and either
            \begin{itemize}
                \item $\kappa_{\bar 0}' = g \cdot \kappa_{\bar 0}$ and $\kappa_{\bar 1}' = g \cdot \kappa_{\bar 1}$, or
                \item $\kappa_{\bar 0}' = g \cdot \kappa_{\bar 1}$ and $\kappa_{\bar 1}' = g \cdot \kappa_{\bar 0}$;
            \end{itemize}
        \item $\beta' = \beta\inv$ and either
            \begin{itemize}
                \item $\kappa_{\bar 0}' = g \cdot \kappa_{\bar 0}\Star$ and $\kappa_{\bar 1}' = g \cdot \kappa_{\bar 1}\Star$, or
                \item $\kappa_{\bar 0}' = g \cdot \kappa_{\bar 1}\Star$ and $\kappa_{\bar 1}' = g \cdot \kappa_{\bar 0}\Star$.
            \end{itemize}
    \end{enumerate}
    
    \noindent
    \fbox{\emph{Type I\textsubscript{Q}}}
    $A_Q^{\mathrm{(I)}}(T, \tilde\beta, \kappa)
    \iso
    A_Q^{\mathrm{(I)}}(T', \tilde\beta', \kappa')$ \IFF $T' = T$ and there exists $g \in G$ such that one of the following holds:
    \begin{enumerate}
	    \item $\tilde\beta'=\tilde\beta$ and $\kappa' = g \cdot \kappa$;
	    \item $\tilde\beta'=\tilde\beta\inv$ and $\kappa' = g \cdot \kappa\Star$.
	\end{enumerate}
    
    \noindent
    \fbox{\emph{Type II\textsubscript{$\mathfrak{osp}$}}}
    $A_{\mathfrak{osp}}^{\mathrm{(II)}}(T, \beta, \kappa_\bz, \kappa_\bo, g_0) 
    \iso 
    A_{\mathfrak{osp}}^{\mathrm{(II)}}(T', \beta', \kappa_\bz', \kappa_\bo', g'_0)$ \IFF
    $T = T'$, $\beta = \beta'$, and there is $g \in G$ such that either
    \begin{enumerate}
        \item $\kappa_\bz' = g \cdot \kappa_\bz$, $\kappa_\bo' = g \cdot \kappa_\bo$ and $g'_0 = g^{-2} g_0$, or
        \item $\kappa_\bz' = g \cdot \kappa_\bo$, $\kappa_\bo' = g \cdot \kappa_\bz$ and $g'_0 = f g^{-2} g_0$.
    \end{enumerate}
    
    \noindent
    \fbox{\emph{Type II\textsubscript{P}}}
    $A_{P}^{\mathrm{(II)}}(T, \beta, \kappa_\bz, h_0)
    \iso 
    A_{P}^{\mathrm{(II)}}(T', \beta', \kappa_\bz', h'_0)$ \IFF
    $T = T'$, $\beta = \beta'$, and there is $g \in G$ such that either
    \begin{enumerate}
        \item $\kappa_\bz' = g \cdot \kappa_\bz$ and $h'_0 = g^{-2} h_0$, or
        \item $\kappa_\bz' = g h_0^{-1} \cdot \kappa_\bz\Star$ and $h'_0 = f g^{-2} h_0$.
    \end{enumerate}
    
    \noindent
    \fbox{\emph{Type II\textsubscript{Q}}}
    $A_{Q}^{\mathrm{(II)}}(T, \tilde\beta,  t_p, \kappa, g_0) 
    \iso 
    A_{Q}^{\mathrm{(II)}}(T', \tilde\beta',  t_p', \kappa', g_0')$ \IFF
    $T =T'$, $\tilde\beta = \tilde\beta'$, $t_p = t_p'$ and there exists $g \in G$ such that $\kappa' = g\cdot\kappa$ and $g_0' = g^{-2}g_0$. \qed
\end{thm}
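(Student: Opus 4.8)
The plan is to transfer the problem to the associative setting by means of \cref{cor:bijection-gradings} and then to read off the five subtypes from the classification of gradings on the superinvolution-simple superalgebra $(R,\vphi) \coloneqq M(n+1,n+1)\times M(n+1,n+1)\sop$ equipped with the exchange superinvolution, which was carried out in \cref{sec:MxM-and-QxQ-associative}. Since $n>1$, the Lie superalgebra $L = \mathfrak{psl}(n+1\,|\,n+1)$ is simple and not of type $A(1,1)$, so \cref{cor:bijection-gradings} gives a bijection between $G$-gradings on $(R,\vphi)$ and on $L \iso \Skew(R,\vphi)^{(1)}/Z(\Skew(R,\vphi)^{(1)})$ that preserves isomorphism classes. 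Thus it suffices to classify $G$-gradings on $(R,\vphi)$ and to rewrite the resulting parameters and isomorphism conditions in Lie-theoretic notation.

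First I would split gradings on $(R,\vphi)$ according to the dichotomy of \cref{defi:type-I-type-II}: either the grading fails to make $R$ graded-simple (Type I) or it makes $R$ graded-simple (Type II). For Type I, \cref{prop:only-SxSsop-is-simple} shows the grading descends from a grading on $S \coloneqq M(n+1,n+1)$, which is even or odd by \cref{cor:iso-M-even,cor:iso-M-odd}, producing $A_M^{\mathrm{(I)}}$ and $A_Q^{\mathrm{(I)}}$. The isomorphism conditions for these are those of \cref{thm:MxM-type-I} rather than of \cref{cor:iso-M-even,cor:iso-M-odd}, since forming $S\times S\sop$ identifies a grading with its superdual; this is precisely what yields the extra alternatives $\beta'=\beta\inv$, $\kappa'=g\cdot\kappa\Star$ in the two Type I boxes. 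For Type II, the three construction cases distinguished in \cref{para:even-D-even-B,para:even-D-odd-B,para:odd-D-odd-B} (even $\D$ with even $B$, even $\D$ with odd $B$, and odd $\D$ with even $B$) give $A_{\mathfrak{osp}}^{\mathrm{(II)}}$, $A_P^{\mathrm{(II)}}$ and $A_Q^{\mathrm{(II)}}$, with isomorphism conditions coming directly from \cref{thm:MxM-even,thm:MxM-odd}.

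The one case requiring genuine work is $A_P^{\mathrm{(II)}}$, where the general $M^{\mathrm{ex}}$-parametrization must be compressed to the pair $(\kappa_\bz, h_0)$. Here $g_0=(h_0,\bo)$ is odd and, by \cref{para:even-D-odd-B}, $\kappa_\bo$ is forced via $\kappa_\bo(x)=\kappa_\bz(h_0\inv x\inv)$ for all $x\in G/T$, which I would rewrite as the identity $\kappa_\bo = h_0\inv\cdot\kappa_\bz\Star$. Substituting this into the two alternatives of \cref{thm:MxM-even} converts condition~(i) into $\kappa_\bz'=g\cdot\kappa_\bz$, $h_0'=g^{-2}h_0$, and condition~(ii), via $\kappa_\bz'=g\cdot\kappa_\bo=gh_0\inv\cdot\kappa_\bz\Star$ together with $g_0'=fg^{-2}g_0$, into $\kappa_\bz'=gh_0\inv\cdot\kappa_\bz\Star$, $h_0'=fg^{-2}h_0$; one must also verify that the companion equation on $\kappa_\bo'$ holds automatically by admissibility. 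This bookkeeping is the main obstacle, since it is the only point at which the reduced and the full parametrizations diverge nontrivially.

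Finally, for exhaustiveness and mutual exclusivity I would appeal to the grading–action correspondence of \cref{subsec:grds-and-actions} together with the structure $\Aut(L)/\mc E(n+1,n+1)\iso C_2\times C_2$ recorded in \cref{eq:barr-Aut-S}. Each grading determines the image of $\widehat G$ in this Klein group, and since the group is abelian that image is a full isomorphism invariant; the five subtypes correspond exactly to its five subgroups, the trivial one giving $A_M^{\mathrm{(I)}}$, the parity class giving $A_Q^{\mathrm{(I)}}$, the two remaining order-two subgroups giving $A_{\mathfrak{osp}}^{\mathrm{(II)}}$ and $A_P^{\mathrm{(II)}}$, and the whole group giving $A_Q^{\mathrm{(II)}}$. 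Concretely, the subtype is detected by the invariants ``$R$ graded-simple or not'', ``$\D$ even or odd'' (\ie\ whether $T=T^+$), and the parity $|g_0|$ of the form $B$, all of which are preserved by the isomorphism conditions above (note $|g^{-2}|=\bz$ and $f\in T^+$, so $|g_0'|=|g_0|$). This shows the list is complete and the cases are disjoint, completing the classification.
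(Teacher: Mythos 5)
Your proposal is correct and follows essentially the same route as the paper: transfer via \cref{cor:bijection-gradings}, split into the five subtypes according to the image of $\widehat G$ in the Klein group $\Aut(L)/\mc E(n+1,n+1)$, and read the isomorphism conditions off \cref{thm:MxM-type-I,thm:MxM-even,thm:MxM-odd} (correctly noting that the Type I conditions come from \cref{thm:MxM-type-I}, not \cref{cor:iso-M-even,cor:iso-M-odd}). Your explicit bookkeeping for Type II\textsubscript{P} --- rewriting admissibility as $\kappa_\bo = h_0\inv\cdot\kappa_\bz\Star$, substituting into \cref{thm:MxM-even}, and checking that the companion condition on $\kappa_\bo'$ holds automatically because $f\in T$ --- is exactly the computation the paper leaves implicit, and it is carried out correctly.
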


\bibliographystyle{amsalpha}
\bibliography{bibliography}

\end{document}